\DeclareMathAlphabet{\dutchcal}{U}{dutchcal}{b}{n}
\setlist[enumerate,1]{label={\upshape(\arabic*)},
ref=\thetheorem(\arabic*)}
\newcommand\crefenumtype{} 
  \let\oldlabel\label
  \renewcommand{\label}[2][]{
    \ifx\crefenumtype\empty
      \oldlabel{####2}
    \else
      \oldlabel[\crefenumtype]{####2}
    \fi}
\newcommand{\bibsortkey}[1]{}
\newcounter{dummyitem}
\newcommand\myitem[1][]{\item[#1]\refstepcounter{dummyitem}\def\@currentlabel{#1}}
\patchcmd{\@setaddresses}{\indent}{\noindent}{}{}
\patchcmd{\@setaddresses}{\indent}{\noindent}{}{}
\patchcmd{\@setaddresses}{\indent}{\noindent}{}{}
\patchcmd{\@setaddresses}{\indent}{\noindent}{}{}
\newtheorem{lemma}{Lemma}[section]
\newtheorem{theorem}[lemma]{Theorem}
\newtheorem{proposition}[lemma]{Proposition}
\newtheorem{corollary}[lemma]{Corollary}
\theoremstyle{definition}
\newtheorem{definition}[lemma]{Definition}
\newtheorem{remark}[lemma]{Remark}
\let\Sec\S
\newcommand{\upset}{\mathord{\uparrow}}
\newcommand{\downset}{\mathord{\downarrow}}
\newcommand{\pprec}{\mathrel{{\prec}\mkern-8mu{\prec}}}
\newcommand{\medcup}{{\textstyle\bigcup}}
\newcommand{\pt}{\mathord{\mathsf{pt}}}
\renewcommand{\o}{\mathfrak o}
\renewcommand{\c}{\mathfrak c}
\renewcommand{\b}{\mathfrak b}
\newcommand{\B}{\mathord{\mathfrak B}}
\renewcommand{\S}{\mathord{\mathsf S}}
\newcommand{\N}{\mathord{\mathsf N}}
\newcommand{\cat}[1]{{\sf #1}\xspace}
\newcommand{\DLat}{\cat{DLat}}
\newcommand{\BA}{\cat{BA}}
\newcommand{\Pries}{\cat{Pries}}
\newcommand{\LPries}{\cat{LPries}}
\newcommand{\SLPries}{\cat{SLPries}}
\newcommand{\ConLPries}{\cat{ConLPries}}
\newcommand{\ConRegLPries}{\cat{ConRegLPries}}
\newcommand{\KRLPries}{\cat{KRLPries}}
\newcommand{\AlgLPries}{\cat{AlgLPries}}
\newcommand{\CohLPries}{\cat{CohLPries}}
\newcommand{\StoneLPries}{\cat{StoneLPries}}
\newcommand{\Frm}{\cat{Frm}}
\newcommand{\SFrm}{\cat{SFrm}}
\newcommand{\ConFrm}{\cat{ConFrm}}
\newcommand{\ConRegFrm}{\cat{ConRegFrm}}
\newcommand{\KRFrm}{\cat{KRFrm}}
\newcommand{\AlgFrm}{\cat{AlgFrm}}
\newcommand{\CohFrm}{\cat{CohFrm}}
\newcommand{\StoneFrm}{\cat{StoneFrm}}
\newcommand{\Top}{\cat{Top}}
\newcommand{\Sob}{\cat{Sob}}
\newcommand{\LCSob}{\cat{LCSob}}
\newcommand{\KHaus}{\cat{KHaus}}
\newcommand{\LCHaus}{\cat{LCHaus}}
\newcommand{\KBSob}{\cat{KBSob}}
\newcommand{\Spec}{\cat{Spec}}
\newcommand{\Stone}{\cat{Stone}}
\renewcommand{\L}{\ensuremath{\mathsf{L}}\xspace}
\DeclareMathOperator{\reg}{reg}
\DeclareMathOperator{\sfit}{sfit}
\DeclareMathOperator{\creg}{creg}
\DeclareMathOperator{\haus}{haus}
\DeclareMathOperator{\con}{con}
\DeclareMathOperator{\zer}{zer}
\DeclareMathOperator{\alg}{alg}
\newcommand{\hausfit}{\haus_{\sfit}}
\newcommand{\K}{\operatorname{K}}
\newcommand{\loc}{\operatorname{loc}}
\newcommand{\clopup}{\operatorname{ClopUp}}
\newcommand{\clopsup}{\operatorname{ClopSUp}}
\newcommand{\opup}{\operatorname{OpUp}}
\newcommand{\cl}{\operatorname{cl}}
\newcommand{\functor}[1]{\dutchcal{#1}}
\newcommand{\Loc}{\functor{L\kern-3pto\kern-1ptc}}
\newcommand{\Pri}{\functor{P\kern-1ptr\kern-1.5pti}}
\renewcommand{\>}{\textup{)}}
\title{Priestley perspective on pointfree topology}
\author{Guram Bezhanishvili}
\address{New Mexico State University, USA}
\email{guram@nmsu.edu}
\author{Sebastian D. Melzer}
\address{University of Salerno, Italy}
\email{smelzer@unisa.it}
\subjclass[2020]{18F70; 06D22; 06D50; 06E15; 54D10; 54D15; 54D30; 54D45}
\keywords{Priestley duality; pointfree topology; spatiality; sublocale; separation axioms; compactness; local compactness; algebraic frame; coherent frame; Stone frame}
\begin{document}

\begin{abstract}
    Priestley duality has diverse applications in various branches of mathematics. 
    In this survey, we discuss its usefulness
    in pointfree topology. This is done by providing Priestley perspective on several key notions, including spatiality, sublocales, separation axioms, compactness, and local compactness.
    This approach yields a new perspective on
    a number of classic results in pointfree topology. 
\end{abstract}

\maketitle

\tableofcontents

\hfill \emph{Tribute to Hilary Priestley}   

\section{Introduction}

It is a celebrated result of Priestley \cite{Pri70} that the category $\DLat$ of bounded distributive lattices is dually equivalent to the category $\Pries$ of Priestley spaces. The latter are Stone spaces equipped with a partial order that satisfies Priestley separation (see \cref{sec: PD for frames} for details). As such, Priestley duality generalizes Stone duality for boolean algebras \cite{Sto36}. Priestley spaces form an important subcategory of the category of compact ordered spaces studied by Nachbin \cite{Nac65}. The correspondence is roughly the same as between compact Hausdorff spaces and Stone spaces, and many results about Stone spaces have their natural generalization to Priestley spaces (see, e.g., \cite{Pri84,BB+10}). 

Each Priestley space carries two natural spectral topologies of open upsets and open downsets, and the Stone topology of the Priestley space is the join of these two
(see, e.g., \cite[Prop.~3.4]{BB+10}). 
This correspondence is at the heart of the Cornish isomorphism \cite{Cor75} between $\Pries$ and the category $\Spec$ of spectral spaces. The latter are 
the Zariski spectra of commutative rings (see, e.g., \cite{AM69} or \cite{DST19}). 
In fact, the first duality for $\DLat$ was developed by Stone \cite{Sto38} using the language of spectral spaces. However, it wasn't until Priestley's work that this has become a major tool in lattice theory and related areas.  
Indeed, since Priestley spaces are compact Hausdorff, the Priestley approach is often easier and the use of order only helps the intuition by allowing to draw the Hasse diagram of the situation at hand. 
Because of this, Priestley duality found numerous applications, not only in lattice theory, but also 
in several other areas of mathematics. For example, Priestley duality is used in algebra (see, e.g., \cite{Pri94,Mar96,CD98,CC06,DG+11}), topology (see, e.g., \cite{Pri72,Han86,KS91,Nai00,BPS09,BM11b}),  
logic (see, e.g., \cite
{DH01}), domain theory (see, e.g., 
\cite{GH+03,YL06}), theoretical computer science (see, e.g., \cite{GGP08,Geh16,BHM23}), etc. 

In this article, we make a case that pointfree topology provides another fertile ground for the use of Priestley duality. 
As the name indicates, pointfree topology is the study of topological spaces through their lattices of open sets, without an explicit reference to their points. 
This perspective originated in the 1930s with the work of Stone
\cite{Sto36,Sto38}, Tarski \cite{Tar38} 
and Wallman
\cite{Wal38}. It was further developed in the 1940s 
\cite{MT44, MT46, Noeb48} (see also \cite{Noeb54,RS63}),
and the 1950s \cite{Ehr57,Ben59,PP59}. 
Ehresmann and Bénabou started to view
complete Heyting algebras as generalized topological spaces, which they called \emph{local lattices}. The two terms that are in use today are \emph{locale} and \emph{frame}. The first was coined by Isbell \cite{Isb72} and the second by
Dowker and Papert \cite{DP66}, with the two categories of locales and frames being opposite 
of each other. 
The dual adjunction between the categories of topological spaces and frames was established in \cite{DP66}. 
Isbell’s influential paper \cite{Isb72} 
is considered the birth of pointfree topology as an independent branch of topology (and lattice theory).
It was further developed by Banaschewski, Johnstone, and Simmons (among others), and matured rapidly. For a comprehensive account of the subject and its history, we refer to Johnstone’s monograph \cite{Joh82} and more recent books by Picado and Pultr \cite{PP12,PP21}.

While pointfree topology provides an elegant algebraic framework 
to study topological notions, it can sometimes obscure the geometric intuition of a topologist.
Priestley duality offers a way to recover this intuition through the Priestley dual of a frame.
This idea appeared first in a note by Wigner \cite{Wig79}, who worked with the spectral space of a frame, but the patch topology played a central role in his considerations.
Pultr and Sichler \cite{PS88} 
identified the subcategory of Priestley spaces that is dually equivalent to the category of frames and characterized compact and regular frames. 
This perspective has been extended and refined in the 2000s by various authors
\cite{PS00,BG07,BGJ13,BGJ16,AB+20,AB+21}, and an alternative spectral approach was considered in \cite{Sch13,Sch17a,Sch17b} (see also \cite{DST19}).
In our recent work, we have developed a systematic study of Priestley spaces of frames (see \cite{BM22,BM23,BM25,BBM25,Mel25}). 
On the one hand, it provides a new perspective on some classic results in pointfree topology; on the other hand, it can be used to address some open problems in the field. 
It should be noted, however, that Priestley duality relies on the Prime Ideal Theorem (PIT), and some results even require the full strength of the Axiom of Choice (AC).

In this survey, 
we illustrate how Priestley duality can be used to gain a grasp of various categories of frames that play a prominent role in pointfree topology. This includes the Priestley characterization of spatiality, sublocales, 
various separation axioms, 
compactness, and local compactness, as well as the description of Priestley spaces of algebraic, coherent, and Stone frames. 
While most of the results we present are known, the uniform approach to separation axioms 
is new, as are some of the proofs, which help the reader gain 
an understanding of the type of reasoning
involved in such considerations. 

The paper is structured as follows. In \cref{sec: PD for frames}, we recall Priestley duality for distributive lattices. 
We then characterize exact joins, which paves the way 
to Priestley duality for frames. In \cref{sec: spatiality}, we restrict Priestley duality to spatial frames. In \cref{sec: nuclei}, we describe sublocales of a frame in terms of special closed subsets of the Priestley dual of the frame, yielding an alternative proof of Isbell's Density Theorem. In \cref{sec: sep axioms}, we develop the theory of kernels in Priestley spaces of frames, and show that various separation axioms can be described uniformly utilizing the language of kernels.
In \cref{sec: compact}, we characterize Priestley spaces of compact frames, yielding a new proof of Isbell's Spatiality Theorem, which gives rise to a new proof of Isbell duality for compact regular frames. The above characterization is then generalized to continuous frames, yielding a new proof 
of Hofmann--Lawson duality. Finally, in \cref{sec: algebraic}, we describe the Priestley spaces of algebraic, coherent, and Stone frames, thus giving rise to new proofs of several well-known dualities in the literature. 
We conclude by 
deriving Priestley and Stone dualities from this approach.

\section{Priestley duality for frames} \label{sec: PD for frames}

Priestley duality is a convenient tool to reason about distributive lattices using the language of ordered topological spaces. We briefly recall basic definitions. 
Let $X$ be a poset. 
As usual, for $S \subseteq X$,  we write $$\upset S := \{x \in X \mid s \leq x \text{ for some } s \in S\}$$ and $$\downset S := \{x \in X \mid s \geq x \text{ for some } s \in S\}.$$ Then $S$ is an \emph{upset} if 
$S = \upset S$ and a \emph{downset} if 
$S = \downset S$. 
A subset of a topological space $X$ is \emph{clopen} if it is both closed and open, and $X$ is {\em zero-dimensional} provided clopen subsets form a basis.

\begin{definition}[Priestley spaces]
\leavevmode
\begin{enumerate}
    \item A \emph{Priestley space} is a compact topological space $X$ equipped with a partial order $\leq$ satisfying \emph{Priestley separation}: 
    \[ 
    x \nleq y \implies \exists \text{ clopen upset } U \text { such that } x \in U \text{ and }y \notin U.
    \] 
    \item A \emph{Priestley morphism} is a continuous order-preserving map between Priestley spaces.
    \item Let $\Pries$ be the category of Priestley spaces and Priestley morphisms.
\end{enumerate}
\end{definition}

Letting $\DLat$ denote the category of bounded distributive lattices and bounded lattice morphisms, we have (see \cite{Pri70,Pri72}):

\begin{theorem}[Priestley duality]
    $\DLat$ is dually equivalent to $\Pries$.
\end{theorem}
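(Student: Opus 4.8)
The plan is to construct the two functors explicitly and verify that they form a dual equivalence by producing natural isomorphisms on both sides. Given a bounded distributive lattice $L$, I would form its space of prime filters $X_L$ (equivalently, the set of bounded lattice morphisms $L \to \{0,1\}$), topologised by taking as a subbasis the sets $\{F : a \in F\}$ together with their complements $\{F : a \notin F\}$ as $a$ ranges over $L$, and ordered by inclusion. The first task is to check that $X_L$ is a Priestley space: compactness follows from the Prime Ideal Theorem (this is where a choice principle enters), while Priestley separation is immediate since for prime filters $F \not\subseteq G$ one picks $a \in F \setminus G$ and uses the clopen upset $\{H : a \in H\}$. In the opposite direction, to a Priestley space $X$ I would assign the bounded distributive lattice $\clopup X$ of its clopen upsets, which is closed under finite unions and intersections and contains $\emptyset$ and $X$.

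Next I would make these assignments functorial. A lattice morphism $h \colon L \to M$ induces a map $X_M \to X_L$ by preimage, $F \mapsto h^{-1}(F)$, which is continuous and order-preserving; a Priestley morphism $f \colon X \to Y$ induces a lattice morphism $\clopup Y \to \clopup X$ by $U \mapsto f^{-1}(U)$, using that the preimage of a clopen upset under a continuous order-preserving map is again a clopen upset. The crux of the argument is then to establish the two natural isomorphisms. On the lattice side, one shows the unit $L \to \clopup(X_L)$, sending $a$ to $\{F : a \in F\}$, is an isomorphism of bounded distributive lattices: injectivity and surjectivity both rest on the Prime Ideal Theorem, which guarantees enough prime filters to separate distinct lattice elements and forces every clopen upset to arise from a single element via a compactness argument. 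On the space side, one shows the counit $X \to X_{\clopup X}$, sending a point $x$ to the prime filter $\{U \in \clopup X : x \in U\}$, is an isomorphism of Priestley spaces; here Priestley separation and compactness of $X$ are exactly what is needed to see this map is a well-defined, continuous, order-reflecting bijection with continuous inverse.

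I expect the main obstacle to lie in the surjectivity part of the counit, namely showing that every prime filter of the lattice $\clopup X$ is the filter of clopen upsets containing some point of $X$. This amounts to proving that the intersection $\bigcap_{U \in \mathcal F} U \cap \bigcap_{V \notin \mathcal F}(X \setminus V)$ associated to a prime filter $\mathcal F$ is nonempty, and pinning down that it is a single point. The argument combines compactness (to get the finite intersection property yielding a nonempty intersection) with Priestley separation (to collapse the intersection to a point and to verify that the resulting prime filter is exactly $\mathcal F$). Once the unit and counit are shown to be isomorphisms, naturality is a routine diagram chase, and the dual equivalence $\DLat \simeq \Pries^{\mathrm{op}}$ follows.
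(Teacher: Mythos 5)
Your proposal is correct and is essentially the classical argument that the paper itself recalls for this theorem: the same prime-filter space functor and clopen-upset functor described in \cref{rem: Priestley functors}, with the unit $\sigma$ and the counit handled exactly as in Priestley's original proofs \cite{Pri70,Pri72}, and with PIT entering in the same two places the paper flags (injectivity of $\sigma$ and compactness of $X_D$). The only thing worth noting is that the paper also \emph{re-derives} this duality at the very end by an entirely different route (through coherent frames, spectral spaces, and the Cornish isomorphism), but for the statement as given your direct construction matches the intended proof.
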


\begin{remark} 
\leavevmode
\begin{enumerate}[cref=remark]
    \item \label{rem: Priestley functors}
The functors establishing Priestley duality are constructed as follows.  
The functor from $\DLat$ to $\Pries$ sends a bounded distributive lattice $D$ to the Priestley space $X_D$ of prime filters of $D$ ordered by inclusion and topologized by the basis
\[
    \{ \sigma(a) \cap \sigma(b)^c \mid a,b \in D \},
\]
where $\sigma(a) = \{x \in X_D \mid a \in x\}$.  
A bounded lattice morphism $h \colon D \to E$ is mapped to the Priestley morphism $h^{-1} \colon X_E \to X_D$.  

The functor from $\Pries$ to $\DLat$ sends a Priestley space $X$ to the lattice $\clopup(X)$ of its clopen upsets, and a Priestley morphism $f \colon X \to Y$ is mapped to the bounded lattice morphism $f^{-1} \colon \clopup(Y) \to \clopup(X)$.
 
\item We will often use 
that  $\sigma : D \to \clopup(X_D)$ is a lattice isomorphism.
The injectivity of $\sigma$ requires PIT,
and so does the compactness of $X_D$ as it 
is established using Tychonoff's Theorem for Hausdorff spaces \cite[p.~187]{Pri70}. 
In fact, 
Priestley duality is equivalent to PIT (see, e.g., \cite[p.~79]{Joh82}).\label{rem:PIT}
\end{enumerate}
\end{remark}

We recall some basic properties of Priestley spaces that will be used throughout the paper (see, e.g., \cite{Pri84} or \cite{BB+10}). 
For a poset $X$, we write $\min X$ and $\max X$ for the sets
of minimal and maximal points, respectively.

\begin{lemma}[Basic facts about Priestley spaces]\label{lem: pri-facts}
Let $X$ be a Priestley space.
\begin{enumerate}[cref=lemma]
    \item $X$ is a Stone space \<compact, Hausdorff, zero-dimensional\>.\label{lem: pri-facts-stone}
    \item The upset and  downset of a closed set are closed.\label{lem: pri-facts-upclosed}
    \item Each closed upset \<resp.\ downset\> is an intersection of clopen upsets \<resp.\ downsets\>.\label{lem: pri-facts-intersection}
    \item Each open upset \<resp.~downset\> is a union of clopen upsets \<resp.~down\-sets\>. \label{lem: pri-facts-unions}
    \item For each closed $F \subseteq X$ and $x \in F$, both $\downset x \cap \min F$ and $\upset x \cap \max F$ are nonempty.\label{lem: pri-facts-max}
\end{enumerate}
\end{lemma}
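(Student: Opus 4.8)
The plan is to derive every item from the interplay between the Priestley separation axiom and compactness, the only genuinely different ingredient being a Zorn's lemma argument for the last item. For \textbf{(1)}, compactness is built into the definition, so it remains to verify Hausdorffness and zero-dimensionality. Given distinct $x,y$, at least one of $x \nleq y$ or $y \nleq x$ holds, and in either case Priestley separation produces a clopen upset $U$ containing exactly one of $x,y$; thus $U$ and $U^c$ are disjoint clopen neighborhoods, so $X$ is Hausdorff and, moreover, any two distinct points are separated by a clopen set. To upgrade this to zero-dimensionality, I would fix an open $W$ and a point $x \in W$: for each $y$ in the closed (hence compact) set $W^c$ choose a clopen $C_y \ni x$ with $y \notin C_y$; the sets $C_y^c$ cover $W^c$, and a finite subcover yields a clopen $C := \bigcap_i C_{y_i}$ with $x \in C \subseteq W$. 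Hence clopens form a basis.

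The items \textbf{(2)}, \textbf{(3)}, \textbf{(4)} all rest on a single separation template. For \textbf{(2)}, to see that $\upset F$ is closed for $F$ closed, take $y \notin \upset F$; then $x \nleq y$ for every $x \in F$, so Priestley separation gives clopen upsets $U_x \ni x$ avoiding $y$, and compactness extracts a finite union $U = \bigcup_i U_{x_i}$, a clopen upset with $F \subseteq U$ and $y \notin U$, whence $U^c$ is an open neighborhood of $y$ missing $\upset F$; the downset case is dual. For \textbf{(3)}, the same finite union $U$ witnesses, for each $y$ outside a closed upset $F$, a clopen upset containing $F$ but not $y$, yielding $F = \bigcap \{U \text{ clopen upset} : F \subseteq U\}$. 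Finally \textbf{(4)} follows by complementation: if $W$ is an open upset then $W^c$ is a closed downset, so by the dual of (3) each $x \in W$ admits a clopen downset $D \supseteq W^c$ with $x \notin D$, and then $D^c$ is a clopen upset with $x \in D^c \subseteq W$.

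The main work is \textbf{(5)}, where I would use Zorn's lemma. Fix closed $F$ and $x \in F$, and set $C := \downset x \cap F$; this is nonempty (it contains $x$) and closed, since $\downset x$ is closed by (2) and $\{x\}$ is closed by (1). To produce a minimal point of $F$ below $x$, I would apply Zorn's lemma to $C$ ordered by $\leq$, seeking a minimal element. The hypothesis to check is that every chain $K \subseteq C$ has a lower bound in $C$: the family $\{\downset k \cap C : k \in K\}$ consists of nonempty closed sets with the finite intersection property (the $\leq$-least of finitely many chain elements lies in all of them), so compactness of $C$ forces a common point $z$, which is exactly a lower bound of $K$ in $C$. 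Zorn then yields a minimal $m \in C$, and $m$ is in fact minimal in $F$, for any $z \in F$ with $z \leq m$ satisfies $z \leq x$, hence $z \in C$, hence $z = m$. Thus $m \in \downset x \cap \min F$, and the statement for $\upset x \cap \max F$ is proved dually. The delicate point is the verification of the chain condition via the finite intersection property, which is exactly where compactness enters; this is the step I expect to require the most care, and it is also the place where the Axiom of Choice is used.
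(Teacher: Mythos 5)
Your proof is correct throughout: the compactness-plus-Priestley-separation template handles (1)--(4) in the standard way, and the Zorn's lemma argument for (5) (chains have lower bounds via the finite intersection property in the compact set $\downset x \cap F$) is exactly the classical argument, consistent with the paper's \cref{rem: AC} that this item genuinely requires choice. The paper itself gives no proof, citing \cite{Pri84} and \cite{BB+10}, and your arguments coincide with the standard ones found there.
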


\begin{remark} \label{rem: AC}
    Assuming PIT, 
    \cref{lem: pri-facts-max} is equivalent to AC
    (see \cite{AB25}).
\end{remark}

The next lemma will be used in \cref{sec: nuclei}.
\begin{lemma}[{\cite[Thm.~3]{Pri70}; see also \cite[Prop.~11]{Pri72}}] \label{lem: onto hom}
        A $\DLat$-morphism
    is onto if and only if its dual Priestley morphism is an order-embedding.
\end{lemma}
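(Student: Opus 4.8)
The plan is to pass through Priestley duality and reduce the statement to a separation argument inside the Priestley space. Write $h \colon D \to E$ for the given $\DLat$-morphism and let $f = h^{-1} \colon X_E \to X_D$ be its dual Priestley morphism, so that $f$ is continuous and order-preserving by construction (\cref{rem: Priestley functors}). Since $\sigma$ is a lattice isomorphism (\cref{rem:PIT}) and is natural, we have the commuting square $\sigma \circ h = f^{-1} \circ \sigma$, whence $h$ is onto if and only if $f^{-1} \colon \clopup(X_D) \to \clopup(X_E)$ is onto. I would take this reformulation as the working statement. I also record that, since $f$ is already order-preserving, ``$f$ is an order-embedding'' is the same as ``$f$ is order-reflecting'', i.e.\ $f(x) \le f(y) \implies x \le y$.

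For the forward implication I would argue by contraposition on prime filters. Points of $X_E$ are prime filters of $E$, the order is inclusion, and $f(x) = h^{-1}(x)$. If $x \nleq y$, pick $b \in x \setminus y$; surjectivity of $h$ supplies $a \in D$ with $h(a) = b$, so $a \in h^{-1}(x) = f(x)$ while $a \notin h^{-1}(y) = f(y)$, giving $f(x) \nleq f(y)$. Thus $f$ is order-reflecting, hence an order-embedding.

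The substance is the reverse implication, which I would prove in the reformulated version: assuming $f$ is an order-embedding, show that every clopen upset $V$ of $X_E$ equals $f^{-1}(U)$ for some clopen upset $U$ of $X_D$. The key observation is that the order-embedding property forces the two closed sets $A := \upset f(V)$ and $B := \downset f(X_E \setminus V)$ to be disjoint: if $z \in A \cap B$, then $f(x) \le z \le f(y)$ for some $x \in V$ and some $y \notin V$, so $f(x) \le f(y)$ and hence $x \le y$; but $V$ is an upset and $x \in V$, forcing $y \in V$, a contradiction. Here $A$ is a closed upset and $B$ a closed downset, being the upset (resp.\ downset) of a continuous image of a compact set, hence of a closed set, by \cref{lem: pri-facts-upclosed}.

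It then remains to separate $A$ from $B$ by a clopen upset. Writing $A = \bigcap_i U_i$ as an intersection of clopen upsets (\cref{lem: pri-facts-intersection}) and using that $B$ is compact with $\bigcap_i (U_i \cap B) = \emptyset$, some finite subfamily already meets $B$ trivially; their intersection $U$ is a clopen upset with $A \subseteq U$ and $U \cap B = \emptyset$. Finally $f(V) \subseteq A \subseteq U$ gives $V \subseteq f^{-1}(U)$, while $f(X_E \setminus V) \subseteq B$ disjoint from $U$ gives $f^{-1}(U) \subseteq V$, so $f^{-1}(U) = V$. The main obstacle is exactly this reverse direction: recognizing that order-reflection yields the disjointness of $A$ and $B$, and then combining compactness with the fact that closed upsets are intersections of clopen upsets to manufacture the separating clopen upset.
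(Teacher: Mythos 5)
Your proof is correct. The paper does not prove this lemma itself --- it is quoted from Priestley's original papers --- and your argument is essentially the standard one found there: the forward direction by lifting a witness $b \in x \setminus y$ along the surjection, and the reverse direction by separating the disjoint closed upset $\upset f(V)$ from the closed downset $\downset f(X_E \setminus V)$ via compactness and \cref{lem: pri-facts-intersection}, exactly the tools the paper makes available.
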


For a bounded distributive lattice $D$ and $S \subseteq D$, we write $$\sigma[S] := \{\sigma(s) \mid s \in S\},$$
and for a Priestley space $X$, we write $\opup(X)$ for the set
of open upsets.
The following proposition characterizes Priestley spaces of complete distributive lattices, where we write $\cl$ for the closure in a topological space. 

\begin{proposition}[{\cite[Props.~15 and 16]{Pri72}}]\label{prop: joins}
Let $D$ be a bounded distributive lattice and $X$ its Priestley space.
\begin{enumerate}[cref=proposition]
    \item\label{prop: joins-1} 
    For $S \subseteq D$, the join $\bigvee S$ exists in $D$ if and only if 
    $
        \upset \cl \bigcup \sigma[S]
    $
    is open \<equivalently, a clopen upset\>. In this case,
    \[
        \sigma\!\left(\bigvee S\right) = \upset \cl \bigcup \sigma[S].
    \]
    \item $D$ is complete if and only if $\upset \cl U$ is open for each $U \in \opup(X)$.
\end{enumerate}
\end{proposition}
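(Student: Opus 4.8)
The plan is to prove part~(1) first and then read off part~(2), since $D$ is complete precisely when $\bigvee S$ exists for every $S \subseteq D$. Throughout I would use that $\sigma \colon D \to \clopup(X)$ is a lattice isomorphism, so the existence and value of a join in $D$ can be computed inside $\clopup(X)$.

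For part~(1), write $W := \upset\cl\bigcup\sigma[S]$. The first observation is that $W$ is always a \emph{closed} upset: the union $\bigcup\sigma[S]$ is an open upset (a union of the clopen upsets $\sigma(s)$), its closure $\cl\bigcup\sigma[S]$ is closed, and by \cref{lem: pri-facts-upclosed} the upset of a closed set is closed. Consequently the parenthetical equivalence ``open $\iff$ clopen upset'' is immediate, as a closed set that is open is clopen. It then remains to show that $\bigvee S$ exists if and only if $W$ is clopen, and that in this case $\sigma(\bigvee S) = W$; since $\sigma$ is an isomorphism, this amounts to showing that $W$ is the join of $\sigma[S]$ in $\clopup(X)$ exactly when $W$ is clopen.

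Assuming $W$ is clopen, I would verify directly that $W$ is the least clopen upset above every $\sigma(s)$. Each $\sigma(s) \subseteq \bigcup\sigma[S] \subseteq W$, so $W$ is an upper bound; and if $V$ is a clopen upset with $\sigma(s) \subseteq V$ for all $s$, then $\bigcup\sigma[S] \subseteq V$, whence $\cl\bigcup\sigma[S] \subseteq V$ because $V$ is closed and then $W = \upset\cl\bigcup\sigma[S] \subseteq V$ because $V$ is an upset. Conversely, assuming $a := \bigvee S$ exists, $\sigma(a)$ is the join of $\sigma[S]$ in $\clopup(X)$; from $\bigcup\sigma[S] \subseteq \sigma(a)$ and the fact that $\sigma(a)$ is a closed upset we get $W \subseteq \sigma(a)$, and for the reverse inclusion I would invoke \cref{lem: pri-facts-intersection}: the closed upset $W$ is the intersection of the clopen upsets containing it, each of which contains $\bigcup\sigma[S]$ and is therefore an upper bound of $\sigma[S]$, so $\sigma(a) = \bigvee\sigma[S]$ is below each of them, giving $\sigma(a) \subseteq W$. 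Hence $\sigma(a) = W$ and $W$ is clopen. This last reverse inclusion, which pins down $\sigma(\bigvee S)$ precisely via the intersection lemma, is the only step with real content; the rest is bookkeeping with closures and upsets.

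Part~(2) then follows formally from part~(1). If $D$ is complete, any open upset $U$ is a union of clopen upsets by \cref{lem: pri-facts-unions}, say $U = \bigcup\sigma[S]$; since $\bigvee S$ exists, part~(1) shows that $\upset\cl U = \upset\cl\bigcup\sigma[S]$ is open. Conversely, if $\upset\cl U$ is open for every open upset $U$, then for arbitrary $S \subseteq D$ the set $\bigcup\sigma[S]$ is an open upset, so $\upset\cl\bigcup\sigma[S]$ is open and part~(1) yields the existence of $\bigvee S$; as $S$ was arbitrary, $D$ is complete.
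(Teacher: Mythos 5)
Your proof is correct. The paper itself states this proposition as a cited result of Priestley (\cite[Props.~15 and 16]{Pri72}) without reproducing a proof, so there is nothing internal to compare against; your argument is a clean, self-contained verification using only \cref{lem: pri-facts-upclosed}, \cref{lem: pri-facts-intersection}, and \cref{lem: pri-facts-unions}, and it is essentially the standard one. The key step --- identifying $\upset\cl\bigcup\sigma[S]$ as the intersection of all clopen upsets containing it, each of which is an upper bound of $\sigma[S]$, to get $\sigma(\bigvee S)\subseteq\upset\cl\bigcup\sigma[S]$ --- is exactly where the content lies, and you have it right.
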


\begin{remark}
A Priestley space $X$ is said to be \emph{extremally order disconnected} if 
$\upset \cl U$ is open for each $U \in \opup(X)$ 
(see \cite[p.~521]{Pri72}). Using this terminology,
a bounded distributive lattice $D$ is complete if and only if its Priestley space $X_D$ is extremally order disconnected. This generalizes the classic result (see, e.g., \cite[Ch.~38]{GH09}) that a boolean algebra is complete if and only if its Stone space is extremally disconnected.
\end{remark}

\begin{definition}
    Let $D$ be a 
    lattice.
    \begin{enumerate}
        \item An existing join 
        $\bigvee S$ in $D$ 
        is \emph{exact} if for each $a\in D$, the join $\bigvee\{a \wedge s \mid s \in S\}$ also exists and
        $
            a \wedge \bigvee S = \bigvee \{a \wedge s \mid s \in S\}.
        $
        \item $D$ is a \emph{frame} if it is complete and every join in $D$ is exact.
    \end{enumerate}
\end{definition}

\begin{remark}
    The terminology ``exact'' was introduced by Ball  \cite{Bal84}, but exact joins were already studied by MacNeille 
    \cite{Mac37} under the name of 
    ``distributive joins.'' 
\end{remark}

The following proposition, which goes back to Wigner \cite[Prop.~2]{Wig79} (see also \cite[Thm.~2.3]{PS88} and \cite[Thm.~5.16]{BDMWW25}), describes Priestley spaces of frames.

\begin{theorem}
Let $D$ be a bounded distributive lattice and $X$ its Priestley space.
\begin{enumerate}[cref=theorem]
    \item\label{prop: exact joins-1}
    An existing join $\bigvee S$ in $D$ is exact if and only if 
    \[
        \sigma\!\left(\bigvee S\right) = \cl \bigcup \sigma[S].
    \]
    \item\label{prop: exact joins-2}
    $D$ is a frame if and only if $\cl U \in \opup(X)$ for each $U\in\opup(X)$.
\end{enumerate}
\end{theorem}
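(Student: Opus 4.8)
The plan is to reduce both parts to the already-established \cref{prop: joins-1}, which tells us that whenever $\bigvee S$ exists we have $\sigma(\bigvee S)=\upset\cl\bigcup\sigma[S]$. Write $U:=\bigcup\sigma[S]$, an open upset. Since $U\subseteq\sigma(\bigvee S)$ and $\sigma(\bigvee S)$ is closed, we get $\cl U\subseteq\sigma(\bigvee S)=\upset\cl U$, so the displayed equality $\sigma(\bigvee S)=\cl\bigcup\sigma[S]=\cl U$ holds if and only if $\upset\cl U=\cl U$, that is, if and only if $\cl U$ is an upset. Thus part (1) amounts to proving: \emph{the join $\bigvee S$ is exact if and only if $\cl U$ is an upset.}

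A computational tool I would isolate first is that for a clopen set $V$ and any set $W$ one has $\cl(V\cap W)=V\cap\cl W$ (the inclusion $\subseteq$ uses that $V$ is closed, and $\supseteq$ uses that $V$ is open). Applying this with $V=\sigma(a)$ and $W=U$, and using that $\sigma$ is a lattice isomorphism, exactness of $\bigvee S$ translates via \cref{prop: joins-1} into the condition
\[
\sigma(a)\cap\upset\cl U=\upset\bigl(\sigma(a)\cap\cl U\bigr)\qquad\text{for all }a\in D.
\]
For the easy direction ``$\cl U$ an upset $\Rightarrow$ exact'', I would fix $a\in D$ and note that $\cl(\sigma(a)\cap U)=\sigma(a)\cap\cl U$ is then an intersection of two upsets, hence an upset, and is clopen because it equals $\sigma(a)\cap\sigma(\bigvee S)=\sigma(a\wedge\bigvee S)$; \cref{prop: joins-1} then yields the existence of $\bigvee\{a\wedge s\mid s\in S\}$ together with the identity $\sigma(\bigvee\{a\wedge s\})=\sigma(a\wedge\bigvee S)$, and injectivity of $\sigma$ gives exactness.

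The converse, which I expect to be the main obstacle, is ``exact $\Rightarrow$ $\cl U$ an upset''. I would argue by contradiction: suppose $y\in\upset\cl U\setminus\cl U$. The set $K:=\downset y\cap\cl U$ is closed by \cref{lem: pri-facts-upclosed}, nonempty since $y\in\upset\cl U$, and does not contain $y$. For each $k\in K$ we have $y\nleq k$ (otherwise $y=k\in\cl U$), so Priestley separation supplies a clopen upset containing $y$ but not $k$; compactness of $K$ then lets me intersect finitely many of these into a single clopen upset $\sigma(a)\ni y$ with $\sigma(a)\cap K=\emptyset$. Then $y\in\sigma(a)\cap\upset\cl U$, whereas $y\notin\upset(\sigma(a)\cap\cl U)$ because $\downset y\cap\sigma(a)\cap\cl U=\sigma(a)\cap K=\emptyset$, contradicting the displayed exactness condition. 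This completes part (1).

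For part (2) I would derive both implications from part (1) together with \cref{prop: joins-1}. If $D$ is a frame, then every $U\in\opup(X)$ equals $\bigcup\sigma[S]$ for $S=\{a\in D\mid\sigma(a)\subseteq U\}$ by \cref{lem: pri-facts-unions}; completeness makes $\bigvee S$ exist, exactness plus part (1) gives $\cl U=\sigma(\bigvee S)$, a clopen upset, so $\cl U\in\opup(X)$. Conversely, if $\cl U\in\opup(X)$ for every $U\in\opup(X)$, then in particular $\upset\cl U=\cl U$ is open, so $D$ is complete by the second part of \cref{prop: joins}; and for any $S\subseteq D$, applying the hypothesis to $U=\bigcup\sigma[S]$ shows $\cl U$ is an upset, whence $\sigma(\bigvee S)=\upset\cl U=\cl U=\cl\bigcup\sigma[S]$ and part (1) makes the join exact. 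As every join is then exact, $D$ is a frame.
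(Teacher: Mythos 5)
Your proof is correct and follows essentially the same route as the paper: both reduce part (1) via \cref{prop: joins-1} to showing that exactness is equivalent to $\cl\bigcup\sigma[S]$ being an upset, handle the easy direction through the identity $\cl(V\cap W)=V\cap\cl W$ for clopen $V$, and prove the converse by contradiction using Priestley separation plus compactness, with part (2) then following formally. The only (harmless) variation is in the converse: the paper separates the offending point from $\cl\bigcup\sigma[S]$ by a basic clopen $\sigma(a)\cap\sigma(b)^c$ and compares $\bigvee\{a\wedge s\mid s\in S\}$ with $a\wedge\bigvee S$ relative to $b$, whereas you separate $y$ from $\downset y\cap\cl U$ by a single clopen upset $\sigma(a)$ and contradict your reformulated exactness identity directly, which dispenses with the auxiliary element $b$.
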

\begin{proof}
    (1) By \cref{prop: joins-1}, it suffices to show that $\bigvee S$ is exact if and only if $\cl \bigcup \sigma[S]$ is an upset. 
    First suppose $\cl \bigcup \sigma[S]$ is an upset. Since $\sigma : D \to \clopup(X)$ is an isomorphism, by \cref{prop: joins-1} it suffices to show that
    \[
        V \cap \cl \bigcup \sigma[S] = \upset \cl\!\left(V \cap \bigcup \sigma[S]\right)
    \]
    for each $V \in \clopup(X)$. The inclusion $\supseteq$ is clear since $V \cap \bigcup \sigma[S] \subseteq V \cap \cl \bigcup \sigma[S]$ and $V \cap \cl \bigcup \sigma[S]$ is a closed upset. For the inclusion $\subseteq$, let $x \in V \cap \cl \bigcup \sigma[S]$ and let $U$ be an open neighborhood of $x$. Then $U \cap V$ is an open neighborhood of $x$, so $U \cap V \cap \bigcup \sigma[S] \ne \varnothing$. 
    Thus, $$x \in \cl\!\left(V \cap \bigcup \sigma[S]\right) \subseteq \upset \cl\left(V \cap \bigcup \sigma[S]\right).$$

    Conversely, suppose $\cl \bigcup \sigma[S]$ is not an upset. Then there exists $x \in \upset \cl \bigcup \sigma[S]$ with ${x \notin \cl \bigcup \sigma[S]}$. 
    Therefore, $\{x\} \cap \cl \bigcup \sigma[S] = \varnothing$. By \cref{lem: pri-facts},
    \[
        \{x\} = \upset x \cap \downset x = \bigcap \{U \in \clopup(X) \mid  x \in U \} \cap \bigcap \{ V^c \mid V \in \clopup(X) \text{ and } x \notin V\}.
    \]
    Since finite intersections and unions of clopen upsets are clopen upsets, by compactness there are $U, V \in \clopup(X)$ such that $x \in U \cap V^c$ and $U \cap V^c \cap \cl \bigcup \sigma[S] = \varnothing$. From $U, V \in \clopup(X)$ it follows that $U = \sigma(a)$ and $V = \sigma(b)$ for some $a,b \in D$. 
    Thus, $\sigma(a) \cap \cl \bigcup \sigma[S] \subseteq \sigma(b)$. If $\bigvee\{a \wedge s \mid s \in S\}$ does not exist in $D$, then $\bigvee S$ is not exact. Otherwise, by \cref{prop: joins-1},
    \[
        \sigma\Big(\bigvee_{s \in S} (a \wedge s)\Big) 
          = \upset \cl \bigcup_{s \in S} \bigl(\sigma(a) \cap \sigma(s)\bigr)
          = \upset \cl \bigl(\sigma(a) \cap \bigcup \sigma[S]\bigr)
          \subseteq \sigma(b).
    \]
    On the other hand,
    \[
        \sigma\left(a \wedge \bigvee S\right) = \sigma(a) \cap \upset \cl \bigcup \sigma[S] \not\subseteq \sigma(b)
    \]
    since $x \in \sigma(a) \cap \upset \cl \bigcup \sigma[S]$, but $x \notin \sigma(b)$. 
    Because $\sigma$ is an isomorphism, we conclude that $a \wedge \bigvee S \neq \bigvee\{a \wedge s \mid s \in S\}$, and hence $\bigvee S$ is not exact.  
    
    (2) This is immediate from (1).
\end{proof}

\begin{definition}[{see, e.g., \cite[Def.~3.1]{BM25}}]
 An \emph{\L-space} is a Priestley space $X$ such that ${\cl U \in \opup(X)}$ for each $U \in \opup(X)$.
\end{definition}

\begin{remark} \label{rem: names}
The prefix ``\L'' in the above definition refers to \emph{localic}. In \cite{PS88}, these spaces are called ``f-spaces'' (``f'' for frame), and in \cite{PS00} they are called ``LP-spaces'' (``localic Priestley spaces''). 
\end{remark}

As an immediate consequence of \cref{prop: exact joins-2}, we obtain: 

\begin{corollary}
    A bounded distributive lattice $D$ is a frame if and only if its Priestley space $X_D$ is an \L-space.
\end{corollary}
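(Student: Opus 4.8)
The plan is to observe that this corollary is nothing more than a reformulation of \cref{prop: exact joins-2} once the definition of an \L-space is unwound. First I would recall that, by definition, the Priestley space $X_D$ is an \L-space precisely when $\cl U \in \opup(X_D)$ for every $U \in \opup(X_D)$. This is verbatim the topological condition appearing in part (2) of the preceding theorem.

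Next I would apply \cref{prop: exact joins-2} directly to the bounded distributive lattice $D$ and its Priestley space $X = X_D$: it asserts that $D$ is a frame if and only if $\cl U \in \opup(X_D)$ for each $U \in \opup(X_D)$. Chaining this equivalence with the definition recalled above immediately yields that $D$ is a frame if and only if $X_D$ is an \L-space.

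Since all the substantive work—characterizing exact joins through the closures $\cl \bigcup \sigma[S]$ and deducing the frame condition from them—has already been carried out in the proof of \cref{prop: exact joins-1,prop: exact joins-2}, I do not expect any real obstacle here. The only point to verify is that the topological condition in the definition of an \L-space matches, word for word, the condition in part (2) of the theorem, which it does. Hence the proof reduces to a one-line appeal to \cref{prop: exact joins-2} together with the definition of an \L-space.
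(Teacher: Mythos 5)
Your proposal is correct and matches the paper exactly: the paper states the corollary as an immediate consequence of \cref{prop: exact joins-2}, since the condition $\cl U \in \opup(X_D)$ for all $U \in \opup(X_D)$ is precisely the definition of an \L-space. Nothing further is needed.
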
 

\begin{remark}    
Every frame $L$ is a Heyting algebra (see, e.g., \cite[p.~11]{PP12}) since the implication
\[
    a \to b := \bigvee\{c \in L \mid a \wedge c \leq b\}
\]
exists for all $a,b \in L$. Hence, \L-spaces are \emph{Esakia spaces} (see, e.g., \cite{Esa19}).
\end{remark}

Recall (see, e.g., \cite[p.~10]{PP12}) that a \emph{frame homomorphism} is a bounded lattice morphism preserving arbitrary joins. On the Priestley side, this preservation translates into how the dual map interacts with closure.

\begin{proposition}[{\cite[Prop.~3]{Wig79}; see also \cite[\Sec 2.5]{PS88}}]\label{prop: morphisms}
Let $D$ and $E$ be complete distributive lattices, and let 
$h \colon D \to E$ be a bounded lattice morphism with the dual Priestley morphism 
$f \colon X_E \to X_D$.
\begin{enumerate}[cref=proposition]
    \item $h$ preserves arbitrary joins if and only if 
    \[
        f^{-1}(\upset \cl U) = \upset \cl f^{-1}(U)
    \]
    for each $U\in\opup(X_D)$.
    \item If $D$ and $E$ are frames, then $h$ is a frame homomorphism if and only if 
    \[
        f^{-1}(\cl U) = \cl f^{-1}(U)
    \]
    for each $U \in \opup(X_D)$.\label{prop: morphisms-2}
\end{enumerate}
\end{proposition}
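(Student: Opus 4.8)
The plan is to reduce both parts to the join formula of \cref{prop: joins-1} together with the naturality of the Priestley embeddings. Write $\sigma_D$ and $\sigma_E$ for the lattice embeddings of $D$ and $E$ into their respective clopen upsets. Since $f = h^{-1}$ by construction (see \cref{rem: Priestley functors}), a direct computation on prime filters shows $f^{-1}(\sigma_D(a)) = \sigma_E(h(a))$ for all $a \in D$: indeed, $x \in f^{-1}(\sigma_D(a))$ iff $a \in h^{-1}(x)$ iff $h(a) \in x$. Consequently $\bigcup \sigma_E[h[S]] = f^{-1}\!\left(\bigcup \sigma_D[S]\right)$ for every $S \subseteq D$, which is the bridge between the two spaces.

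For part (1), fix $S \subseteq D$. Both $\bigvee S$ and $\bigvee h[S]$ exist since $D$ and $E$ are complete, so ``$h$ preserves $\bigvee S$'' means $h(\bigvee S) = \bigvee h[S]$. Applying \cref{prop: joins-1} in $D$ and then naturality gives $\sigma_E(h(\bigvee S)) = f^{-1}(\sigma_D(\bigvee S)) = f^{-1}(\upset \cl \bigcup \sigma_D[S])$; applying \cref{prop: joins-1} in $E$ together with the bridge identity gives $\sigma_E(\bigvee h[S]) = \upset \cl f^{-1}(\bigcup \sigma_D[S])$. Because $\sigma_E$ is injective, $h$ preserves the join of $S$ if and only if $f^{-1}(\upset \cl U) = \upset \cl f^{-1}(U)$ for the open upset $U := \bigcup \sigma_D[S]$. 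It then remains to observe that the assignment $S \mapsto \bigcup \sigma_D[S]$ is surjective onto $\opup(X_D)$: by \cref{lem: pri-facts-unions} every open upset $U$ is a union of clopen upsets, so $U = \bigcup \sigma_D[S]$ for $S = \{a \in D \mid \sigma_D(a) \subseteq U\}$. Thus ``$h$ preserves all joins'' is equivalent to the displayed equality holding for every $U \in \opup(X_D)$, which proves (1).

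For part (2), assume additionally that $D$ and $E$ are frames, so that $X_D$ and $X_E$ are \L-spaces by \cref{prop: exact joins-2}. Then $\cl U$ is already an open upset, whence $\upset \cl U = \cl U$; and since $f$ is continuous and order-preserving, $f^{-1}(U) \in \opup(X_E)$, so likewise $\upset \cl f^{-1}(U) = \cl f^{-1}(U)$. Substituting these into the equivalence of part (1), and recalling that a frame homomorphism is precisely a bounded lattice morphism preserving arbitrary joins, yields (2).

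I expect the main obstacle to be the two bookkeeping points rather than any deep topology: first, carefully tracking the two distinct copies $\sigma_D,\sigma_E$ through the naturality square so that the join formula is applied in the correct space; and second, justifying the surjectivity of $S \mapsto \bigcup \sigma_D[S]$ onto $\opup(X_D)$, which is what upgrades the per-$S$ equivalence to a statement quantified over all open upsets $U$. All genuine content is supplied by \cref{prop: joins-1} and \cref{lem: pri-facts-unions}, with the \L-space property of \cref{prop: exact joins-2} doing the final collapse $\upset \cl = \cl$ in part (2).
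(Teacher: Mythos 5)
Your proof is correct and follows essentially the same route as the paper: both reduce the statement to the join formula of \cref{prop: joins-1} via the isomorphisms $\sigma_D,\sigma_E$ (you spell out the naturality square $f^{-1}(\sigma_D(a))=\sigma_E(h(a))$, which the paper leaves implicit), and both use \cref{lem: pri-facts-unions} to pass from subsets $S\subseteq D$ to arbitrary open upsets. Your derivation of (2) from (1) by collapsing $\upset\cl=\cl$ via the \L-space property is a cosmetic variant of the paper's appeal to \cref{prop: exact joins-1} and is equally valid.
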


\begin{proof} 
    (1) It is sufficient to show that 
    $f^{-1}\colon \clopup(X_D) \to \clopup(X_E)$ 
    preserves arbitrary joins if and only if
        $f^{-1}(\upset \cl U) = \upset \cl f^{-1}(U)$ for each $U \in \opup(X_D)$. By \cref{prop: joins-1}, $f^{-1}$ preserves arbitrary joins  exactly when
    \[
        f^{-1}\!\left(\upset \cl \bigcup \sigma[S]\right)
          = \upset \cl \bigcup_{s \in S} f^{-1}(\sigma(s))
          = \upset \cl f^{-1}\!\Bigl(\bigcup \sigma[S]\Bigr)
    \]
    for each $S \subseteq D$. Since every open upset of $X_D$ is of the form $\bigcup \sigma[S]$ (see \cref{lem: pri-facts-unions}), the latter is equivalent to $f^{-1}(\upset \cl U) = \upset \cl f^{-1}(U)$
    for each $U \in \opup(X_D)$.
    
    (2) The proof is the same as that for (1), but uses \cref{prop: exact joins-1} instead of \cref{prop: joins-1}, because of which the occurrences of $\upset$ become redundant.
\end{proof}

This motivates the following definition:

\begin{definition}[{see, e.g., \cite[Def.~3.1]{BM25}}]
An \emph{\L-morphism} $f : X \to Y$ is a Priestley morphism between \L-spaces such that $f^{-1}(\cl U) = \cl f^{-1}(U)$ for each $U \in\opup(Y)$.
\end{definition}

\begin{remark}
    In \cite{PS88}, \L-morphisms are called ``f-maps,'' and in \cite{PS00} they are called ``LP-maps'' (see \cref{rem: names} for the abbreviations). 
\end{remark}

It is easy to see that \L-spaces and \L-morphisms form a category, which we denote by \LPries. As is customary, we also let \Frm denote the category of frames and frame homomorphisms. By \cref{prop: exact joins-2,prop: morphisms-2}, the functors establishing Priestley duality restrict to the categories \Frm and \LPries to yield the following result (see \cite[Cor.~2.5]{PS88}):

\begin{theorem}[Pultr--Sichler--Wigner duality]
\label{thm: WPS duality}
    $\Frm$ is dually equivalent to $\LPries$.
\end{theorem}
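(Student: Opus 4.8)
The plan is to show that the dual equivalence between $\DLat$ and $\Pries$ supplied by Priestley duality restricts to the subcategories $\Frm$ and $\LPries$. Recall that this duality is implemented by the functor sending a bounded distributive lattice $D$ to its Priestley space $X_D$ and a $\DLat$-morphism $h$ to $h^{-1}$, together with the functor sending a Priestley space $X$ to $\clopup(X)$ and a Priestley morphism $f$ to $f^{-1}$; the equivalence is witnessed by the natural isomorphism $\sigma_D \colon D \to \clopup(X_D)$ and the evaluation isomorphism $\varepsilon_X \colon X \to X_{\clopup(X)}$ sending $x$ to $\{U \in \clopup(X) \mid x \in U\}$.

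First I would check that the two functors carry these subcategories into one another on objects. If $D$ is a frame, then $X_D$ is an \L-space by the Corollary to \cref{prop: exact joins-2}. Conversely, if $X$ is an \L-space, then $\varepsilon_X$ is a Priestley isomorphism $X \cong X_{\clopup(X)}$, so $X_{\clopup(X)}$ is an \L-space, and reading the same Corollary backwards shows that $\clopup(X)$ is a frame. Thus $X_{(-)}$ restricts to a map $\Frm \to \LPries$ and $\clopup$ restricts to a map $\LPries \to \Frm$ at the level of objects.

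Next I would verify the correspondence on morphisms, which carries the real content, since $\Frm$ and $\LPries$ are \emph{not} full subcategories of $\DLat$ and $\Pries$. Let $D$ and $E$ be frames with dual \L-spaces $X_D$ and $X_E$. By \cref{prop: morphisms-2}, a $\DLat$-morphism $h \colon D \to E$ preserves arbitrary joins, i.e.\ is a frame homomorphism, exactly when its dual Priestley morphism $f = h^{-1} \colon X_E \to X_D$ satisfies $f^{-1}(\cl U) = \cl f^{-1}(U)$ for each $U \in \opup(X_D)$, and this is precisely the condition making $f$ an \L-morphism. Hence the bijection between $\DLat$-morphisms $D \to E$ and Priestley morphisms $X_E \to X_D$ furnished by Priestley duality restricts to a bijection between frame homomorphisms and \L-morphisms. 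This shows both restricted assignments are well defined on morphisms, with preservation of identities and composition inherited from the ambient duality.

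Finally, since $\sigma_D$ is natural in $D$ and $\varepsilon_X$ is natural in $X$, these natural isomorphisms restrict to the subcategories and witness that the restricted functors constitute a dual equivalence $\Frm \simeq \LPries$. The only point to watch is the non-fullness just noted: the equivalence does not follow formally from the object-level correspondence alone, and it is \cref{prop: morphisms-2} that guarantees the morphism classes match up exactly. Beyond assembling these pieces I do not anticipate a genuine obstacle.
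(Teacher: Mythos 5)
Your proposal is correct and follows essentially the same route as the paper, which obtains the duality by restricting the Priestley functors using \cref{prop: exact joins-2} on objects and \cref{prop: morphisms-2} on morphisms; your explicit attention to the non-fullness of $\Frm$ in $\DLat$ and of $\LPries$ in $\Pries$ is exactly the point those two results are there to handle.
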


\begin{remark} \label{rem: Wigner}
    In \cite{BM23,BM25}, we referred to the above  result as Pultr--Sichler duality. However, recently we came across Wigner's earlier work \cite{Wig79}. In view of this, it is more prudent to refer to this result as Pultr--Sichler--Wigner duality. 
\end{remark}

\section{Spatiality} \label{sec: spatiality}

Spatiality is one of the central notions in pointfree topology.
A frame $L$ is \emph{spatial} if it is isomorphic to the opens of a topological space. Equivalently, $L$ is spatial provided it has enough completely prime filters to separate its elements (see, e.g., \cite[p.~18]{PP12}). 
Spatial frames are precisely the fixed points of the 
dual adjunction between \Frm and the category \Top of topological spaces and continuous maps. This adjunction sends a topological space $X$ to its frame $\Omega(X)$ of opens and a continuous map $f : X \to Y$ to 
the frame homomorphism $f^{-1} : \Omega(Y) \to \Omega(X)$. Conversely, a frame $L$ is sent to its space $\pt(L)$ of \emph{points} (that is,  completely prime filters) and a frame homomorphism $h : L \to M$ to the continuus map $h^{-1} : \pt(M) \to \pt(L)$. 

Let $\SFrm$ be the full subcategory of \Frm consisting of spatial frames and let \Sob be the full subcategory of \Top consisting of sober spaces, where we recall that a topological space is  \emph{sober} if every closed irreducible set is the closure of a unique point. 

\begin{theorem}[{\cite[Thm.~8]{DP66}}] \label{thm: DP66}
    The functors $\Omega : \Top \to \Frm$ and $\pt : \Frm \to \Top$ are dually adjoint and restrict to a dual equivalence between $\Sob$ and $\SFrm$.
\end{theorem}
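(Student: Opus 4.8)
The plan is to realize the dual adjunction concretely through its unit and counit, and then to identify the fixed objects on each side. First I would record that $\Omega$ and $\pt$ are contravariant functors: for continuous $f\colon X\to Y$ the preimage map $f^{-1}\colon\Omega(Y)\to\Omega(X)$ preserves finite meets and arbitrary joins, hence is a frame homomorphism; and for a frame homomorphism $h\colon L\to M$ the assignment $p\mapsto h^{-1}(p)$ is a well-defined map $\pt(M)\to\pt(L)$, since the preimage of a completely prime filter under a join-preserving lattice morphism is again completely prime, and it is continuous because $h^{-1}$ pulls the basic open $\{q\in\pt(L)\mid a\in q\}$ back to $\{p\in\pt(M)\mid h(a)\in p\}$.

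Next I would exhibit the two transformations. For a space $X$, set $\eta_X\colon X\to\pt(\Omega(X))$ by $\eta_X(x)=\{U\in\Omega(X)\mid x\in U\}$, the completely prime filter of open neighbourhoods of $x$; one checks $\eta_X$ is continuous, indeed the preimage under $\eta_X$ of the basic open $\{F\mid U\in F\}$ is exactly $U$. For a frame $L$, set $\epsilon_L\colon L\to\Omega(\pt(L))$ by $\epsilon_L(a)=\{p\in\pt(L)\mid a\in p\}$, and check that it is a frame homomorphism. The adjunction is then the natural bijection $\Frm(L,\Omega(X))\cong\Top(X,\pt(L))$ sending $h$ to $x\mapsto h^{-1}(\eta_X(x))$, with inverse sending $g$ to $a\mapsto g^{-1}(\epsilon_L(a))$; verifying that these are mutually inverse and natural is a routine check, equivalent to the two triangle identities for $\eta$ and $\epsilon$.

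It remains to determine the fixed points. On the frame side this is almost immediate: the opens of $\pt(L)$ are by definition exactly the sets $\epsilon_L(a)$, so $\epsilon_L$ is always onto, and hence an isomorphism precisely when it is injective, that is, precisely when the points of $L$ separate its elements. This is exactly spatiality, so the frames fixed by the adjunction are the spatial ones. On the space side, $\eta_X$ is injective iff $X$ is $T_0$, and since $\eta_X(U)=\{F\mid U\in F\}\cap\eta_X(X)$ it is open onto its image; thus $\eta_X$ is a homeomorphism as soon as it is a bijection. To handle surjectivity I would set up the bijection between completely prime filters of $\Omega(X)$ and irreducible closed subsets of $X$: a completely prime filter $F$ yields the closed set $X\setminus\bigcup\{U\mid U\notin F\}$, which is irreducible, and conversely an irreducible closed $C$ yields $\{U\mid U\cap C\ne\varnothing\}$; under this correspondence $\eta_X(x)$ matches $\cl\{x\}$. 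Hence $\eta_X$ is a bijection exactly when every irreducible closed set is the closure of a unique point, that is, exactly when $X$ is sober.

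The main obstacle is precisely this last correspondence: verifying that the largest open missing from a completely prime filter has irreducible closed complement, that the assignment $F\mapsto X\setminus\bigcup\{U\mid U\notin F\}$ is a bijection, and that it intertwines $\eta_X$ with the map $x\mapsto\cl\{x\}$, so that sobriety becomes equivalent to $\eta_X$ being a homeomorphism. Once the two classes of fixed points are identified as $\SFrm$ and $\Sob$, the restriction of the dual adjunction to a dual equivalence between them is formal.
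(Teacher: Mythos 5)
Your proof is correct: it is the classical direct construction of the dual adjunction via the unit $\eta_X(x)=\{U\in\Omega(X)\mid x\in U\}$ and counit $\epsilon_L(a)=\{p\in\pt(L)\mid a\in p\}$, with the fixed points identified through surjectivity of $\epsilon_L$ (so spatiality is exactly injectivity) and the bijection between completely prime filters of $\Omega(X)$ and irreducible closed subsets of $X$ (so sobriety is exactly bijectivity of $\eta_X$). However, this is genuinely not the route the paper takes. The paper does not prove \cref{thm: DP66} directly at all: it derives the theorem as a corollary of Pultr--Sichler--Wigner duality between $\Frm$ and $\LPries$, combined with the adjunction $\Loc\dashv\Pri$ between $\LPries$ and $\Top$ and its restriction to an equivalence $\SLPries\simeq\Sob$ (\cref{thm: spatial duality,thm: adjunction in priestley}); the space of points $\pt(L)$ appears there as the localic part $\loc X$ of the Priestley dual, carrying the open-upset topology. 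Your approach buys self-containedness and economy of hypotheses --- in particular it does not route through Priestley duality and so does not import PIT, whereas the Priestley factorization is choicy by design (\cref{rem:PIT,rem: AC}). The paper's approach buys uniformity: once the dictionary between frames and \L-spaces is set up, the same localic-part machinery simultaneously yields this adjunction, Isbell duality, Hofmann--Lawson duality, and the coherent and Stone cases, which is the point of the survey. Both are valid; yours is the standard proof of the cited result, the paper's is a re-derivation illustrating its method.
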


In this section we describe the Priestley spaces corresponding to spatial frames, which then will be used to recover the dual adjunction and equivalence of \cref{thm: DP66}.

\begin{lemma}[{\cite[Lem.~5.1]{BGJ16}}; see also {\cite[Prop.~2.9]{PS00}}]
    Let $L$ be a frame and $X$ its \L-space. Then  $x \in X$ is a completely prime filter if and only if $\downset x$ is open.
    \label{lem: localic part}
\end{lemma}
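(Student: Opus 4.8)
The plan is to convert complete primeness of $x$ into a topological separation condition and then match that condition with openness of $\downset x$. Since every point of $X$ is a prime filter, $x$ is completely prime precisely when it respects arbitrary joins: $\bigvee S \in x$ implies $s \in x$ for some $s \in S$. I would rewrite both sides via $\sigma$. On the one hand, $\bigvee S \in x$ means $x \in \sigma(\bigvee S)$, and since $L$ is a frame, \cref{prop: exact joins-1} gives $\sigma(\bigvee S) = \cl \bigcup \sigma[S]$; on the other hand, $s \in x$ for some $s \in S$ means $x \in \bigcup \sigma[S]$. As every open upset of $X$ is of the form $\bigcup \sigma[S]$ for some $S \subseteq L$ (\cref{lem: pri-facts-unions}, together with $\sigma$ being an isomorphism onto $\clopup(X)$), this reformulates complete primeness as the single condition $(\ast)$: for every $U \in \opup(X)$, if $x \in \cl U$ then $x \in U$. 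The whole proof then reduces to showing that $(\ast)$ holds if and only if $\downset x$ is open.

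For the implication that $\downset x$ open forces $(\ast)$, I would argue directly: if $\downset x$ is open it is an open neighborhood of $x$, so $x \in \cl U$ forces $\downset x \cap U \neq \varnothing$; picking $y \in \downset x \cap U$ gives $y \le x$ with $y \in U$, and since $U$ is an upset we conclude $x \in U$. This direction is routine and uses nothing beyond the definitions.

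The substantive direction is the converse, and here the key preliminary observation is that $\downset x$ is always closed: $\{x\}$ is closed since $X$ is Hausdorff (\cref{lem: pri-facts-stone}), and the downset of a closed set is closed (\cref{lem: pri-facts-upclosed}). Hence $U := X \setminus \downset x$ is an open upset, and because $x \in \downset x$ we have $x \notin U$; applying the contrapositive of $(\ast)$ yields $x \notin \cl U$, i.e.\ $x$ lies in the interior of $\downset x$. The main obstacle is that interiority of the single point $x$ does not by itself make $\downset x$ open, and this is exactly where I expect the \L-space hypothesis to be indispensable. The idea is that, since $X$ is an \L-space, $\cl U$ is again an open upset (this is the defining property $\cl U \in \opup(X)$), hence a clopen upset; therefore its complement $G := X \setminus \cl U$ is a clopen downset. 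One checks $G \subseteq \downset x$ (as $U \subseteq \cl U$) and $x \in G$ (shown above), and since $G$ is a downset containing $x$ it must contain all of $\downset x$. Thus $G = \downset x$, which exhibits $\downset x$ as a clopen, in particular open, set. I would stress that without the \L-space property the closure of the upset $U$ need not be an upset, so $G$ would fail to be a downset and the argument would collapse; this is the crux of the whole lemma.
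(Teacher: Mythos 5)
Your proof is correct and follows essentially the same route as the paper: both directions translate complete primeness through $\sigma$ and \cref{prop: exact joins-1} into the condition that $x \in \cl U$ forces $x \in U$ for open upsets $U$, and the substantive direction applies this to $U = X \setminus \downset x$, using the \L-space property that $\cl U$ is an upset to upgrade $x \notin \cl U$ to $\cl U \cap \downset x = \varnothing$ and hence to openness of $\downset x$. Your packaging via the intermediate condition $(\ast)$ is only a cosmetic reorganization of the paper's argument.
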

\begin{proof}
    ($\Rightarrow$) Suppose $x \in X$ is a completely prime filter. By \cref{lem: pri-facts-upclosed}, $\downset x$ is a closed downset, so $U := (\downset x)^c$ is an open upset. Let $S = \{a \in L \mid \sigma(a) \subseteq U\}$. By \cref{lem: pri-facts-unions}, $U = \bigcup \sigma[S]$. Since $L$ is a frame, every join in $L$ is exact. Therefore, by \cref{prop: exact joins-1}, $\cl U = \sigma(\bigvee S)$. Because $x \notin \sigma(s)$ for all $s \in S$ and $x$ is completely prime, $x \notin \sigma(\bigvee S) = \cl U$. Thus, $\cl U \cap \downset x = \varnothing$, so $\cl U = U$, and hence $\downset x$ is open.

    ($\Leftarrow$) Suppose $\downset x$ is open. If $\bigvee S \in x$, then $x \in \sigma(\bigvee S) = \cl \bigcup \sigma[S]$ by \cref{prop: exact joins-1}. Therefore, $\downset x \cap \sigma(a) \neq \varnothing$ for some $a \in S$, and since $\sigma(a)$ is an upset, $a \in x$. Thus, $x$ is a completely prime filter.
\end{proof}

The key definition of this section is that of the localic part of an \L-space.
\begin{definition}[{see, e.g., \cite[p.~231]{PS00}}]
    Let $X$ be an \L-space.
    \begin{enumerate}
        \item Call $x \in X$ a \emph{localic point} if $\downset x$ is clopen.
    \item The set $\loc X$ of all localic points of $X$ constitutes the \emph{localic part} of $X$.
    \end{enumerate}
\end{definition}

By \cref{lem: localic part}, spatiality of a frame translates into the requirement that its dual \L-space has enough localic points.
The latter has a very natural description: 

\begin{theorem}[{\cite[pp.~231--232]{PS00}; see also \cite[Thm.~5.5]{AB+20}}]\label{thm: spatial}
    Let $L$ be a frame and $X$ its \L-space.
    Then $L$ is spatial if and only if $\loc X$ is dense in $X$. 
\end{theorem}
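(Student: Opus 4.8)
The plan is to transport both sides of the equivalence to a single statement about the basic clopen sets $\sigma(a)\cap\sigma(b)^c$, $a,b\in L$, which form a basis of $X$ (see \cref{rem: Priestley functors}). The first move is to identify $\loc X$ with the set of completely prime filters of $L$. Since $\{x\}$ is closed, \cref{lem: pri-facts-upclosed} gives that $\downset x$ is a closed downset in any Priestley space, so $\downset x$ is clopen iff it is open; by \cref{lem: localic part} this happens iff $x$ is a completely prime filter. Hence $\loc X$ is exactly the set of points of $L$ sitting inside $X$.

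Next I would record the two translations. On the topological side, density of $\loc X$ means that every nonempty open set meets $\loc X$, and since the $\sigma(a)\cap\sigma(b)^c$ form a basis this is equivalent to: every nonempty $\sigma(a)\cap\sigma(b)^c$ contains a localic point. On the lattice side, since $\sigma$ is an isomorphism onto $\clopup(X)$ (see \cref{rem:PIT}), we have $\sigma(a)\cap\sigma(b)^c\ne\varnothing$ iff $\sigma(a)\not\subseteq\sigma(b)$ iff $a\not\le b$; and a localic point $p$ lies in $\sigma(a)\cap\sigma(b)^c$ iff $a\in p$ and $b\notin p$.

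Combining these, density of $\loc X$ says precisely that for all $a\not\le b$ there is a completely prime filter $p$ with $a\in p$ and $b\notin p$, which is exactly the statement that the completely prime filters separate the elements of $L$, i.e.\ spatiality in the form recalled at the start of the section. For the forward direction I would spell this out by taking a nonempty open $W$, shrinking it to a nonempty basic clopen $\sigma(a)\cap\sigma(b)^c\subseteq W$, and applying separation to the pair $a\not\le b$ to produce a localic point inside $W$; the converse direction applies density directly to the clopen $\sigma(a)\cap\sigma(b)^c$ attached to a witness $a\not\le b$.

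I do not expect a genuine obstacle here: the content is essentially a dictionary translation, and the only point that requires care is the identification $\loc X = \{\text{completely prime filters}\}$, which hinges on the fact that in a Priestley space $\downset x$ is automatically closed, so that ``clopen'' in the definition of a localic point collapses to the ``open'' used in \cref{lem: localic part}. Everything else is a routine passage through $\sigma$ and the chosen basis.
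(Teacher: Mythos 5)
Your proposal is correct and follows essentially the same route as the paper: both directions reduce to the basis of sets $\sigma(a)\cap\sigma(b)^c$, use the isomorphism $\sigma$ to translate $a\nleq b$ into nonemptiness of such a basic clopen, and invoke \cref{lem: localic part} to identify $\loc X$ with the completely prime filters. Your explicit remark that $\downset x$ is automatically closed (so ``clopen'' collapses to ``open'') is exactly the point the paper leaves implicit when passing between the definition of a localic point and \cref{lem: localic part}.
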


\begin{proof}
($\Rightarrow$) Let $L$ be spatial and $U \subseteq X$ a nonempty open. By the definition of the topology on $X$ (see \cref{rem: Priestley functors}), there are $a,b \in L$ with $\varnothing\ne\sigma(a) \cap \sigma(b)^c \subseteq U$, so
$a \nleq b$. Since $L$ is spatial, there is $y \in \loc X$ with $a \in y$ and $b \notin y$. Thus, $y \in U$, and hence $\loc X$ is dense in $X$.  

($\Leftarrow$) If $a \nleq b$, then $\sigma(a) \cap \sigma(b)^c \neq \varnothing$. Since $\loc X$ is dense in $X$, there is ${y \in \loc X \cap \sigma(a) \cap \sigma(b)^c}$. Thus, $a \in y$ but $b \notin y$, and hence $L$ is spatial.
\end{proof}

This motivates the following definition.

\begin{definition}[{see, e.g., \cite[p.~232]{PS00}}]
    \leavevmode
    \begin{enumerate}
        \item An \L-space $X$ is \emph{\L-spatial} or a \emph{spatial \L-space} if $\loc X$ is dense in $X$.
        \item Let $\SLPries$ be the full subcategory of $\LPries$ consisting of spatial \L-spaces.
    \end{enumerate}
\end{definition}

Using \cref{thm: spatial}, we can restrict Pultr--Sichler--Wigner duality to spatial frames: 

\begin{corollary}[{\cite[Cor.~4.10]{BM23}}] \label{thm: spatial duality}
    \SFrm is dually equivalent to \SLPries.
\end{corollary}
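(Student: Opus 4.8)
The plan is to obtain this as a restriction of Pultr--Sichler--Wigner duality (\cref{thm: WPS duality}) to full subcategories, using the general categorical fact that a dual equivalence restricts to a dual equivalence between any pair of full subcategories that correspond to one another under the equivalence on objects. Since $\SFrm$ and $\SLPries$ are by definition full subcategories of $\Frm$ and $\LPries$, all I need to check is that the contravariant functors underlying \cref{thm: WPS duality} carry spatial frames to spatial \L-spaces and back; fullness then takes care of morphisms, and the natural isomorphisms of the ambient duality restrict componentwise.

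For the object correspondence I would invoke \cref{thm: spatial}, whose biconditional form covers both directions simultaneously. Going one way: if $L$ is a spatial frame, then \cref{thm: spatial} says its dual \L-space $X_L$ has $\loc X_L$ dense, so $X_L \in \SLPries$. Going the other way: given $X \in \SLPries$, I would set $L := \clopup(X)$, which is a frame because $X$ is an \L-space (by \cref{prop: exact joins-2}), and note that $X$ is identified with the \L-space of $L$ under the duality; applying \cref{thm: spatial} to this $L$ and $X$, the density of $\loc X$ forces $L$ to be spatial, so $\clopup(X) \in \SFrm$.

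The argument is essentially formal, so I do not expect a genuine obstacle; the only point requiring a little care is the converse direction, where one must first recognize $X$ as the \L-space of a frame before \cref{thm: spatial} (stated for ``a frame $L$ and its \L-space $X$'') becomes applicable --- this is exactly what the duality of \cref{thm: WPS duality} supplies. With the object correspondence in place, the restricted functors together with the restricted units and counits deliver the dual equivalence between $\SFrm$ and $\SLPries$.
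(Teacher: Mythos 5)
Your proposal is correct and matches the paper's own treatment: the paper likewise obtains this corollary by restricting Pultr--Sichler--Wigner duality (\cref{thm: WPS duality}) to the full subcategories $\SFrm$ and $\SLPries$, with \cref{thm: spatial} supplying the object-level correspondence in both directions. The extra care you take in the converse direction (identifying $X$ with the \L-space of $\clopup(X)$ before applying \cref{thm: spatial}) is exactly the implicit step the paper leaves to the reader.
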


Next, we describe how to derive the 
adjunction of \cref{thm: DP66} from the perspective of \L-spaces. For this we need to define an adjunction between \LPries and \Top.

\begin{definition}[{see, e.g., \cite[Def.~4.3]{BM23}}]
    Let $X$ be an \L-space. 
    We equip its localic part $\loc X$ with the topology 
    \[
        \{U \cap \loc X \mid U \in\opup(X)\}.
    \]
\end{definition}

\begin{remark} 
\leavevmode
\begin{enumerate}[cref=remark]
    \item\label{rem: loc topology}
    The opens of $\loc X$ can alternatively be described as the intersections of clopen upsets of $X$ with $\loc X$.
    Indeed, ${U \cap \loc X = \cl U \cap \loc X}$ for each $U \in\opup(X)$ (see, e.g., \cite[Lem.~4.4]{BM23}). 
    
    \item\label{rem: sober} If $X$ is the \L-space of a frame $L$, then $\loc X = \pt(L)$ by \cref{lem: localic part}. Moreover, the opens of $\pt(L)$ are of the form $\sigma(a)\cap\loc X$ for $a\in L$ (see, e.g., \cite[p.~15]{PP12}). Thus, $\loc X$ is homeomorphic to $\pt(L)$ (see \cite[Prop.~5.4]{AB+20}). Since the latter is a sober space, we conclude that so is $\loc X$. This will be used in later sections.  
\end{enumerate}
\end{remark}

\begin{lemma}[{\cite[Lem.~4.12]{BM23}}]
    If $f : X \to Y$ is an \L-morphism, then the restriction $f : \loc X \to \loc Y$ is a well-defined continuous map.
\end{lemma}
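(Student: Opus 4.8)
The plan is to separate the two assertions implicit in the statement: first, that $f$ restricts to a map $\loc X \to \loc Y$ (i.e.\ that it carries localic points to localic points), and second, that this map is continuous. The second is routine, so I would concentrate the effort on the first, which is exactly where the defining identity of an \L-morphism, $f^{-1}(\cl U) = \cl f^{-1}(U)$, is needed and where a bare Priestley morphism would not suffice.

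For well-definedness I would view $X$ and $Y$ as the \L-spaces of the frames $\clopup(X)$ and $\clopup(Y)$ and use \cref{lem: localic part} to read ``localic point'' as ``completely prime filter.'' The one geometric fact I would isolate at the outset is: \emph{if $x$ is a localic point and $V \in \opup(X)$ with $x \in \cl V$, then $x \in V$}; indeed $\downset x$ is then an open neighbourhood of $x$, hence meets $V$, and since $V$ is an upset this forces $x \in V$. Now fix $x \in \loc X$ and a family $\mathcal W \subseteq \clopup(Y)$ with $f(x) \in \bigvee \mathcal W$. Because $\clopup(Y)$ is a frame, \cref{prop: exact joins-1} gives $\bigvee \mathcal W = \cl U$ with $U := \bigcup \mathcal W \in \opup(Y)$, so $f(x) \in \cl U$, that is, $x \in f^{-1}(\cl U)$. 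Applying the \L-morphism identity yields $x \in f^{-1}(\cl U) = \cl f^{-1}(U)$, and $f^{-1}(U) \in \opup(X)$ since $f$ is continuous and order-preserving. The isolated fact then gives $x \in f^{-1}(U)$, whence $f(x) \in U = \bigcup \mathcal W$ and so $f(x) \in W$ for some $W \in \mathcal W$. Thus $f(x)$ is completely prime, i.e.\ $f(x) \in \loc Y$ by \cref{lem: localic part}.

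Continuity is then immediate: writing $g$ for the restriction, for each $W \in \opup(Y)$ one has $g^{-1}(W \cap \loc Y) = f^{-1}(W) \cap \loc X$, and $f^{-1}(W)$ is an open upset of $X$, so this set is open in the topology on $\loc X$ by definition. The only real obstacle is the closure-transfer step in the previous paragraph: a general Priestley morphism gives only $\cl f^{-1}(U) \subseteq f^{-1}(\cl U)$, which is too weak to conclude, and it is precisely the \L-morphism condition that upgrades this to an equality and lets the argument go through. (Alternatively, one could invoke \cref{prop: morphisms-2} to see that $f^{-1} \colon \clopup(Y) \to \clopup(X)$ is a frame homomorphism and appeal to the fact that preimages of completely prime filters under frame homomorphisms are again completely prime; but the direct argument above keeps everything on the Priestley side.)
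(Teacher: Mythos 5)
Your proposal is correct. Note that the paper states this lemma without proof, citing \cite[Lem.~4.12]{BM23}, so there is no in-text argument to compare against; but your argument is sound and self-contained. The key step---that for a localic point $x$ and $V \in \opup(X)$, membership in $\cl V$ already forces membership in $V$ because the open set $\downset x$ must meet the upset $V$---is exactly the right way to exploit the \L-morphism identity $f^{-1}(\cl U) = \cl f^{-1}(U)$, and your reduction of complete primeness of $f(x)$ to this fact via \cref{prop: exact joins-1} is valid (the join $\bigvee \mathcal W$ exists and equals $\cl \bigcup \mathcal W$ precisely because $Y$ is an \L-space). The continuity computation $g^{-1}(W \cap \loc Y) = f^{-1}(W) \cap \loc X$ is likewise correct. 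Your parenthetical alternative---pass to the frame homomorphism $f^{-1} \colon \clopup(Y) \to \clopup(X)$ via \cref{prop: morphisms-2} and use that preimages of completely prime filters under frame homomorphisms are completely prime---is the more algebraic route and is essentially a one-liner given \cref{lem: localic part}; your direct argument buys a purely order-topological proof at the cost of a few more lines, which fits the paper's stated aim of keeping the reasoning on the Priestley side.
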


Thus, we obtain a functor $\Loc \colon \LPries \to \Top$ by sending 
an \L-space $X$ to $\loc X$ and an \L-morphism 
$f \colon X \to Y$ to its restriction 
$\loc f \colon \loc X \to \loc Y$.
    
Conversely, the functor $\Pri \colon \Top \to \LPries$ sends a topological space $Y$ to the Priestley space of its frame of opens $X_{\Omega(Y)}$ and a continuous map $f : Y \to Z$ to the \L-morphism $\widehat f \colon X_{\Omega(Y)} \to X_{\Omega(Z)}$ given by $\widehat f(y) = \{ U \in \Omega(Z) \mid f^{-1}(U) \in y\}$. 
If $y$ is a prime filter of $\Omega(Y)$, then $\widehat f(y)$ is a prime filter of $\Omega(Z)$, so $\widehat f$ is well defined.

\begin{theorem}[{\cite[Cor.~4.19]{BM23}}] \label{thm: adjunction in priestley}
    The functors $\Loc$ and $\Pri$ are adjoint and restrict to an equivalence between \SLPries and \Sob. 
\end{theorem}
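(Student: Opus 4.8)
The plan is to prove the adjunction $\Pri \dashv \Loc$ together with its restriction by \emph{factoring} both functors through Pultr--Sichler--Wigner duality, thereby reducing everything to the dual adjunction of \cref{thm: DP66}. The starting observation is that $\Pri$ is, by construction, the composite of $\Omega \colon \Top \to \Frm$ with the functor $D \mapsto X_D$ from $\Frm$ to $\LPries$, while \cref{rem: sober} identifies $\loc X$ with $\pt(\clopup X)$ naturally in $X$, so that $\Loc$ is naturally isomorphic to the composite of $\clopup \colon \LPries \to \Frm$ with $\pt \colon \Frm \to \Top$. Thus both functors traverse the dual equivalence $\Frm \simeq \LPries$ of \cref{thm: WPS duality} on one side and the dual adjunction $\Omega \dashv \pt$ of \cref{thm: DP66} on the other, and the adjunction $\Pri \dashv \Loc$ should be the composite of these two.

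Concretely, I would establish the hom-set bijection
\[
    \LPries(\Pri Y, X) = \LPries(X_{\Omega(Y)}, X) \cong \Frm(\clopup X, \Omega(Y)) \cong \Top(Y, \pt(\clopup X)) = \Top(Y, \Loc X),
\]
natural in $Y \in \Top$ and $X \in \LPries$. The first isomorphism is the action on morphisms of the dual equivalence $\clopup$ of \cref{thm: WPS duality}, using $\clopup(X_{\Omega(Y)}) \cong \Omega(Y)$ via $\sigma$; the second is the defining adjunction isomorphism of \cref{thm: DP66} applied to the frame $\clopup X$. Naturality in each variable is inherited from that of the two constituent correspondences, and a composite of natural isomorphisms is again natural, so this yields $\Pri \dashv \Loc$.

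For the restriction to an equivalence, I would identify the unit and counit and compute their fixed points. The unit at $Y$ is the canonical map $Y \to \Loc(\Pri Y) = \pt(\clopup(X_{\Omega(Y)})) \cong \pt(\Omega(Y))$, i.e.\ the sobrification map, which is a homeomorphism exactly when $Y$ is sober, by definition of soberness. Dually, the counit $\epsilon_X \colon \Pri(\Loc X) \to X$ corresponds under the dual equivalence of \cref{thm: WPS duality} to the spatialization map $\clopup X \to \Omega(\pt(\clopup X))$ of \cref{thm: DP66}, which is an isomorphism precisely when the frame $\clopup X$ is spatial. By \cref{lem: localic part,thm: spatial} (equivalently, \cref{thm: spatial duality}), this happens exactly when $\loc X$ is dense in $X$, that is, when $X \in \SLPries$. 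Since an adjunction restricts to an equivalence between its subcategories of fixed objects, we obtain the desired equivalence between $\Sob$ and $\SLPries$.

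The main obstacle I expect is the careful identification of the counit: one must check that transporting $\epsilon_X$ across $\clopup$ really reproduces the \cref{thm: DP66} spatialization map of $\clopup X$, and that its being an isomorphism is detected by density of $\loc X$ rather than by some weaker condition. This is precisely where \cref{rem: sober}, which realizes $\loc X$ as $\pt(\clopup X)$ as a space, and the density characterization of spatiality in \cref{thm: spatial} do the essential work; the remaining verifications --- the naturality squares and the triangle identities --- are routine once the two constituent correspondences are aligned.
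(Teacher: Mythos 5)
Your argument is mathematically sound as a standalone proof: factoring $\Pri$ as the Priestley functor composed with $\Omega$, identifying $\Loc$ with $\pt\circ\clopup$ via \cref{rem: sober}, and composing the hom-set bijections of \cref{thm: WPS duality} and \cref{thm: DP66} does yield $\Pri\dashv\Loc$, and your fixed-point analysis of the unit and counit correctly isolates $\Sob$ and $\SLPries$ (the latter through \cref{thm: spatial}). Note that the survey itself gives no proof here --- it cites \cite{BM23} --- so the comparison is with that source and with the paper's overall architecture, and there your route diverges in a way that matters. At the end of \cref{sec: spatiality} the paper explicitly claims that \cref{thm: DP66} is \emph{obtained} from Pultr--Sichler--Wigner duality together with \cref{thm: spatial duality} and \cref{thm: adjunction in priestley}. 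Your proof consumes \cref{thm: DP66} --- both the dual adjunction $\Frm(L,\Omega(Y))\cong\Top(Y,\pt(L))$ and the identification of its fixed objects --- as an input, so inside this paper the derivation would be circular. The intended proof instead builds the unit $Y\to\loc(X_{\Omega(Y)})$ (sending a point to its completely prime neighbourhood filter, which is a localic point) and the counit $X_{\Omega(\loc X)}\to X$ (the Priestley dual of the frame homomorphism $U\mapsto U\cap\loc X$) directly in the Priestley setting and checks the triangle identities there, so that the classical adjunction genuinely falls out as a corollary. What your approach buys is brevity and a conceptual explanation of why the adjunction exists (it is the transport of $\Omega\dashv\pt$ across the duality); what it costs is exactly the independence from the classical result that the paper is at pains to preserve. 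To fit this paper, the appeal to \cref{thm: DP66} would have to be replaced by those direct verifications; everything else in your outline, including the standard fact that an adjunction restricts to an equivalence on the subcategories where the unit and counit are isomorphisms, can stay as is.
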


\begin{remark}
\label{rem: cornish}
As mentioned in the introduction, \Pries is isomorphic to the category \Spec of spectral spaces (see also the end of 
\cref{subsec: coherent}). Under this isomorphism,
the functor $\Pri : \Top \to \LPries$ 
corresponds to 
sending a topological space to the spectral space of prime filters of its frame of opens. As is pointed out in \cite[Prop.~4]{Ban81}, 
this yields a reflection of \Top into \Spec. This reflection is identified in \cite[Prop.~16]{Smy92} as
the largest stable compactification of a $T_0$-space. Thus, the functor $\Pri$ can be seen as the Priestley analogue 
of Banaschewski’s reflection and Smyth’s largest stable compactification.
\end{remark}

Pultr--Sichler--Wigner duality together with \cref{thm: spatial duality,thm: adjunction in priestley} 
yield \cref{thm: DP66}.
The situation is summarized in \cref{fig: Frm LPries Top}, where hooked arrows denote being  a full subcategory, squiggly double arrows denote dual equivalences, straight double arrows denote equivalences, and curved arrows denote adjunctions. The labels on the arrows refer to the theorems and corollaries in which these results are established.

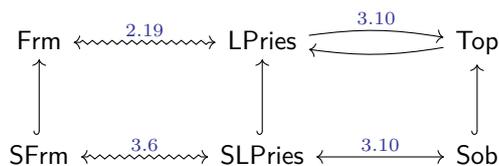
\begin{figure}[H]
\begin{center}
\begin{tikzcd}[column sep=huge, row sep=large]
\Frm \arrow[r, <->, dual, "\ref{thm: WPS duality}"] & \LPries \arrow[r, bend left=2ex, "\ref{thm: adjunction in priestley}"] & \Top \arrow[l, bend left=2ex] \\
\SFrm \arrow[u, hook] \arrow[r, <->, dual, "\ref{thm: spatial duality}"] & \SLPries \ar[r, <->, "\ref{thm: adjunction in priestley}"] \ar[u, hook] & \Sob \ar[u, hook]
\end{tikzcd}
\end{center}
\caption{Equivalences and adjunctions between frames, \L-spaces, and topological spaces, together with their spatial/sober restrictions.}
\label{fig: Frm LPries Top}
\end{figure}

\section{Sublocales and nuclei} \label{sec: nuclei}

Sublocales provide a generalization of subspaces and play a central role in pointfree topology 
(see, e.g., \cite[Ch.~III]{PP12}). We recall:
\begin{definition}
A subset $S$ of a frame $L$ is a \emph{sublocale} provided
\begin{enumerate}
    \item $S$ is closed under arbitrary meets;
    \item $a \to s \in L$ for each $a \in L$ and $s \in S$. 
\end{enumerate}
We denote by $\S(L)$ the set
of all sublocales of $L$.
\end{definition}

It is well known (see, e.g., \cite[p.~28]{PP12}) that $\S(L)$ ordered by inclusion is a coframe (i.e., the order dual of a frame). 
\begin{remark}
A long-standing open problem in pointfree topology, due to Isbell \cite{Isb72}, is to characterize which coframes arise this way.     
\end{remark}
 
Sublocales admit an equivalent description in terms of nuclei:
\begin{definition}[{see, e.g. \cite[p.~31]{PP12}}]
A \emph{nucleus} on a frame $L$ is a map $j \colon L \to L$ satisfying, for all $a,b \in L$,
\[
    a \leq j(a), \qquad j(j(a)) = j(a), \qquad j(a \wedge b) = j(a) \wedge j(b).
\]
We denote by $\N(L)$ the set
of all nuclei on $L$, ordered pointwise. 
\end{definition}

Given a nucleus $j \in \N(L)$, the set of its fixpoints $L_j := \{ a \in L \mid j(a) = a \}$ is a sublocale of $L$. Conversely, if $S \in \S(L)$, then $j_S(a) := \bigwedge \{ s \in S \mid a \leq s \}$
defines a nucleus on $L$ whose fixpoints are exactly $S$. 
This establishes a bijection between $\N(L)$ and $\S(L)$, and it is straightforward to verify that this bijection is a dual order-isomorphism. We thus arrive at the following well-known result:

\begin{theorem}[{see, e.g., \cite[pp.~31--32]{PP12}}] \label{thm: SL and NL}
    For a frame $L$, $\S(L)$ and $\N(L)$ are dually isomorphic.
\end{theorem}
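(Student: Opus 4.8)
The plan is to make precise the two assignments already displayed before the statement and verify they are mutually inverse and order-reversing. Concretely, I would define $\Phi \colon \N(L) \to \S(L)$ by $\Phi(j) = L_j$ and $\Psi \colon \S(L) \to \N(L)$ by $\Psi(S) = j_S$, where $L_j = \{a \in L \mid j(a) = a\}$ and $j_S(a) = \bigwedge\{s \in S \mid a \le s\}$, and then check that both are well defined, that $\Phi \circ \Psi$ and $\Psi \circ \Phi$ are identities, and that $\Phi$ reverses order.

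First I would check that these land in the right sets. That $L_j$ is a sublocale splits into two points. Closure under arbitrary meets follows because a nucleus is monotone (being meet-preserving): if $a_i \in L_j$, then $j(\bigwedge_i a_i) \le j(a_k) = a_k$ for every $k$, so $j(\bigwedge_i a_i) \le \bigwedge_i a_i$, and the reverse inequality is extensivity. For condition (2), given $a \in L$ and $s \in L_j$, I would show $a \to s \in L_j$ by verifying $a \wedge j(a \to s) \le s$: indeed $a \wedge j(a\to s) \le j(a) \wedge j(a \to s) = j(a \wedge (a \to s)) \le j(s) = s$, whence $j(a\to s) \le a \to s$, and equality follows from extensivity. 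Symmetrically, that $j_S$ is a nucleus is routine except for meet-preservation; here I would use that every sublocale contains $1 = \bigwedge \varnothing$ and is closed under meets, so $j_S(a) \in S$ is the least element of $S$ above $a$, making extensivity and idempotency immediate.

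The main obstacle is the multiplicativity $j_S(a \wedge b) = j_S(a) \wedge j_S(b)$. The inequality $\le$ is just monotonicity. For $\ge$, writing $c := j_S(a \wedge b) \in S$, I would argue as follows: from $a \wedge b \le c$ we get $b \le a \to c$, and $a \to c \in S$ by condition (2), so $j_S(b) \le a \to c$, i.e.\ $a \wedge j_S(b) \le c$; this rewrites as $a \le j_S(b) \to c$, and $j_S(b) \to c \in S$ again by condition (2), hence $j_S(a) \le j_S(b) \to c$, which is exactly $j_S(a) \wedge j_S(b) \le c$. Thus condition (2) of a sublocale — the Heyting-closure clause, available since every frame is a Heyting algebra — is precisely what powers meet-preservation, and this is the one genuinely nontrivial computation.

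It then remains to see that $\Phi$ and $\Psi$ are mutually inverse and reverse order. For the first: $L_{j_S} = S$, since $j_S$ fixes exactly the elements of $S$ (an element $a$ is fixed iff it is the least member of $S$ above itself iff it lies in $S$); and $j_{L_j} = j$, since $j(a)$ is the least fixpoint of $j$ above $a$ (using idempotency and monotonicity of $j$). For order-reversal I would verify directly that $j \le j'$ pointwise implies $L_{j'} \subseteq L_j$ (if $j'(a) = a$ then $a \le j(a) \le j'(a) = a$), and conversely that $L_{j'} \subseteq L_j$ implies $j \le j'$ (for any $a$, the element $j'(a) \in L_{j'} \subseteq L_j$ is fixed by $j$, so $j(a) \le j(j'(a)) = j'(a)$). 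Hence $\Phi$ is an order-reversing bijection whose inverse $\Psi$ is also order-reversing, which is exactly the asserted dual isomorphism.
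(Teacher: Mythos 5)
Your proposal is correct and follows exactly the route the paper takes: the paper sketches the same two assignments $j \mapsto L_j$ and $S \mapsto j_S$ in the paragraph preceding the theorem and declares the verification routine, while you supply the standard details (including the one genuinely nontrivial point, that the Heyting-closure clause of a sublocale yields multiplicativity of $j_S$). All the computations check out.
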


Consequently, $\N(L)$ is a frame, known as the {\em assembly} of $L$ \cite[p.~242]{Sim78}. 
Sublocales (and hence nuclei) of a frame $L$ correspond to certain closed subsets of the Priestley space of $L$. 
\begin{definition}
    Let $X$ be an \L-space. 
    \begin{enumerate}
        \item We call a closed set $N \subseteq X$ a \emph{nuclear subset} of $X$ provided $\downset (U \cap N)$ is clopen for each clopen set $U \subseteq X$.

        \item We denote by $\N(X)$ the nuclear subsets of $X$, ordered by inclusion.
    \end{enumerate}
\end{definition}

The above definition originates in \cite{BG07}, where these sets are called ``subframes.'' The term ``nuclear'' was introduced in \cite{AB+20} because of their link to nuclei (see \cref{thm: nuclear-subsets}). The next lemma provides an alternate description of nuclear sets, which goes back to Wigner \cite[Prop.~5]{Wig79} (see also \cite[\Sec 2.4]{PS00}). 

\begin{lemma} \label{lem: Lsets}
    Let $X$ be an \L-space. A closed subset $N$ of $X$ is a nuclear subset if and only if 
    $N \cap \cl U = \cl (N \cap U)$
    for each $U \in\opup(X)$.
\end{lemma}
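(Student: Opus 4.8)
The plan is to prove the two implications separately, after recording the one inclusion that comes for free: since $N\cap U\subseteq N\cap\cl U$ and $N\cap\cl U$ is closed (being the intersection of the closed set $N$ with the closed set $\cl U$), we always have $\cl(N\cap U)\subseteq N\cap\cl U$. Thus in both directions the real content is the reverse inclusion $N\cap\cl U\subseteq\cl(N\cap U)$, and throughout I would freely use that $X$ is a Stone space (\cref{lem: pri-facts-stone}), so clopen sets form a neighborhood basis, that the downset of a closed set is closed (\cref{lem: pri-facts-upclosed}), and that $\cl U$ is a \emph{clopen} upset for every $U\in\opup(X)$ by the defining property of an \L-space.

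For the implication that the closure condition implies nuclearity, I would start from a clopen set $V$ and set $D:=\downset(V\cap N)$ and $U:=D^c$. Then $D$ is closed by \cref{lem: pri-facts-upclosed}, so $U$ is an open upset, and hence $\cl U$ is a clopen upset. It suffices to show $\cl U=U$, since then $D=U^c$ is open, and being closed it is clopen. Suppose instead $z\in\cl U\setminus U$. Then $z\in D$, so there is $y\in V\cap N$ with $z\le y$; since $\cl U$ is an upset, $y\in N\cap\cl U$, whence $y\in\cl(N\cap U)$ by hypothesis. As $V$ is a clopen neighborhood of $y$, it must meet $N\cap U$, producing $w\in V\cap N\cap U$. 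But then $w\in\downset(V\cap N)=U^c$, contradicting $w\in U$. Hence $\cl U=U$ and $D$ is clopen.

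For the converse I would argue by contraposition, and this is the step I expect to be the main obstacle, since the naive pairing ``apply the closure condition to $U=(\downset(V\cap N))^c$'' does not work and one must build the right clopen set by hand. Suppose the closure condition fails: there are $U\in\opup(X)$ and $x\in N\cap\cl U$ with $x\notin\cl(N\cap U)$, so some clopen $W\ni x$ satisfies $W\cap N\cap U=\varnothing$. Put $V:=W\cap\cl U$, a clopen set containing $x$, and set $D:=\downset(V\cap N)$; note $x\in V\cap N\subseteq D$ and $V\cap N\cap U=\varnothing$. I would then show $D$ is not open by checking $x\in\cl(D^c)$: given any clopen neighborhood $W'\subseteq W$ of $x$, the point $x\in\cl U$ forces $W'\cap U\ne\varnothing$, and any $u\in W'\cap U$ satisfies $\upset u\subseteq U$, so $\upset u\cap(V\cap N)=\varnothing$, i.e.\ $u\in D^c\cap W'$. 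As clopen neighborhoods form a basis at $x$, this yields $x\in D\cap\cl(D^c)$, so $D$ is closed (by \cref{lem: pri-facts-upclosed}) but not open, contradicting nuclearity for the clopen set $V$. The delicate points to get right are the choice $V=W\cap\cl U$ (which simultaneously captures $x$ and forces $V\cap N\cap U=\varnothing$) and the use of $\upset u\subseteq U$ to land in $D^c$; these are precisely where the \L-space property and the upset structure of $U$ enter.
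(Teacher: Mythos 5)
Your proof is correct and follows essentially the same route as the paper: the easy inclusion from $N\cap\cl U$ being closed, the use of the \L-space property to make $\cl$ of the complementary open upset a clopen upset in the direction from the closure condition to nuclearity, and the interplay of ``open downset neighborhood of a point of $\cl U$ meets $U$'' with ``$U$ is an upset'' in the other direction. The only cosmetic differences are that you phrase the nuclear-implies-closure direction contrapositively (the paper argues it directly and slightly more economically, applying nuclearity to an arbitrary clopen neighborhood $V$ of $x$ and noting that the clopen downset $\downset(N\cap V)$ must meet $U$), and your auxiliary set $V=W\cap\cl U$ could be replaced by $W$ itself without harm.
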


\begin{proof}
    ($\Rightarrow$) Let $U \in \opup(X)$. The inclusion 
    $\cl (N \cap U) \subseteq N \cap \cl U$ is clear. For the reverse inclusion, 
    let $x \in N \cap \cl U$, and let $V$ be a clopen neighborhood of $x$. 
    Since $N$ is nuclear, $\downset(N \cap V)$ is a clopen downset containing $x$, 
    so $\downset(N \cap V) \cap U \neq \varnothing$. Because $U$ is an upset, 
    $N \cap V \cap U \neq \varnothing$. Thus, $x \in \cl(N \cap U)$.

    ($\Leftarrow$) Let $U \subseteq X$ be clopen, and put $V = (\downset(N \cap U))^c$. 
    Then $V \in \opup(X)$ (see \cref{lem: pri-facts-upclosed}). 
    Suppose $V$ is not closed. Then
    $
        \cl V \cap V^c \neq \varnothing$, so $\cl V \cap \downset(N \cap U) \neq \varnothing.
    $
    Since $X$ is an \L-space, $\cl V$ is an upset. 
    Therefore, $\cl V \cap N \cap U \neq \varnothing$. 
    By assumption, $\cl V \cap N = \cl(V \cap N)$, so 
    $\cl(V \cap N) \cap U \ne \varnothing$, and hence $V \cap N \cap U \neq \varnothing$ since $U$ is open. But 
    $
        V \cap N \cap U = (\downset(N \cap U))^c \cap N \cap U = \varnothing,
    $
    a contradiction. 
    Thus, $V$ is clopen, yielding that so is $\downset (N \cap U)$. 
\end{proof}

The following theorem goes back to \cite[Prop.~5]{Wig79} (see also \cite[\Sec 2.4]{PS00}), where it was stated using the alternate description of nuclear subsets given in \cref{lem: Lsets}. The formulation below is 
from \cite[Thm.~30]{BG07}, but the proof we give 
is different (see \cref{rem: nucleus def}).

\begin{theorem}
\label{thm: nuclear-subsets}
Let $L$ be a frame and $X$ its \L-space. 
\begin{enumerate}
    \item $\N(X)$ and $\N(L)$ are dually isomorphic.
    \item $\N(X)$ and $\S(L)$ are isomorphic.
\end{enumerate}
\end{theorem}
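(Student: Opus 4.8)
The plan is to establish (1) and then read off (2) for free: once $\N(X)$ and $\N(L)$ are shown to be dually isomorphic, composing this dual isomorphism with the dual isomorphism of \cref{thm: SL and NL} (which gives $\N(L)\cong\S(L)^{\mathrm{op}}$) yields the genuine order-isomorphism $\N(X)\cong\S(L)$ asserted in (2). So the entire content lies in part (1), and for this I would set up and track the order along the chain of correspondences
\[
\text{nuclear subsets of } X \;\longleftrightarrow\; \text{order-embedding } \L\text{-morphisms into } X \;\longleftrightarrow\; \text{frame surjections out of } L \;\longleftrightarrow\; \text{nuclei on } L.
\]

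The first correspondence is the geometric heart of the argument. Given a closed $N\subseteq X$, I would equip it with the subspace topology and the induced order, so that it is automatically a Priestley space, and first record two bookkeeping facts: that $\cl_N A = N\cap\cl_X A$ for $A\subseteq N$ (since $N$ is closed in $X$), and that the open upsets of $N$ are exactly the sets $W\cap N$ with $W\in\opup(X)$ (using \cref{lem: pri-facts-unions} on $N$ together with the fact that clopen upsets of $N$ are restrictions of those of $X$). Using \cref{lem: Lsets}, I would then show that $N$ is nuclear if and only if $N$ is an \L-space and the inclusion $i_N\colon N\hookrightarrow X$ is an \L-morphism: indeed nuclearity says $N\cap\cl_X W=\cl_X(N\cap W)$ for all $W\in\opup(X)$, and this single identity both exhibits $\cl_N(W\cap N)=N\cap\cl_X W$ as an open upset of $N$ (so $N$ is an \L-space) and is precisely the equation $i_N^{-1}(\cl_X W)=\cl_N\, i_N^{-1}(W)$ defining an \L-morphism.

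For the remaining correspondences I would pass through the duality. Since $i_N$ is an order-embedding, \cref{lem: onto hom} shows that its dual $q_N\colon\clopup(X)\cong L\to\clopup(N)$, $a\mapsto\sigma(a)\cap N$, is onto, and \cref{prop: morphisms-2} shows it is a frame homomorphism; thus $\clopup(N)$ is a frame quotient of $L$. Being join-preserving, $q_N$ has a right adjoint $(q_N)_\ast$, and $j_N:=(q_N)_\ast\circ q_N$ is a nucleus on $L$, which defines the map $\N(X)\to\N(L)$. For the inverse, given a nucleus $j$ with sublocale $L_j$, the quotient $L\twoheadrightarrow L_j$ dualizes, via \cref{thm: WPS duality,lem: onto hom,prop: morphisms-2}, to an order-embedding \L-morphism $f_j\colon X_{L_j}\to X$; its image is closed and, being the image of an order-embedding \L-morphism, is nuclear by the first step. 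I would then verify that these assignments are mutually inverse (using that $f_j$ is a homeomorphism and order-isomorphism onto its image and that dualization is a contravariant equivalence) and that they reverse order: if $N_2\subseteq N_1$ are nuclear, then $N_2$ is closed in $N_1$, so $q_{N_2}$ factors through $q_{N_1}$, whence $j_{N_2}\ge j_{N_1}$ pointwise.

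The main obstacle I expect is the first step—the verification that nuclearity is \emph{exactly} the condition turning the induced structure on $N$ into an \L-space with \L-inclusion—which rests on the interplay between closures in $N$ and in $X$ and on rewriting the nuclear condition through \cref{lem: Lsets}. The rest is essentially bookkeeping, but care is needed with the order direction: one should confirm that the largest nuclear subset $N=X$ corresponds to the identity nucleus (smallest in $\N(L)$) and to the top sublocale $L$ (largest in $\S(L)$), so that $\N(X)\cong\N(L)$ is genuinely a \emph{dual} isomorphism while $\N(X)\cong\S(L)$ is an order-isomorphism.
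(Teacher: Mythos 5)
Your proposal is correct, and its overall architecture (reduce (2) to (1) via \cref{thm: SL and NL}; relate nuclear subsets to embedded \L-spaces and frame quotients) matches the paper's, but the execution of the direction $\N(X)\to\N(L)$ is genuinely different. In the direction $\N(L)\to\N(X)$ you and the paper do essentially the same thing: dualize the onto frame homomorphism $L\twoheadrightarrow L_j$ to a one-to-one \L-morphism into $X$ and check via \cref{lem: Lsets} that its (closed) image is nuclear. In the other direction, however, the paper writes down the explicit formula $j_N(a)=\bigvee\{b\in L\mid \sigma(b)\cap N\subseteq\sigma(a)\}$ and verifies the nucleus axioms and the mutual inverseness by hand inside the Priestley space, the key step being the identity $\sigma(j_N(a))\cap N=\sigma(a)\cap N$; you instead observe that nuclearity of $N$ is exactly the statement that $N$ with the subspace structure is an \L-space and $i_N$ is an \L-morphism, so that $q_N=i_N^{-1}\colon L\to\clopup(N)$ is an onto frame homomorphism by \cref{lem: onto hom,prop: morphisms-2}, and you take $j_N=(q_N)_*\circ q_N$. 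The two constructions agree, since $(q_N)_*(q_N(a))=\bigvee\{b\mid \sigma(b)\cap N\subseteq\sigma(a)\cap N\}=\bigvee\{b\mid\sigma(b)\cap N\subseteq\sigma(a)\}$, and your route buys a cleaner conceptual picture (nucleus axioms, bijectivity, and order-reversal all follow from standard facts about frame surjections, adjoints, and the contravariant equivalence of \cref{thm: WPS duality}) at the cost of importing the quotient--nucleus correspondence as a black box and leaving a little bookkeeping implicit (e.g., that $q_N$ and $L\twoheadrightarrow L_{j_N}$ induce the same congruence, hence have duals with the same image). The paper's hands-on computation is longer but produces the explicit identity (A), which it reuses later (in \cref{rem: nucleus def} and in the proof of \cref{lem: density}); if you adopt your abstract route you would need to re-derive that identity separately for those applications. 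Your sanity check on the order direction (the largest nuclear subset $X$ corresponding to the identity nucleus and the top sublocale) is the right one and confirms that (1) is a dual isomorphism while (2) is an isomorphism.
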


\begin{proof}
    (1) 
    Define a map $\N(L) \to \N(X)$ by
    \[
        j \mapsto N_j := \{x \in X \mid x = j^{-1}[x]\}.
    \]
    To see that this is well defined, let $j \in \N(L)$. Then $j : L \to L_j$ is an onto frame homomorphism (see, e.g., \cite[p.~31]{PP12}). Letting $X_j$ be the \L-space of $L_j$, we have that $f := j^{-1}$ is a one-to-one \L-morphism from $X_j$ into $X$ (see \cref{lem: onto hom}).
    It is straightforward to verify that $f[X_j] = N_j$. Hence, 
    since $f$ is an \L-morphism, 
    $N_j \cap \cl U = \cl(N_j \cap U)$ for each $U \in\opup(X)$. Therefore, $N_j$ is a nuclear subset by \cref{lem: Lsets}. Thus, $j \mapsto N_j$ is well defined, and it is straightforward to see that $j_1 \le j_2$ implies $N_{j_2} \subseteq N_{j_1}$.
    
    Next define a map $\N(X) \to \N(L)$ by $N \mapsto j_N$ where $$j_N(a) = \bigvee\{b \in L \mid \sigma(b) \cap N \subseteq \sigma(a)\}.$$ It is straightforward to see that $a \leq j_N(a)$ for all $a \in L$. Also, since joins in $L$ are exact, $j_N(a \wedge b) = j_N(a) \wedge j_N(b)$ for all $a,b \in L$. 
    Moreover,
    \[\begin{aligned}
        \sigma(j_N(a)) \cap N 
        &= \cl \bigl( \medcup\{\sigma(b) \mid \sigma(b) \cap N \subseteq \sigma(a) \} \bigr) \cap N \\
        &= \cl \bigl( \medcup\{\sigma(b) \mid \sigma(b) \cap N \subseteq \sigma(a) \} \cap N \bigr) \\
        &= \cl \medcup\{\sigma(b) \cap N \mid \sigma(b) \cap N \subseteq \sigma(a)\} \\
        &= \sigma(a) \cap N,
    \end{aligned} \tag{A}\label{eq:A}\]
    where the second equality holds by \cref{lem: Lsets}. 
    This implies that $\sigma(b) \cap N \subseteq \sigma(j_N(a))$ if and only if $\sigma(b) \cap N \subseteq \sigma(a)$, and hence $j_N(j_N(a))=j_N(a)$ for all $a\in L$.
    Thus, $N \mapsto j_N$ is well defined, and it is straightforward to see that $N_1 \subseteq N_2$ implies $j_{N_2} \le j_{N_1}$.
    
    For $a,b \in L$, we have
    \begin{align*}
        b \leq j(a) 
        &\iff (\forall x \in X)(b \in x \text{ and }j^{-1}[x] = x \Longrightarrow 
        a \in x) \\
        &\iff \sigma(b) \cap N_j \subseteq \sigma(a)\\
        &\iff b \leq j_{N_j}(a),
    \end{align*}
    where in the first equivalence the right-to-left direction follows from PIT (since if $b \nleq j(a)$ then $j(b) \nleq j(a)$, so PIT applied to $L_j$ gives $x \in X_j$ with $j(b) \in x$ and $j(a) \notin x$, and then $j^{-1}[x]$ is the desired point of $X$ containing $b$ and missing $a$).
    Also, for $x \in X$,
    \begin{align*}
        x \in N_{j_N}
        &\iff j_N^{-1}[x] \subseteq x \\
        &\iff (\forall a \in L)(j_N(a) \in x \Longrightarrow
        a \in x) \\
        &\iff (\forall a \in L)\left(x \in \sigma(j_N(a)) \Longrightarrow
        x \in \sigma(a)\right) \\
        & \iff x \in N,
    \end{align*}
    where the right-to-left implication of the last equivalence follows from \cref{eq:A}; 
to see the left-to-right implication,
if $x \notin N$, then since $N$ is closed, there are $a,b \in L$ with 
$x \in \sigma(b) \cap \sigma(a)^c$ and $\sigma(b) \cap \sigma(a)^c \cap N = \varnothing$. 
Thus, $x \in \sigma(b)$ and $\sigma(b)\cap N \subseteq \sigma(a)$, so $x \in \sigma(j_N(a))$ but $x \notin \sigma(a)$. 

    (2) This follows from (1) and \cref{thm: SL and NL}.
\end{proof}

\begin{remark}\label{rem: nucleus def}
    In \cite{BG07}, the map $\N(X) \to \N(L)$ is defined by first introducing the nucleus $j_N'$ on $\clopup(X)$ by setting
    \[
        j_N'(U) = X \setminus \downset (N \setminus U) \qquad \text{for } U \in \clopup(X),
    \]
    and then transporting it along the isomorphism $\sigma \colon L \to \clopup(X)$ to obtain a nucleus on $\N(L)$. 
    Our proof of \cref{thm: nuclear-subsets} provides an alternative description of this nucleus. Indeed, if $a \in L$, then for $U \in \clopup(X)$ we have 
    \begin{align*}
        U \subseteq \sigma(j_N(a)) 
        &\iff U \subseteq \cl \medcup \{\sigma(b) \mid \sigma(b) \cap N \subseteq \sigma(a)\} \\
        &\iff U \cap N \subseteq \sigma(a)  &&\text{(by \cref{lem: Lsets})}\\
        &\iff N \cap \sigma(a)^c \subseteq U^c \\ 
        &\iff \downset\!\big(N \cap \sigma(a)^c\big) \subseteq U^c \\ 
        &\iff U \subseteq j'_N(\sigma(a)).
    \end{align*}
    Hence, $\sigma \circ j_N = j_N' \circ \sigma$.
\end{remark}

As a consequence of \cref{thm: nuclear-subsets}, we obtain that $\N(X)$ is a coframe. We next describe how joins and meets are calculated in $\N(X)$.

\begin{theorem}[{\cite[Lem.~4.8 and Thm.~4.9]{AB+20}}] \label{theorem: nuclear meets and joins}
    Let $X$ be an \L-space. For a family $\{ N_i \} \subseteq \N(X)$, we have
    \[
        \bigvee N_i = \cl \bigcup N_i.
    \]
    Consequently, 
    \[
        \bigwedge N_i = \cl \bigcup \left\{N \in \N(X) \mid N \subseteq \bigcap N_i\right\}.
    \]
\end{theorem}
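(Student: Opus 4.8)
The plan is to establish the join formula first and then read off the meet formula as a purely lattice-theoretic consequence. Since $\N(X)$ is a coframe by \cref{thm: nuclear-subsets}, it is a complete lattice, so all joins and meets exist and the only task is to identify them. To prove $\bigvee N_i = \cl \bigcup N_i$, I would verify three things about $M := \cl\bigcup N_i$: that it is itself a nuclear subset, that it is an upper bound of the family, and that it is the least such.

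The latter two points are immediate. Each $N_i \subseteq \bigcup N_i \subseteq M$, so $M$ is an upper bound. If $P \in \N(X)$ satisfies $N_i \subseteq P$ for all $i$, then $\bigcup N_i \subseteq P$, and since $P$ is closed we get $M = \cl\bigcup N_i \subseteq P$; hence $M$ is the least upper bound. The real content, and the \emph{main obstacle}, is showing that $M$ is nuclear, i.e.\ that the closure of a union of nuclear subsets is again nuclear.

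For this I would use the characterization of \cref{lem: Lsets}: $M$ is nuclear iff $M \cap \cl U = \cl(M \cap U)$ for every $U \in \opup(X)$. The inclusion $\cl(M \cap U) \subseteq M \cap \cl U$ is automatic, since $M \cap U$ is contained in both the closed set $M$ and the closed set $\cl U$. The observation driving the reverse inclusion is that in an \L-space $\cl U$ is clopen (closed by definition, and open because $X$ is an \L-space). For a clopen set $C$ one has $\cl A \cap C = \cl(A \cap C)$ for any $A$, since openness of $C$ lets one intersect neighborhoods with $C$. Applying this with $A = \bigcup N_i$ and $C = \cl U$, together with the nuclearity $N_i \cap \cl U = \cl(N_i \cap U)$ of each $N_i$, I would compute
\[
    M \cap \cl U = \cl\Bigl(\bigcup_i (N_i \cap \cl U)\Bigr) = \cl\Bigl(\bigcup_i \cl(N_i \cap U)\Bigr) = \cl\Bigl(\bigcup_i (N_i \cap U)\Bigr) \subseteq \cl(M \cap U),
\]
where the last inclusion holds because $\bigcup_i (N_i \cap U) = (\bigcup_i N_i) \cap U \subseteq M \cap U$. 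This yields the missing inclusion and shows $M$ is nuclear.

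Finally, the meet formula follows formally. In any complete lattice the meet of a family equals the join of all its lower bounds. The lower bounds of $\{N_i\}$ in $\N(X)$ are exactly the nuclear subsets $N$ with $N \subseteq \bigcap N_i$, so substituting the join formula already proved gives $\bigwedge N_i = \cl\bigcup\{N \in \N(X) \mid N \subseteq \bigcap N_i\}$, as claimed.
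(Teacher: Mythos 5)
Your proof is correct. Note that the paper itself gives no proof of this statement---it is quoted from \cite[Lem.~4.8 and Thm.~4.9]{AB+20}---so there is no in-text argument to compare against; your proposal is a sound self-contained derivation. The decomposition is the natural one, and you correctly identify the only nontrivial point: closure under the join operation, i.e.\ that $M:=\cl\bigcup N_i$ is again nuclear. Your key lemma---that $\cl A\cap C=\cl(A\cap C)$ for clopen $C$, applied with $C=\cl U$, which is clopen precisely because $X$ is an \L-space---is exactly the right leverage, and the chain
\[
M\cap\cl U=\cl\Bigl(\bigcup_i(N_i\cap\cl U)\Bigr)=\cl\Bigl(\bigcup_i\cl(N_i\cap U)\Bigr)=\cl\Bigl(\bigcup_i(N_i\cap U)\Bigr)\subseteq\cl(M\cap U)
\]
checks out, with \cref{lem: Lsets} supplying both the criterion for nuclearity and the identity $N_i\cap\cl U=\cl(N_i\cap U)$ for each member of the family. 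Two minor remarks. First, your appeal to \cref{thm: nuclear-subsets} for completeness of $\N(X)$ is legitimate (the paper records that $\N(X)$ is a coframe as a consequence of that theorem), but it is also dispensable: your argument exhibits all joins directly, and the meet formula then follows from the general fact that, in a poset in which all joins exist, the meet of a family is the join of its lower bounds---which is exactly how you conclude. Second, the identification of the lower bounds of $\{N_i\}$ with $\{N\in\N(X)\mid N\subseteq\bigcap N_i\}$ is immediate since the order on $\N(X)$ is inclusion. No gaps.
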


In particular, \cref{theorem: nuclear meets and joins} implies that a finite join of nuclear subsets is their set-theoretic union.
Certain sublocales form  building blocks of $\S(L)$ in the sense that each sublocale is determined by them. 

\begin{definition}[see, e.g., {\cite[pp.~33, 42--43]{PP12}}]
    Let $L$ be a frame and $a \in L$. Consider the nuclei defined by
    \[
        o_a(d) = a \to d, \qquad 
        c_a(d) = a \vee d, \qquad 
        b_a(d) = (d \to a) \to a 
    \]
    for $d \in L$. We denote the images  of these nuclei by $\o(a)$, $\c(a)$, and $\b(a)$, respectively. 
\end{definition}

\begin{remark}
    The nuclei $o_a, c_a, b_a$ were first considered in \cite{Sim78,BM79,Mac81},  where the notation $v_a, u_a, w_a$ was used. 
\end{remark}

The sublocales $\o(a)$, $\c(a)$, and $\b(a)$ are called the \emph{open}, \emph{closed}, and \emph{relatively dense} sublocales determined by $a$, and we use the same terminology for their associated nuclei. Observe that $b_0$ is the well-known \emph{double negation} nucleus $d \mapsto d^{**}$, where $d^* := d\to 0$ denotes the pseudocomplement of $d$. 
By the Isbell Density Theorem \cite[Thm.~1.5]{Isb72}, $\b(0)$ is the least dense sublocale. More generally, $\b(a)$ is the least sublocale containing $a$. We will discuss this in more detail towards the the end of the section. 
The next theorem characterizes open, closed, and relatively dense sublocales in the language of $\L$-spaces. 

\begin{theorem}[{\cite[Lem.~2.5]{PS00} and \cite[Thm.~34]{BG07}}] \label{thm: desc nuclei}
    Let $L$ be a frame and $X$ its \L-space. Let also $S$ be a sublocale of $L$ and $N$ its corresponding nuclear subset of $X$.
\begin{enumerate}[cref=theorem]
    \item $S = \o(a)$ if and only if $N = \sigma(a)$. \label{thm: desc open sublocale}
    \item $S = \c(a)$ if and only if $N = \sigma(a)^c$. \label{thm: desc closed sublocale}
    \item $S = \b(a)$ if and only if $N = \max (\sigma(a)^c)$. \label{thm: rel dense}
\end{enumerate} 
\end{theorem}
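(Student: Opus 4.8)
The plan is to use the bijection of \cref{thm: nuclear-subsets}, under which a sublocale with nucleus $j$ corresponds to the nuclear subset $N_j = \{x \in X \mid x = j^{-1}[x]\}$, with inverse $N \mapsto j_N$ given by $j_N(d) = \bigvee\{b \in L \mid \sigma(b) \cap N \subseteq \sigma(d)\}$. Since this is a bijection, it suffices in each case to match the relevant nucleus with the displayed set: for (1) and (2) by computing $j_N$, and for (3) by directly identifying the nuclear subset $N_{b_a}$ corresponding to $\b(a)$.

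For (1) and (2) this is routine. Both $\sigma(a)$ and $\sigma(a)^c$ are clopen, hence nuclear (for clopen $N$ one has $N \cap \cl U = \cl(N \cap U)$ since $N$ is open, so \cref{lem: Lsets} applies). Using that $\sigma$ is a lattice isomorphism, $\sigma(b) \cap \sigma(a) \subseteq \sigma(d)$ is equivalent to $b \wedge a \le d$, so $j_{\sigma(a)}(d) = \bigvee\{b \mid b \wedge a \le d\} = a \to d = o_a(d)$; likewise $\sigma(b) \cap \sigma(a)^c \subseteq \sigma(d)$ is equivalent to $b \le a \vee d$, so $j_{\sigma(a)^c}(d) = \bigvee\{b \mid b \le a \vee d\} = a \vee d = c_a(d)$. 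This yields (1) and (2).

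For (3) I would show directly that $N_{b_a} = \max(\sigma(a)^c)$. Since $b_a(0) = (0 \to a) \to a = a$, every $x \in N_{b_a}$ has $a \notin x$ (otherwise $0 \in x$), so $N_{b_a} \subseteq \sigma(a)^c$. For $\max(\sigma(a)^c) \subseteq N_{b_a}$, take $m \in \max(\sigma(a)^c)$ and suppose $z \notin m$; I claim $b_a(z) \notin m$. The filter generated by $m \cup \{z\}$ must contain $a$, for otherwise \cref{rem:PIT} produces a prime filter $p \supsetneq m$ with $a \notin p$, contradicting maximality of $m$ in $\sigma(a)^c$. Hence $u \wedge z \le a$ for some $u \in m$, so $z \to a \in m$; as $a \notin m$, this forces $b_a(z) = (z \to a) \to a \notin m$, since otherwise $b_a(z) \wedge (z \to a) \le a$ would put $a$ in $m$. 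Thus $m \in N_{b_a}$.

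The crux is the reverse inclusion $N_{b_a} \subseteq \max(\sigma(a)^c)$. Take $x \in N_{b_a}$ and suppose it is not maximal in $\sigma(a)^c$, so there is $y > x$ with $a \notin y$; pick $d \in y \setminus x$. Then $d \to a \notin y$ (else $a \in y$), hence $d \to a \notin x$, and the defining property of $N_{b_a}$ gives both $g := b_a(d) \notin x$ and $g \to a = b_a(d \to a) \notin x$. The decisive computation is that $z := g \vee (g \to a)$ satisfies $b_a(z) = 1$: because $g$ is a $b_a$-fixpoint we have $(g \to a) \to a = g$, so $z \to a = (g \to a) \wedge g \le a$, and therefore $(z \to a) \to a = 1$. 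Since $1 \in x$, the property of $N_{b_a}$ forces $z \in x$, whence primeness of $x$ yields $g \in x$ or $g \to a \in x$ — contradicting the previous line. This step is the main obstacle: the identity $b_a\bigl(g \vee (g \to a)\bigr) = 1$ is exactly the fact that $\b(a)$ is a Boolean frame (with relative complement $e \mapsto e \to a$), and it is precisely what prevents $N_{b_a}$ from containing two comparable points. Combining the two inclusions gives $N_{b_a} = \max(\sigma(a)^c)$, which in passing shows that this set is closed.
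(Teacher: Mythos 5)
Your proof is correct, but note that the paper itself gives no proof of this theorem---it is quoted from \cite[Lem.~2.5]{PS00} and \cite[Thm.~34]{BG07}---so what you have written is a genuinely self-contained argument rather than a reconstruction of one in the text. Your strategy is the natural one given the machinery of \cref{thm: nuclear-subsets}: since $j \mapsto N_j$ and $N \mapsto j_N$ are mutually inverse, it suffices to match nuclei with nuclear subsets, and your computations $j_{\sigma(a)} = o_a$ and $j_{\sigma(a)^c} = c_a$ (after observing that clopen sets satisfy the criterion of \cref{lem: Lsets}) dispose of (1) and (2) cleanly. For (3), your direct identification of $N_{b_a}$ checks out in all three steps: $b_a(0)=a$ gives $N_{b_a} \subseteq \sigma(a)^c$; the PIT argument (via \cref{rem:PIT}) correctly upgrades maximality of $m$ in $\sigma(a)^c$ to $z \to a \in m$ whenever $z \notin m$, whence $b_a(z) \notin m$; and the decisive computation $b_a\bigl(g \vee (g \to a)\bigr) = 1$ for $g = b_a(d)$ (using idempotence of $b_a$ to get $(g \to a) \to a = g$, hence $\bigl(g \vee (g\to a)\bigr) \to a = g \wedge (g \to a) \le a$) combined with primeness of $x$ is exactly what rules out two comparable points of $N_{b_a}$. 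As you observe, this last identity is the Booleanness of $\b(a)$ in disguise, and your argument has the added benefit of establishing, without a separate topological check, that $\max(\sigma(a)^c)$ is a nuclear (in particular closed) subset. An alternative route more in the spirit of \cite{BG07} would be to compute the nucleus $j_N'(U) = X \setminus \downset(N \setminus U)$ of \cref{rem: nucleus def} for $N = \max(\sigma(a)^c)$ and identify it with $\sigma \circ b_a \circ \sigma^{-1}$ using \cref{lem: pri-facts-max} and \cref{lem: pseudocomplement-1}; that version trades your Heyting-algebra manipulations for order-topological ones on $X$, but yours is entirely adequate.
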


\Cref{thm: desc nuclei} shows that open sublocales correspond to clopen upsets, 
closed sublocales to clopen downsets, and relatively dense sublocales to the maxima of clopen downsets.  Therefore, clopen upsets and clopen downsets ar nuclear subsets, and hence $$\{ U \cup V^c \mid U,V \in \clopup(X) \} \subseteq \N(X)$$ by \cref{theorem: nuclear meets and joins}. In addition, $$\{ \max(U^c) \mid U \in \clopup(X) \} \subseteq \N(X)$$ by \Cref{thm: rel dense}.
We now describe how to obtain all nuclear subsets from the above ones.

\begin{proposition}
    Let $X$ be an \L-space and $N \in \N(X)$.
    \begin{enumerate}[cref=proposition]
        \item $N = \bigwedge \{U \cup V^c \mid N \subseteq U \cup V^c \text{ and } U,V \in \clopup(X)\}$. \label{prop: blocks-1}
        \item $N = \bigvee 
        \{\max(U^c) \mid \max (U^c) \subseteq N \text{ and } U \in \clopup(X)\}.$
    \end{enumerate}
\end{proposition}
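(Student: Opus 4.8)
The plan is to prove the two parts separately, handling (1) directly on the Priestley side and (2) by transporting the statement to sublocales.

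For (1), first note that every $U\cup V^c$ with $U,V\in\clopup(X)$ is a nuclear subset: the clopen upset $U$ and the clopen downset $V^c$ are nuclear by \cref{thm: desc open sublocale,thm: desc closed sublocale}, and their (finite) join in $\N(X)$ is their union by \cref{theorem: nuclear meets and joins}. Writing $\mathcal F$ for the indicated family, the inclusion $N\subseteq\bigwedge\mathcal F$ is immediate from the meet formula of \cref{theorem: nuclear meets and joins}, since $N$ is a nuclear subset contained in every member of $\mathcal F$, hence in $\bigcap\mathcal F$. For the reverse inclusion it suffices to show $\bigcap\mathcal F\subseteq N$: then every nuclear subset below $\bigcap\mathcal F$ lies in the closed set $N$, and so does the closure of the union of such subsets, which by the meet formula is $\bigwedge\mathcal F$. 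To prove $\bigcap\mathcal F\subseteq N$ I would separate points. If $x\notin N$, then since $N$ is closed and $\{\sigma(p)\cap\sigma(q)^c\}$ is a basis (\cref{rem: Priestley functors}), there are $p,q\in L$ with $x\in\sigma(p)\cap\sigma(q)^c$ and $\sigma(p)\cap\sigma(q)^c\cap N=\varnothing$. The last condition reads $N\cap\sigma(p)\subseteq\sigma(q)$, i.e.\ $N\subseteq\sigma(q)\cup\sigma(p)^c$, so $\sigma(q)\cup\sigma(p)^c$ is a member of $\mathcal F$; and $x\notin\sigma(q)\cup\sigma(p)^c$ since $x\notin\sigma(q)$ while $x\in\sigma(p)$. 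Thus $x\notin\bigcap\mathcal F$, as required.

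For (2), the inclusion $\bigvee\{\max(U^c)\mid\max(U^c)\subseteq N\}\subseteq N$ is again immediate from \cref{theorem: nuclear meets and joins}, as each $\max(U^c)$ in the family lies in the closed set $N$. For the substance of the statement I would pass through the isomorphism $\N(X)\cong\S(L)$ of \cref{thm: nuclear-subsets}. Under it, $\max(\sigma(a)^c)$ corresponds to the relatively dense sublocale $\b(a)$ by \cref{thm: rel dense}, inclusions and joins are preserved, and (since $\sigma$ is onto $\clopup(X)$) the family $\{\max(U^c)\mid\max(U^c)\subseteq N\}$ corresponds to $\{\b(a)\mid\b(a)\subseteq S\}$, where $S$ is the sublocale matching $N$. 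Hence (2) is equivalent to the assertion that $S=\bigvee\{\b(a)\mid\b(a)\subseteq S\}$ in $\S(L)$. To prove this I would use that $\b(a)$ is the least sublocale containing $a$: for every $x\in S$ the sublocale $\b(x)$ satisfies $x\in\b(x)\subseteq S$, so $\b(x)$ belongs to the family and $x\in\b(x)\subseteq\bigvee\{\b(a)\mid\b(a)\subseteq S\}$. This gives $S\subseteq\bigvee\{\b(a)\mid\b(a)\subseteq S\}$, while the reverse inclusion holds because each joinand is contained in $S$; equality follows, and transporting back along the isomorphism yields (2).

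The main obstacle is the reverse inclusion in (2): showing that the relatively dense pieces $\max(U^c)$ sitting inside $N$ are numerous enough to recover all of $N$ under closure. A direct Priestley-space argument is awkward, since the constraint $\max(U^c)\subseteq N$ is delicate to arrange while simultaneously reaching a prescribed point of $N$. Transporting to $\S(L)$ dissolves this difficulty, because the least-sublocale property of $\b(a)$ produces, for each element $x$ of the sublocale $S$, a relatively dense sublocale $\b(x)$ that lies inside $S$ and contains $x$ — exactly what is needed to cover $S$. By contrast, part (1) offers no such obstacle, as the required separation is supplied directly by the Priestley basis.
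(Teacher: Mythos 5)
Part (1) of your proposal is correct and essentially the paper's own argument: the same basis-separation step produces, for each $x\notin N$, a set $U\cup V^c$ in the family missing $x$, and the meet formula of \cref{theorem: nuclear meets and joins} does the rest. (You prove the slightly stronger $\bigcap\mathcal F\subseteq N$ where the paper only checks $M\subseteq N$ for nuclear $M\subseteq\bigcap\mathcal F$; both suffice.)

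Part (2) is also correct but takes a genuinely different route. The paper stays on the Priestley side: given $x\in N$ and a clopen neighborhood $V$ of $x$, nuclearity of $N$ makes $D:=\downset(N\cap V)$ a clopen downset with $\max D=\max(N\cap V)\subseteq N$, so $U:=D^c$ yields a member of the family meeting $V$; this shows $N\subseteq\cl\bigcup\{\max(U^c)\mid\max(U^c)\subseteq N\}$ directly. You instead transport the statement to $\S(L)$ via \cref{thm: nuclear-subsets,thm: rel dense} and invoke the fact that $\b(a)$ is the least sublocale containing $a$, so that $x\in\b(x)\subseteq S$ covers $S$. This is sound and not circular within the paper's logic (the least-sublocale theorem is proved later from \cref{lem: density}, which does not depend on this proposition), but it inverts the paper's intent: the proposition is used precisely to give a Priestley-side proof of the classical corollary that every sublocale is a join of relatively dense sublocales, whereas your argument derives the proposition \emph{from} that classical fact. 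What your route buys is brevity and independence from the nuclearity of $N$ beyond what the isomorphism already encodes; what the paper's route buys is a self-contained order-topological proof whose only nontrivial input is that $\downset(N\cap V)$ is clopen, which is exactly the definition of a nuclear subset. Your stated worry that a direct Priestley argument is ``awkward'' is thus unfounded --- the constraint $\max(U^c)\subseteq N$ is arranged automatically by taking $U^c=\downset(N\cap V)$.
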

\begin{proof} 
    (1) Let $\mathcal F = \{U \cup V^c \mid N \subseteq U \cup V^c
    \text{ and } U ,V \in \clopup(X)\}$. 
    By \cref{theorem: nuclear meets and joins}, $N \subseteq \bigwedge \mathcal F$. For the other inclusion, we use \cref{theorem: nuclear meets and joins} again, by which it suffices to show that for each nuclear subset $M \subseteq \bigcap \mathcal F$, we have $M \subseteq N$. Suppose not.
    Then there is $x \in M \setminus N$. Since $N$ is closed and $\{ V \cap U^c \mid U,V \in \clopup(X) \}$ is a basis for the topology, there are $U,V \in \clopup(X)$ with $x \in V \cap U^c$ and $V \cap U^c \cap N = \varnothing$. 
    Thus, $N \subseteq U \cup V^c$, so $U\cup V^c \in \mathcal F$, and hence $x \in M \subseteq U \cup V^c$, a contradiction.
    
    (2) The right-to-left
    inclusion is clear. For the left-to-right
    inclusion, let $x \in N$ and $V$ be a clopen neighborhood of $x$. 
    Since $N$ is nuclear, $D := \downset (N \cap V)$ is a clopen downset. Moreover, $\max D = \max (N \cap V) \subseteq N$. 
    Let $U = D^c$. Then $U \in \clopup(X)$ and ${\max (U^c) = \max D \subseteq N}$. 
    It is also clear that $V \cap \max(U^c) \ne \varnothing$ (see \cref{lem: pri-facts-max}). 
    Therefore, 
    $$x \in \cl \bigcup \{\max(U^c) \mid \max(U^c) \subseteq N \text{ and } U \in \clopup(X) \}$$ and \cref{theorem: nuclear meets and joins} completes the proof.
\end{proof}

As an immediate consequence, we obtain:

\begin{corollary}[]
    \leavevmode
    \begin{enumerate}
        \item \textup{(see, e.g., \cite[p.~36]{PP12})} Every sublocale is a meet of binary joins of open and closed sublocales.
        \item \textup{(see, e.g., \cite[p.~44]{PP12})} Every sublocale is a join of relatively dense sublocales. 
    \end{enumerate}
    The dual statements hold for nuclei \textup{(see, e.g., \cite[Lem.~7]{Sim78})}.
\end{corollary}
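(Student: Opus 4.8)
The plan is to transport the preceding proposition across the order-isomorphism $\N(X) \cong \S(L)$ furnished by \cref{thm: nuclear-subsets}, using the dictionary supplied by \cref{thm: desc nuclei}. Write $\Phi \colon \N(X) \to \S(L)$ for this isomorphism. Being an order-isomorphism between coframes, $\Phi$ preserves all meets and joins, and by \cref{thm: desc nuclei} it sends $\sigma(a) \mapsto \o(a)$, $\sigma(a)^c \mapsto \c(a)$, and $\max(\sigma(a)^c) \mapsto \b(a)$ for every $a \in L$.

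The one point needing care is how $\Phi$ acts on the building blocks appearing in the proposition. By the remark following \cref{theorem: nuclear meets and joins}, a finite join in $\N(X)$ is its set-theoretic union, so for $U = \sigma(a)$ and $V = \sigma(b)$ in $\clopup(X)$ the nuclear subset $U \cup V^c$ equals the binary join $\sigma(a) \vee \sigma(b)^c$ in $\N(X)$. Applying $\Phi$ together with the dictionary above gives $\Phi(U \cup V^c) = \o(a) \vee \c(b)$, a binary join of an open and a closed sublocale; similarly $\Phi(\max(U^c)) = \b(a)$ is relatively dense.

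With this in hand the two statements are immediate. Given a sublocale $S$, set $N = \Phi^{-1}(S)$. Applying $\Phi$ to the identity $N = \bigwedge\{U \cup V^c \mid N \subseteq U \cup V^c\}$ of \cref{prop: blocks-1} and pulling $\Phi$ through the meet exhibits $S$ as a meet of terms $\o(a) \vee \c(b)$, which is (1); applying $\Phi$ to the second identity of the same proposition exhibits $S$ as a join of terms $\b(a)$, which is (2). The index sets match because $\Phi$ is an order-isomorphism, so that $N \subseteq U \cup V^c$ if and only if $S \subseteq \o(a) \vee \c(b)$, and $\max(U^c) \subseteq N$ if and only if $\b(a) \subseteq S$.

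The dual statements for nuclei follow by further composing with the dual isomorphism $\S(L) \cong \N(L)$ of \cref{thm: SL and NL}, under which meets and joins are interchanged and $\o(a), \c(a), \b(a)$ correspond to $o_a, c_a, b_a$: a meet of binary joins of open and closed sublocales then corresponds to a join of binary meets of open and closed nuclei, and a join of relatively dense sublocales corresponds to a meet of relatively dense nuclei. There is no genuine obstacle here; the only substantive verification is the identification of $U \cup V^c$ with $\sigma(a) \vee \sigma(b)^c$ in $\N(X)$, which is exactly what the remark after \cref{theorem: nuclear meets and joins} guarantees.
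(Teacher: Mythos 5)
Your argument is correct and is exactly the route the paper intends: it states the corollary as an ``immediate consequence'' of the preceding proposition, transported across the isomorphism $\N(X)\cong\S(L)$ of \cref{thm: nuclear-subsets} using the dictionary of \cref{thm: desc nuclei}, with finite joins in $\N(X)$ being unions by \cref{theorem: nuclear meets and joins}. You have simply made explicit the details the paper leaves to the reader.
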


Recall that 
a frame is 
\emph{zero-dimensional} if every element is a join of complemented elements.

\begin{theorem}[{\cite[Thm.~32]{BG07}; see also \cite[Rem.~4.15]{BGJ13}}]\label{thm: center}
    Let $X$ be an \L-space. Each clopen subset of $X$ is complemented in 
    $\N(X)^{\mathrm{op}}$.
        \label{thm: center-1}
\end{theorem}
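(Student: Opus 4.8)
The plan is to exhibit the complement explicitly. For a clopen set $C \subseteq X$, I claim that its set-theoretic complement $C^c$ serves as the complement of $C$ in $\N(X)^{\mathrm{op}}$. Since complementation in a bounded lattice is a self-dual notion, it will suffice to show that $C$ and $C^c$ are complements of each other in the coframe $\N(X)$ itself, i.e.\ that $C \vee C^c$ is the top and $C \wedge C^c$ the bottom of $\N(X)$. Before this can even be stated, I first need both $C$ and $C^c$ to belong to $\N(X)$, so the initial step is to verify that every clopen subset of $X$ is nuclear.

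For this I would use the alternative description of nuclear subsets in \cref{lem: Lsets}: a closed set $N$ is nuclear precisely when $N \cap \cl U = \cl(N \cap U)$ for every $U \in \opup(X)$. Taking $N = C$, the inclusion $\cl(C \cap U) \subseteq C \cap \cl U$ is immediate from $C$ being closed, while for the reverse inclusion I would take $x \in C \cap \cl U$ and a clopen neighborhood $W$ of $x$; since $C$ is open, $W \cap C$ is again an open neighborhood of $x$, so it meets $U$, giving $x \in \cl(C \cap U)$. As $C^c$ is clopen as well, the same argument shows $C^c \in \N(X)$.

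It then remains to identify the bounds of $\N(X)$ and compute the two combinations. Ordered by inclusion, $\N(X)$ has top $X$ and bottom $\varnothing$, both of which are readily seen to be nuclear. For the join I would invoke the closure formula of \cref{theorem: nuclear meets and joins}: $C \vee C^c = \cl(C \cup C^c) = \cl X = X$. For the meet, any nuclear subset contained in both $C$ and $C^c$ is contained in $C \cap C^c = \varnothing$, whence $C \wedge C^c = \varnothing$. Thus $C$ and $C^c$ are complementary in $\N(X)$, and therefore in $\N(X)^{\mathrm{op}}$ as well.

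The argument is short and presents no serious obstacle; the one point deserving care is the bookkeeping of the order reversal between $\N(X)$ and $\N(X)^{\mathrm{op}}$, where meets and joins as well as top and bottom are interchanged, together with the observation that, because the defining equations $x \vee x' = \top$ and $x \wedge x' = \bot$ are symmetric under passage to the opposite lattice, it is immaterial whether the complement is computed in $\N(X)$ or in $\N(X)^{\mathrm{op}}$.
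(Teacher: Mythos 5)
Your proof is correct. The paper states this theorem without giving a proof (it only cites the original sources), so there is no in-paper argument to compare against; your verification — showing via \cref{lem: Lsets} that every clopen set $C$ is nuclear (using that $C$ is simultaneously closed, for $\cl(C\cap U)\subseteq C\cap\cl U$, and open, for the reverse inclusion), then computing $C\vee C^c=\cl(C\cup C^c)=X$ and $C\wedge C^c=\varnothing$ from \cref{theorem: nuclear meets and joins} — is complete, self-contained, and uses only machinery the paper has already established, and your remark that complementation is self-dual correctly handles the passage to $\N(X)^{\mathrm{op}}$.
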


Putting \cref{prop: blocks-1,thm: center-1} together, we obtain the following well-known result:

\begin{corollary}[see, e.g.,  {\cite[p.~105]{PP12}}]
    For a frame $L$, $\N(L)$ is zero-dimensional.
\end{corollary}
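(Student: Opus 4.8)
The goal is to show that for a frame $L$, the assembly $\N(L)$ is zero-dimensional, i.e., every element is a join of complemented elements. The plan is to transport the problem to the Priestley dual $X$ of $L$. Since $\N(L)$ and $\N(X)$ are dually isomorphic by \cref{thm: nuclear-subsets}, proving $\N(L)$ is zero-dimensional amounts to a dual statement about $\N(X)$. I would first observe that the notion of zero-dimensionality for a frame—every element is a join of complemented elements—dualizes, so I want every element of the coframe $\N(X)^{\mathrm{op}}$ (equivalently, every nuclear subset, viewed appropriately) to be expressible using complemented elements.

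The two ingredients are already in place. By \cref{prop: blocks-1}, every nuclear subset $N \in \N(X)$ can be written as a meet in $\N(X)$:
\[
    N = \bigwedge \{U \cup V^c \mid N \subseteq U \cup V^c \text{ and } U,V \in \clopup(X)\}.
\]
By \cref{thm: center-1}, each clopen subset of $X$ is complemented in $\N(X)^{\mathrm{op}}$; in particular each clopen upset $U$ and each clopen downset $V^c$ is complemented, and hence so is each finite join $U \cup V^c$ (a finite join of complemented elements is complemented). Thus \cref{prop: blocks-1} exhibits every element of $\N(X)$ as a meet of complemented elements of $\N(X)$. Passing through the dual isomorphism of \cref{thm: nuclear-subsets} turns this meet into a join and preserves complementedness (a dual isomorphism sends complemented elements to complemented elements), so every element of $\N(L)$ is a join of complemented elements. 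This is precisely the statement that $\N(L)$ is zero-dimensional.

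I expect no serious obstacle: the corollary is explicitly flagged in the text as following immediately from combining \cref{prop: blocks-1} and \cref{thm: center-1}, and the only points requiring a word of care are (i) that the dual isomorphism of \cref{thm: nuclear-subsets} converts the meet representation of \cref{prop: blocks-1} into a join representation in $\N(L)$, and (ii) that complementedness is preserved under this dual isomorphism and that a finite join $U \cup V^c$ of complemented elements is again complemented. Both are routine order-theoretic facts. If one prefers to avoid invoking the finite-join closure of complemented elements explicitly, one can note that the binary joins $U \cup V^c$ appearing in \cref{prop: blocks-1} are themselves complemented directly: in a (co)frame, a finite join of complemented elements is complemented with complement the meet of the complements.

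Summarizing the argument in the pointfree language to match the cited references in \cref{thm: desc nuclei}: open and closed sublocales correspond, respectively, to clopen upsets and clopen downsets, which are complemented in $\N(X)^{\mathrm{op}}$ by \cref{thm: center-1}; their binary joins are then complemented; and \cref{prop: blocks-1} shows every sublocale is a meet of such binary joins. Dualizing via \cref{thm: nuclear-subsets} yields that every nucleus is a join of complemented nuclei, so $\N(L)$ is zero-dimensional.
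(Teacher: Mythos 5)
Your proposal is correct and follows exactly the paper's intended derivation: the paper gives no proof beyond the phrase ``putting \cref{prop: blocks-1,thm: center-1} together,'' and you combine precisely those two results, dualizing the meet representation of \cref{prop: blocks-1} into a join of complemented elements in $\N(L)\cong\N(X)^{\mathrm{op}}$. The only superfluous step is invoking closure of complemented elements under finite joins, since $U\cup V^c$ is itself a clopen subset and hence complemented directly by \cref{thm: center-1}.
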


Recall (see, e.g., \cite[p.~334]{PP12}) that the \emph{booleanization} $\B(L)$ of a frame $L$ is 
the fixpoints of the double negation nucleus. In other words, $\B(L) = \b(0)$. By the well-known Isbell Density Theorem (see, e.g., \cite[p.~40]{PP12}), $\B(L)$ is the least dense sublocale of $L$. This result can be proved using Priestley duality (see \cite[Sec.~3]{BBM25}). 
To this end, we recall the following definition.

\begin{definition}[{see, e.g., \cite[p.~39]{PP12}}]
    Let $L$ be a frame. A sublocale $S \in \S(L)$ is \emph{dense} if $0 \in S$, and a nucleus $j \in \N(L)$ is \emph{dense} if $j(0)=0$.
\end{definition}

The following is a generalization of \cite[Lem.~3.16]{BBM25}.

\begin{lemma}
\label{lem: density}
    Let $L$ be a frame, $X$ its \L-space, $S \in \S(L)$, and $a \in L$. The following are equivalent.
    \begin{enumerate}[cref=lemma]
        \item $a \in S$.
        \item $j_S(a) = a$
        \item $\max (\sigma(a)^c) \subseteq N_{j_S}$. \label{lem: dense a}
    \end{enumerate}
    In particular, $S$ is dense if and only if $j_S$ is dense, which is equivalent to 
    $\max X \subseteq N_{j_S}$.
\end{lemma}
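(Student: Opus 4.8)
The plan is to establish the three-way equivalence and then read off the ``in particular'' clause by specializing to $a=0$. The equivalence of (1) and (2) is immediate: under the sublocale--nucleus correspondence recalled before \cref{thm: SL and NL}, the sublocale $S$ is precisely the set of fixpoints of $j_S$, so $a\in S$ iff $j_S(a)=a$. It then remains to link either of these with condition (3). Conceptually, this is the assertion that, under the order isomorphism $\S(L)\cong\N(X)$ of \cref{thm: nuclear-subsets}, the least sublocale $\b(a)$ containing $a$ corresponds to the nuclear subset $\max(\sigma(a)^c)$ of \cref{thm: rel dense}, so that $a\in S$ iff $\b(a)\subseteq S$ iff $\max(\sigma(a)^c)=N_{j_{\b(a)}}\subseteq N_{j_S}$. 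To keep the argument self-contained I would instead prove (2)$\iff$(3) directly.

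For the direct argument, write $N:=N_{j_S}$, a nuclear subset by \cref{thm: nuclear-subsets}, and recall from that theorem that $j_S=j_N$, so $j_S(a)=\bigvee\{b\in L\mid \sigma(b)\cap N\subseteq\sigma(a)\}$. Setting $W:=\bigcup\{\sigma(b)\mid \sigma(b)\cap N\subseteq\sigma(a)\}\in\opup(X)$, exactness of joins together with \cref{prop: exact joins-1} gives $\sigma(j_S(a))=\cl W$. Since $\sigma(a)$ is itself one of the sets in this union, $\sigma(a)\subseteq W\subseteq\cl W$; as $a\le j_S(a)$ and $\sigma$ is injective, this yields $j_S(a)=a\iff \cl W=\sigma(a)\iff W\subseteq\sigma(a)$. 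Thus the lemma reduces to the single equivalence $W\subseteq\sigma(a)\iff \max(\sigma(a)^c)\subseteq N$.

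I would prove this last equivalence in two steps. For the implication from (3), suppose $\max(\sigma(a)^c)\subseteq N$ and that some $y\in W\setminus\sigma(a)$; applying \cref{lem: pri-facts-max} to the closed set $\sigma(a)^c$ produces $x\in\max(\sigma(a)^c)$ with $x\ge y$, whence $x\in N$ and, $W$ being an upset, $x\in W\cap N\subseteq\sigma(a)$, contradicting $x\in\sigma(a)^c$. The reverse implication is the step I expect to be the main obstacle: assuming $W\subseteq\sigma(a)$, take $x\in\max(\sigma(a)^c)$ and suppose $x\notin N$. The idea is to exploit nuclearity of $N$ to form the clopen downset $D:=\downset(N\cap\sigma(a)^c)$; maximality of $x$ forces the membership $x\in D$ to be equivalent to $x\in N$, so $x\notin D$. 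Then $\sigma(b):=D^c$ is a clopen upset containing $x$ with $\sigma(b)\cap N=N\setminus D\subseteq\sigma(a)$, so $\sigma(b)\subseteq W\subseteq\sigma(a)$, again contradicting $x\in\sigma(a)^c$. Finally, the ``in particular'' clause follows by taking $a=0$: since no prime filter contains $0$, we have $\sigma(0)=\varnothing$, hence $\sigma(0)^c=X$ and $\max(\sigma(0)^c)=\max X$, and because $S$ is dense iff $0\in S$ and $j_S$ is dense iff $j_S(0)=0$, conditions (1)--(3) specialize exactly to the three stated density conditions.
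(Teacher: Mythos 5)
Your proof is correct, and it reaches the same pivotal equivalence as the paper --- namely that $j_S(a)=a$ comes down to $\downset\bigl(N\cap\sigma(a)^c\bigr)=\sigma(a)^c$, which holds iff $\max(\sigma(a)^c)\subseteq N$ by $\sigma(a)^c=\downset\max(\sigma(a)^c)$ (\cref{lem: pri-facts-max}) --- but by a different decomposition. The paper's proof is a three-line chain of equivalences that leans on \cref{rem: nucleus def}: it transports $j_S$ along $\sigma$ to the nucleus $j_N'(U)=X\setminus\downset(N\setminus U)$ and reads off the fixpoint condition directly. You instead start from the join formula $j_N(a)=\bigvee\{b\mid\sigma(b)\cap N\subseteq\sigma(a)\}$, reduce via \cref{prop: exact joins-1} to the statement $W\subseteq\sigma(a)\iff\max(\sigma(a)^c)\subseteq N$, and prove both implications from scratch. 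The harder implication of your argument (constructing $D=\downset(N\cap\sigma(a)^c)$, noting it is clopen by nuclearity, and taking $\sigma(b)=D^c$) is in substance a re-derivation of the identity $\sigma\circ j_N=j_N'\circ\sigma$ in the one instance you need; citing \cref{rem: nucleus def} would have let you skip it, which is what the paper does. What your route buys is self-containment --- it works straight from the definition of $j_N$ given in the proof of \cref{thm: nuclear-subsets} without the auxiliary description of $j_N'$ --- at the cost of repeating that computation. All individual steps check out, including the use of maximality of $x$ to get $x\in D\iff x\in N$ and the specialization $a=0$ for the density clause.
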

\begin{proof}
    (1)$\Leftrightarrow$(2) is obvious. For (2)$\Leftrightarrow$(3), let $j \in \N(L)$ and $N \in \N(X)$ be its corresponding nuclear subset. For $a \in L$, we have
    \begin{align*}
        j(a) = a 
        &\iff j_{N}'(\sigma(a)) = \sigma(a) &&\text{(see \cref{rem: nucleus def})} \\
        &\iff \downset (N \cap \sigma(a)^c) = \sigma(a)^c \\
        &\iff \max (\sigma(a)^c) \subseteq N, 
    \end{align*}
    where the last equivalence is a consequence of $\sigma(a)^c = \downset \max(\sigma(a)^c)$ (see \cref{lem: pri-facts-max}). 
    The result follows.
\end{proof}

\begin{theorem}\textup{(see, e.g., \cite[p.~42]{PP12})}
    For a frame $L$ and $a \in L$, $\b(a)$ is the least sublocale containing $a$.
\end{theorem}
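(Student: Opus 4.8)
The plan is to transport the entire statement to the Priestley side, where it collapses to an immediate consequence of \cref{lem: density} and \cref{thm: rel dense}. The first step is to record that the assignment $S \mapsto N_{j_S}$ is an \emph{order}-isomorphism $\S(L) \to \N(X)$. This is built into \cref{thm: nuclear-subsets}: the map $S \mapsto j_S$ of \cref{thm: SL and NL} is a dual isomorphism $\S(L) \to \N(L)$, and the map $j \mapsto N_j$ constructed in the proof of \cref{thm: nuclear-subsets} is a dual isomorphism $\N(L) \to \N(X)$; composing the two dualities yields a genuine (order-preserving) isomorphism. In particular, for sublocales $S, T$ we have $S \subseteq T$ if and only if $N_{j_S} \subseteq N_{j_T}$.

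Next I would pin down the nuclear subset attached to $\b(a)$. By \cref{thm: rel dense}, the sublocale $\b(a)$ corresponds to the nuclear subset $\max(\sigma(a)^c)$; that is, $N_{j_{\b(a)}} = \max(\sigma(a)^c)$. Combining this with the order-isomorphism of the previous step and with \cref{lem: density}, one obtains, for every sublocale $S$, the chain of equivalences
\[
    \b(a) \subseteq S
    \iff N_{j_{\b(a)}} \subseteq N_{j_S}
    \iff \max(\sigma(a)^c) \subseteq N_{j_S}
    \iff a \in S,
\]
where the first equivalence is the order-isomorphism, the second is \cref{thm: rel dense}, and the third is the equivalence \cref{lem: dense a} of \cref{lem: density}.

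The theorem then drops out of this chain. Taking $S = \b(a)$, which is a sublocale since it is the image of the nucleus $b_a$, the trivial inclusion $\b(a) \subseteq \b(a)$ forces $a \in \b(a)$, so $\b(a)$ is a sublocale containing $a$. For an arbitrary sublocale $S$ with $a \in S$, reading the chain from right to left gives $\b(a) \subseteq S$. Hence $\b(a)$ is the least sublocale containing $a$.

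Since all the substantive content is already packaged in \cref{lem: density} and \cref{thm: rel dense}, I do not expect a serious obstacle. The one point requiring genuine care is the variance bookkeeping in the first step: one must verify that composing the two dualities of \cref{thm: nuclear-subsets} produces an order-preserving (not order-reversing) isomorphism, so that $\b(a) \subseteq S$ translates into $N_{j_{\b(a)}} \subseteq N_{j_S}$ rather than the reverse inclusion. With that check in place, the rest is purely formal.
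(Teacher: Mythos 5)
Your proposal is correct and follows essentially the same route as the paper: both transport the statement to the Priestley side via the isomorphism $\S(L)\cong\N(X)$ of \cref{thm: nuclear-subsets}, identify $N_{j_{\b(a)}}$ with $\max(\sigma(a)^c)$ via \cref{thm: rel dense}, and conclude from the equivalence $a\in S\iff\max(\sigma(a)^c)\subseteq N_{j_S}$ of \cref{lem: density}. Your version is slightly more complete in that it also extracts $a\in\b(a)$ from the same chain (the paper only proves leastness) and makes the variance check explicit.
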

\begin{proof}
    Let $S \in \S(L)$ be such that $a \in S$. By \cref{lem: dense a}, $\max(\sigma(a)^c) \subseteq N_{j_S}$. Therefore, by \cref{thm: nuclear-subsets,thm: rel dense}, $\b(a) \subseteq S$.
\end{proof}

Recalling that $\B(L) = \b(0)$, as an immediate corollary, we obtain Isbell’s Density Theorem:

\begin{corollary}[{\cite[Thm.~1.5]{Isb72}}]
$\B(L)$ is the least dense sublocale of~$L$.
\end{corollary}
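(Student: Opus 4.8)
The plan is to specialize the theorem just established to the case $a = 0$. First I would unwind the two definitions involved. By definition the booleanization is $\B(L) = \b(0)$, and a sublocale $S$ is dense precisely when $0 \in S$; hence the dense sublocales are exactly the sublocales containing $0$, and ``the least dense sublocale'' means the least sublocale containing $0$. The preceding theorem, applied with $a = 0$, states that $\b(0)$ is the least sublocale containing $0$. Combining these observations immediately yields that $\B(L) = \b(0)$ is the least dense sublocale.

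It is instructive to record the Priestley-side picture, which makes the statement transparent. Since no prime filter contains $0$, we have $\sigma(0) = \varnothing$, so $\sigma(0)^c = X$ and $\max(\sigma(0)^c) = \max X$. By \cref{thm: rel dense}, the sublocale $\b(0) = \B(L)$ therefore corresponds to the nuclear subset $\max X$. On the other hand, the ``in particular'' clause of \cref{lem: density} says that a sublocale $S$ is dense exactly when $\max X \subseteq N_{j_S}$. Since $\max X$ is itself the nuclear subset corresponding to $\B(L)$, it is the least nuclear subset containing $\max X$, and under the order-isomorphism $\S(L) \cong \N(X)$ this translates back into $\B(L)$ being the least dense sublocale.

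I do not anticipate any genuine obstacle: all the substantive work has already been carried out in \cref{thm: rel dense} and \cref{lem: density}, together with the theorem characterizing $\b(a)$ as the least sublocale containing $a$. The corollary is a direct specialization to $a = 0$, and the only point requiring care is to correctly match the definition of density (containing $0$, equivalently $\max X \subseteq N_{j_S}$) with the conclusion of the preceding theorem.
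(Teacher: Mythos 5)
Your proof is correct and follows exactly the paper's route: the corollary is obtained by specializing the preceding theorem to $a=0$ and matching the definition of density ($0\in S$) with ``least sublocale containing $0$.'' The additional Priestley-side remarks simply restate what \cref{lem: density} and \cref{thm: rel dense} already provide and do not constitute a different argument.
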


The booleanization is an important frame-theoretic construction that does not have a spatial analogue since a topological space need not have a least dense subspace. It is of interest that the booleanization of the assembly of a frame has a convenient description using Priestley duality:

\begin{theorem}[{\cite[Prop.~4.12]{BGJ13}}]
Let $L$ be a frame and $X$ its \L-space. Then the booleanization of $N(L)$ 
is isomorphic to the 
regular closed subsets of $X$.
\end{theorem}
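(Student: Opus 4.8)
The plan is to exhibit both $\B(\N(L))$ and the Boolean algebra of regular closed subsets of $X$ as the MacNeille completion of one and the same Boolean algebra---the clopen subsets of $X$---and then to invoke the uniqueness of that completion. Throughout I work through the dual isomorphism $\N(X) \cong \N(L)^{\mathrm{op}}$ of \cref{thm: nuclear-subsets}, which presents the assembly $\N(L)$ as the frame $\N(X)^{\mathrm{op}}$. Under this presentation the booleanization $\B(\N(L))$ is the complete Boolean algebra of regular elements of $\N(X)^{\mathrm{op}}$, where the relevant joins are obtained by applying double pseudocomplementation to the coframe meets described in \cref{theorem: nuclear meets and joins}. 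I write $\mathrm{Clop}(X)$ for the Boolean algebra of clopen subsets of $X$ and $\mathsf{RC}(X)$ for the regular closed subsets of $X$.

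The first step is to realize $\mathrm{Clop}(X)$ as a join-dense Boolean subalgebra of $\B(\N(L))$. By \cref{thm: center-1}, every clopen $C \subseteq X$ is complemented in $\N(X)^{\mathrm{op}}$, hence lies in the center of $\N(L)$ and in particular in $\B(\N(L))$; since complemented elements are regular and the center is always a Boolean subalgebra, a routine check (tracking the order reversal, under which frame complement becomes set complement) shows that $\mathrm{Clop}(X)$ embeds as a Boolean subalgebra. Join-density is where \cref{prop: blocks-1} does the work: it expresses every $N \in \N(X)$ as a meet (in $\N(X)$) of sets of the form $U \cup V^c$ with $U,V \in \clopup(X)$, all of which are clopen. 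Dualizing, every element of $\N(L)$ is a $\N(L)$-join of elements of $\mathrm{Clop}(X)$, and restricting this to a regular element $b = b^{**} \in \B(\N(L))$ gives $b = \bigvee^{\N(L)}\{c \in \mathrm{Clop}(X) \mid c \le b\}$, whence $b$ is the $\B$-join of the clopen elements below it. Thus $\mathrm{Clop}(X)$ is join-dense in $\B(\N(L))$, and since a join-dense Boolean subalgebra of a complete Boolean algebra is automatically meet-dense, it follows that $\B(\N(L))$ is the MacNeille completion of $\mathrm{Clop}(X)$.

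The second step is the classical side. By \cref{lem: pri-facts-stone}, $X$ is a Stone space, so $X \cong \mathrm{Spec}(\mathrm{Clop}(X))$ by Stone duality, and the regular closed subsets of the Stone space of a Boolean algebra constitute its MacNeille completion; hence $\mathsf{RC}(X)$ is the MacNeille completion of $\mathrm{Clop}(X)$ as well. By uniqueness of the MacNeille completion, $\B(\N(L)) \cong \mathsf{RC}(X)$, as required. I expect the main obstacle to be the second step of the previous paragraph---establishing join-density and packaging it as a MacNeille completion. This demands care in transporting \cref{prop: blocks-1} across the duality $\N(X)\cong\N(L)^{\mathrm{op}}$, in distinguishing $\N(L)$-joins from $\B$-joins (the former computed via \cref{theorem: nuclear meets and joins}, the latter via double pseudocomplementation), and in verifying that the embedding of $\mathrm{Clop}(X)$ genuinely respects the Boolean operations despite the order reversal. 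The remaining ingredients---completeness of $\B(\N(L))$, the equivalence of join- and meet-density in a Boolean algebra, and the identification of $\mathsf{RC}(X)$ with a MacNeille completion---are standard and require no new input beyond the results already in the excerpt.
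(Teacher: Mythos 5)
The paper states this theorem without proof, citing \cite{BGJ13}, so there is no in-paper argument to compare against; judged on its own, your proof is correct. All the ingredients check out: by \cref{thm: center-1} every clopen set is a complemented, hence regular, element of $\N(L)\cong\N(X)^{\mathrm{op}}$; the finite Boolean operations on clopens computed via \cref{theorem: nuclear meets and joins} do come out as the set-theoretic ones up to the order reversal (e.g.\ the $\N(L)$-complement of a clopen $C$ is $C^c$, and the $\B$-join of regular complemented elements agrees with the frame join since that join is again complemented); and \cref{prop: blocks-1} expresses every nuclear subset as an $\N(X)$-meet of clopens of the form $U\cup V^c$, which a fortiori makes the clopens join-dense in $\N(L)$ and hence, after applying double pseudocomplementation to a regular element, join-dense in $\B(\N(L))$. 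Combined with the standard facts that join-density of a Boolean subalgebra implies meet-density and that the regular closed (equivalently regular open) sets of the Stone space of a Boolean algebra form its MacNeille completion (using \cref{lem: pri-facts-stone}), the uniqueness of the MacNeille completion closes the argument. The route the cited source takes is more direct: one computes the pseudocomplement of a nuclear subset $N$ in $\N(X)^{\mathrm{op}}$ explicitly as $\cl(N^c)$, so that $N^{**}=\cl(\operatorname{int} N)$ and the regular elements are literally the regular closed sets. Your MacNeille detour trades that explicit computation (which requires checking that $\cl(N^c)$ is nuclear) for an appeal to completion theory; it buys a proof that uses only results already displayed in this survey, at the cost of producing the isomorphism abstractly rather than as the identity on underlying sets. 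One small point worth making explicit if you write this up: the natural inclusion of the clopens into $\N(L)$ is order-reversing, so you should either embed via set complementation or invoke the self-duality of Boolean algebras before speaking of a ``Boolean subalgebra''; as you note, this does not affect the existence of the final isomorphism.
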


\section{Separation axioms}\label{sec: sep axioms}

Separation axioms play an important role in topology 
as they add extra structure, producing subcategories that allow deeper analysis. 
Since higher separation axioms (such as regularity or normality)
can be expressed purely in terms of the frame of opens, they can be described 
faithfully in the pointfree setting. 
On the other hand, lower separation axioms (such as $T_1$ or $T_2$)
are harder to capture pointfree because the frame language is too restrictive to express, for example, that points are closed. 
Nevertheless, several frame-theoretic analogs of $T_1$ and $T_2$  
have been introduced (see, e.g., \cite[Ch.~II--IV]{PP21}). Here we focus on two of the most prominent ones:
Isbell's subfitness (\cite{Isb72}) and the notion of Hausdorff frames introduced by Johnstone and Sun \cite{JS88}.

Separation axioms in pointfree topology have been studied through the lenses of Priestley duality (see, e.g., \cite{BCM23,BMMTWW25} for subfitness and  \cite{PS88,BGJ16} for regularity).
We provide a uniform approach 
by identifying certain \emph{kernels}, i.e., dense subsets of clopen upsets that determine the corresponding separation property. This follows the viewpoint of our recent work
\cite{BM23,BM25} (see also \cite[pp.~51, 108--109]{Mel25}), where similar kernels are introduced to characterize certain classes of frames (see \cref{sec: compact,sec: algebraic}).
We begin with lower separation axioms (subfitness and Hausdorffness) and then 
proceed to higher ones (regularity and normality).

\subsection{Subfitness}

As we pointed out above, subfitness was introduced as a frame-theoretic analog of the $T_1$ separation axiom (see, e.g., \cite{PP21}).

\begin{definition}
     A frame $L$ is \emph{subfit} 
     if for all $a,b \in L$,
     \[
     a \nleq b \Longrightarrow \exists c \in L : a \vee c =1 \mbox{ and } b \vee c \neq 1.
     \]
\end{definition}

This notion goes back to Wallman \cite[p.~115]{Wal38}.
The term  was coined by Isbell \cite[p.~17]{Isb72},\footnote{Subfit frames are also called \emph{disjunctive} (see, e.g., \cite{Sim79}), in correspondence with Wallman's original terminology for the dual notion.} who gave the following characterization of subfitness in the language of sublocales.

\begin{lemma}[see, e.g., {\cite[p.~29]{PP21}}]
A frame $L$ is subfit if and only if every open sublocale is a join of closed ones.
\label{lem: Isbell char subfit}
\end{lemma}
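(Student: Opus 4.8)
The plan is to prove both directions by translating the lattice condition through the isomorphism $\sigma \colon L \to \clopup(X)$ and the description of open and closed sublocales from \cref{thm: desc open sublocale,thm: desc closed sublocale}. Recall that open sublocales correspond to clopen upsets $\sigma(a)$ and closed sublocales to clopen downsets $\sigma(b)^c$, and that joins in $\N(X)$ are computed as $\cl \bigcup$ by \cref{theorem: nuclear meets and joins}. So the statement ``every open sublocale is a join of closed ones'' should become a purely topological condition on the $\L$-space $X$: for each $U \in \clopup(X)$, the upset $U$ is the closure of a union of clopen downsets contained in it.

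First I would unwind the definition of subfitness directly. The implication $a \nleq b \Rightarrow \exists c : a \vee c = 1, b \vee c \neq 1$ translates, via $\sigma$ and the fact that $\sigma(1) = X$ while $\sigma$ is an order-embedding, into a separation statement about clopen upsets: whenever $\sigma(a) \nsubseteq \sigma(b)$, there is a clopen upset $\sigma(c)$ with $\sigma(a) \cup \sigma(c) = X$ and $\sigma(b) \cup \sigma(c) \neq X$. The complement $\sigma(c)^c$ is a clopen downset, so this says: there is a clopen downset $D = \sigma(c)^c$ with $D \subseteq \sigma(a)$ (from $\sigma(a) \cup \sigma(c) = X$) and $D \nsubseteq \sigma(b)$ (from $\sigma(b) \cup \sigma(c) \neq X$). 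Thus subfitness is equivalent to the condition that whenever a clopen upset $U = \sigma(a)$ fails to be contained in another clopen upset $\sigma(b)$, some clopen downset sitting inside $U$ witnesses this failure.

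Next I would match this against the sublocale formulation. For the direction ($\Leftarrow$), assuming every open sublocale is a join of closed ones, I would take $\sigma(a) \nsubseteq \sigma(b)$ and use that $\o(a) = \sigma(a)$ is a join of closed sublocales $\c(c_i) = \sigma(c_i)^c$, i.e.\ $\sigma(a) = \cl \bigcup_i \sigma(c_i)^c$ by \cref{thm: desc closed sublocale,theorem: nuclear meets and joins}. Since the failure $\sigma(a) \nsubseteq \sigma(b)$ is witnessed by a point, and the clopen downsets $\sigma(c_i)^c$ are dense in $\sigma(a)$, compactness/openness of $\sigma(b)^c$ lets me extract a single $\sigma(c_i)^c$ not contained in $\sigma(b)$, yielding the required $c = c_i$. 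For the direction ($\Rightarrow$), assuming subfitness, I would fix an open sublocale $\o(a) = \sigma(a)$ and show it equals the join (closure of the union) of all clopen downsets it contains; the subfitness condition guarantees these downsets are dense enough in $\sigma(a)$ that no point of $\sigma(a)$ can be separated from their union by a basic clopen neighborhood.

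The main obstacle will be the density/compactness bookkeeping in extracting a single witnessing closed sublocale from the join in the ($\Leftarrow$) direction: joins in $\N(X)$ involve a closure $\cl \bigcup$, so a point in $\sigma(a) \setminus \sigma(b)$ need not lie in any single $\sigma(c_i)^c$, only in their closure. I would handle this by working with a basic clopen neighborhood $V \cap W^c$ of the separating point that misses $\sigma(b)$, using that $\sigma(b)$ is clopen hence its complement is open, and then invoking that the point lies in $\cl \bigcup_i \sigma(c_i)^c$ to find some index $i$ with $\sigma(c_i)^c$ meeting this neighborhood, so that $\sigma(c_i)^c \nsubseteq \sigma(b)$ as needed.
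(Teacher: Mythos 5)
Your route is sound and is genuinely different from the paper's. The paper does not prove this lemma directly: it cites it and later recovers it as a byproduct of \cref{thm: subfit}, namely the equivalence of subfitness with $\cl(\sfit U)=U$ for all $U\in\clopup(X)$, combined with the observation (in the remark following that theorem) that, by \cref{lem: pri-facts-intersection} and compactness, $\sfit U$ and the union of all clopen downsets contained in $U$ have the same closure. You instead translate both sides of the biconditional head-on through Priestley duality --- subfitness becomes ``whenever $\sigma(a)\nsubseteq\sigma(b)$ there is a clopen downset $D\subseteq\sigma(a)$ with $D\nsubseteq\sigma(b)$,'' and the sublocale statement becomes $\sigma(a)=\cl\bigcup\{D\mid D \text{ clopen downset},\ D\subseteq\sigma(a)\}$ via \cref{thm: desc nuclei} and \cref{theorem: nuclear meets and joins} --- and then checks the two are equivalent. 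This is shorter and avoids the intermediate characterizations (2)--(4) of \cref{thm: subfit}, at the cost of not producing those characterizations along the way. Your ($\Leftarrow$) direction is correct as sketched; openness of $\sigma(b)^c$ alone extracts the witnessing $\sigma(c_i)^c$, no compactness needed.

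One detail in the ($\Rightarrow$) direction is not yet right and would trip up a naive write-up. If $x\in\sigma(a)$ fails to lie in the closure of the union, you get a basic clopen neighborhood $\sigma(v)\cap\sigma(w)^c$ of $x$ missing every clopen downset contained in $\sigma(a)$, and you want to apply subfitness to $v\nleq w$. But the clopen downset $D$ that subfitness then produces satisfies only $D\subseteq\sigma(v)$, and $\sigma(v)$ need not be contained in $\sigma(a)$; so $D$ need not be one of the downsets whose union you are taking, and no contradiction follows. The fix is to first replace $v$ by $a\wedge v$ (equivalently, shrink the separating neighborhood to $\sigma(a\wedge v)\cap\sigma(w)^c$, still a neighborhood of $x$ disjoint from the union): then $D\subseteq\sigma(a\wedge v)\subseteq\sigma(a)$ and $D\cap\sigma(w)^c\neq\varnothing$, so $D$ meets the neighborhood, which is the desired contradiction. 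With that adjustment the argument goes through.
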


\L-spaces of subfit frames were described in \cite[Lem.~3.1]{BCM23} (see also \cite[Prop.~5.2]{BMMTWW25}) as those whose set of minimal points is dense. Below we give several other equivalent conditions, including the one using subfit kernels: 
 
\begin{definition}
    Let $X$ be an \L-space. For $U \in \clopup(X)$, define the \emph{subfit kernel} of $U$ as $\sfit U = \{x \in U \mid \downset x \subseteq U\}$.
\end{definition}

\begin{theorem}  \label{thm: subfit}
    Let $L$ be a frame and $X$ its \L-space. The following are equivalent.
    \begin{enumerate}[cref=theorem]
        \item $L$ is subfit. \label{thm: subfit-1}
        \item $\cl (\min U) = U$ for each $U \in \clopup(X)$. \label{thm: subfit-2}
        \item $\min X$ is dense. \label{thm: subfit-3}
        \item $\min X \subseteq N$ implies $N = X$ for each $N \in \N(X)$. \label{thm: subfit-4}
        \item $\cl (\sfit U) = U$ for each $U \in \clopup(X)$. \label{thm: subfit-5}
    \end{enumerate}
\end{theorem}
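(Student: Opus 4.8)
The plan is to make condition $(3)$ (density of $\min X$) the hub: I would prove the geometric equivalences $(2)\Leftrightarrow(3)\Leftrightarrow(4)\Leftrightarrow(5)$ by direct arguments in the \L-space $X$, and then bridge to subfitness through $(1)\Leftrightarrow(5)$, using Isbell's characterization (\cref{lem: Isbell char subfit}) together with the sublocale--nuclear dictionary of \cref{sec: nuclei}.

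The preliminary I would record first is that $\sfit U=U\setminus\upset(U^c)$ is exactly the largest downset contained in $U$. Since $U$ is clopen, $U^c$ is closed, so $\upset(U^c)$ is closed by \cref{lem: pri-facts-upclosed}; hence $\sfit U$ is an \emph{open} downset, and by \cref{lem: pri-facts-unions} it is the union of all clopen downsets contained in $U$. Consequently $\cl(\sfit U)=\bigvee\{D\mid D\text{ a clopen downset with }D\subseteq U\}$ is a nuclear subset by \cref{theorem: nuclear meets and joins}. With this in hand, $(1)\Leftrightarrow(5)$ becomes almost formal: under the isomorphism $\S(L)\cong\N(X)$ of \cref{thm: nuclear-subsets}, open sublocales $\o(a)$ correspond to clopen upsets $U=\sigma(a)$ and closed sublocales $\c(b)$ to clopen downsets $\sigma(b)^c$ (\cref{thm: desc open sublocale,thm: desc closed sublocale}), while joins correspond to $\cl\bigcup$. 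Thus ``$\o(a)$ is a join of closed sublocales'' translates precisely into $U=\cl(\sfit U)$, and \cref{lem: Isbell char subfit} closes the loop.

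For the geometric cluster I would argue via minimal-point extraction from \cref{lem: pri-facts-max}. For $(3)\Rightarrow(5)$: if $\min X$ is dense, then for a clopen upset $U$ any nonempty relatively open $O\subseteq U$ contains some $m\in\min X$, and $\downset m=\{m\}\subseteq U$ forces $m\in\sfit U$, so $\sfit U$ is dense in $U$. For $(5)\Rightarrow(3)$: given a nonempty basic open $V\cap W^c$ with $V,W\in\clopup(X)$, density of $\sfit V$ in $V$ yields $y\in\sfit V\cap W^c$; a minimal $m\le y$ then lies in $V$ (since $\downset y\subseteq V$) and in $W^c$ (since $W$ is an upset), so $\min X$ meets $V\cap W^c$. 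The equivalence $(2)\Leftrightarrow(3)$ is the same density bookkeeping: $(2)\Rightarrow(3)$ is the instance $U=X$, and $(3)\Rightarrow(2)$ holds because $\min X\cap U\subseteq\min U$ is already dense in $U$. Finally, for $(3)\Leftrightarrow(4)$: as nuclear subsets are closed, $\min X\subseteq N$ gives $X=\cl(\min X)\subseteq N$ when $\min X$ is dense; conversely, if $\min X$ is not dense, a nonempty basic open $V\cap W^c$ disjoint from $\min X$ yields the proper nuclear witness $N:=W\cup V^c=(V\cap W^c)^c$, which is of the form $U\cup V^c$ and hence lies in $\N(X)$, satisfies $\min X\subseteq N$, yet $N\ne X$, refuting $(4)$.

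I expect the bridge $(1)\Leftrightarrow(5)$ to be the main obstacle, since there one must get the variance of the sublocale--nuclear correspondence right (joins in $\S(L)$ matching $\cl\bigcup$ in $\N(X)$) and must recognise $\cl(\sfit U)$ as the join of \emph{all} closed sublocales contained in the open sublocale $\o(a)$. Everything else is routine once $\sfit U$ is identified as the largest downset in $U$ and \cref{lem: pri-facts-max} is invoked to pass to minimal points; the only mild cleverness is spotting the nuclear witness $W\cup V^c$ for $(4)\Rightarrow(3)$.
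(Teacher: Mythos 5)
Your proposal is correct, but it routes the crucial bridge between subfitness and the geometric conditions differently from the paper. The paper proves the cycle $(1)\Rightarrow(2)\Rightarrow(3)\Rightarrow(4)\Rightarrow(5)\Rightarrow(1)$, handling both ends of the bridge by direct computation with the definition of subfitness in $\clopup(X)$: $(1)\Rightarrow(2)$ extracts a minimal point of $A^c$ from a witness $A\cup V=X$, $B\cup V\ne X$, and $(5)\Rightarrow(1)$ uses \cref{lem: pri-facts-intersection} and compactness to shrink $\downset x$ to a clopen downset $C^c\subseteq A\cap B^c$. You instead prove $(1)\Leftrightarrow(5)$ by identifying $\sfit U$ as the largest (open) downset in $U$, hence the union of all clopen downsets in $U$, so that $\cl(\sfit U)$ is exactly the join in $\N(X)$ of all closed sublocales below $\o(a)$; Isbell's characterization (\cref{lem: Isbell char subfit}) then does the rest. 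Both arguments are sound, and your geometric cluster $(2)\Leftrightarrow(3)\Leftrightarrow(4)\Leftrightarrow(5)$ matches the paper's in substance (your nuclear witness $W\cup V^c$ for $(4)\Rightarrow(3)$ plays the role of the paper's appeal to \cref{thm: center-1} in $(4)\Rightarrow(5)$). What each buys: your route is shorter and makes the conceptual point — that $(5)$ is literally the Priestley transcription of Isbell's condition — the engine of the proof rather than a by-product; the paper's route is self-contained from the definition of subfitness and therefore \emph{yields} Isbell's characterization as a corollary, which is exactly the third bullet of the remark following the theorem. If you adopt your proof, that bullet becomes circular, since you would be assuming the very statement the paper advertises as a consequence.
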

\begin{proof}
    (1)$\Rightarrow$(2) 
    Let $U \in \clopup(X)$. If $\cl(\min U) \neq U$, then 
    $U \setminus \cl(\min U) \neq \varnothing$, so there are $V,W \in \clopup(X)$ such that 
    $$\varnothing \neq V \cap W^c \subseteq U \setminus \cl(\min U).$$ 
    Therefore, $V \not\subseteq W$. Since $L \cong \clopup(X)$,  by (1) there is ${A \in \clopup(X)}$ such that 
    ${A \cup V = X}$ and $A \cup W \neq X$. This implies that $A^c \subseteq V$ and $A^c \not \subseteq W$. 
    In particular, $\min (A^c) \subseteq V$ and $\min (A^c) \not \subseteq W$ (by \cref{lem: pri-facts-max}). 
    Thus, there exists $x \in \min (A^c)$ with $x \notin W$, and so ${x \in V \cap W^c \subseteq U \setminus \cl(\min U)}$. On the other hand, since $A^c$ is a downset, $\min(A^c)\subseteq V \cap \min X$, yielding that $x \in U \cap \min X \subseteq \min U$, 
    so $x \in \cl(\min U)$, a contradiction. Hence, $\cl(\min U) = U$.

    (2)$\Rightarrow$(3) 
    Since $X \in \clopup(X)$, $\cl(\min X) = X$ by  (2). Thus, $\min X$ is dense.

    (3)$\Rightarrow$(4) Suppose $\min X \subseteq N$. Since $N$ is closed, $\cl(\min X) \subseteq N$. Thus, $N = X$ by (3).

    (4)$\Rightarrow$(5). Clearly $\cl(\sfit U) \subseteq U$. Let $U \not\subseteq \cl(\sfit U)$. Then there is $x \in U$ with ${x \notin \cl(\sfit U)}$. Because $X$ is a Stone space (see \cref{lem: pri-facts-stone}), there is a clopen neighborhood $V$ of $x$ with ${V \cap \sfit U = \varnothing}$. 
    Since $U \cap \min X \subseteq \sfit U$, we have ${U \cap V \cap \min X = \varnothing}$, so $\min X \subseteq (U \cap V)^c$. By \cref{thm: center-1}, $(U \cap V)^c \in \N(X)$, so (4) gives that $(U \cap V)^c = X$. This contradicts that $x \in U \cap V$. Thus, $U = \cl(\sfit U)$.

    (5)$\Rightarrow$(1) It suffices to show that $\clopup(X)$ is subfit. 
    Let $A \not \subseteq B$ for some $A,B \in \clopup(X)$. Then $A \cap B^c \neq \varnothing$, so 
    $\sfit(A) \cap B^c \neq \varnothing$ by (5). 
    Therefore, there exists $x \in B^c$ such that $\downset x \subseteq A$. 
    Since $B^c$ is a downset, $\downset x \subseteq A \cap B^c$. But $\downset x = \bigcap \{C^c \mid C \in \clopup(X) \text{ and } \downset x \subseteq C^c\}$ by \cref{lem: pri-facts-intersection}.
    Thus, 
    $
        \bigcap \{C^c \mid C \in \clopup(X) \text{ and } \downset x \subseteq C^c\} \subseteq A \cap B^c,
    $
    and by compactness, there is $C \in \clopup(X)$ such that $C^c \subseteq A \cap B^c$. 
    Consequently, $A \cup C = X$ and $B \cup C \neq X$ (since $B \cup C = C$ and $x \notin C$). 
\end{proof}

\begin{remark}
\Cref{thm: subfit} provides an alternate proof, in the language of Priestley spaces,  of 
several known frame-theoretic characterizations of subfitness:
\begin{itemize}
    \item Since clopen upsets correspond to open sublocales (see \cref{thm: desc open sublocale}), \cref{thm: subfit-2} states that every open sublocale 
    is subfit. This is equivalent to subfitness since a frame is an open sublocale of itself and open sublocales of subfit frames are subfit (see, e.g., \cite[p.~91]{PP12}).
    \item Because $\min X \subseteq N$ means that $N$ meets every nonempty clopen downset and the latter correspond to closed sublocales (see \cref{thm: desc closed sublocale}),  \cref{thm: subfit-4} states that the only sublocale that is not disjoint from any nonempty closed sublocale is the frame itself \cite[p.~29]{PP21}. 
    \item Since $\downset x$ is a closed downset, compactness together with \cref{lem: pri-facts-intersection} yields that $\downset x \subseteq U$ for $U \in \clopup(X)$ if and only if there is a clopen downset $D$ such that $x \in D \subseteq U$. Hence, \cref{thm: subfit-5} states that every open sublocale (clopen upset) is a join of closed sublocales (clopen downsets), which is Isbell's characterization of subfitness (see \cref{lem: Isbell char subfit}).
\end{itemize}
\end{remark}

\subsection{Hausdorffness}

Hausdorffness is another separation axiom 
that is difficult to express in the language of frames. 
Several pointfree definitions 
have been proposed in the literature (see, e.g., \cite[Ch.~III]{PP21}). We concentrate on the one introduced by Johnstone and Sun \cite{JS88}.

\begin{definition}[{\cite[p.~44]{PP21}}]
A frame $L$ is \emph{Hausdorff} if for all $a,b \in L$, $$a \neq 1 \mbox{ and } a\nleq b \Longrightarrow \exists c \in L : c \nleq a \mbox{ and } c^* \nleq b.$$
\end{definition}

While this notion does not imply subfitness,
it has become the accepted one as it agrees with the usual Hausdorffness in the spatial case and is preserved by important operations on frames such as taking sublocales and products (see, e.g., \cite[p.~44--46]{PP21}). 
There are several equivalent characterizations of Hausdorff frames, which will be used below.

\begin{lemma}[see, e.g., {\cite[pp.~43--44]{PP21}}] 
    A frame $L$ is Hausdorff if and only if 
        \[u = \bigvee \{v \in L \mid v \leq u \text{ and } v^* \nleq u\}\] 
        for all $u \in L \setminus\{1\}$. \label{lem: T2-join}
\end{lemma}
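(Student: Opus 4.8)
The plan is to prove both implications, writing $W(u) := \{v \in L \mid v \le u \text{ and } v^* \nleq u\}$. Since every $v \in W(u)$ satisfies $v \le u$, the inequality $\bigvee W(u) \le u$ is automatic, so in each direction only the reverse inequality (or the separation property itself) is at stake. The one computation I would isolate at the outset, since it is used in both directions, is the following: \emph{whenever $c \nleq u$, the element $c^* \wedge u$ belongs to $W(u)$}. Indeed $c^* \wedge u \le u$, and from $c^* \wedge u \le c^*$ we get $(c^* \wedge u)^* \ge c^{**} \ge c$, so $(c^* \wedge u)^* \ge c \nleq u$, giving $(c^* \wedge u)^* \nleq u$. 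Consequently $c^* \wedge u \le \bigvee W(u)$ for every $c \nleq u$.

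For the direction ``join condition $\Rightarrow$ Hausdorff,'' I would argue directly. Given $a \ne 1$ and $a \nleq b$, apply the hypothesis to $u = a$ to obtain $a = \bigvee W(a)$. Since $a \nleq b$, the join $\bigvee W(a)$ is not below $b$, so $b$ fails to be an upper bound of $W(a)$; hence there is $v \in W(a)$ with $v \nleq b$, and by definition this $v$ also satisfies $v \le a$ and $v^* \nleq a$. Now set $c := v^*$. Then $c = v^* \nleq a$, and $c^* = v^{**} \ge v \nleq b$ forces $c^* \nleq b$, so $c$ witnesses the Hausdorff condition.

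For the harder direction ``Hausdorff $\Rightarrow$ join condition,'' fix $u \ne 1$, put $t := \bigvee W(u)$, and show $u \le t$; suppose not. The key idea, which I expect to be the main obstacle to locate, is to apply the Hausdorff condition not to the pair $(u,t)$ but to the pair $(u,\, u \to t)$, exploiting the Heyting adjunction. Since $u \nleq t$ we have $u \to t \ne 1$, hence $u \nleq (u \to t)$, and $u \ne 1$, so Hausdorffness yields $c$ with $c \nleq u$ and $c^* \nleq (u \to t)$. By the adjunction $x \le (u \to t) \iff x \wedge u \le t$, the condition $c^* \nleq (u \to t)$ says precisely that $c^* \wedge u \nleq t$. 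But $c \nleq u$, so by the opening observation $c^* \wedge u \in W(u)$, whence $c^* \wedge u \le t$ --- a contradiction. Therefore $u \le t$, and the desired equality $u = \bigvee W(u)$ follows.

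The only genuinely non-routine point is the choice of the second argument in the Hausdorff condition. Applying it to $(u,t)$ directly gives only $c^* \nleq t$, which does not combine with $c^* \wedge u \le t$ to anything useful, because the witness of $c^* \nleq t$ may lie outside $u$; replacing $t$ by $u \to t$ localizes the separation inside $u$ and makes the contradiction immediate. Everything else is the adjunction $x \wedge u \le t \iff x \le (u \to t)$ together with the elementary pseudocomplement facts $c \le c^{**}$ and $x \le y \Rightarrow y^* \le x^*$, all available since $L$ is a frame and hence a Heyting algebra.
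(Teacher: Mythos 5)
Your proof is correct, and there is nothing in the paper to compare it against: the lemma is stated with a citation to Picado--Pultr and no proof is given in the text. Both directions check out --- the preliminary observation that $c \nleq u$ forces $c^* \wedge u \in W(u)$, the choice $c := v^*$ in the easy direction, and in particular the pivotal step of applying the Hausdorff axiom to the pair $(u,\, u \to t)$ rather than to $(u,t)$, which is exactly what turns $c^* \nleq u \to t$ into the localized statement $c^* \wedge u \nleq t$ and produces the contradiction.
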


We now characterize Priestley spaces of Hausdorff frames using an appropriate kernel.

\begin{definition}
    Let $X$ be an \L-space. For $U \in \clopup(X)$, define the \emph{Hausdorff kernel} of $U$ as 
    $\haus U = \{x \in U \mid U \ne X \Longrightarrow \exists y \in \max (U^c) : \upset y \cap \upset x = \varnothing\}$.
\end{definition}

We next prove that a frame is Hausdorff iff 
the Hausdorff kernel of each clopen upset is dense in the upset.
To do so, we require the following lemma, which describes the pseudocomplement of a clopen upset (and the containment relationship). 

\begin{lemma}
    Let $X$ be an \L-space and $U,V \in\clopup(X)$. 
    \begin{enumerate}[cref=lemma]
        \item $U^* = X \setminus \downset U$. \label{lem: pseudocomplement-1}
        \item $V^* \subseteq U$ if and only if $\max(U^c) \subseteq \downset V$. \label{lem: pseudocomplement}
    \end{enumerate}
\end{lemma}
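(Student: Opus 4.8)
The plan is to prove (1) directly from the frame structure on $\clopup(X)\cong L$ and then obtain (2) as a formal consequence. For (1), I would begin from the pseudocomplement computed inside the frame: $U^{*}=\bigvee\{W\in\clopup(X)\mid U\cap W=\varnothing\}$. Since each such $W$ is an upset, the condition $U\cap W=\varnothing$ is equivalent to $W\subseteq X\setminus\downset U$; indeed any $w\in W\cap\downset U$ would lie below some $u\in U$, forcing $u\in W$ by upwards closure. Thus the clopen upsets entering the join are exactly those contained in the open upset $X\setminus\downset U$ (open because $\downset U$ is a closed downset by \cref{lem: pri-facts-upclosed}). By \cref{lem: pri-facts-unions} their union is all of $X\setminus\downset U$, and by \cref{prop: exact joins-1} the frame join is the closure of this union, so $U^{*}=\cl(X\setminus\downset U)$.

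The one real obstacle is to discard the closure, i.e.\ to see that $V:=X\setminus\downset U$ is already closed; this is exactly where the \L-space hypothesis is used. I would argue as follows. Since $V$ is an open upset and $X$ is an \L-space, $\cl V$ is again an upset. From $U\subseteq\downset U$ we get $V\subseteq U^{c}$, and $U^{c}$ is closed (as $U$ is clopen), so $\cl V\subseteq U^{c}$, that is $\cl V\cap U=\varnothing$. Now if some $y\in\cl V$ failed to lie in $V$, then $y\in\downset U$, so $y\le u$ for some $u\in U$; upwards closure of $\cl V$ would then give $u\in\cl V\cap U=\varnothing$, a contradiction. Hence $\cl V=V$ and $U^{*}=X\setminus\downset U$, proving (1).

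For (2), I would simply substitute (1): $V^{*}\subseteq U$ becomes $X\setminus\downset V\subseteq U$, which on taking complements reads $U^{c}\subseteq\downset V$. It then remains to trade $U^{c}$ for $\max(U^{c})$, using that the closed downset $U^{c}$ satisfies $U^{c}=\downset\max(U^{c})$: the inclusion $\downset\max(U^{c})\subseteq U^{c}$ is clear, and the reverse is \cref{lem: pri-facts-max} applied to the closed set $U^{c}$. As $\downset$ is monotone and idempotent, $U^{c}=\downset\max(U^{c})\subseteq\downset V$ is equivalent to $\max(U^{c})\subseteq\downset V$, which yields (2). I expect (2) to be entirely routine once (1) is available; the content of the lemma lies in the closure-removal step, which rests squarely on the defining property of \L-spaces.
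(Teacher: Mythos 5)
Your proof is correct. Part (2) is argued exactly as in the paper: the chain $V^{*}\subseteq U \iff X\setminus\downset V\subseteq U \iff U^{c}\subseteq\downset V \iff \max(U^{c})\subseteq\downset V$, with the last step resting on $U^{c}=\downset\max(U^{c})$ via \cref{lem: pri-facts-max}. For part (1) the paper offers no argument at all, merely citing the fact as well known (it is the standard description of pseudocomplements in Esakia spaces); your derivation of it --- computing $U^{*}=\bigvee\{W\mid U\cap W=\varnothing\}$, identifying the union of the relevant clopen upsets with the open upset $X\setminus\downset U$, and then using the \L-space property that $\cl(X\setminus\downset U)$ is an upset disjoint from $U$ to kill the closure --- is a complete and correct substitute for that citation, and correctly isolates where the \L-space hypothesis enters.
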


\begin{proof}
(1) This is well known (see, e.g., \cite[p.~61]{Esa19}). For 
    (2), observe that \begin{equation*}
        V^* \subseteq U 
        \iff X \setminus \downset V \subseteq U
        \iff U^c \subseteq \downset V 
        \iff \max(U^c) \subseteq \downset V. \qedhere
    \end{equation*}
\end{proof}

\begin{theorem} \label{thm: char Hausdorff} Let $L$ be a frame and $X$ its \L-space. 
    Then $L$ is Hausdorff if and only if $\cl (\haus U) = U$ for each clopen upset $U \in \clopup(X)$.
\end{theorem}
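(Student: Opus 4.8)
The plan is to transport everything to the Priestley side and reduce the claimed identity $\cl(\haus U)=U$ to the join characterization of Hausdorffness in \cref{lem: T2-join}. Since $\sigma\colon L\to\clopup(X)$ is a frame isomorphism, $L$ is Hausdorff if and only if $\clopup(X)$ is, so I may argue throughout with clopen upsets, computing joins and pseudocomplements inside $\clopup(X)$. The case $U=X$ is trivial: the implication in the definition of $\haus U$ is then vacuous, so $\haus X=X$ and $\cl(\haus X)=X$. Hence both the Hausdorff condition (which in \cref{lem: T2-join} quantifies over $u\neq 1$) and the target condition effectively concern only the clopen upsets $U\neq X$.

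First I would rewrite Hausdorffness concretely. By \cref{lem: T2-join}, $\clopup(X)$ is Hausdorff if and only if, for every $U\in\clopup(X)$ with $U\neq X$, we have $U=\bigvee\{V\in\clopup(X)\mid V\subseteq U \text{ and } V^*\nsubseteq U\}$. Because every join in a frame is exact, \cref{prop: exact joins-1} (transported along $\sigma$) identifies this join with $\cl\bigcup\{V\mid\dots\}$. Next I translate the condition $V^*\nsubseteq U$: by \cref{lem: pseudocomplement} it is equivalent to $\max(U^c)\nsubseteq\downset V$, i.e.\ to the existence of $y\in\max(U^c)$ with $y\notin\downset V$, which unwinds to $\upset y\cap V=\varnothing$. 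Thus Hausdorffness becomes the statement that $U=\cl W_U$ for every $U\neq X$, where $W_U:=\bigcup\{V\in\clopup(X)\mid V\subseteq U \text{ and } \exists\,y\in\max(U^c)\text{ with }\upset y\cap V=\varnothing\}$.

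The crux is then the purely order-topological identity $W_U=\haus U$, from which $\cl W_U=\cl(\haus U)$ and the theorem follow. The inclusion $W_U\subseteq\haus U$ is the easy direction: if $V$ lies in the defining family with witness $y$ and $z\in V$, then $\upset z\subseteq V$ (as $V$ is an upset), so $\upset y\cap\upset z\subseteq\upset y\cap V=\varnothing$, whence $z\in\haus U$. For the reverse inclusion $\haus U\subseteq W_U$, take $x\in\haus U$ with witness $y\in\max(U^c)$, so that $\upset x$ and $\upset y$ are disjoint closed upsets (\cref{lem: pri-facts-upclosed}). I would separate them by a clopen upset: by \cref{lem: pri-facts-intersection}, $\upset x$ is the intersection of the clopen upsets containing it, so the complements of these sets cover the compact set $\upset y$; a finite subcover yields a clopen upset $V_0\supseteq\upset x$ with $V_0\cap\upset y=\varnothing$. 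Intersecting with $U$ (which contains $\upset x$ since $x\in U$) gives a clopen upset $V:=V_0\cap U\subseteq U$ with $x\in V$ and $\upset y\cap V=\varnothing$, so $V$ belongs to the defining family and $x\in V\subseteq W_U$.

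I expect the separation of the disjoint closed upsets $\upset x$ and $\upset y$ by a clopen upset to be the only genuinely nontrivial step; everything else is bookkeeping, translating frame-theoretic data across $\sigma$ and through the pseudocomplement formula of \cref{lem: pseudocomplement}. This separation is exactly where compactness (and, via \cref{lem: pri-facts-intersection}, Priestley separation) enters, and some care is needed to land the separating clopen upset inside $U$, which the final intersection with $U$ secures.
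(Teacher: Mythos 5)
Your proposal is correct and follows essentially the same route as the paper: reduce via \cref{lem: T2-join}, \cref{lem: pseudocomplement}, and \cref{prop: exact joins-1} to the identity $\haus U = \bigcup\{V \in \clopup(X) \mid V \subseteq U \text{ and } \max(U^c) \nsubseteq \downset V\}$, and prove both inclusions exactly as the paper does (including separating the disjoint closed upsets $\upset x$ and $\upset y$ by a clopen upset via \cref{lem: pri-facts-intersection} and compactness, then intersecting with $U$). Your explicit treatment of the trivial case $U = X$ is a small addition the paper leaves implicit.
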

\begin{proof}
    Since $L \cong \clopup(X)$, by \cref{lem: T2-join} $L$ is Hausdorff if and only if
    \[
        U = \bigvee \{V \in \clopup(X) \mid V \subseteq U \text{ and } V^* \nsubseteq U\}
    \]
    for all $U \in \clopup(X) \setminus \{X\}$. 
    Using \cref{lem: pseudocomplement,prop: exact joins-1}, this is equivalent to \[
        U = \cl \bigcup \{V \in \clopup(X) \mid V \subseteq U \text{ and } \max(U^c) \nsubseteq \downset V\}.
    \]
    for all $U \in \clopup(X) \setminus \{X\}$. It remains to show that for such $U$,
    \[
      \haus U = \bigcup \{V \in \clopup(X) \mid V \subseteq U \text{ and } \max(U^c) \nsubseteq \downset V\}.
    \]
    ($\subseteq$) Let $x \in \haus U$. Then there is $y \in \max(U^c)$ such that $\upset x \cap \upset y = \varnothing$. By \cref{lem: pri-facts-intersection} and compactness, there is $V \in \clopup(X)$ such that $x \in V$ and $V \cap \upset y = \varnothing$. Since $y \notin U$, we may choose $V$ so that $V \subseteq U$. 
    From $V \cap \upset y = \varnothing$ it follows that $y \notin \downset V$. Thus, $\max(U^c) \not\subseteq \downset V$.

    ($\supseteq$) Let $x \in V \in \clopup(X)$ with $V \subseteq U$ and $\max(U^c) \nsubseteq \downset V$. Then there is $y \in \max(U^c)$ with $\upset y \cap V = \varnothing$. Since $x \in V$, we have $\upset y \cap \upset x = \varnothing$. 
    Thus, $x \in \haus U$. 
\end{proof}

Because Hausdorff frames need not be subfit, the Hausdorff kernel is not generally contained in the subfit kernel. We next define an alternative kernel that characterizes subfit Hausdorff frames and leads to a chain of containments between the kernels (see \cref{rem: containment of kernels}).

\begin{definition}
    Let $X$ be an \L-space. For $U \in \clopup(X)$, let
    \begin{align*}
        \hausfit U &= \haus U \cap \sfit U \\
        &=  \{x \in U \mid \downset x \subseteq U \text{ and } U \ne X \Longrightarrow \exists y \in \max (U^c) : \upset y \cap \upset x = \varnothing\}.
    \end{align*}
\end{definition}

\begin{theorem} \label{thm: char subfit Hausdorff} Let $L$ be a frame and $X$ its \L-space. 
Then $L$ is subfit Hausdorff if and only if $\cl(\hausfit U) = U$ for each clopen upset~$U$.
\end{theorem}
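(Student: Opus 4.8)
The plan is to treat the two implications very asymmetrically: the backward direction will be essentially immediate from what is already proved, while the forward direction carries all the content and reduces to a density-of-intersection argument.

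For the backward direction, suppose $\cl(\hausfit U) = U$ for every clopen upset $U$. Since $\hausfit U = \haus U \cap \sfit U$, we have the inclusions $\hausfit U \subseteq \sfit U$ and $\hausfit U \subseteq \haus U$. Taking closures and using that each kernel is contained in $U$, I obtain $U = \cl(\hausfit U) \subseteq \cl(\sfit U) \subseteq U$ and likewise $U = \cl(\hausfit U) \subseteq \cl(\haus U) \subseteq U$. Hence $\cl(\sfit U) = U$ and $\cl(\haus U) = U$ for all clopen upsets $U$, so \cref{thm: subfit} and \cref{thm: char Hausdorff} give that $L$ is subfit and Hausdorff, respectively.

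For the forward direction, assume $L$ is subfit and Hausdorff, so that $\cl(\sfit U) = U$ and $\cl(\haus U) = U$ for every $U \in \clopup(X)$ by those same theorems. Fixing a clopen upset $U$, the inclusion $\cl(\hausfit U) \subseteq U$ is clear, so I only need density: every nonempty clopen $G \subseteq U$ (these form a basis for $U$) must meet $\hausfit U$. The key structural fact I would invoke is that $\haus U$ is open; indeed, the proof of \cref{thm: char Hausdorff} exhibits it as a union of clopen upsets. Thus $G \cap \haus U$ is a nonempty (by density of $\haus U$) open subset of the Stone space $X$, so by zero-dimensionality (\cref{lem: pri-facts-stone}) it contains a nonempty clopen $G'$ with $G' \subseteq G \cap \haus U \subseteq U$. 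Applying density of $\sfit U$ to $G'$ yields a point $z \in G' \cap \sfit U$; since $G' \subseteq \haus U$ and $G' \subseteq G$, this $z$ lies in $G \cap \haus U \cap \sfit U = G \cap \hausfit U$, as required.

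Conceptually, the forward direction is just the statement that the intersection of two dense open subsets of the compact Hausdorff (hence Baire) space $U$ is again dense; the argument above carries this out by hand, using only the openness of $\haus U$ together with the density of both kernels. The main obstacle is precisely this passage from the separate density of $\haus U$ and $\sfit U$ to density of their intersection: density is not preserved under intersection in general, and what rescues the argument is that at least one of the two kernels is open, which lets me nest a clopen box inside it before invoking density of the other. The degenerate case $U = X$ is trivial, since then $\hausfit X = X$.
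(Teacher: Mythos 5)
Your proof is correct, and while the backward direction coincides with the paper's, your forward direction takes a mirror-image route. The paper also reduces the problem to intersecting the two dense kernels, but it localizes the \emph{subfit} kernel: using density of $\min X$ (\cref{thm: subfit-3}) it finds a nonempty clopen \emph{downset} $D$ inside a given basic clopen subset of $U$, observes that any such $D\subseteq U$ automatically satisfies $D \subseteq \sfit U$, and then applies density of $\haus U$ to $D$. You instead localize the \emph{Hausdorff} kernel, extracting from the proof of \cref{thm: char Hausdorff} the (correct, and there unstated) fact that $\haus U$ is a union of clopen upsets, hence open, and then apply density of $\sfit U$ inside a clopen set nested in $G \cap \haus U$; the case $U=X$ is handled separately, as it must be since the displayed description of $\haus U$ in that proof is only asserted for $U \ne X$. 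Both arguments are instances of the same Baire-flavored observation that a dense set meets a dense open set densely; yours makes that shape explicit and isolates openness of $\haus U$ as a reusable structural fact, whereas the paper's version re-exploits the $\min X$ characterization of subfitness and never needs to know that $\haus U$ is open. The two proofs are of comparable length and rely on the same background (\cref{lem: pri-facts-stone}, \cref{lem: pri-facts-intersection}, compactness), so the choice between them is essentially a matter of which kernel one prefers to treat as the ``open'' one.
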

\begin{proof}
    ($\Rightarrow$) 
    It is sufficient to show that for each $x \in U$ and clopen neighborhood $W$ of $x$, we have ${W \cap \hausfit U \neq \varnothing}$. 
    Since $L$ is subfit, $\min X$ is dense by \cref{thm: subfit-3}. 
    Therefore, ${W \cap U \cap \min X \neq \varnothing}$, so there is $m \in \min X \cap W \cap U$. If $U = X$, then $m \in \hausfit U$, so $W \cap \hausfit U \ne \varnothing$. Suppose $U \ne X$. We have $\downset m = \{m\} \subseteq W \cap U$, so \cref{lem: pri-facts-intersection} and compactness imply that there is a nonempty clopen downset $D$ such that $D \subseteq W \cap U$. Because $L$ is Hausdorff, $D \cap  \haus U \neq \varnothing$, so there are $y\in U \cap D$ and $z \in \max(U^c)$ such that $\upset y \cap \upset z = \varnothing$. Therefore, $\downset y \subseteq D \subseteq U$, 
    yielding that $y \in W \cap \hausfit U$,
    as required. 
    
    ($\Leftarrow$) 
    Since $\hausfit U \subseteq \sfit U$, we have that $\sfit U$ is dense in $U$, and hence $L$ is subfit by \cref{thm: subfit-5}. 
    Also, since $\hausfit U \subseteq \haus U$, we have that $\haus U$ is dense in $U$, and hence $L$ is Hausdorff by \cref{thm: char Hausdorff}.
\end{proof}

\subsection{Regularity and complete regularity}
In contrast to lower separation axioms, higher separation axioms have more faithful pointfree descriptions.

\begin{definition}[{see, e.g., \cite[p.~88--89]{PP12}}]
    Let $L$ be a frame.
    \begin{enumerate}
        \item Let $a, b \in L$. We say that $a$ is \emph{rather below} $b$ and write $a \prec b$ if $a^* \vee b =1$.\footnote{This relation is also called the {\em well inside} relation (see, e.g., \cite[p.~80]{Joh82}).} 
        \item $L$ is \emph{regular} if for all $a,b \in L$,
        \[
            a \nleq b \implies \exists c \in L : c \prec a \text{ and } c \nleq b.
        \]
        Equivalently, $L$ is regular provided
        $a = \bigvee \{c \in L \mid c \prec a\}$ for all $a \in L$. 
    \end{enumerate}
\end{definition}

The Priestley spaces of regular frames have been described in \cite[Thm.~3.4]{PS88} and \cite[Lem.~3.6]{BGJ16}. Our approach follows the latter.

\begin{lemma}[{\cite[Lem.~3.3]{BGJ16}}]
    Let $X$ be an \L-space. For $U, V \in \clopup(X)$, 
         $U \prec V$ if and only if $\downset U \subseteq V$. \label{lem: prec}
\end{lemma}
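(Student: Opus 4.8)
The plan is to prove the equivalence $U \prec V \iff \downset U \subseteq V$ by translating the frame-theoretic condition $U^* \vee V = X$ into a statement about upsets and downsets in the $\L$-space $X$, using the description of pseudocomplements already available as \cref{lem: pseudocomplement-1}. The key observation is that in $\clopup(X)$ the join $U^* \vee V$ equals $X$ precisely when the clopen upsets $U^*$ and $V$ cover $X$ as sets, since finite joins of clopen upsets are unions (this follows from \cref{prop: exact joins-1}, or simply because the join in the lattice $\clopup(X)$ is the set-theoretic union for finitely many clopen upsets). So I would first reduce $U \prec V$ to the set-theoretic equation $U^* \cup V = X$.

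Next I would substitute the formula $U^* = X \setminus \downset U$ from \cref{lem: pseudocomplement-1}. This turns $U^* \cup V = X$ into $(X \setminus \downset U) \cup V = X$, which holds if and only if the complement of $V$ is contained in $\downset U$, i.e.\ $V^c \subseteq \downset U$. Taking complements, this is equivalent to $(\downset U)^c \subseteq V$. Now the crucial point is that $X \setminus \downset U = U^*$ is a clopen \emph{upset}, so $(\downset U)^c \subseteq V$ says exactly that the upset $X \setminus \downset U$ sits inside $V$; but observe that $\downset U \subseteq V$ is the statement I want. I would therefore verify the chain of equivalences carefully:
\[
    U \prec V \iff U^* \vee V = X \iff (X\setminus\downset U)\cup V = X \iff V^c \subseteq \downset U.
\]
The last condition $V^c \subseteq \downset U$ is the same as $\downset U \subseteq V$ only after taking complements and using that $V$ is an upset while $\downset U$ is a downset, so I must be attentive here: $V^c \subseteq \downset U$ means every point outside $V$ lies below some point of $U$, whereas $\downset U \subseteq V$ means every point below a point of $U$ lies in $V$. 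These are contrapositives of each other once we note $\downset U \subseteq V \iff V^c \subseteq (\downset U)^c$, and since $(\downset U)^c = U^*$ is the complement, the two read identically. Thus the equivalence closes.

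The main obstacle, and the step I would treat most carefully, is the direction-of-inclusion bookkeeping in the final equivalence, ensuring that $(X \setminus \downset U) \cup V = X$ really is equivalent to $\downset U \subseteq V$ and not to some dual condition. Concretely, $(X \setminus \downset U) \cup V = X$ fails exactly when some point lies in neither set, i.e.\ when some $x \in \downset U$ has $x \notin V$, which is precisely the negation of $\downset U \subseteq V$; this contrapositive argument is the cleanest way to nail it down and avoids any confusion between upsets and downsets. Everything else is a direct substitution, so once the reduction of $\vee$ to $\cup$ for clopen upsets and the pseudocomplement formula are in hand, the proof is short.
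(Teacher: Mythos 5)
Your proposal is correct and follows essentially the same route as the paper: apply \cref{lem: pseudocomplement-1} to rewrite $U \prec V$ as $(X \setminus \downset U) \cup V = X$ and observe this is equivalent to $\downset U \subseteq V$. Note only that the displayed chain contains a slip --- $(X\setminus\downset U)\cup V = X$ is equivalent to $V^c \subseteq (\downset U)^c$, not to $V^c \subseteq \downset U$ --- but your final contrapositive paragraph resolves the equivalence correctly, so the argument stands.
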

\begin{proof}
    By \cref{lem: pseudocomplement-1},
    \[
       U \prec V \iff U^* \cup V = X \iff (X \setminus \downset U) \cup V = X \iff \downset U \subseteq V. \qedhere
    \]
\end{proof}

\begin{definition}[{see, e.g., \cite[Def.~ 3.4]{BGJ16}}]
    Let $X$ be an \L-space. For $U \in \clopup(X)$, define the \emph{regular kernel} of $U$ as $\reg U = \{x \in U \mid \downset \upset x \subseteq U\}$.
\end{definition}

\begin{theorem}[{\cite[Lem.~3.6]{BGJ16}}] \label{thm: char regular}
    Let $L$ be a frame and $X$ its \L-space. Then 
    $L$ is regular if and only if $\cl (\reg U) = U$ for each $U \in \clopup(X)$.
\end{theorem}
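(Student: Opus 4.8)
The plan is to follow the template already used for \cref{thm: subfit,thm: char Hausdorff}: first rephrase regularity as a closure-of-union identity on the side of clopen upsets, and then identify the relevant union with the regular kernel $\reg U$. Since $\sigma \colon L \to \clopup(X)$ is an isomorphism, $L$ is regular if and only if $\clopup(X)$ is, and by the join form of the definition this means that
\[
    U = \bigvee \{V \in \clopup(X) \mid V \prec U\}
\]
for every $U \in \clopup(X)$. By \cref{lem: prec} the relation $V \prec U$ is exactly $\downset V \subseteq U$, and since $L$ is a frame the join above is exact, so \cref{prop: exact joins-1} converts it into a closure. Thus regularity is equivalent to $U = \cl \bigcup \{V \in \clopup(X) \mid \downset V \subseteq U\}$ for all $U \in \clopup(X)$, and the theorem will follow once I establish the kernel identity
\[
    \reg U = \bigcup \{V \in \clopup(X) \mid \downset V \subseteq U\}
\]
and take closures on both sides.

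For the identity, the inclusion $\supseteq$ is routine: if $x \in V$ with $\downset V \subseteq U$, then $\upset x \subseteq V$ because $V$ is an upset, whence $\downset \upset x \subseteq \downset V \subseteq U$ and $x \in \reg U$. The inclusion $\subseteq$ is the substance of the proof. Given $x \in \reg U$, I need a clopen upset $V$ with $x \in V$ and $\downset V \subseteq U$. The pivotal reformulation is that $\downset \upset x \subseteq U$ is equivalent to the disjointness of the two \emph{closed upsets} $\upset x$ and $\upset(U^c)$, both closed by \cref{lem: pri-facts-upclosed}; this is the instance $\downset A \cap B = \varnothing \iff A \cap \upset B = \varnothing$ of the evident adjunction between $\downset$ and $\upset$, applied with $A = \upset x$ and $B = U^c$. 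Writing $\upset x$ as the intersection of all clopen upsets containing $x$ (\cref{lem: pri-facts-intersection}) and using compactness of $\upset(U^c)$, I obtain finitely many such clopen upsets whose intersection $V$ still contains $x$ yet remains disjoint from $\upset(U^c)$. The same adjunction, now in the form $V \cap \upset(U^c) = \varnothing \iff \downset V \subseteq U$, shows that this $V$ is the required clopen upset, so $x$ lies in the right-hand union.

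The step I expect to require the most care is this $\subseteq$ direction, and specifically the passage between the ``downset'' phrasing $\downset \upset x \subseteq U$ built into the definition of $\reg U$ and the ``disjoint closed upsets'' picture that powers the separation argument. Once the two equivalences $\downset \upset x \subseteq U \iff \upset x \cap \upset(U^c) = \varnothing$ and $\downset V \subseteq U \iff V \cap \upset(U^c) = \varnothing$ are verified, the remaining work—separating two disjoint closed upsets by a clopen upset via \cref{lem: pri-facts-intersection} and compactness—is the standard routine already exploited elsewhere in the paper, and the theorem follows immediately by applying $\cl$ to the kernel identity.
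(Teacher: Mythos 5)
Your proposal is correct and follows essentially the same route as the paper: the same reduction of regularity to a closure identity via \cref{lem: prec} and \cref{prop: exact joins-1}, and the same kernel identity $\reg U = \bigcup\{V \in \clopup(X) \mid \downset V \subseteq U\}$ with an identical $\supseteq$ direction. The only (cosmetic) difference is in the $\subseteq$ direction, where the paper applies \cref{lem: pri-facts-intersection} and compactness twice---first producing a clopen downset $D$ with $\upset x \subseteq D \subseteq U$, then a clopen upset $V$ with $x \in V \subseteq D$---whereas you separate the disjoint closed upsets $\upset x$ and $\upset (U^c)$ in a single compactness step; both arguments are routine and valid.
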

\begin{proof}
    Since $L \cong \clopup(X)$, by \cref{lem: prec,prop: exact joins-1} $L$ is regular if and only if
    \[
        U = \cl \bigcup \{V \in \clopup(X) \mid \downset V \subseteq U\}
    \]
    for all $U \in \clopup(X)$. Thus, it suffices to show that 
    \[
        \reg U = \bigcup \{V \in \clopup(X) \mid \downset V \subseteq U\}.
    \]
    
    ($\supseteq$) Let $x \in V \in \clopup(X)$ with $\downset V \subseteq U$. Then $\upset x \subseteq V$, so ${\downset \upset x \subseteq \downset V \subseteq U}$.
    
    ($\subseteq$) Let $x \in \reg U$, so $\downset \upset x \subseteq U$. By \cref{lem: pri-facts-intersection}, $\downset \upset x$ is an intersection of clopen downsets, and so by compactness there is a clopen downset $D$ such that $\upset x \subseteq D \subseteq U$. Using \cref{lem: pri-facts-intersection} again, $\upset x$ is an intersection of clopen upsets, so by compactness there is $V \in \clopup(X)$ such that $x \in V \subseteq D \subseteq U$. Thus,  $\downset V \subseteq U$.
\end{proof}

\begin{remark} \label{rem: containment of kernels}
    It is well known (see, e.g., \cite[p.~91]{PP21}) that regular frames 
    are both subfit and Hausdorff. In the language of \L-spaces, this can be seen as follows. If $X$ is an \L-space, then 
    for each $U \in \clopup(X)$, we have the following chain of inclusions: 
        \[\reg U \subseteq \hausfit U \subseteq \sfit U.\]
    The containment $\hausfit U \subseteq \sfit U$ is immediate. For $\reg U \subseteq \hausfit U$, suppose $x \in \reg U$. Then $\downset x \subseteq \downset \upset x \subseteq U$. Let $U \neq X$. Then $U^c \neq \varnothing$, so there is $y \in \max(U^c)$ (see \cref{lem: pri-facts-max}). If $\upset y \cap \upset x \neq \varnothing$, then $y \in \downset \upset x \subseteq U$, contradicting that $y \in U^c$. Thus, $\upset y \cap \upset x = \varnothing$, and hence $x\in\hausfit U$. 
\end{remark}

Complete regularity is an important strengthening of regularity and serves as a bridge to normality. In the pointfree setting it is obtained by refining the rather-below relation. 
\begin{definition}[{see, e.g., \cite[p.~91  ]{PP12}}]
    Let $L$ be a frame. 
    \begin{enumerate}
        \item For $a,b \in L$, a \emph{scale} from $a$ to $b$ is a family $\{a_r \mid r \in \mathbb{Q} \cap [0,1]\} \subseteq L$ such that 
        \[
            a = a_0 \leq a_r \prec a_s \leq a_1 = b 
        \]
        for all $r < s$. We say that $a$ is \emph{completely below} $b$, written $a \pprec b$, provided such a scale exists.\footnote{This relation 
        is also called the {\em really inside} relation (see, e.g., \cite[p.~126]{Joh82}).}
        \item $L$ is \emph{completely regular} if for all $a, b \in L$,
        \[
            a \nleq b \implies \exists c \in L : c \pprec a \text{ and } c \nleq b.
        \]
        Equivalently, $L$ is completely regular provided  
        $a = \bigvee\{c \in L \mid c \pprec a\}$ for all $a \in L$.
    \end{enumerate}
\end{definition}

\begin{remark}
        Assuming the Axiom of Countable Dependent Choice, the relation $\pprec$ is the largest relation $R$ contained in $\prec$ that {\em interpolates}, meaning that 
        from $aRb$ it follows that there is $c$ with $aRc$ and $cRb$ (see, e.g., \cite[p.~108]{PP21}). This will be used in \cref{prop: kernels coincide}.
        \label{rem: interpolates}
\end{remark}

There is a direct translation of the completely below relation in the language of Priestley spaces.

\begin{lemma} \label{lem: pprec}
    Let $X$ be an \L-space. For $U, V \in \clopup(X)$, $U \pprec V$ if and only if there is a family $\{U_r \mid r \in \mathbb Q \cap [0,1]\} \subseteq \clopup(X)$ such that 
    \[
        U = U_0 \subseteq U_r \subseteq \downset U_r \subseteq U_s \subseteq U_1 = V
    \]
    for $r < s$.
\end{lemma}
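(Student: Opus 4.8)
The plan is to reduce the statement about the completely-below relation $U \pprec V$ on $\clopup(X)$ to the relation on $L$ via the isomorphism $\sigma \colon L \to \clopup(X)$, and then translate each instance of the rather-below relation $\prec$ appearing in a scale using \cref{lem: prec}. Concretely, by definition $U \pprec V$ means there is a scale $\{U_r\}$ in $\clopup(X)$ with $U = U_0 \le U_r \prec U_s \le U_1 = V$ for all $r < s$, where $\le$ is the lattice order (i.e.\ inclusion $\subseteq$) and $\prec$ is the rather-below relation. Since $\sigma$ is a lattice isomorphism, a scale in $L$ transports to a scale in $\clopup(X)$ and vice versa, so $U \pprec V$ in $\clopup(X)$ is equivalent to the existence of such a family $\{U_r\} \subseteq \clopup(X)$.

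The substantive step is to rewrite the scale condition using the Priestley-space description of $\prec$. By \cref{lem: prec}, for $U_r, U_s \in \clopup(X)$ we have $U_r \prec U_s$ if and only if $\downset U_r \subseteq U_s$. Thus the three conditions $U_r \le U_s$, $U_r \prec U_s$ collapse: the chain $U_r \subseteq U_s$ together with $U_r \prec U_s$ becomes exactly $U_r \subseteq \downset U_r \subseteq U_s$, since $U_r \subseteq \downset U_r$ always holds and $\downset U_r \subseteq U_s$ is the content of $U_r \prec U_s$. Combining the endpoints $U = U_0$ and $V = U_1$ with the rather-below steps between consecutive rationals yields precisely the displayed chain
\[
    U = U_0 \subseteq U_r \subseteq \downset U_r \subseteq U_s \subseteq U_1 = V
\]
for $r < s$, which is the desired reformulation.

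First I would state that, because $\sigma$ is an isomorphism, $U \pprec V$ holds iff a scale $\{U_r\} \subseteq \clopup(X)$ exists with $U = U_0 \le U_r \prec U_s \le U_1 = V$ for $r < s$; this is immediate from the definition of $\pprec$ applied inside $\clopup(X)$. Then I would substitute the characterization of $\prec$ from \cref{lem: prec} into each step $U_r \prec U_s$, observing that $\downset U_r \subseteq U_s$ is what $U_r \prec U_s$ says, while $U_r \subseteq \downset U_r$ is automatic; this converts the mixed $\le$/$\prec$ chain into the uniform inclusion chain in the statement. I do not anticipate a serious obstacle here: the lemma is essentially a direct unwinding of definitions once \cref{lem: prec} is in hand. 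The only point requiring a small amount of care is checking that the interleaving of $\le$ and $\prec$ in the original scale definition matches the placement of $\downset U_r$ in the target chain, and that the inequalities for all pairs $r < s$ (not merely consecutive ones) follow; but the latter is automatic because $\prec$ composes with $\le$ on either side to give $\prec$, so the single-step conditions suffice to recover the full scale.
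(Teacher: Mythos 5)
Your proposal is correct and matches the paper's approach: the paper's entire proof is ``Apply \cref{lem: prec},'' i.e., transport the scale through the isomorphism $\sigma$ and replace each occurrence of $\prec$ by the condition $\downset U_r \subseteq U_s$. Your write-up simply makes explicit the routine unwinding that the paper leaves to the reader.
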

\begin{proof}
    Apply \cref{lem: prec}.
\end{proof}

\begin{definition}
    Let $X$ be an \L-space and $U \in \clopup(X)$. 
    \begin{enumerate}
        \item A point $x$ is \emph{completely inside} $U$ if $x \in U_0$ for some scale $\{U_r\}$ of $U$. 
        \item The \emph{completely regular kernel} of $U$ is
        \[
            \creg(U) = \{x \in U \mid x \text{ is completely inside }U\}.
        \]
    \end{enumerate}
\end{definition}

\begin{theorem} \label{thm: char cregular}
    Let $L$ be a frame and $X$ its \L-space. Then $L$ is completely regular if and only if $\cl (\creg U) = U$ for each $U \in \clopup(X)$.
\end{theorem}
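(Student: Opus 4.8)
The plan is to mirror the proof of \cref{thm: char regular}, replacing the rather-below relation $\prec$ with the completely-below relation $\pprec$ throughout. First I would use that $\sigma \colon L \to \clopup(X)$ is a lattice isomorphism together with the exact-join formula of \cref{prop: exact joins-1}: since $L$ is a frame, every join is exact, so the characterization $a = \bigvee\{c \in L \mid c \pprec a\}$ transports to $\clopup(X)$ as
\[
    U = \cl\bigcup\{V \in \clopup(X) \mid V \pprec U\} \qquad \text{for all } U \in \clopup(X).
\]
Here I rely on $\sigma$ transporting $\pprec$ faithfully: a scale in $L$ witnessing $c \pprec a$ maps under $\sigma$ to a family of the form described in \cref{lem: pprec}, since $\sigma$ is a lattice isomorphism and, by \cref{lem: prec}, converts each rather-below step $a_r \prec a_s$ into $\downset\sigma(a_r) \subseteq \sigma(a_s)$.

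Having made this reduction, it remains to establish the order-topological identity
\[
    \creg U = \bigcup\{V \in \clopup(X) \mid V \pprec U\}.
\]
This is essentially a matter of unwinding the definition of \emph{completely inside}. Indeed, $x \in \creg U$ means $x \in U_0$ for some scale $\{U_r\}$ of $U$, that is, a scale with top $U_1 = U$; by \cref{lem: pprec} such a scale is precisely a witness for $U_0 \pprec U$, so setting $V = U_0$ yields $V \pprec U$ with $x \in V$. Conversely, if $V \pprec U$, then a scale witnessing this has $U_0 = V$ and $U_1 = U$, so any $x \in V$ lies in the bottom of a scale of $U$ and is therefore completely inside $U$. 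This delivers both inclusions simultaneously.

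The proof is then completed by combining the two displays: $\cl(\creg U) = \cl\bigcup\{V \mid V \pprec U\}$, which equals $U$ for every $U \in \clopup(X)$ exactly when $L$ is completely regular. I do not expect a genuine obstacle here, and in fact the argument is lighter than the regular case, since no separate compactness step is needed for the kernel identity: the rational-indexed scale is already packaged inside the definition of $\pprec$, so the two inclusions hold by definition rather than by an interpolation-and-compactness argument. The one point requiring care is the first paragraph, namely verifying that $\sigma$ transports $\pprec$ in both directions; this is immediate from \cref{lem: prec} applied stepwise, but it is the only place where one must check that the rational-indexed family behaves correctly under the isomorphism.
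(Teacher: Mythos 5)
Your proof is correct and follows exactly the route the paper takes: the paper's own proof is the one-line citation ``This follows from \cref{prop: exact joins-1,lem: pprec},'' and your write-up simply spells out that same argument — transporting $a=\bigvee\{c\mid c\pprec a\}$ along $\sigma$ via the exact-join formula and observing that $\creg U=\bigcup\{V\in\clopup(X)\mid V\pprec U\}$ by unwinding the definition of ``completely inside.'' No gaps.
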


\begin{proof}
    This follows from  \cref{prop: exact joins-1,lem: pprec}.
\end{proof}

\color{black}

\subsection{Normality}
We conclude this section by describing the \L-spaces of normal frames, and show that under this assumption all previously introduced kernels coincide.

\begin{definition}[{see, e.g., \cite[p.~91]{PP12}}]
    A frame $L$ is \emph{normal} if for all $a,b \in L$, 
    $$a \vee b = 1 \Longrightarrow \exists u,v \in L : u \wedge v = 0, \ a \vee v = 1, \mbox{ and } b \vee u = 1.$$
\end{definition}
The next theorem goes back to \cite[p.~68]{Joh82}, where the equivalence of (1), (3), and (4) is established.
\begin{theorem}
    Let $L$ be a frame and $X$ its \L-space. The following are equivalent.
    \begin{enumerate}[cref=theorem]
        \item $L$ is normal.
        \item For all disjoint clopen downsets $D,E \subseteq X$ there exist disjoint clopen upsets $U,V \subseteq X$ such that $D \subseteq U$ and $E \subseteq V$.
        \item For all $x,y \in X$, if $\upset x \cap \upset y \neq \varnothing$ then $\downset x \cap \downset y \neq \varnothing$. \label{thm: church rosser}
        \item $\min (\downset x)$ is a singleton for each $x \in X$; i.e., each prime filter of $L$ contains a unique minimal prime filter.
        \item $\upset D$ is a downset for each downset $D$. 
        \label{thm: up of clopen down}
    \end{enumerate}
\end{theorem}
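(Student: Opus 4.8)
The plan is to prove the equivalences by first translating normality into a separation statement about clopen upsets and downsets (yielding \textup{(1)} $\Leftrightarrow$ \textup{(2)}), then relating that separation to the pointwise ``confluence'' condition \textup{(3)}, and finally deriving the order-theoretic reformulations \textup{(4)} and \textup{(5)} from \textup{(3)}.

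For \textup{(1)} $\Leftrightarrow$ \textup{(2)}, I would use the isomorphism $\sigma \colon L \to \clopup(X)$ to rewrite normality entirely inside $\clopup(X)$, where finite joins are unions, $1 = X$, and $0 = \varnothing$. Given $A,B \in \clopup(X)$ with $A \cup B = X$, set $D = A^c$ and $E = B^c$, which are disjoint clopen downsets; then $A \vee v = 1$ and $B \vee u = 1$ become $D \subseteq v$ and $E \subseteq u$, while $u \wedge v = 0$ becomes $u \cap v = \varnothing$. Thus normality says exactly that disjoint clopen downsets can be enclosed in disjoint clopen upsets, which is \textup{(2)}. This step is routine bookkeeping with complements.

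The heart of the argument is \textup{(2)} $\Leftrightarrow$ \textup{(3)}. For \textup{(2)} $\Rightarrow$ \textup{(3)} I would argue contrapositively: if $\downset x \cap \downset y = \varnothing$, then since these are disjoint closed downsets, \cref{lem: pri-facts-intersection} together with compactness produces disjoint clopen downsets $D \ni x$ and $E \ni y$, and applying \textup{(2)} gives disjoint clopen upsets $U \supseteq D$, $V \supseteq E$, whence $\upset x \subseteq U$ and $\upset y \subseteq V$ are disjoint. For \textup{(3)} $\Rightarrow$ \textup{(2)}, given disjoint clopen downsets $D,E$, condition \textup{(3)} applied pointwise yields $\upset x \cap \upset y = \varnothing$ for all $x \in D$ and $y \in E$, so $\upset D$ and $\upset E$ are disjoint closed upsets (closed by \cref{lem: pri-facts-upclosed}). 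I then need the separation lemma that two disjoint closed upsets $F,G$ can be separated by a clopen upset $U \supseteq F$ with $U \cap G = \varnothing$; applying it first to $F = \upset D$, $G = \upset E$, and then to $F = \upset E$, $G = U$, produces disjoint clopen upsets enclosing $D$ and $E$. I expect this separation lemma to be the main obstacle, since it is the only genuinely topological ingredient: I would establish it from Priestley separation (for $x \in F$ and $y \notin F$ one has $x \nleq y$, giving a clopen upset through $x$ avoiding $y$) together with compactness to pass to finite subcovers, and it must be applied carefully to enclose both downsets \emph{simultaneously} in disjoint upsets.

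Finally, \textup{(3)} $\Leftrightarrow$ \textup{(4)} and \textup{(3)} $\Leftrightarrow$ \textup{(5)} are order-theoretic. Observing that $\min(\downset x) = \min X \cap \downset x$ is nonempty by \cref{lem: pri-facts-max}, I would get \textup{(3)} $\Rightarrow$ \textup{(4)} by noting that two minimal points $m_1, m_2 \leq x$ give $x \in \upset m_1 \cap \upset m_2$, so \textup{(3)} forces $\downset m_1 \cap \downset m_2 = \{m_1\} \cap \{m_2\} \neq \varnothing$ and hence $m_1 = m_2$; conversely, for \textup{(4)} $\Rightarrow$ \textup{(3)}, a common upper bound $z$ of $x,y$ puts both in $\downset z$, and the unique minimal point of $\downset z$ lies below both $x$ and $y$ by \cref{lem: pri-facts-max}. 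For \textup{(3)} $\Leftrightarrow$ \textup{(5)}, I would check directly that \textup{(3)} $\Rightarrow$ \textup{(5)} by tracing a common upper bound of $d \in D$ and $v \leq w \in \upset D$ down to a witness in $\downset D = D$, and that \textup{(5)} $\Rightarrow$ \textup{(3)} by taking $D = \downset x$, so that $\upset \downset x$ is a downset containing any common upper bound $z$ of $x,y$ and hence containing $y$, which furnishes an element of $\downset x \cap \downset y$. These steps involve only elementary manipulation of up- and down-sets.
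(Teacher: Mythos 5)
Your proof is correct, but it decomposes the equivalences differently from the paper. The paper runs the cycle (1)$\Leftrightarrow$(2), (2)$\Rightarrow$(3)$\Rightarrow$(4)$\Rightarrow$(5)$\Rightarrow$(2): conditions (4) and (5) are used as stepping stones, and the only topological separation argument --- enclosing two disjoint closed upsets in disjoint clopen upsets via \cref{lem: pri-facts-intersection} and compactness --- occurs once, in (5)$\Rightarrow$(2), after a minimal-point argument shows $\upset D \cap \upset E = \varnothing$. You instead prove (2)$\Leftrightarrow$(3) directly and hang (4) and (5) off (3) by purely order-theoretic arguments. Your (3)$\Rightarrow$(2) needs exactly the separation lemma you flag, and it does go through: a closed upset is an intersection of clopen upsets, so compactness of the disjoint closed set yields a clopen upset separating them, and a second application disjointifies the two sides --- this is the same one-line argument the paper uses at the end of its (5)$\Rightarrow$(2). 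Your (4)$\Rightarrow$(3) and (3)$\Leftrightarrow$(5) arguments are also sound (the appeal to \cref{lem: pri-facts-max} to locate the unique minimal point below both $x$ and $y$ is the right move, and is essentially how the paper handles (4)$\Rightarrow$(5)). The trade-off: your route makes it transparent that (4) and (5) are elementary order-theoretic reformulations of the confluence condition (3), at the cost of proving eight implications where the paper's cycle proves six and invokes the clopen separation step only once.
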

\begin{proof}
    (1)$\Leftrightarrow$(2) This follows easily from the observation that $A,B$ are clopen upsets satisfying $A \cup B = X$ if and only if $A^c,B^c$ are disjoint clopen downsets.

    (2)$\Rightarrow$(3) Suppose $\downset x \cap \downset y = \varnothing$. Then by \cref{lem: pri-facts-intersection} and compactness, there exist disjoint clopen downsets $D,E$ such that $x \in D$ and $y \in E$. By (1), there are disjoint clopen upsets $U,V$ with $x \in U$ and $y \in V$, so $\upset x \cap \upset y = \varnothing$.

    (3)$\Rightarrow$(4) Let $x \in X$. By \cref{lem: pri-facts}(\hyperref[lem: pri-facts-upclosed]{2},\hyperref[lem: pri-facts-max]{5}), $\min(\downset x)$ is nonempty. Suppose $y,z \in \min(\downset x)$. Then $x \in \upset y \cap \upset z$, so $\upset y \cap \upset z \neq \varnothing$, and hence  (3) yields that $\downset y \cap \downset z \neq \varnothing$. Thus, $y = z$.

    (4)$\Rightarrow$(5) Suppose $x \in \downset \upset D$. Then $x \in \downset y$ for some $y \in \upset D$. Therefore, there is $z \in \min D$ such that $z \leq y$. Since $D$ is a downset, (4) implies that $\min (\downset y) = \{z\}$. Thus, $z \leq x$, so $x \in \upset D$, and hence $\upset D$ is a downset.
    
    (5)$\Rightarrow$(2) Suppose $D$ and $E$ are disjoint clopen downsets. Then $\upset D$ and $\upset E$ are downsets by (5), so $\upset D \cap \upset E$ is a downset. 
    Therefore, $\min (\upset D \cap \upset E) \subseteq \min X$.
    If $x \in \upset D \cap \upset E \cap \min X$, then there exist $y \in D$ and $z \in E$ such that $y \leq x$ and $z \leq x$. But then $y = x = z$ since $x \in \min X$, so $x \in D \cap E$. Thus, $\min(\upset D \cap \upset E) \subseteq D \cap E$. 
    If $\upset D \cap \upset E \neq \varnothing$, then $\min(\upset D \cap \upset E) \neq \varnothing$ by \cref{lem: pri-facts-max}. But then $D$ and $E$ cannot be disjoint, so $\upset D \cap \upset E = \varnothing$. By \cref{lem: pri-facts-intersection} and compactness, there are disjoint clopen upsets $U,V$ such that $D \subseteq \upset D \subseteq U$ and $E \subseteq \upset E \subseteq V$, completing the proof.
 \end{proof}

To show that all kernels coincide in \L-spaces of normal frames, we require the following lemma, which proves via Priestley duality the well-known fact (see, e.g., \cite[p.~138]{PP21}) that the rather-below relation $\prec$ interpolates in normal frames.

\begin{lemma} \label{lem: interpolate}
    Let $L$ be a normal frame and $X$ its \L-space.
    \begin{enumerate}[cref=lemma]
        \item For $U, V \in \clopup(X)$, if $\downset U \subseteq V$ then there is $W \in \clopup(X)$ such that 
        \[
            \downset U \subseteq W \subseteq \downset W \subseteq V.
        \]
        \item $\prec$ interpolates. \label{lem: interpolates}
    \end{enumerate}
\end{lemma}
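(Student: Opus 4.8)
The plan is to establish (1) directly by a compactness-plus-normality argument and then read off (2) as its translation through \cref{lem: prec}. For (1), suppose $\downset U \subseteq V$ with $U,V \in \clopup(X)$. Since $U$ is clopen, $\downset U$ is a closed downset by \cref{lem: pri-facts-upclosed}, and it sits inside the open set $V$. The first step is to shrink $\downset U$ to a \emph{clopen} downset: by \cref{lem: pri-facts-intersection} the closed downset $\downset U$ is an intersection of clopen downsets, so compactness furnishes a clopen downset $D$ with $\downset U \subseteq D \subseteq V$.

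The crux is then to feed the two disjoint clopen downsets $D$ and $V^c$ into the normality hypothesis. As $D \subseteq V$, the clopen downsets $D$ and $V^c$ are disjoint, so by condition (2) of the preceding theorem there exist disjoint clopen upsets $A,B$ with $D \subseteq A$ and $V^c \subseteq B$. I claim $W := A$ works. Indeed $\downset U \subseteq D \subseteq A$ gives the first inclusion, and $A \subseteq \downset A$ holds trivially. For the last inclusion, disjointness of $A$ and $B$ yields $A \subseteq B^c$; since $B$ is a clopen upset, $B^c$ is a downset, so $\downset A \subseteq B^c$, and $V^c \subseteq B$ gives $B^c \subseteq V$. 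Hence $\downset A \subseteq V$, as required, and $W = A \in \clopup(X)$.

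For (2), I pass through the isomorphism $\sigma \colon L \to \clopup(X)$. Given $a \prec b$ in $L$, set $U = \sigma(a)$ and $V = \sigma(b)$; by \cref{lem: prec} this says $\downset U \subseteq V$. Applying (1) produces $W \in \clopup(X)$ with $\downset U \subseteq W$ and $\downset W \subseteq V$, which by \cref{lem: prec} mean $U \prec W$ and $W \prec V$. Taking $c := \sigma^{-1}(W)$ gives $a \prec c \prec b$, so $\prec$ interpolates. The main obstacle is the interface with the normality characterization in the first step: that characterization separates \emph{disjoint clopen downsets}, whereas $\downset U$ is only closed, so one must first replace it by the clopen downset $D$; moreover, after normality delivers the upsets $A,B$, one controls $\downset A$ not directly but through the disjointness $A \cap B = \varnothing$, reading $\downset A \subseteq B^c \subseteq V$ off the downset $B^c$. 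Once the problem is arranged this way, the remaining verifications are routine.
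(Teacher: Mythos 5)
Your proof is correct, but it routes through a different clause of the normality theorem than the paper does. You first shrink the closed downset $\downset U$ to a clopen downset $D$ with $\downset U \subseteq D \subseteq V$ (via \cref{lem: pri-facts-intersection} and compactness), and then invoke the separation property of normality --- condition (2) of the theorem, that disjoint clopen downsets are separated by disjoint clopen upsets --- applied to $D$ and $V^c$, reading off $\downset A \subseteq B^c \subseteq V$ from the disjointness of the separating upsets. The paper instead uses condition \cref{thm: up of clopen down}: since $V$ is an upset, $\upset\downset U \subseteq V$, and normality makes $\upset\downset U$ a downset as well as a closed upset, so two successive applications of \cref{lem: pri-facts-intersection} and compactness sandwich a clopen upset $W$ between $\upset\downset U$ and a clopen downset $D \subseteq V$, giving $\downset W \subseteq D \subseteq V$. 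Both arguments are sound and of comparable length; yours stays closer to the classical topological picture of normality as separation of disjoint closed sets, while the paper's exploits the order-theoretic reformulation and avoids having to produce the disjoint pair. Your part (2) is the same translation through \cref{lem: prec} as in the paper.
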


\begin{proof}
    (1) Since $\downset U \subseteq V$ and $V$ is an upset,
    $\upset \downset U \subseteq V$. Because $L$ is normal, \cref{thm: up of clopen down} implies that $\upset \downset U$ is a downset. Therefore, by \cref{lem: pri-facts}(\hyperref[lem: pri-facts-upclosed]{2},\hyperref[lem: pri-facts-intersection]{3})  and compactness,
    there is a clopen downset $D$ such that $\upset \downset U \subseteq D \subseteq V$. Since $\upset \downset U$ is a closed upset contained in $D$, \cref{lem: pri-facts-intersection} and compactness yield 
    $W \in \clopup(X)$ such that $\upset \downset U \subseteq W \subseteq D$, so $\downset W \subseteq D$ because $D$ is a downset. Thus, $\downset U \subseteq W \subseteq \downset W \subseteq V$.

    (2) This follows from \cref{lem: prec} and (1).
\end{proof}

\begin{proposition} \label{prop: kernels coincide}
    Let $L$ be a normal frame and $X$ its \L-space. Then, for each $U \in \clopup(X)$, 
    \[\sfit U =\hausfit U = \reg U = \creg U.\]
\end{proposition}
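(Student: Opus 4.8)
The plan is to prove the chain of equalities by combining the containment $\reg U \subseteq \hausfit U \subseteq \sfit U$ from \cref{rem: containment of kernels} with the general inclusion $\creg U \subseteq \reg U$, and then closing the loop by showing $\sfit U \subseteq \creg U$ under normality. The inclusion $\creg U \subseteq \reg U$ should hold in any \L-space: if $x$ is completely inside $U$, witnessed by a scale $\{U_r\}$ with $x \in U_0$, then in particular $U_0 \prec U_s$ for small $s$, so \cref{lem: prec} gives $\downset U_0 \subseteq U_s \subseteq U$; since $x \in U_0$ we get $\upset x \subseteq U_0$ (as $U_0$ is an upset), hence $\downset \upset x \subseteq \downset U_0 \subseteq U$, which is exactly $x \in \reg U$. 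Putting these together yields
\[
    \creg U \subseteq \reg U \subseteq \hausfit U \subseteq \sfit U,
\]
so it remains only to prove $\sfit U \subseteq \creg U$ when $L$ is normal.

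For that final inclusion, the strategy is to use the interpolation of $\prec$ provided by \cref{lem: interpolate}. Let $x \in \sfit U$, so $\downset x \subseteq U$. The goal is to produce a scale from some clopen upset containing $x$ to $U$. First I would find a clopen upset $V_0$ with $x \in V_0$ and $\downset V_0 \subseteq U$; this uses that $\downset x \subseteq U$ together with \cref{lem: pri-facts-intersection} and compactness (writing $\downset x$ as an intersection of clopen downsets inside $U$, then separating $\upset x$ from the complement of a clopen downset). By \cref{lem: prec}, $V_0 \prec U$. Now I would invoke \cref{lem: interpolates}: since $\prec$ interpolates in the normal frame $L$, and by \cref{rem: interpolates} the relation $\pprec$ is the \emph{largest} interpolating relation contained in $\prec$, it follows that $\prec$ itself is contained in $\pprec$, hence $V_0 \pprec U$. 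Concretely, interpolation lets me build the required dyadic family $\{U_r\}$ by the standard recursive insertion of interpolants, and \cref{lem: pprec} then packages this as a scale witnessing $U_0 = V_0 \pprec U$. Since $x \in V_0 = U_0$, the point $x$ is completely inside $U$, i.e.\ $x \in \creg U$.

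The main obstacle will be the careful construction of the scale from interpolation. One must produce clopen upsets $U_r$ indexed by $r \in \mathbb{Q} \cap [0,1]$ satisfying $U_r \subseteq \downset U_r \subseteq U_s$ for $r < s$, starting from the single relation $V_0 \prec U$. The clean way is to appeal directly to \cref{rem: interpolates}, which already encodes (under Countable Dependent Choice) that the largest interpolating subrelation of $\prec$ is $\pprec$; thus any $\prec$-pair in a frame where $\prec$ interpolates is automatically a $\pprec$-pair, and \cref{lem: pprec} translates this back into the existence of the scale. This avoids grinding through the dyadic indexing by hand. The only subtlety to flag is the reliance on Countable Dependent Choice noted in \cref{rem: interpolates}, which is invoked precisely at this step.
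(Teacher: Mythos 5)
Your outer framework is sound: $\creg U \subseteq \reg U$ holds in any \L-space by the scale argument you give, $\reg U \subseteq \hausfit U \subseteq \sfit U$ is \cref{rem: containment of kernels}, and passing from ``$\prec$ interpolates'' to ${\prec} \subseteq {\pprec}$ via \cref{lem: interpolates} and \cref{rem: interpolates} is exactly what the paper does. The gap is in the first step of your final inclusion: producing a clopen upset $V_0$ with $x \in V_0$ and $\downset V_0 \subseteq U$ from the hypothesis $\downset x \subseteq U$ alone. That claim is precisely the inclusion $\sfit U \subseteq \reg U$ (since $\reg U = \bigcup\{V \in \clopup(X) \mid \downset V \subseteq U\}$ by the proof of \cref{thm: char regular}), and it cannot be proved without normality: if it held in every \L-space, then \cref{thm: subfit-5} and \cref{thm: char regular} would make every subfit frame regular, which is false. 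Concretely, your compactness argument breaks down as follows: from the closed downset $\downset x \subseteq U$ you can extract a clopen downset $D$ with $\downset x \subseteq D \subseteq U$, but nothing forces $\upset x \subseteq D$, so you cannot ``separate $\upset x$ from the complement of a clopen downset'' --- points above $x$ may lie outside $D$. The two-step compactness argument in the proof of \cref{thm: char regular} starts from the stronger hypothesis $\downset \upset x \subseteq U$, which is exactly what guarantees $\upset x \subseteq D$.

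Normality must enter at this point, and this is where the paper uses it: by \cref{thm: church rosser}, for $y \in \downset\upset x$ we have $\upset x \cap \upset y \neq \varnothing$, hence $\downset x \cap \downset y \neq \varnothing$, so some $z \leq y$ lies in $\downset x \subseteq U$ and therefore $y \in U$ since $U$ is an upset. This upgrades $\downset x \subseteq U$ to $\downset\upset x \subseteq U$, i.e.\ $x \in \reg U$, after which your compactness step becomes legitimate --- or one can simply stop there, since $\reg U = \creg U$ already follows from ${\prec} = {\pprec}$. With this repair your argument coincides with the paper's, which proves $\sfit U \subseteq \reg U$ directly and separately deduces $\creg U = \reg U$.
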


\begin{proof}
    By \cref{lem: interpolates}, $\prec$ interpolates.
    Consequently, ${\prec} = {\pprec}$ (see \cref{rem: interpolates}). Therefore, $\creg U = \reg U$. Thus, \color{black}
    by \cref{rem: containment of kernels}, it suffices to show that $\sfit U \subseteq \reg U$. Suppose $x \in \sfit U$, so $\downset x \subseteq U$. Let $y \in \downset \upset x$. Then $\upset x \cap \upset y \neq \varnothing$, so by \cref{thm: church rosser}, $\downset x \cap \downset y \neq \varnothing$. Therefore, 
    there is $z \in \downset x \subseteq U$ such that $z \leq y$. Since $U$ is an upset, $y \in U$. Thus, $\upset \downset x \subseteq U$, and so $x \in \reg U$.
\end{proof}

\cref{prop: kernels coincide} 
yields an alternate proof of the well-known fact that subfit normal frames are completely regular (see, e.g., \cite[p.~138]{PP21}):

\begin{corollary}
    Subfit normal frames are completely regular.
\end{corollary}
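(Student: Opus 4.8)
The plan is to derive the corollary directly from \cref{prop: kernels coincide}, exactly as the surrounding text foreshadows. The key observation is that complete regularity is characterized in \cref{thm: char cregular} by the condition that $\cl(\creg U) = U$ for every $U \in \clopup(X)$, so it suffices to establish this closure identity under the hypothesis that $L$ is subfit and normal.

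First I would let $X$ be the \L-space of $L$ and fix an arbitrary $U \in \clopup(X)$. Since $L$ is normal, \cref{prop: kernels coincide} applies and gives the chain of equalities $\sfit U = \hausfit U = \reg U = \creg U$; in particular $\creg U = \sfit U$. Next, since $L$ is subfit, \cref{thm: subfit-5} (the equivalence (1)$\Leftrightarrow$(5) in \cref{thm: subfit}) yields $\cl(\sfit U) = U$. Combining these two facts, $\cl(\creg U) = \cl(\sfit U) = U$. As $U \in \clopup(X)$ was arbitrary, \cref{thm: char cregular} lets me conclude that $L$ is completely regular.

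There is essentially no obstacle here, since all the hard work has already been done: \cref{prop: kernels coincide} packages the interpolation argument (via \cref{lem: interpolate}) that forces ${\prec} = {\pprec}$ and hence $\creg U = \reg U$, together with the Church--Rosser-style condition \cref{thm: church rosser} that collapses $\sfit U$ into $\reg U$. The only thing to be careful about is that \cref{prop: kernels coincide} requires $L$ to be normal (which is granted) while the final closure identity $\cl(\sfit U) = U$ requires $L$ to be subfit (also granted); the corollary's hypothesis supplies both, so the two inputs mesh cleanly. The proof is therefore just the one-line composition $\cl(\creg U) = \cl(\sfit U) = U$ feeding into the completely regular characterization.
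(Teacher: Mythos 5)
Your proof is correct and follows exactly the same route as the paper's: apply \cref{thm: subfit-5} to get $\cl(\sfit U) = U$, use \cref{prop: kernels coincide} to replace $\sfit U$ by $\creg U$, and conclude via \cref{thm: char cregular}. Nothing to add.
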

\begin{proof}
    Suppose $L$ is normal and subfit. Let $X$ be the \L-space of $L$, and let ${U \in \clopup(X)}$. Then $\cl \sfit U = U$ by \cref{thm: subfit-5}. 
    Thus, $\cl \creg U = U$ by \cref{prop: kernels coincide}, and hence $L$ is completely regular by \cref{thm: char cregular}.
\end{proof}

\section{Compactness and local compactness} \label{sec: compact}

One of the central categories in topology is that of compact Hausdorff spaces. It illustrates how separation axioms often behave better under compactness. For instance, a compact space is Hausdorff if and only if it is regular.
However, this is no longer true in the pointfree setting since 
compact Hausdorff frames need not even be subfit (see, e.g., \cite[p.~46]{PP21}). Because of this, it is more natural to work with 
the category of compact regular frames. By Isbell duality \cite{Isb72} (see also \cite{BM80} and \cite[p.~90]{Joh82}), these exactly correspond 
to compact Hausdorff spaces. 
This result can be derived from Isbell's Spatiality Theorem (see, e.g., \cite[p.~216]{PP21}), which implies that every compact regular frame is spatial.

\subsection{Compactness and Isbell's spatiality theorem}

\begin{definition}[{see, e.g., \cite[p.~80]{Joh82}}]
    Let $L$ be a frame. We say that
    \begin{enumerate}
        \item $a \in L$ is \emph{compact} if for each $S \subseteq L$, from $a \leq \bigvee S$ it follows that $a \leq \bigvee T$ for some finite $T \subseteq S$;
        \item $L$ is \emph{compact} if its top element $1$ is compact.
    \end{enumerate}
\end{definition}

The Priestley spaces of compact frames have been characterized in \cite[Thm.~3.5]{PS88} and \cite[Lem.~3.1]{BGJ16}. These characterizations can be relativized to elements of~$L$:
\begin{proposition}[{see, e.g., \cite[Cor.~5.4(1)]{BM22}}] \label{prop: compact char}
    Let $L$ be a frame, $X$ its \L-space, and $a \in L$.  The following are equivalent.
    \begin{enumerate}[cref=proposition]
        \item $a$ is compact.
        \item $\sigma(a) \subseteq \cl U$ implies $\sigma(a) \subseteq U$ for all $U \in \opup(X)$. \label{prop: compact char-2}
        \item $\min \sigma(a) \subseteq \loc X$. \label{prop: compact char-3}
    \end{enumerate}
\end{proposition}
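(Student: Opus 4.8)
The plan is to prove the cyclic chain $(1)\Rightarrow(2)\Rightarrow(3)\Rightarrow(1)$. Throughout I would use that $\sigma\colon L\to\clopup(X)$ is an isomorphism, that every open upset of $X$ is a union of clopen upsets (\cref{lem: pri-facts-unions}), and the exact-join dictionary $\sigma(\bigvee S)=\cl\bigcup\sigma[S]$ furnished by \cref{prop: exact joins-1} (valid for \emph{every} join, since $L$ is a frame). The common geometric mechanism is that compactness of the clopen set $\sigma(a)$ lets one pass from an open cover to a finite subcover, while \cref{lem: pri-facts-max} lets one reduce statements about all points of $\sigma(a)$ to statements about its minimal points.

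For $(1)\Rightarrow(2)$, given $U\in\opup(X)$ with $\sigma(a)\subseteq\cl U$, I would write $U=\bigcup\sigma[S]$ with $S=\{s\in L\mid\sigma(s)\subseteq U\}$, so that $\cl U=\sigma(\bigvee S)$ and hence $a\le\bigvee S$. Compactness of $a$ yields a finite $T\subseteq S$ with $a\le\bigvee T$, whence $\sigma(a)\subseteq\bigcup_{t\in T}\sigma(t)\subseteq U$. The direction $(3)\Rightarrow(1)$ runs the same machine backwards: from $a\le\bigvee S$ I get $\sigma(a)\subseteq\cl\bigcup\sigma[S]$; for each $m\in\min\sigma(a)\subseteq\loc X$ the set $\downset m$ is (cl)open, so it meets some $\sigma(s)$, forcing $m\in\sigma(s)$ as $\sigma(s)$ is an upset. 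Since every point of $\sigma(a)$ lies above a minimal one (\cref{lem: pri-facts-max}), $\{\sigma(s)\mid s\in S\}$ covers the compact set $\sigma(a)$, and a finite subcover delivers $a\le\bigvee T$.

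The hard part will be $(2)\Rightarrow(3)$, where I must upgrade the ``covering'' statement (2) into a statement about individual minimal points being localic. Fixing $m\in\min\sigma(a)$, I would consider the open upset $U=(\downset m)^c$ (open because $\downset m$ is closed, by \cref{lem: pri-facts-upclosed}). Minimality gives $\sigma(a)\cap\downset m=\{m\}$, so $\sigma(a)\setminus\{m\}\subseteq U$; if $m$ lay in $\cl U$, then $\sigma(a)\subseteq\cl U$, and (2) would force $\sigma(a)\subseteq U$, contradicting $m\in\sigma(a)\cap\downset m$. Hence $m\notin\cl U$.

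The step I expect to be the genuine crux is turning $m\notin\cl U$ into ``$\downset m$ is open.'' This is exactly where the $\L$-space hypothesis does the real work: since $\cl U$ is again an \emph{upset} (the defining property of an $\L$-space), being an upset disjoint from $m$ forces it to be disjoint from all of $\downset m$ — any $y\in\cl U$ with $y\le m$ would drag $m$ into $\cl U$. Thus $\cl U=U$, i.e.\ $\downset m$ is open, and \cref{lem: localic part} gives $m\in\loc X$. I would flag this upset-closure property as the one non-formal ingredient: without it, $m\notin\cl U$ would only say that $m$ is interior to $\downset m$, which is strictly weaker than $\downset m$ being open.
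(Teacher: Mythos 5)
Your proposal is correct and follows essentially the same route as the paper: the same join-dictionary argument for $(1)\Rightarrow(2)$ and $(3)\Rightarrow(1)$, and the same choice $U=(\downset m)^c$ together with the \L-space property that $\cl U$ is an upset for $(2)\Rightarrow(3)$ (the paper merely phrases that step as a contradiction, assuming $m\notin\loc X$, rather than directly). The ingredient you flag as the crux is indeed exactly the one the paper leans on.
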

\begin{proof}
    (1)$\Rightarrow$(2) Let $U$ be an open upset with $\sigma(a)  \subseteq \cl U$. By \cref{lem: pri-facts-unions}, $U = \bigcup \sigma[S]$ for some $S \subseteq L$. Therefore, $\sigma(a) \subseteq \sigma(\bigvee S)$ by \cref{prop: exact joins-1}. Then $a \leq \bigvee S$, and since $a$ is compact, $a \leq \bigvee T$ for some finite $T \subseteq S$. Thus, $\sigma(a) \subseteq \bigcup \sigma[T] \subseteq U$.

    (2)$\Rightarrow$(3) Let $x \in \min \sigma(a)$. If $x \not \in \loc X$, then $U := X \setminus \downset x$ is not closed. Therefore, ${\cl(U) \cap \downset x \neq \varnothing}$, which means that $x \in \cl U$ since $\cl U$ is an upset (because $X$ is an \L-space). Thus, $\sigma(a) \subseteq \cl U$ since $\sigma(a)\setminus\{x\} \subseteq U$. But $\sigma(a) \nsubseteq U$ since $x\notin U$, contradicting (2).

    (3)$\Rightarrow$(1) Let $a \leq \bigvee S$. Then $\sigma(a) \subseteq \cl \bigcup \sigma[S]$ by \cref{prop: exact joins-1}. Suppose $x \in \min \sigma(a)$. Then $x \in \loc X$ by (3). Therefore,  $\downset x$ is open, so $\downset x \cap \bigcup \sigma[S] \neq \varnothing$, which means that $x \in \bigcup \sigma[S]$. Thus, $\min \sigma(a) \subseteq \bigcup \sigma[S]$, and hence $\sigma(a) \subseteq \bigcup \sigma[S]$ by \cref{lem: pri-facts-max}. Since $\sigma(a)$ is compact, there exists a finite $T \subseteq S$ such that $\sigma(a) \subseteq \bigcup \sigma[T]$. Consequently, $a \leq \bigvee T$, and hence $a$ is compact.
\end{proof}

As an immediate corollary, we obtain a characterization of Priestley spaces of compact frames. 

\begin{corollary}\label{cor: compact}
    Let $L$ be a frame and $X$ its \L-space. The following are equivalent.
    \begin{enumerate}[cref=corollary]
        \item $L$ is compact.
        \item $X= \cl U$ implies $X= U$ for all $U \in\opup(X)$. \label{thm: compact is scott upset}
        \item $\min X \subseteq \loc X$. \label{thm: compact and min}
    \end{enumerate}
\end{corollary}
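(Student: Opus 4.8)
The plan is to obtain this corollary immediately as the special case $a = 1$ of \cref{prop: compact char}. The only preliminary observation I would record is that $\sigma(1) = X$: since $\sigma \colon L \to \clopup(X)$ is a lattice isomorphism (see \cref{rem:PIT}), it sends the top element $1$ of $L$ to the top element of $\clopup(X)$, and the latter is $X$ itself.

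With this identification in hand, each clause of \cref{prop: compact char} specializes to the corresponding clause of the corollary. Clause (1), that $a=1$ is compact, is by definition the statement that $L$ is compact. Clause (2), namely that $\sigma(1) \subseteq \cl U$ implies $\sigma(1) \subseteq U$ for all $U \in \opup(X)$, becomes ``$X \subseteq \cl U$ implies $X \subseteq U$''; since $\cl U$ and $U$ are automatically contained in $X$, this reads exactly as ``$X = \cl U$ implies $X = U$,'' which is clause (2) here. Finally, clause (3), $\min \sigma(1) \subseteq \loc X$, becomes $\min X \subseteq \loc X$, which is clause (3).

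Because the three clauses of \cref{prop: compact char} are equivalent, so are their specializations, and these are precisely clauses (1)--(3) of the corollary. I expect no genuine obstacle: the substantive work is already carried out in \cref{prop: compact char}, and the only point deserving a word of justification is the passage from the implication in clause (2) to an equality, which rests simply on $X$ being the ambient space so that both $\cl U$ and $U$ are contained in it.
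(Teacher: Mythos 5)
Your proposal is correct and matches the paper exactly: the paper derives this corollary as the immediate specialization of \cref{prop: compact char} to $a=1$, using $\sigma(1)=X$. No further comment is needed.
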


\begin{remark}
    \cref{thm: compact is scott upset} is proved in \cite[Thm.~3.5]{PS88}, and \cref{thm: compact and min} in \cite[Lem.~3.1]{BGJ16}.
\end{remark}

We are ready to give a simple proof of the following well-known result of Isbell \cite[2.1]{Isb72}:

\begin{theorem}[Isbell's Spatiality Theorem] \label{thm: subfit compact is spatial}
    Compact subfit frames are spatial.
\end{theorem}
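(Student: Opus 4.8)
The plan is to work entirely on the Priestley side, using the kernel characterizations of subfitness and compactness already established. Let $L$ be a compact subfit frame and $X$ its \L-space. By \cref{thm: spatial}, it suffices to show that $\loc X$ is dense in $X$. I would combine the compactness criterion $\min X \subseteq \loc X$ from \cref{thm: compact and min} with the subfitness criterion $\cl(\min X) = X$ from \cref{thm: subfit-3}.

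The argument is then almost immediate: since $L$ is compact, \cref{thm: compact and min} gives $\min X \subseteq \loc X$. Since $L$ is subfit, \cref{thm: subfit-3} gives that $\min X$ is dense in $X$, i.e.\ $\cl(\min X) = X$. Combining these, $X = \cl(\min X) \subseteq \cl(\loc X)$, so $\loc X$ is dense. By \cref{thm: spatial}, $L$ is spatial.

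\textbf{Where the work really lives.} The statement is a one-line corollary \emph{given} the machinery in the excerpt, so there is no serious obstacle in this final step itself. The substantive content was front-loaded into the two characterization results: the equivalence (1)$\Leftrightarrow$(3) of \cref{cor: compact} (that compactness of $L$ corresponds to $\min X \subseteq \loc X$), and the equivalence (1)$\Leftrightarrow$(3) of \cref{thm: subfit} (that subfitness corresponds to density of $\min X$). The key conceptual point worth emphasizing is that subfitness forces $\min X$ to be large (dense), while compactness forces $\min X$ to be ``nice'' (localic), and spatiality only requires a \emph{dense} supply of localic points — exactly what the intersection of these two conditions delivers. I would present the proof in this compressed two-line form, perhaps with a remark that this is precisely why neither compactness nor subfitness alone suffices: compactness alone gives localic minimal points but possibly too few of them, and subfitness alone gives many minimal points but no guarantee they are localic.

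\begin{proof}
    Let $L$ be a compact subfit frame and $X$ its \L-space. By \cref{thm: spatial}, it suffices to show that $\loc X$ is dense in $X$. Since $L$ is compact, $\min X \subseteq \loc X$ by \cref{thm: compact and min}. Since $L$ is subfit, $\min X$ is dense in $X$ by \cref{thm: subfit-3}. Therefore,
    \[
        X = \cl(\min X) \subseteq \cl(\loc X),
    \]
    so $\loc X$ is dense in $X$. Hence $L$ is spatial.
\end{proof}
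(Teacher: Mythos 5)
Your proof is correct and coincides with the paper's own argument: both combine $\min X \subseteq \loc X$ (from \cref{thm: compact and min}) with density of $\min X$ (from \cref{thm: subfit-3}) to conclude that $\loc X$ is dense, and then invoke \cref{thm: spatial}. Nothing further is needed.
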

\begin{proof}
    Let $X$ be the \L-space of $L$. Since $L$ is compact, $\min X \subseteq \loc X$ by \cref{thm: compact and min}; and since $L$ is subfit, $\min X$ is dense by \cref{thm: subfit-3}. Thus, $\loc X$ is dense, and hence $L$ is spatial by \cref{thm: spatial}.
\end{proof}

\begin{remark}
    The classic proof of Isbell's Spatiality Theorem uses Zorn's Lemma. The above proof also relies on AC since we use 
    \cref{lem: pri-facts-max} (see \cref{rem: AC}). It remains open whether Isbell's Spatiality Theorem is in fact equivalent to~AC.
\end{remark}

\subsection{Compact regular frames and Isbell duality}

We now use the descriptions of compactness and regularity in terms of \L-spaces to derive  Isbell duality between compact regular frames and compact Hausdorff spaces. 

\begin{definition}[{see, e.g., \cite[Defs.~6.12 and 7.6]{BM23}}]
    Let $X$ be an \L-space. We call $X$
    \begin{enumerate}
        \item \emph{\L-compact} or a \emph{compact \L-space} if $\min X \subseteq \loc X$;
        \item \emph{\L-regular} or a \emph{regular \L-space} if  $\cl \reg U = U$ for each $U \in \clopup(X)$.
    \end{enumerate}
\end{definition}

By \cref{thm: compact and min}, compact \L-spaces are precisely the \L-spaces of compact frames, and by \cref{thm: char regular} regular \L-spaces are precisely the \L-spaces of regular frames. 
Let $\KRFrm$ be the full subcategory of $\Frm$ consisting of compact regular frames, and let \KRLPries be the full subcategory of \LPries consisting of compact regular \L-spaces. As an immediate consequence of the above observation,
Wigner--Pultr--Sichler duality restricts to the following duality (which follows from  
\cite{PS88} and is stated in the form below in {\cite[Cor.~7.8]{BM23}}):

\begin{theorem} \label{thm: KRFrm KRLPries}
    \KRFrm and \KRLPries are dually equivalent.
\end{theorem}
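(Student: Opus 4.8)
The plan is to derive \cref{thm: KRFrm KRLPries} as a formal restriction of \cref{thm: WPS duality} to the relevant full subcategories. Since Pultr--Sichler--Wigner duality already provides a dual equivalence between $\Frm$ and $\LPries$ via the functors $\clopup$ and $X_{(-)}$, the only thing that needs checking is that these functors send objects of $\KRFrm$ to objects of $\KRLPries$ and vice versa, after which the restricted functors automatically inherit full faithfulness and essential surjectivity from the ambient equivalence.

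First I would record the object correspondence, which is already assembled in the excerpt. By \cref{thm: compact and min}, a frame $L$ is compact if and only if its \L-space $X_L$ satisfies $\min X_L \subseteq \loc X_L$, i.e.\ $X_L$ is compact in the sense of the preceding definition. By \cref{thm: char regular}, $L$ is regular if and only if $\cl \reg U = U$ for each $U \in \clopup(X_L)$, i.e.\ $X_L$ is \L-regular. Hence $L \in \KRFrm$ if and only if $X_L \in \KRLPries$. Conversely, for an arbitrary \L-space $X$ we have $X \cong X_{\clopup(X)}$ under the duality, and since $\clopup(X)$ is a frame whose \L-space is (isomorphic to) $X$, the same two equivalences show that $X$ is compact and regular precisely when $\clopup(X)$ is a compact regular frame. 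Thus the object classes match up under both functors.

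Next I would invoke the general categorical fact that a dual equivalence between categories $\mathcal C$ and $\mathcal D$ restricts to a dual equivalence between any full subcategories $\mathcal C' \subseteq \mathcal C$ and $\mathcal D' \subseteq \mathcal D$ that correspond to one another under the equivalence on objects. Concretely: the functors of \cref{thm: WPS duality} carry $\KRFrm$ into $\KRLPries$ and $\KRLPries$ into $\KRFrm$ by the previous paragraph; fullness and faithfulness are preserved verbatim because $\KRFrm$ and $\KRLPries$ are \emph{full} subcategories (every morphism between the restricted objects already lives in the ambient category); and the natural isomorphisms $L \cong \clopup(X_L)$ (namely $\sigma$) and $X \cong X_{\clopup(X)}$ witnessing the original equivalence restrict to natural isomorphisms on the subcategories. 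Therefore the restricted functors constitute a dual equivalence between $\KRFrm$ and $\KRLPries$.

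There is no genuine obstacle here: the mathematical content is entirely contained in \cref{thm: compact and min,thm: char regular}, which identify the two defining conditions on \L-spaces with compactness and regularity of the corresponding frame, and in \cref{thm: WPS duality} itself. The only point deserving a moment's care is checking that the definitions of \emph{compact \L-space} ($\min X \subseteq \loc X$) and \emph{regular \L-space} ($\cl\reg U = U$ for all $U \in \clopup(X)$) are exactly the conditions characterized by those two results, so that ``compact regular \L-space'' is literally ``the \L-space of a compact regular frame''; once this bookkeeping is made explicit, the statement follows immediately as a restriction, and the proof can be given in a single sentence citing \cref{thm: WPS duality,thm: compact and min,thm: char regular}.
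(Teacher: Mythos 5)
Your proposal is correct and matches the paper's argument exactly: the paper also obtains this as an immediate restriction of Pultr--Sichler--Wigner duality (\cref{thm: WPS duality}), using \cref{thm: compact and min} and \cref{thm: char regular} to identify compact regular \L-spaces with the \L-spaces of compact regular frames. The only difference is that you spell out the routine categorical bookkeeping (fullness of the subcategories, restriction of the natural isomorphisms) that the paper leaves implicit.
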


To establish Isbell duality, we need to restrict the equivalence of \cref{thm: adjunction in priestley}. We do this by showing that the localic parts of compact regular \L-spaces are precisely the compact Hausdorff spaces. We begin by connecting \L-compactness with compactness of the localic part.

\begin{proposition}[{\cite[Lem.~6.15]{BM23}}]\label{prop: loc X compact}
    Let $X$ be an \L-space. If $X$ is \L-compact, then $\loc X$ is compact. The converse holds when $X$ is \L-spatial.
\end{proposition}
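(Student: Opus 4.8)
The plan is to prove both directions by working with the relationship between the localic part's topology and covers by open upsets. Recall that $\loc X$ carries the topology whose opens are $\{U \cap \loc X \mid U \in \opup(X)\}$, and by \cref{rem: loc topology} we have $U \cap \loc X = \cl U \cap \loc X$ for each $U \in \opup(X)$. The key translation is that a family $\{U_i\} \subseteq \opup(X)$ covers $\loc X$ (i.e.\ $\loc X \subseteq \bigcup U_i$) precisely when the corresponding opens of $\loc X$ cover $\loc X$, and I want to relate this to whether $\bigcup U_i$ is all of $X$ or becomes so after taking closure.

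For the forward direction, suppose $X$ is \L-compact, so $\min X \subseteq \loc X$ by the definition. Let $\{U_i \cap \loc X\}$ be an open cover of $\loc X$ with $U_i \in \opup(X)$; I may replace $\bigcup U_i$ by a single open upset $U$, so $\loc X \subseteq U$. In particular $\min X \subseteq \loc X \subseteq U$. Since $U$ is an open upset and every point lies above a minimal point of its down-set (by \cref{lem: pri-facts-max}, $\downset x \cap \min X \ne \varnothing$ for each $x$), and $U$ is an upset, I get $X = \upset \min X \subseteq U$, so $U = X$. Then $X = \cl U$ trivially, and more to the point $U = X$ means the original family already covers all of $X$, hence covers $\loc X$; but I need a \emph{finite} subcover. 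Here I invoke \cref{thm: compact is scott upset}: since $\bigcup U_i = X = \cl(\bigcup U_i)$ forces the union to equal $X$ already, and compactness of $L$ (equivalently $1$ compact) gives that $1 = \bigvee\{a \mid \sigma(a) \subseteq U\}$ reduces to a finite subjoin, so finitely many $U_i$ already cover $X$ and hence $\loc X$. Thus $\loc X$ is compact.

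For the converse, assume $\loc X$ is compact \emph{and} $X$ is \L-spatial, so $\loc X$ is dense in $X$. I must show $\min X \subseteq \loc X$. Suppose toward a contradiction that some $m \in \min X$ is not localic. Then $\downset m$ is not clopen; since it is always closed (\cref{lem: pri-facts-upclosed}) and $m$ is minimal so $\downset m = \{m\}$, the point $m$ fails to be isolated-from-below in the relevant sense, and $U := X \setminus \{m\}$ relates to an open upset that covers $\loc X$ but admits no finite subcover. Concretely, using that $\loc X$ is dense, I cover $\loc X$ by open upsets each omitting $m$ yet whose union's closure is all of $X$; \L-spatiality lets me approximate $m$ by localic points, and compactness of $\loc X$ forces a finite subfamily to cover $\loc X$, which by density has closure $X$, contradicting that each member avoids a neighborhood associated to the non-localic $m$.

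The main obstacle is the converse: producing the contradicting open cover. The delicate point is that $\min X \subseteq \loc X$ is a statement about \emph{all} minimal points, so I must turn a single non-localic minimal point into a genuine failure of compactness of $\loc X$, and this requires carefully using both density (to ensure the cover really is a cover of $\loc X$) and the non-clopenness of $\downset m$ (to ensure no finite subfamily suffices). I expect the cleanest route is via \cref{prop: compact char}: apply condition \ref{prop: compact char-3} relativized, showing that if every point of $\loc X$ lies in some $\sigma(a)$ with $a$ compact, then compactness of $\loc X$ pushes compactness of $1$ back up to $X$, yielding $\min X \subseteq \loc X$ by \cref{thm: compact and min}. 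Making the density hypothesis do exactly the work of bridging ``$\loc X$ compact'' to ``$X$ \L-compact'' is where the argument must be handled with care.
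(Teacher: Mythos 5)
Your forward direction is essentially the paper's argument: from $\min X \subseteq \loc X$ you conclude that the union of any cover of $\loc X$ by open upsets is all of $X$ (every point of $X$ lies above a minimal point by \cref{lem: pri-facts-max}, and the cover consists of upsets), and then extract a finite subcover. The closing detour through compactness of $1$ in the frame is unnecessary --- once $\bigcup U_i = X$, reduce to clopen upsets via \cref{rem: loc topology} and invoke topological compactness of the Stone space $X$ directly, which is what the paper does --- but it is not wrong.

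The converse has a genuine gap, in two places. First, the entire argument hinges on producing a cover of $\loc X$ by clopen upsets each omitting the non-localic minimal point $m$, and you never construct it; you only assert that $X \setminus \{m\}$ ``relates to an open upset that covers $\loc X$.'' The construction is exactly where minimality of $m$ is used: for each $y \in \loc X$ we have $y \neq m$ (since $m \notin \loc X$), hence $y \nleq m$ because $m$ is minimal, and Priestley separation then supplies $U_y \in \clopup(X)$ with $y \in U_y$ and $m \notin U_y$. Second, your endgame does not close: after extracting a finite subfamily whose union $V$ contains the dense set $\loc X$, you conclude $\cl V = X$ and then appeal to ``a neighborhood associated to $m$'' avoided by each member --- but no such neighborhood exists in general (the sets omit the point $m$, not a neighborhood of it), and $\cl V = X$ alone is consistent with $m \notin V$. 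The fix is to take the $U_y$ \emph{clopen}, so that the finite union $V$ is closed, whence $V = \cl V = X \ni m$, contradicting $m \notin U_y$ for every $y$. Your alternative suggestion of routing the converse through \cref{prop: compact char-3} is not developed and does not bridge either gap.
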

\begin{proof}
    ($\Rightarrow$) Suppose $\loc X \subseteq \bigcup \{U \cap \loc X\mid U \in \mathcal U\}$ for $\mathcal U \subseteq \clopup(X)$ (recall \cref{rem: loc topology}). By \cref{thm: compact and min}, $X = \upset \min X \subseteq \upset \loc X \subseteq \bigcup \mathcal U$ . Since $X$ is compact, there exists a finite $\mathcal V \subseteq \mathcal U$ such that $X = \bigcup \mathcal V$. This implies that $\loc X \subseteq \bigcup\{U \cap \loc X \mid U \in \mathcal V\}$, and so $\loc X$ is compact.

    ($\Leftarrow$) Assume that $\loc X$ is dense (recall \cref{thm: spatial}). Suppose $\loc X$ is compact and $x \in \min X$. If $x \not \in \loc X$, then $$\loc X \subseteq \bigcup\{U \cap \loc X \mid x \notin U \in \clopup(X)\}.$$ Since $\loc X$ is compact, there are $U_1, \dots, U_n \in \clopup(X)$ with $\loc X \subseteq U_1 \cup \dots \cup U_n$. Because $\loc X$ is dense, $x \in \cl \loc X \subseteq U_1 \cup \dots \cup U_n$, a contradiction. Thus, $x \in \loc X$.
\end{proof}

To prove the analogous statement about regularity, we require the following: 

\begin{lemma}[{\cite[Lem.~7.4(1)]{BM23}}] \label{lem: loc inside reg}
    Let $X$ be a regular \L-space and $U \in \clopup(X)$. Then $U \cap \loc X \subseteq \reg U$. 
\end{lemma}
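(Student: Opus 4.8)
The plan is to prove the inclusion directly, by showing that every $x \in U \cap \loc X$ satisfies the defining condition of the regular kernel, namely $\downset\upset x \subseteq U$. Two ingredients drive the argument: the localic point $x$ supplies a convenient open neighborhood of itself, and regularity of $X$ forces $\reg U$ to be dense in $U$, so this neighborhood must meet $\reg U$. The key is that these two facts interact through the \emph{order}, not merely the topology.

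Here are the steps in order. First I would note that, since $x \in \loc X$, the set $\downset x$ is clopen; in particular it is an open set containing $x$, hence an open neighborhood of $x$. Second, invoking regularity of $X$, we have $\cl\reg U = U$, and since $x \in U = \cl\reg U$, every open neighborhood of $x$ meets $\reg U$. Applying this to the neighborhood $\downset x$ produces a point $v \in \downset x \cap \reg U$, that is, $v \leq x$ with $\downset\upset v \subseteq U$. Third, I would transfer this containment upward by monotonicity: from $v \leq x$ we get $\upset x \subseteq \upset v$, whence $\downset\upset x \subseteq \downset\upset v \subseteq U$. This says precisely that $x \in \reg U$, which is what we want.

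The step I expect to be the crux is recognizing why localic-ness is exactly the right hypothesis to localize the argument. Density of $\reg U$ in $U$ only guarantees points of $\reg U$ arbitrarily close to $x$ in the Stone topology; on its own this gives no control in the order. It is the fact that $\downset x$ is open — available precisely because $x$ is a localic point — that lets us find a witness $v$ lying \emph{below} $x$, and the order-containment $v \leq x$ is then what powers the final monotonicity step. Once this is seen, everything else is routine.
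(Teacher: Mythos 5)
Your proof is correct and follows essentially the same route as the paper's: use that $\downset x$ is open (since $x$ is localic) together with $x \in U = \cl\reg U$ to find a point $y \leq x$ in $\reg U$, then conclude $\downset\upset x \subseteq \downset\upset y \subseteq U$ by monotonicity. No gaps.
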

\begin{proof}
    Suppose $x \in U \cap \loc X$. Then $\downset x$ is open. Since $X$ is \L-regular, $\reg U$ is dense in $U$. 
    Therefore, $\downset x \cap \reg U \neq \varnothing$, and so  
    there is $y \in \reg U$ such that $x \in \upset y$. Thus, $\downset \upset x \subseteq \downset \upset y \subseteq U$, and hence $x \in \reg U$.
\end{proof}

\begin{proposition}[{\cite[Thm.~7.11]{BM23}}] \label{prop: reg loc}
    Let $X$ be an \L-space. If $X$ is \L-regular, then $\loc X$ is regular. The converse holds when $X$ is \L-spatial.
\end{proposition}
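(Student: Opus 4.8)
The plan is to treat the two implications separately (mirroring the structure of \cref{prop: loc X compact}), and throughout to translate between the rather-below relation $\prec$ on clopen upsets, described set-theoretically by \cref{lem: prec}, and closures computed in the topology on $\loc X$. Recall that $\{V \cap \loc X \mid V \in \clopup(X)\}$ is a basis for $\loc X$, so both the neighborhoods and the relevant closures can be tested against clopen upsets.

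For the forward implication I would assume $X$ is \L-regular and fix $p \in \loc X$ together with a basic open neighborhood $U \cap \loc X$, where $U \in \clopup(X)$ and $p \in U$. By \cref{lem: loc inside reg} we have $p \in \reg U$, hence $\downset \upset p \subseteq U$; arguing exactly as in the proof of \cref{thm: char regular} (via \cref{lem: pri-facts-intersection} and compactness) I would extract $V \in \clopup(X)$ with $p \in V$ and $\downset V \subseteq U$, i.e. $V \prec U$. It then remains to verify that $V \cap \loc X$ is a neighborhood of $p$ whose closure in $\loc X$ lands inside $U \cap \loc X$. This reduces to the inclusion $\cl_{\loc X}(V \cap \loc X) \subseteq \downset V \cap \loc X$, which holds because any $y \in \loc X$ in this closure has every clopen-upset neighborhood meeting $V$, forcing $\upset y \cap V \neq \varnothing$ and hence $y \in \downset V$. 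Together with $\downset V \subseteq U$ this gives regularity of $\loc X$; notably this direction uses no spatiality.

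For the converse I would assume in addition that $X$ is \L-spatial and that $\loc X$ is regular, and aim to prove $\reg U \cap \loc X = U \cap \loc X$ for each $U \in \clopup(X)$. Indeed, since $U \cap \loc X$ is then a subset of $\reg U$ that is dense in $U$ (by \L-spatiality and openness of $U$), this yields $\cl \reg U = U$, whence $X$ is \L-regular by \cref{thm: char regular}. The enabling computation is that in the spatial case $\cl_{\loc X}(V \cap \loc X) = \downset V \cap \loc X$ for every $V \in \clopup(X)$: the nontrivial inclusion uses density of $\loc X$, so that any nonempty clopen $W \cap V$ meets $\loc X$. Granting this, regularity of $\loc X$ says precisely that each localic point of $U$ admits a clopen-upset neighborhood $V$ with $\downset V \cap \loc X \subseteq U$, and the task is to promote this to $\downset V \subseteq U$.

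The main obstacle is exactly this promotion: regularity of $\loc X$ only furnishes separations witnessed on localic points, namely $\downset V \cap \loc X \subseteq U$, whereas membership in $\reg U$ requires the honest containment $\downset V \subseteq U$, and a priori a closed downset can avoid the dense set $\loc X$. (This is the gap that compactness closes in \cref{prop: loc X compact} through \cref{thm: compact and min}, a tool unavailable here.) The device that bridges it is \L-spatiality itself: by \cref{thm: spatial} and \cref{rem: sober}, $L$ is spatial and $a \mapsto \sigma(a) \cap \loc X$ is a frame isomorphism $L \cong \Omega(\loc X)$. A frame isomorphism preserves pseudocomplements and joins, hence preserves $\prec$; comparing the two descriptions of $\prec$ on clopen upsets — the set-theoretic one $\downset V \subseteq U$ from \cref{lem: prec}, and the localic one $\cl_{\loc X}(V \cap \loc X) \subseteq U \cap \loc X$ from the closure computation above — forces the equivalence $\downset V \subseteq U \iff \downset V \cap \loc X \subseteq U$. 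This collapses the localic and honest rather-below relations on clopen upsets, supplies the promotion, and completes the converse.
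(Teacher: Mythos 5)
Your proof is correct. The forward direction is essentially the paper's argument: both extract, from $\downset \upset p \subseteq U$ via \cref{lem: loc inside reg}, \cref{lem: pri-facts-intersection}, and compactness, a clopen upset $V$ with $p \in V$ and $\downset V \subseteq U$; you phrase regularity via closures of neighborhoods where the paper phrases it via disjoint separating opens, which is cosmetic. The converse is where you genuinely diverge. The paper takes the disjoint opens $V\cap\loc X \ni x$ and $W\cap\loc X \supseteq U^c\cap\loc X$ furnished by regularity of $\loc X$ and observes that $V\cap W$ and $U^c\setminus W$ are \emph{clopen} subsets of $X$ missing the dense set $\loc X$, hence empty; this yields the clopen downset $W^c$ with $V\subseteq W^c\subseteq U$, and $\downset V\subseteq W^c\subseteq U$ follows at once. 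You instead confront the promotion $\downset V\cap\loc X\subseteq U \Rightarrow \downset V\subseteq U$ head-on (where density cannot be applied naively, since $\downset V$ is closed but not open) and resolve it by transporting the rather-below relation along the frame isomorphism $\clopup(X)\cong\Omega(\loc X)$ guaranteed by spatiality, together with the computation $\cl_{\loc X}(V\cap\loc X)=\downset V\cap\loc X$, whose nontrivial inclusion correctly uses density. Both routes are sound; the paper's trick is more elementary and self-contained, while yours is more conceptual --- it isolates the invariance of $\prec$ under the spatiality isomorphism as the real content of the converse --- at the cost of invoking the identification of $\loc X$ with $\pt(L)$ from \cref{rem: sober} and the pseudocomplement computation in $\Omega(\loc X)$.
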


\begin{proof}
    Suppose $X$ is \L-regular. Let $x \in \loc X$ and $U \in \clopup(X)$ such that $x \notin U^c \cap \loc X$. Then $x \in U$, so $\downset \upset x \subseteq U$ by \cref{lem: loc inside reg}. Therefore, by applying \cref{lem: pri-facts-intersection} and compactness twice (as in the proof of \cref{lem: interpolate}),
    there exist $V,W \in\clopup(X)$ such that $x \in V \subseteq W^c \subseteq U$. Hence, $x \in V$, $U^c \subseteq W$, and $V \cap W = \varnothing$. Thus, $V \cap \loc X$ and $W \cap \loc X$ are the desired opens of $\loc X$ separating $x$ and $U^c \cap \loc X$.

    Assume that $\loc X$ is dense. Let $\loc X$ be regular, $U \in \clopup(X)$, and ${x \in U \cap \loc X}$. Then $x \notin U^c \cap \loc X$, so by regularity of $\loc X$, there exist $V,W \in \clopup(X)$ such that ${x \in V \cap \loc X}$, $U^c \cap \loc X \subseteq W \cap \loc X$, and $V \cap W \cap \loc X = \varnothing$. Since $\loc X$ is dense, $U^c \subseteq W$ and $V \cap W = \varnothing$. Therefore, $x \in V \subseteq W^c \subseteq U$, and hence $\downset \upset x \subseteq U$. Consequently, $U \cap\loc X \subseteq \reg U$, and using again that $\loc X$ is dense, we conclude that $\cl \reg U = U$. Thus, $X$ is \L-regular.
\end{proof}

Let $\KHaus$ be the full subcategory of $\Sob$ consisting of compact Hausdorff spaces.
Since regular frames are subfit (see \cref{rem: containment of kernels}), Isbell’s Spatiality Theorem implies that compact regular frames are spatial (see \cref{thm: subfit compact is spatial}). Similarly, compact regular \L-spaces are \L-spatial. Hence, we may restrict the equivalences of \cref{thm: spatial duality,thm: adjunction in priestley} to the compact regular setting (see, e.g., \cite[Cor.~7.12]{BM23}):

\begin{theorem}
        $\KRLPries$ is equivalent to $\KHaus$.
        \label{thm: KRLPries KHaus}
\end{theorem}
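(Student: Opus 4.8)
The plan is to show that the adjoint equivalence $\Loc \dashv \Pri$ between $\SLPries$ and $\Sob$ furnished by \cref{thm: adjunction in priestley} restricts to an equivalence between the full subcategories $\KRLPries$ and $\KHaus$. Since both subcategories are full, it suffices to verify the two containments ``$\Loc$ carries $\KRLPries$ into $\KHaus$'' and ``$\Pri$ carries $\KHaus$ into $\KRLPries$''; the unit and counit isomorphisms of the ambient equivalence then restrict automatically, yielding the equivalence on the nose. As a preliminary, I would record that $\KRLPries \subseteq \SLPries$: a compact regular \L-space is \L-spatial, which is the \L-space analogue of Isbell's Spatiality Theorem (\cref{thm: subfit compact is spatial}), using that regular frames are subfit (\cref{rem: containment of kernels}). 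This legitimizes applying the machinery of \cref{thm: adjunction in priestley} to objects of $\KRLPries$.

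For the forward containment, let $X \in \KRLPries$. As $X$ is \L-spatial, it lies in $\SLPries$, so $\loc X = \Loc(X)$ is sober, in particular $T_0$. Since $X$ is \L-compact and \L-regular, \cref{prop: loc X compact,prop: reg loc} give that $\loc X$ is compact and regular. The one genuinely topological step is to upgrade ``regular plus $T_0$'' to Hausdorff: given distinct $x,y \in \loc X$, soberness (hence $T_0$) yields an open set containing one of them, say $x$, but not the other; its complement is then a closed set not containing $x$, and regularity separates $x$ from this closed set, thereby separating $x$ and $y$. Thus $\loc X$ is compact Hausdorff, i.e.\ $\loc X \in \KHaus$.

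For the backward containment, let $Y \in \KHaus$. Compact Hausdorff spaces are sober (irreducible closed subsets of a Hausdorff space are singletons) and regular (compact Hausdorff spaces are normal, hence regular). Therefore $Y \in \Sob$, so by \cref{thm: adjunction in priestley} the \L-space $\Pri(Y)$ is \L-spatial and its localic part is homeomorphic to $Y$. Because $Y$ is compact and regular, the converse halves of \cref{prop: loc X compact,prop: reg loc}, which apply precisely because $\Pri(Y)$ is \L-spatial, show that $\Pri(Y)$ is both \L-compact and \L-regular. Hence $\Pri(Y) \in \KRLPries$.

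With both containments established, the restricted functors $\Loc \colon \KRLPries \to \KHaus$ and $\Pri \colon \KHaus \to \KRLPries$ are well defined, and restricting the natural isomorphisms of the equivalence $\SLPries \simeq \Sob$ completes the argument. The main obstacle is not a single hard computation but the correct assembly of ingredients; the only place needing real topological input is the implication ``regular $+$ sober $\Rightarrow$ Hausdorff'' in the forward direction, together with the soberness and regularity of compact Hausdorff spaces used in the converse.
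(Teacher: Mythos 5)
Your proposal is correct and follows essentially the same route as the paper: the paper likewise notes that compact regular \L-spaces are \L-spatial (via regularity implying subfitness and the \L-space form of Isbell's Spatiality Theorem) and then restricts the equivalence of \cref{thm: adjunction in priestley} using \cref{prop: loc X compact,prop: reg loc}, together with the fact that a compact (or locally compact) space is Hausdorff iff it is regular. Your write-up merely makes explicit the ``regular $+$ $T_0$ $\Rightarrow$ Hausdorff'' step and the verification that both functors preserve the subcategories, which the paper leaves implicit.
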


Putting \cref{thm: KRFrm KRLPries,thm: KRLPries KHaus} together yields 
the following classic result \cite{Isb72} (see also \cite{BM80} and \cite[p.~90]{Joh82}):

\begin{corollary}[Isbell duality]
    $\KRFrm$ is dually equivalent to $\KHaus$.
\end{corollary}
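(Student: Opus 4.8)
The plan is to obtain Isbell duality formally, by composing the dual equivalence of \cref{thm: KRFrm KRLPries} with the equivalence of \cref{thm: KRLPries KHaus}, since between them these two results already carry all the content. No new pointfree or Priestley-theoretic work is needed; the task is entirely one of bookkeeping at the level of functors.

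First I would record the two functors explicitly. In one direction, \cref{thm: KRFrm KRLPries} provides a contravariant functor $\KRFrm \to \KRLPries$ sending a compact regular frame $L$ to its \L-space $X_L$, with the clopen-upset functor $\clopup$ as its quasi-inverse. In the other, \cref{thm: KRLPries KHaus} provides the covariant equivalence $\Loc \colon \KRLPries \to \KHaus$, $X \mapsto \loc X$, with a quasi-inverse given by $\Pri$ restricted to $\KHaus$.

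Next I would invoke the standard categorical fact that the composite of a dual equivalence with an equivalence is again a dual equivalence. Concretely, the composite $L \mapsto \loc X_L$ is a contravariant functor $\KRFrm \to \KHaus$, and one checks that it is naturally isomorphic to the point functor $\pt$ restricted to compact regular frames (using \cref{rem: sober}); paired with $\Omega \colon \KHaus \to \KRFrm$ it realizes the asserted dual equivalence. The unit and counit are obtained by pasting together the natural isomorphisms that witness the two given (dual) equivalences.

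The only point requiring care is the bookkeeping of variance and the verification that the pasted natural transformations remain isomorphisms, which is routine. Hence I do not expect any genuine obstacle, and the corollary follows immediately from \cref{thm: KRFrm KRLPries,thm: KRLPries KHaus}.
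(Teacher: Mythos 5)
Your proposal is correct and matches the paper exactly: the paper also obtains Isbell duality simply by composing the dual equivalence of \cref{thm: KRFrm KRLPries} with the equivalence of \cref{thm: KRLPries KHaus}. Your extra remarks on identifying the composite with the point functor $\pt$ and on pasting natural isomorphisms are accurate routine bookkeeping that the paper leaves implicit.
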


\subsection{Local compactness, continuity, and Hofmann--Lawson duality}
Local compactness is a convenient 
generalization of compactness that retains many of its nice properties. In particular, every compact Hausdorff space is locally compact, so there is a natural generalization of Isbell duality to the locally compact setting \cite[Rem.~4]{Ban80} (see also \cite[Cor.~6.10]{BR25}). The frame analogue of a locally compact space is a continuous frame, which is described in terms of the way-below relation.

\begin{definition}[{see, e.g., \cite[pp.~134--135]{PP12}}]
    Let $L$ be a frame.
    \begin{enumerate}
        \item For $a, b \in L$, we say that $a$ is \emph{way below} $b$ and write $a \ll b$ if for each $S \subseteq L$,
        \[
        b \leq \bigvee S \ \Longrightarrow \ \mbox{there is a finite } T \subseteq S \mbox{ such that }a \leq \bigvee T.
        \]
        \item $L$ is \emph{continuous} if for all $a,b \in L$,
        \[
            a \nleq b \implies \exists c \in L : c \ll a \text{ and } c \nleq b.
        \]
        Equivalently, $L$ is continuous provided
        $a = \bigvee \{c \in L \mid c \ll a\}$ for all $a \in L$. 
    \end{enumerate}
\end{definition}

\begin{lemma}[{\cite[Prop.~3.6]{PS88}}] \label{lem: way-below}
    Let $X$ be an \L-space and $U,V \in \clopup(X)$. Then $U \ll V$ if and only if for every $W \in \opup(X)$, 
    \[
        V \subseteq \cl W \implies U \subseteq W.
    \]
\end{lemma}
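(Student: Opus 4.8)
The plan is to translate the frame-theoretic way-below relation into the Priestley language through the isomorphism $\sigma \colon L \to \clopup(X)$, writing $U = \sigma(a)$ and $V = \sigma(b)$ so that $U \ll V$ means precisely $a \ll b$ in $L$. The two ingredients that make this translation transparent are: every open upset $W$ is of the form $\bigcup \sigma[S]$ for some $S \subseteq L$ (by \cref{lem: pri-facts-unions}), and, since $L$ is a frame, every join is exact, so $\sigma(\bigvee S) = \cl \bigcup \sigma[S]$ (by \cref{prop: exact joins-1}). Consequently, for $S \subseteq L$ and $W := \bigcup \sigma[S]$, the inequality $b \le \bigvee S$ is equivalent to $V \subseteq \cl W$; and for a finite $T \subseteq S$, using that $\sigma$ preserves finite joins, $a \le \bigvee T$ is equivalent to $U \subseteq \bigcup \sigma[T]$.

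For the forward direction, I would assume $U \ll V$ and take any $W \in \opup(X)$ with $V \subseteq \cl W$. Writing $W = \bigcup \sigma[S]$, the dictionary above turns $V \subseteq \cl W$ into $b \le \bigvee S$, so the way-below hypothesis yields a finite $T \subseteq S$ with $a \le \bigvee T$, i.e.\ $U \subseteq \bigcup \sigma[T] \subseteq W$. Hence $U \subseteq W$, as required.

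For the converse, I would assume the closure condition and verify $a \ll b$ directly. Given $S \subseteq L$ with $b \le \bigvee S$, set $W := \bigcup \sigma[S] \in \opup(X)$; then $V \subseteq \cl W$ by exactness, so the hypothesis gives $U \subseteq W = \bigcup_{s \in S}\sigma(s)$. This is where the one genuine point of the argument enters: $U$ is clopen in the Stone space $X$ (\cref{lem: pri-facts-stone}), hence compact, while each $\sigma(s)$ is open, so a finite subfamily already covers $U$. This produces a finite $T \subseteq S$ with $U \subseteq \bigcup \sigma[T]$, i.e.\ $a \le \bigvee T$, establishing $a \ll b$.

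The only step requiring real care --- and the one I would flag as the crux --- is the passage from ``$U$ is contained in the open upset $W$'' to ``$U$ is contained in a finite subunion of the generating clopen upsets,'' which rests precisely on the compactness of the clopen set $U$. Everything else is bookkeeping through the isomorphism $\sigma$ together with the exactness identity $\sigma(\bigvee S) = \cl \bigcup \sigma[S]$.
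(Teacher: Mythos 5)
Your proof is correct and follows essentially the same route as the paper: both directions translate the way-below condition through the identification $L \cong \clopup(X)$, using \cref{lem: pri-facts-unions} to write an open upset as a union of clopen upsets, \cref{prop: exact joins-1} to convert joins into closures of unions, and the compactness of the clopen set $U$ to extract the finite subfamily in the converse direction. The only cosmetic difference is that you carry the elements $a,b\in L$ explicitly through $\sigma$, whereas the paper works directly with the clopen upsets.
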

\begin{proof}
    ($\Rightarrow$) Let $U \ll V$ and let $W$ be an open upset such that $V \subseteq \cl W$. By \cref{lem: pri-facts-unions},  
    $V \subseteq \cl \bigcup \mathcal S$ for some $\mathcal S \subseteq \clopup(X)$. Hence, by \cref{prop: exact joins-1}, $V \leq \bigvee \mathcal W$. Therefore, since $U \ll V$, there is a finite $\mathcal T \subseteq \mathcal S$ such that $$U \leq \bigvee \mathcal T = \bigcup \mathcal T \subseteq W.$$

    ($\Leftarrow$) Let $V \leq \bigvee \mathcal S$ for some $\mathcal S \subseteq \clopup(X)$. Then $V \subseteq \cl \bigcup \mathcal S$ by \cref{prop: exact joins-1}, so $U \subseteq \bigcup \mathcal S$ by  assumption. By compactness, there is a finite $\mathcal T \subseteq \mathcal S$ such that $U \subseteq \bigcup \mathcal T$. Thus, $U \leq \bigvee \mathcal T$, and hence $U \ll V$.
\end{proof}

 A characterization of Priestley spaces of continuous frames 
 using kernels 
 was obtained in \cite[Sec.~5]{BM23}. 

\begin{definition}[{see, e.g., \cite[Defs.~5.2 and 5.4]{BM23}}]
     Let $X$ be an \L-space. 
    \begin{enumerate}
        \item For $U \in \clopup(X)$ define the \emph{continuous kernel} of $U$ as 
    \[
        \con U = \bigcup\{V \in \clopup(X) \mid \forall W \in\opup(X) : U \subseteq \cl W \Rightarrow V \subseteq W\}.
    \]
        \item We call $X$ \emph{\L-continuous} or a \emph{continuous \L-space} if $\cl \con U = U$ for each $U \in \clopup(X)$.
    \end{enumerate}
\end{definition}

\begin{theorem}[{\cite[Thm.~5.5]{BM23}}]
\label{thm: char continuous}
    Let $L$ be a frame and $X$ its \L-space. Then $L$ is continuous if and only if
    $\cl (\con U) = U$ for each $U \in \clopup(X)$.
\end{theorem}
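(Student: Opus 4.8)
The plan is to mimic the structure used for the regular and Hausdorff cases (\cref{thm: char regular,thm: char Hausdorff}): reduce the frame-theoretic condition to a statement about $\clopup(X)$ via the isomorphism $L \cong \clopup(X)$, then translate the defining join through \cref{prop: exact joins-1} and identify the continuous kernel with the relevant union of clopen upsets. Specifically, since $L$ is continuous iff $a = \bigvee\{c \mid c \ll a\}$ for all $a$, and $L \cong \clopup(X)$, continuity is equivalent to
\[
    U = \bigvee\{V \in \clopup(X) \mid V \ll U\}
\]
for all $U \in \clopup(X)$. By \cref{prop: exact joins-1}, the right-hand join is $\cl\bigcup\{V \in \clopup(X) \mid V \ll U\}$, so continuity becomes $U = \cl\bigcup\{V \in \clopup(X) \mid V \ll U\}$.

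The crux is then to show that this union of clopen upsets coincides with $\con U$. This is where \cref{lem: way-below} does the work: it characterizes $V \ll U$ exactly as the condition ``$U \subseteq \cl W \Rightarrow V \subseteq W$ for all $W \in \opup(X)$,'' which is precisely the defining condition in the union $\con U = \bigcup\{V \in \clopup(X) \mid \forall W \in \opup(X) : U \subseteq \cl W \Rightarrow V \subseteq W\}$. Hence
\[
    \con U = \bigcup\{V \in \clopup(X) \mid V \ll U\},
\]
so the two descriptions of the dense subset agree on the nose.

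Combining these observations, $L$ is continuous iff $\cl\bigcup\{V \in \clopup(X) \mid V \ll U\} = U$ for each $U$, iff $\cl(\con U) = U$ for each $U$, which is the claim. I would therefore write the proof as a short chain: first invoke $L \cong \clopup(X)$ and the equivalent formulation of continuity to get the displayed join identity; then apply \cref{prop: exact joins-1} to rewrite the join as a closure of a union; then invoke \cref{lem: way-below} to identify that union with $\con U$; and conclude. I do not expect a genuine obstacle here, since all the substantive content has been isolated into \cref{lem: way-below}; the remaining danger is purely bookkeeping, namely making sure the quantifier in the way-below characterization is matched verbatim with the quantifier in the definition of $\con U$ so that the two sets are literally equal rather than merely mutually contained. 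This mirrors the one-line proof given for \cref{thm: char cregular}, which similarly reduces to two earlier results.
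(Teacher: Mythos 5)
Your proposal is correct and is essentially the paper's own argument: the paper's proof is the one-line ``This follows from \cref{lem: way-below} and \cref{prop: exact joins-1},'' and your write-up simply spells out the same two reductions (exact joins to rewrite $\bigvee\{V \mid V \ll U\}$ as a closure of a union, and the way-below characterization to identify that union with $\con U$). No gaps; the quantifier-matching you flag does go through verbatim.
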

\begin{proof}
    This follows from \cref{lem: way-below} and \cref{prop: exact joins-1}.
\end{proof}

It is common to consider those frame homomorphisms between continuous frames that preserve the way-below relation (see, e.g., \cite[p.~421]{GH+03}). Such frame homomorphisms are called \emph{proper}. 
    The Priestley counterpart of a proper frame homomorphism is described in \cite{BM23}. 
    Call an \L-morphism $f : X \to Y$ between \L-spaces \emph{\L-proper} or a \emph{proper \L-morphism} if \[
            f^{-1}(\con U) \subseteq \con f^{-1}(U)
        \]
        for all $U \in \clopup(Y)$.

\begin{proposition}[{\cite[Lem.~5.11]{BM23}}] \label{prop: proper morph}
    Let $h : L_1 \to L_2$ be a frame homomorphism and $f : X_2 \to X_1$ its dual \L-morphism. Then $h$ is proper if and only if $f$ is \L-proper.
\end{proposition}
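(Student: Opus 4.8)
The plan is to transport both notions across the Priestley isomorphism and reduce everything to the single relation $\ll$ on clopen upsets. Writing $\sigma_i \colon L_i \to \clopup(X_i)$ for the two representations, recall that the functor sends $h$ to $f^{-1} \colon \clopup(X_1) \to \clopup(X_2)$, and that $f^{-1}\sigma_1 = \sigma_2 h$. Each $\sigma_i$ is an isomorphism of frames, hence preserves and reflects $\ll$ (which is defined purely from arbitrary joins). Thus I would first record the reduction that $h$ is proper, i.e.\ preserves $\ll$, if and only if
\[
    V \ll U \ \Longrightarrow\ f^{-1}(V) \ll f^{-1}(U) \qquad (U,V \in \clopup(X_1)).
\]
The second ingredient is the identity
\[
    \con U = \bigcup\{V \in \clopup(X_1) \mid V \ll U\},
\]
which is immediate from \cref{lem: way-below}, since the defining condition of $\con U$ is exactly the way-below condition of that lemma. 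In other words, $\con U$ is the open upset swept out by all clopen upsets way below $U$.

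For the forward implication I would assume $h$ proper. Then for each $V \ll U$ we get $f^{-1}(V) \ll f^{-1}(U)$, so $f^{-1}(V)$ is one of the clopen upsets constituting $\con f^{-1}(U)$. Taking the union over all such $V$ and using that $f^{-1}$ preserves unions gives
\[
    f^{-1}(\con U) = \bigcup_{V \ll U} f^{-1}(V) \subseteq \con f^{-1}(U),
\]
which is precisely \L-properness. This direction is a purely formal manipulation of unions.

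For the converse I would assume $f$ is \L-proper and fix $V \ll U$ in $\clopup(X_1)$, aiming to show $f^{-1}(V) \ll f^{-1}(U)$. Since $V \subseteq \con U$, \L-properness yields
\[
    f^{-1}(V) \subseteq f^{-1}(\con U) \subseteq \con f^{-1}(U) = \bigcup\{W \in \clopup(X_2) \mid W \ll f^{-1}(U)\}.
\]
Here I would use that the family $\{W \in \clopup(X_2) \mid W \ll f^{-1}(U)\}$ is directed, being closed under finite unions (a finite join of elements way below a fixed element is again way below it), and that each $W$ is open while $f^{-1}(V)$ is clopen, hence compact. A compactness argument then extracts finitely many members whose union $W_0$ lies in the family and still contains $f^{-1}(V)$, giving $f^{-1}(V) \subseteq W_0 \ll f^{-1}(U)$; since $\ll$ is downward closed in its first argument, this forces $f^{-1}(V) \ll f^{-1}(U)$.

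I expect the last compactness step in the converse to be the main obstacle: passing from ``$f^{-1}(V)$ is covered by clopen upsets way below $f^{-1}(U)$'' to ``$f^{-1}(V)$ is itself way below $f^{-1}(U)$''. It is exactly here that directedness of the way-below ideal and compactness of the clopen set $f^{-1}(V)$ must be combined, whereas the forward direction requires no such interpolation.
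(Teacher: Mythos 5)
Your proposal is correct and follows essentially the same route as the paper: reduce to the induced map $f^{-1}$ on clopen upsets, identify $\con U$ with the union of all clopen upsets way below $U$ via \cref{lem: way-below}, and in the converse use compactness of the clopen set $f^{-1}(V)$ together with closure of the way-below family under finite unions. The only cosmetic difference is that the paper phrases the forward direction pointwise rather than as a union manipulation.
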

\begin{proof}
    It suffices to show that $f^{-1}\colon \clopup(X_1)\to \clopup(X_2)$ is proper if and only if $f$ is \L-proper. 
    First, suppose $f^{-1}$ is proper. Let $U\in\clopup(X_1)$ and $x\in f^{-1}(\con U)$. By \cref{lem: way-below}, $f(x)\in V\ll U$ for some $V\in\clopup(X_1)$. Since $f^{-1}$ is proper, $x\in f^{-1}(V)\ll f^{-1}(U)$, so applying the same lemma again yields $x\in \con f^{-1}(U)$. Thus, $f^{-1}(\con U) \subseteq \con f^{-1}(U)$, and hence $f$ is \L-proper. 
    
    Conversely, suppose $f$ is \L-proper and $U\ll V$ in $\clopup(X_2)$. Then $U\subseteq \con V$, and since $f$ is \L-proper, 
    $
    f^{-1}(U)\subseteq f^{-1}(\con V)\subseteq \con f^{-1}(V).
    $
    Because $\con f^{-1}(V)$ is open, compactness and \cref{lem: way-below} give that $f^{-1}(U)\ll f^{-1}(V)$. 
    Thus,  $f^{-1}$ is proper.
\end{proof}

Let \ConFrm be the category of continuous frames and proper frame homomorphisms, and let $\ConLPries$ be the category of continuous \L-spaces and proper \L-morphisms.
Since isomorphisms in \ConFrm are proper and isomorphisms in \ConLPries are \L-proper, \cref{thm: WPS duality,thm: char continuous,prop: proper morph}
yield: 

\begin{corollary}[{\cite[Thm.~5.13]{BM23}}]
$\ConFrm$ is dually equivalent to $\ConLPries$. \label{cor: ConFrm ConLPries}
\end{corollary}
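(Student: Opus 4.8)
The plan is to obtain the stated dual equivalence by restricting the Pultr--Sichler--Wigner duality of \cref{thm: WPS duality} to the (non-full) subcategories $\ConFrm \subseteq \Frm$ and $\ConLPries \subseteq \LPries$. Recall that this duality is implemented by the contravariant functors sending a frame $L$ to its \L-space and a frame homomorphism $h$ to $h^{-1}$, and conversely sending an \L-space $X$ to $\clopup(X)$ and an \L-morphism $f$ to $f^{-1}$, together with natural isomorphisms whose components are the Stone-type maps $\sigma$ (and their Priestley-side counterparts).

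First I would check that the functors carry objects into the correct subcategories. By \cref{thm: char continuous}, a frame $L$ is continuous if and only if its \L-space is \L-continuous, and dually an \L-space $X$ is \L-continuous if and only if $\clopup(X)$ is continuous; hence both functors send objects of one subcategory to objects of the other. For the morphisms, \cref{prop: proper morph} shows that a frame homomorphism is proper if and only if its dual \L-morphism is \L-proper, and symmetrically. Since $\ConFrm$ and $\ConLPries$ are by definition closed under composition and contain the identities, the two functors restrict to well-defined contravariant functors between $\ConFrm$ and $\ConLPries$.

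The main point to verify---and the one flagged just before the statement---is that this restriction remains an equivalence rather than merely a pair of functors. Because we have passed to non-full subcategories, we must ensure that the unit and counit natural isomorphisms of \cref{thm: WPS duality} restrict, i.e.\ that each of their components is a morphism of the relevant subcategory. These components are isomorphisms, so it suffices to observe that every isomorphism in $\ConFrm$ is proper and every isomorphism in $\ConLPries$ is \L-proper. The former holds because the way-below relation $\ll$ is defined purely in terms of joins, so any frame isomorphism preserves $\ll$ and is therefore proper; the latter then follows via \cref{prop: proper morph}. With the components of the natural isomorphisms lying in the subcategories, the restricted functors together with the restricted natural isomorphisms exhibit the desired dual equivalence, completing the proof.
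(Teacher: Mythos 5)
Your proposal is correct and follows essentially the same route as the paper, which likewise obtains the corollary by restricting Pultr--Sichler--Wigner duality via \cref{thm: char continuous} (objects) and \cref{prop: proper morph} (morphisms), noting that isomorphisms in $\ConFrm$ are proper and isomorphisms in $\ConLPries$ are \L-proper so that the natural isomorphisms restrict to the non-full subcategories. Your extra remark justifying why isomorphisms preserve $\ll$ is a welcome elaboration of a point the paper states without proof.
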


We next connect $\ConLPries$ with the category of locally compact sober spaces. Recall (see, e.g., \cite[p.~44]{GH+03}) that a topological space $X$ is \emph{locally compact} if for each open $U$ and $x \in U$ there exist an open $V$ and compact $K$ such that $x \in V \subseteq K \subseteq U$. Also recall (see, e.g., \cite[p.~43]{GH+03}) that a \emph{saturated} set is an intersection of open sets. We show that compact saturated subsets of $\loc X$ correspond to certain closed upsets of $X$.

\begin{definition}[{see, e.g., \cite[Def.~3.1]{BM22}}]
    Let $X$ be an \L-space. We call a closed upset $F \subseteq X$ a \emph{Scott upset} if $\min F \subseteq \loc X$. 
\end{definition}

\begin{proposition}[{\cite[Thm.~5.7]{BM22}}] \label{prop: HM}
    Let $X$ be an \L-space.
    \begin{enumerate}[cref=proposition]
        \item For each Scott upset $F$, $F \cap \loc X$ is compact saturated. \label{prop: HM-1}
        \item For each compact saturated $K \subseteq \loc X$, $\upset K$ is a Scott upset. \label{prop: HM-2}
        \item These assignments are inverse to each other.
    \end{enumerate}
\end{proposition}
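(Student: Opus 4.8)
The plan is to handle the two constructions and their mutual inversion in turn, throughout translating between the Stone topology on $X$ and the open-upset topology on $\loc X$. The only feature of the latter I will use is the description from \cref{rem: loc topology}: the opens of $\loc X$ are exactly the sets $U \cap \loc X$ with $U \in \opup(X)$. For part (1), I would first note that a Scott upset $F$, being a closed upset, is an intersection of clopen upsets by \cref{lem: pri-facts-intersection}; intersecting this representation with $\loc X$ exhibits $F \cap \loc X$ as an intersection of opens of $\loc X$, hence as a saturated set. For compactness I would take a cover of $F \cap \loc X$ by opens $U_i \cap \loc X$ (with $U_i \in \opup(X)$) and put $W = \bigcup_i U_i$. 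The decisive step is to promote this to a cover of all of $F$: for $x \in F$, \cref{lem: pri-facts-max} supplies $m \in \downset x \cap \min F$, and the Scott condition $\min F \subseteq \loc X$ places $m$ in $F \cap \loc X \subseteq W$; since $W$ is an upset and $m \le x$, we get $x \in W$. Thus $F \subseteq W$, and as $F$ is a closed subset of the compact space $X$ it is compact, so finitely many $U_i$ already cover $F$, and a fortiori cover $F \cap \loc X$.

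For part (2), given a compact saturated $K \subseteq \loc X$, I would prove the identity $\upset K = \bigcap\{U \in \clopup(X) \mid K \subseteq U\}$, which makes $\upset K$ a closed upset at once. The inclusion $\subseteq$ is immediate. For $\supseteq$ I argue contrapositively: if $x \notin \upset K$ then $k \nleq x$ for every $k \in K$, so Priestley separation yields clopen upsets $U_k \ni k$ with $x \notin U_k$; the sets $U_k \cap \loc X$ cover $K$, so compactness of $K$ gives a finite subfamily whose union $U$ is a clopen upset with $K \subseteq U$ and $x \notin U$, witnessing $x \notin \bigcap\{U \in \clopup(X) \mid K \subseteq U\}$. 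It then remains to check $\min(\upset K) \subseteq \loc X$: any $m \in \min(\upset K)$ lies above some $k \in K \subseteq \upset K$, and minimality forces $m = k \in \loc X$. In fact this shows $\min(\upset K) \subseteq K$, so $\upset K$ is a Scott upset.

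For part (3) both round-trips are short. Starting from a Scott upset $F$, the inclusion $\upset(F \cap \loc X) \subseteq F$ is automatic, and the reverse reuses \cref{lem: pri-facts-max} with $\min F \subseteq \loc X$ to produce, for each $x \in F$, a point $m \in F \cap \loc X$ with $m \le x$. Starting from compact saturated $K$, the inclusion $K \subseteq \upset K \cap \loc X$ is clear; for the reverse I would invoke saturation: if $x \in \loc X \cap \upset K$ but $x \notin K$, choose an open $U \cap \loc X \supseteq K$ avoiding $x$ (so $x \notin U$, as $x \in \loc X$), and then any $k \in K$ with $k \le x$ lies in the upset $U$, forcing $x \in U$, a contradiction. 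The main obstacle is exactly the compactness transfer in the two directions: in (1) one must recognize that the Scott condition is precisely what lets a cover of $F \cap \loc X$ be lifted to a cover of $F$, so that honest compactness of the closed set $F \subseteq X$ can be used; and dually in (2), that compactness of $K$ in the weaker open-upset topology is what forces $\upset K$ to be closed in the Stone topology. The remaining steps are routine bookkeeping with \cref{lem: pri-facts-max,lem: pri-facts-intersection} and the topology of $\loc X$.
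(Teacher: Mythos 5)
Your proposal is correct and follows essentially the same route as the paper's proof: saturation of $F \cap \loc X$ via \cref{lem: pri-facts-intersection}, compactness by lifting a cover of $F\cap\loc X$ to a cover of $F$ using $F=\upset\min F$ and $\min F\subseteq\loc X$, closedness of $\upset K$ via Priestley separation plus compactness of $K$, and the two round-trips via \cref{lem: pri-facts-max} and saturation. The only differences are cosmetic (pointwise contradiction arguments where the paper writes chains of equalities).
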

\begin{proof}
    (1) By \cref{lem: pri-facts-intersection}, $F = \bigcap \{U \in \clopup(X) \mid F \subseteq U\}$. Therefore, 
    \[{F \cap \loc X = \bigcap\{U \cap \loc X \mid F \subseteq U \in \clopup(X)\}},\]
    so $F \cap \loc X$ is saturated. To see that it is compact, let $F \cap \loc X \subseteq \bigcup\{U \cap \loc X \mid U \in \mathcal U\}$ for some $\mathcal U \subseteq \clopup(X)$. Since $F$ is a Scott upset, $\min F \subseteq \bigcup \mathcal U$, so $F = \upset \min F \subseteq \bigcup \mathcal U$ by \cref{lem: pri-facts-max}. By compactness, $F \subseteq \bigcup \mathcal V$ for some finite $\mathcal V \subseteq \mathcal U$. Thus, $$F \cap \loc X \subseteq \bigcup \{U \cap \loc X \mid U \in \mathcal V\},$$ and hence $F \cap \loc X$ is compact.

    (2) Clearly, $\upset K$ is an upset and $\min \upset K \subseteq \loc X$. It remains to show that $\upset K$ is closed. Let $x \notin \upset K$. Then $y \nleq x$ for each $y \in K$. By Priestley separation, for each $y \in K$ there is $U_y \in \clopup(X)$ such that $y \in U_y$ and $x\notin U_y$. Therefore, $K \subseteq \bigcup \{U_y \cap \loc X \mid y \in K\}$. Since $K$ is compact, there is $U \in \clopup(X)$ such that $K \subseteq U$ and $x \notin U$. Thus, $\upset K \subseteq U$, and hence $\upset K$ is closed.

    (3) If $F$ is a Scott upset, $$F = \upset \min F \subseteq \upset (F \cap \loc X) \subseteq F.$$ 
    If $K \subseteq \loc X$ is compact saturated,  
    \begin{align*}
        K 
        &= \bigcap \{U \cap \loc X \mid K \subseteq U \in \clopup(X)\}&&\text{(since $K$ is saturated)} \\
        &= \bigcap\{U \in \clopup(X)\mid K \subseteq U \} \cap \loc X \\
        &= \upset K \cap \loc X && \text{(since $\upset K$ is closed).}
    \end{align*}
    Thus, the assignments in (1) and (2) are inverses of each other.
\end{proof}

\begin{remark} 
    \cref{prop: HM} is closely related to the Hofmann--Mislove Theorem (see, e.g., \cite[p.~144]{GH+03}), which states that compact saturated subsets of a sober space correspond to Scott-open filters of its frame of opens. A proof of this result using Priestley duality is given in \cite{BM22}.
\end{remark}

Let $U$ and $V$ be clopen upsets of an \L-space $X$. In general, the relation $U \ll V$ does not imply the existence of a Scott upset between them, not even when $X$ is \L-spatial. However, for continuous \L-spaces the situation improves: 

\begin{lemma}[{\cite[Sec.~5]{PS00}}] \label{lem: way below and Scott}
    Let $X$ be an \L-space and $U,V \in \clopup(X)$. If there is a Scott upset $F \subseteq X$ such that $U \subseteq F \subseteq V$ then $U \ll V$. The converse holds when $X$ is \L-continuous.
\end{lemma}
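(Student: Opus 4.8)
The plan is to prove the two implications separately, using the characterization of the way-below relation from \cref{lem: way-below} as the main tool throughout.

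For the forward implication, suppose there is a Scott upset $F$ with $U \subseteq F \subseteq V$. By \cref{lem: way-below} it suffices to show that $U \subseteq W$ for every $W \in \opup(X)$ with $V \subseteq \cl W$. Fix such a $W$ and let $m \in \min F$. Since $F \subseteq V \subseteq \cl W$, we have $m \in \cl W$; and since $F$ is a Scott upset, $m \in \loc X$, so $\downset m$ is open. As $\downset m$ is then an open neighborhood of the point $m \in \cl W$, it meets $W$, giving some $z \leq m$ with $z \in W$; because $W$ is an upset, $m \in W$. Thus $\min F \subseteq W$, and since $F = \upset \min F$ by \cref{lem: pri-facts-max}, we conclude $U \subseteq F \subseteq W$.

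For the converse, assume $X$ is \L-continuous and $U \ll V$. Since the way-below relation interpolates on a continuous frame (see, e.g., \cite{GH+03,PP12}), working in $\clopup(X) \cong L$ we may build a chain $U \ll \dots \ll W_{n+1} \ll W_n \ll \dots \ll W_1 \ll V$ of clopen upsets with $U \ll W_n$ for all $n$. Put $F := \bigcap_n W_n$. Then $F$ is a closed upset, $U \subseteq F$ (as $U \subseteq W_n$ for every $n$), and $F \subseteq W_1 \subseteq V$, so it only remains to prove $\min F \subseteq \loc X$. The key observation is that $\cl W$ is a \emph{clopen} upset for every $W \in \opup(X)$, since $X$ is an \L-space. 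Consequently, whenever $F \subseteq \cl W$, the decreasing family of closed sets $W_n \cap (\cl W)^c$ has empty intersection $F \cap (\cl W)^c = \varnothing$, so by compactness $W_N \subseteq \cl W$ for some $N$; applying \cref{lem: way-below} to $W_{N+1} \ll W_N$ then yields $F \subseteq W_{N+1} \subseteq W$. This establishes the auxiliary implication $F \subseteq \cl W \Rightarrow F \subseteq W$ for all $W \in \opup(X)$.

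To finish, let $x \in \min F$ and let $W \in \opup(X)$ satisfy $x \in \cl W$; I must show $x \in W$, which is exactly the statement that $\downset x$ is open, i.e.\ $x \in \loc X$ by \cref{lem: localic part}. Consider the open upset $W' := W \cup (X \setminus \downset x)$, noting that $\downset x$ is closed by \cref{lem: pri-facts-upclosed}. Since $x \in \min F$ gives $F \cap \downset x = \{x\}$, we have $F \setminus \{x\} \subseteq X \setminus \downset x \subseteq W'$, while $x \in \cl W \subseteq \cl W'$; hence $F \subseteq \cl W'$. By the auxiliary implication, $F \subseteq W'$, so $x \in W'$, and as $x \notin X \setminus \downset x$ we conclude $x \in W$. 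Therefore $\min F \subseteq \loc X$, and $F$ is the required Scott upset. The main obstacle is this converse: the two points one must get right are that \emph{consecutive} way-below steps (hence interpolation, the sole external ingredient) are needed so that the compactness argument can trap a single $W_N$ inside $\cl W$, and that $\cl W$ is clopen — precisely what makes $W_n \cap (\cl W)^c$ closed and lets compactness apply.
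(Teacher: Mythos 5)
Your proof is correct, but it is worth noting that the paper itself gives no argument for this lemma --- it simply defers to an external reference (\cite[Prop.~8.13]{Mel25}) --- so your write-up supplies a proof the survey omits. Both directions check out: the forward implication correctly combines $F=\upset\min F$ (via \cref{lem: pri-facts-max}) with openness of $\downset m$ for $m\in\min F$ to push $\min F$ into $W$; and in the converse the two delicate points are handled properly, namely that the auxiliary implication ``$F\subseteq\cl W\Rightarrow F\subseteq W$'' needs a \emph{consecutive} pair $W_{N+1}\ll W_N$ trapped by compactness (which is why a single interpolant would not suffice and a full descending chain is needed), and that testing this implication against $W':=W\cup(X\setminus\downset x)$ for $x\in\min F$ is exactly what certifies $x\in\loc X$, via the equivalence between ``$x\in\cl W\Rightarrow x\in W$ for all $W\in\opup(X)$'' and complete primeness of $x$ (\cref{lem: localic part}); that equivalence deserves one more line (it is the argument of \cref{prop: compact char}(2)$\Rightarrow$(3), using that $\cl(X\setminus\downset x)$ is an upset), but it is standard in this setting. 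Two small points of hygiene: the construction of the infinite chain $U\ll\cdots\ll W_{n+1}\ll W_n\ll\cdots\ll V$ by iterated interpolation uses countable dependent choice, which is consistent with the paper's conventions (cf.\ \cref{rem: interpolates}) but worth flagging; and interpolation itself is available precisely because \L-continuity of $X$ makes $\clopup(X)$ a continuous frame (\cref{thm: char continuous}), so the converse genuinely uses the hypothesis, as it must.
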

\begin{proof}
    See \cite[Prop.~8.13]{Mel25}.
\end{proof}

\begin{theorem}[{\cite[Prop.~4.6]{PS00}}] \label{thm: con is spatial}
    Continuous \L-spaces are \L-spatial.
\end{theorem}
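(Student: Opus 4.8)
The goal is to show that $\loc X$ is dense in $X$ whenever $X$ is $\L$-continuous. The plan is to verify that every nonempty basic open set meets $\loc X$; since the sets $V \cap W^c$ with $V,W \in \clopup(X)$ form a basis for the topology, it suffices to produce a localic point inside an arbitrary nonempty $V \cap W^c$.

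First I would invoke $\L$-continuity in the form $\cl(\con V) = V$, which says exactly that $\con V$ is dense in $V$. Since $V \cap W^c$ is a nonempty open subset of $V$ and $\con V \subseteq V$ (because $V' \ll V$ forces $V' \subseteq V$), density yields a point $y \in \con V \cap W^c$. By the definition of the continuous kernel together with \cref{lem: way-below}, there is $V' \in \clopup(X)$ with $y \in V'$ and $V' \ll V$. This is where continuity is used decisively: applying the nontrivial direction of \cref{lem: way below and Scott} (available precisely because $X$ is $\L$-continuous), I obtain a Scott upset $F$ with $V' \subseteq F \subseteq V$, and in particular $y \in F$.

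Next I extract the localic point. Since $y \in F$ and $F$ is closed, \cref{lem: pri-facts-max} gives $m \in \min F$ with $m \leq y$. Because $F$ is a Scott upset, $\min F \subseteq \loc X$, so $m$ is a localic point. Finally $m \in F \subseteq V$ gives $m \in V$, while $m \leq y \in W^c$ together with the fact that $W^c$ is a downset gives $m \in W^c$. Hence $m \in V \cap W^c \cap \loc X$, which is what we wanted.

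The main obstacle, and really the crux of the argument, is the passage from the abstract relation $V' \ll V$ to a concrete Scott upset separating $V'$ from $V$, i.e.\ the converse direction of \cref{lem: way below and Scott}; everything else is routine bookkeeping with upsets and downsets. It is worth noting that $\L$-continuity is doing double duty here: it supplies the density of $\con V$ needed to locate the witness $y$, and it is exactly the hypothesis that lets $V' \ll V$ be realized by a Scott upset, thereby guaranteeing a localic minimal point beneath $y$.
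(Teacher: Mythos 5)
Your proof is correct and follows essentially the same route as the paper's: use $\L$-continuity to find a point of the continuous kernel in the given basic open set, pass from the way-below relation to a Scott upset via the converse direction of \cref{lem: way below and Scott}, and then drop to a minimal (hence localic) point of that Scott upset using \cref{lem: pri-facts-max} and the fact that the complement of a clopen upset is a downset. The only cosmetic difference is that you track an explicit witness point $y$ where the paper tracks the clopen upset $W \ll U$ containing it.
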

\begin{proof}
    Let $X$ be a continuous \L-space. Suppose $U \cap V^c \neq \varnothing$ for $U,V \in \clopup(X)$. Since $X$ is \L-continuous, $\cl (\con U)  \cap V^c \neq \varnothing$. Therefore, there is $W \in \clopup(X)$ such that $W \ll U$ (recall \cref{lem: way-below}) and $W \cap V^c \neq \varnothing$. Thus, by \cref{lem: way below and Scott}, there is a Scott upset $F$ such that $W \subseteq F \subseteq U$. Consequently, $F \cap V^c \neq \varnothing$, and since $V^c$ is a downset, $\min(F) \cap V^c \neq \varnothing$. Since $F$ is a Scott upset, $\loc X \cap F \cap V^c \ne \varnothing$, and hence $\loc X \cap U \cap V^c \neq \varnothing$. This implies that $X$ is \L-spatial.
\end{proof}

The above theorem together with \cref{thm: spatial,thm: char continuous} yields an alternative proof of the well-known fact (see, e.g., \cite[p.~311]{Joh82}) that every continuous frame is spatial. The spatiality of continuous frames forms the foundation for Hofmann--Lawson duality between continuous frames and locally compact sober spaces. This duality was established via Priestley duality in \cite[Sec.~5]{BM23}. It requires connecting continuous \L-spaces to locally compact sober spaces.

\begin{theorem}[{\cite[Thm.~5.10]{BM23}}]
    Let $X$ be an \L-space. If $X$ is \L-continuous, then $\loc X$ is locally compact. The converse holds provided $X$ is \L-spatial. \label{thm: locally compact loc}
\end{theorem}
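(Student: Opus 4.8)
The plan is to prove both implications by transferring the topological content of $\loc X$ to order-topological statements about clopen upsets of $X$, using two bridges from the excerpt: the correspondence between compact saturated subsets of $\loc X$ and Scott upsets of $X$ (\cref{prop: HM}), and the link between the way-below relation on $\clopup(X)$ and the existence of an intermediate Scott upset (\cref{lem: way below and Scott}). Throughout I would use that, by \cref{rem: loc topology}, the opens of $\loc X$ are exactly the traces $V \cap \loc X$ with $V \in \clopup(X)$, and that whenever $\loc X$ is dense the trace $U \cap \loc X$ of an open $U$ is dense in $U$.

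For the forward implication I assume $X$ is \L-continuous and first establish the inclusion $U \cap \loc X \subseteq \con U$ for every $U \in \clopup(X)$. Given $x \in U \cap \loc X$, the point $x$ is localic, so $\downset x$ is open; since $\cl(\con U) = U \ni x$, this neighborhood meets $\con U$, producing $z \le x$ lying in some clopen upset $V \ll U$ (recall \cref{lem: way-below}), and then $x \in V$ because $V$ is an upset. Thus $x \in V \ll U$. Now I invoke the nontrivial (\L-continuous) direction of \cref{lem: way below and Scott} to get a Scott upset $F$ with $V \subseteq F \subseteq U$. Setting $K := F \cap \loc X$, \cref{prop: HM-1} makes $K$ compact saturated, and the chain $x \in V \cap \loc X \subseteq K \subseteq U \cap \loc X$ exhibits the required open-inside-compact-inside-open sandwich. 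Since the sets $U \cap \loc X$ form a basis of $\loc X$, this shows $\loc X$ is locally compact.

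For the converse I assume $X$ is \L-spatial, so $\loc X$ is dense by \cref{thm: spatial}, and that $\loc X$ is locally compact; I must show $\cl(\con U) = U$ for each $U \in \clopup(X)$. As $\con U \subseteq U$ and $U$ is closed, only $U \subseteq \cl(\con U)$ needs proof, and since $U \cap \loc X$ is dense in $U$ it suffices to prove $U \cap \loc X \subseteq \con U$. Fix $x \in U \cap \loc X$. Local compactness yields an open $\mathcal V$ and a compact $K$ with $x \in \mathcal V \subseteq K \subseteq U \cap \loc X$; replacing $K$ by its saturation (still compact and still contained in $U \cap \loc X$) and shrinking $\mathcal V$ to a basic open, I may assume $x \in V \cap \loc X \subseteq K \subseteq U \cap \loc X$ with $V \in \clopup(X)$ and $K$ compact saturated. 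Then $F := \upset K$ is a Scott upset by \cref{prop: HM-2}. Density of $\loc X$ gives $V \subseteq \cl(V \cap \loc X) \subseteq \cl F = F$, while $K \subseteq U$ together with $U$ being an upset gives $F = \upset K \subseteq U$; hence $V \subseteq F \subseteq U$. The elementary direction of \cref{lem: way below and Scott} then yields $V \ll U$, so $x \in V \subseteq \con U$, as needed.

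The density manipulations and the saturation step are routine; the conceptual crux in each direction is the same, namely converting a single localic point into a genuine way-below pair of clopen upsets so that \cref{lem: way below and Scott} can be brought to bear. I expect the main obstacle to be getting the first step of the forward direction right: it is precisely the localic hypothesis (openness of $\downset x$) that upgrades membership in $\cl(\con U)$ to membership in $\con U$, and this is exactly where \L-continuity (via $\cl(\con U) = U$) and the localic structure must be combined.
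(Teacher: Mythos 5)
Your proof is correct and follows essentially the same route as the paper's: both directions hinge on converting a localic point of a clopen upset into a way-below pair via \cref{lem: way-below,lem: way below and Scott}, and then passing between Scott upsets and compact saturated sets using \cref{prop: HM}. The only differences are cosmetic — you are slightly more explicit about the saturation step and about checking local compactness on the basic opens $U \cap \loc X$, both of which the paper leaves implicit.
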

\begin{proof}
    ($\Rightarrow$) Suppose $X$ is \L-continuous. Let $x \in U \cap \loc X$ for some $U \in \clopup(X)$. 
    Then $\downset x \cap \con U \neq \varnothing$ since $\downset x$ is open and $\con U$ is dense in $U$. Therefore, there is $V \in \clopup(X)$ such that $V \ll U$ and $V \cap \downset x \neq \varnothing$ (see \cref{lem: way-below}). Thus, $x\in V$, and so $x \in \con U$.
    By 
    \cref{lem: way-below,lem: way below and Scott}, there exist $W \in \clopup(X)$ 
    and a Scott upset $F$ such that $x \in W \subseteq F \subseteq U$. Intersecting with $\loc X$ yields the desired open and compact sets (the latter by \cref{prop: HM-1}).

    ($\Leftarrow$) Assume that $X$ is \L-spatial. Let $\loc X$ be locally compact. Since $\loc X$ is dense, it suffices to show that $\loc X \cap U \subseteq \con U$ for each $U \in \clopup(X)$. If $x \in \loc X \cap U$, then since $\loc X$ is locally compact, there exist $V \in \clopup(X)$ and a compact saturated set $K \subseteq \loc X$  such that $x \in V \cap \loc X\subseteq K \subseteq U \cap \loc X$. Because $\loc X$ is dense, $V \subseteq K \subseteq U$, and by \cref{prop: HM-2}, $\upset K$ is a Scott upset between $V$ and $U$. Thus, $V \ll U$ by \cref{lem: way below and Scott}, and hence $x \in \con U$ by \cref{lem: way-below}.
\end{proof}

We recall from \cite[p.~481]{GH+03} that a continuous map $f :X \to Y$ between locally compact sober spaces is \emph{proper} provided 
$f^{-1}(K)$ 
is compact for every compact saturated $K \subseteq Y$.

\begin{theorem}[{\cite[Thms.~5.15 and 5.16]{BM23}}] \label{thm: proper cont}
    For an \L-morphism $f : X \to Y$ between continuous \L-spaces, the following are equivalent.
    \begin{enumerate}
        \item $f$ is \L-proper.
        \item $f^{-1}(F)$ is a Scott upset for each Scott upset $F \subseteq Y$.
        \item The restriction $f : \loc X \to \loc Y$ is a proper continuous map.
    \end{enumerate}
\end{theorem}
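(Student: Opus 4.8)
The plan is to establish the cycle $(1)\Rightarrow(2)\Rightarrow(3)\Rightarrow(1)$. I first record three facts used throughout. By \cref{thm: con is spatial} continuous \L-spaces are \L-spatial, so $\loc X$ and $\loc Y$ are dense; hence every clopen upset $W$ of $X$ satisfies $W=\cl(W\cap\loc X)$. Next, a point $x$ lies in $\loc X$ iff $\upset x$ is a Scott upset (forward: $\min\upset x=\{x\}$; backward: minima of Scott upsets are localic). Finally, for compact saturated $K\subseteq\loc Y$ with associated Scott upset $\upset K$ (see \cref{prop: HM}), the restriction satisfies $(\loc f)^{-1}(K)=f^{-1}(\upset K)\cap\loc X$, since $f$ maps $\loc X$ into $\loc Y$ and $K=\upset K\cap\loc Y$; recall also that the restriction $f\colon\loc X\to\loc Y$ is automatically continuous.

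For $(2)\Rightarrow(3)$, let $K\subseteq\loc Y$ be compact saturated. Then $\upset K$ is a Scott upset by \cref{prop: HM-2}, so $f^{-1}(\upset K)$ is a Scott upset by~(2), and therefore $(\loc f)^{-1}(K)=f^{-1}(\upset K)\cap\loc X$ is compact by \cref{prop: HM-1}. Thus $f\colon\loc X\to\loc Y$ is proper. For $(3)\Rightarrow(1)$, by \cref{prop: proper morph} it suffices to show that $f^{-1}$ preserves $\ll$. Given $U\ll V$ in $\clopup(Y)$, \cref{lem: way below and Scott} provides a Scott upset $F$ with $U\subseteq F\subseteq V$; put $K=F\cap\loc Y$, which is compact saturated by \cref{prop: HM-1}. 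Since $\loc f$ is proper, $(\loc f)^{-1}(K)=f^{-1}(F)\cap\loc X$ is compact saturated, so $G:=\upset(f^{-1}(F)\cap\loc X)$ is a Scott upset by \cref{prop: HM-2}. From $U\subseteq F\subseteq V$ together with the density identity we get $f^{-1}(U)=\cl(f^{-1}(U)\cap\loc X)\subseteq G\subseteq f^{-1}(V)$, whence $f^{-1}(U)\ll f^{-1}(V)$ by \cref{lem: way below and Scott}.

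The real content is $(1)\Rightarrow(2)$, and I would argue frame-theoretically, exploiting that a closed upset is a Scott upset iff the filter of clopen upsets containing it is Scott-open (the Hofmann--Mislove correspondence underlying \cref{prop: HM}). By \cref{prop: proper morph}, condition~(1) says $f^{-1}\colon\clopup(Y)\to\clopup(X)$ preserves $\ll$. Let $F$ be a Scott upset of $Y$ and $\mathcal P=\{V\in\clopup(Y)\mid F\subseteq V\}$ the corresponding Scott-open filter. I claim $\mathcal Q:=\upset f^{-1}[\mathcal P]$ is a Scott-open filter of $\clopup(X)$: if a directed join $\bigvee\mathcal C$ lies in $\mathcal Q$, choose $V\in\mathcal P$ with $f^{-1}(V)\subseteq\bigvee\mathcal C$; since $\clopup(Y)$ is continuous and $\mathcal P$ is Scott-open there is $V'\in\mathcal P$ with $V'\ll V$, so $f^{-1}(V')\ll f^{-1}(V)\subseteq\bigvee\mathcal C$, giving $f^{-1}(V')\subseteq C$ for some $C\in\mathcal C$ and hence $C\in\mathcal Q$. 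A compactness argument then identifies $\mathcal Q$ with $\{W\in\clopup(X)\mid f^{-1}(F)\subseteq W\}$: indeed $f^{-1}(F)=\bigcap_{V\in\mathcal P}f^{-1}(V)$ is a down-directed intersection of clopen sets, so if $f^{-1}(F)\subseteq W$ then compactness of $X$ yields $V\in\mathcal P$ with $f^{-1}(V)\subseteq W$. Thus the filter of clopen upsets containing $f^{-1}(F)$ is Scott-open, which means $f^{-1}(F)$ is a Scott upset, proving~(2).

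The main obstacle is exactly this last implication, where properness is indispensable. The naive geometric route---proving $f^{-1}(F)=\upset(f^{-1}(F)\cap\loc X)$ outright---fails, since by \cref{thm: compact and min} a non-compact continuous \L-space has minimal points outside $\loc X$, so not every point of a closed upset lies above a localic point. The way properness enters is through the preservation of $\ll$, which is precisely what makes the image of a Scott-open filter again Scott-open (equivalently, what allows one to interpolate a clopen upset $U$ with $F\subseteq U\ll V$ beneath any clopen $V\supseteq F$); verifying this preservation carefully, and matching the resulting filter to $f^{-1}(F)$ via compactness, is the technical heart of the argument.
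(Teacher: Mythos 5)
The paper itself gives no proof of this theorem---it is quoted from \cite[Thms.~5.15 and 5.16]{BM23}---so there is nothing in the text to compare your argument against; I can only assess it on its own terms. Your cycle $(1)\Rightarrow(2)\Rightarrow(3)\Rightarrow(1)$ is correctly organized, and the implications $(2)\Rightarrow(3)$ and $(3)\Rightarrow(1)$ are complete: the identity $(\loc f)^{-1}(K)=f^{-1}(\upset K)\cap\loc X$, the two directions of \cref{prop: HM}, and the sandwich $f^{-1}(U)\subseteq\upset\bigl(f^{-1}(F)\cap\loc X\bigr)\subseteq f^{-1}(V)$ obtained from density of $\loc X$ all check out, as does the reduction of \L-properness to preservation of $\ll$ via \cref{prop: proper morph}.

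The one genuine omission is the correspondence you invoke twice in $(1)\Rightarrow(2)$: that a closed upset $G$ of a continuous \L-space is a Scott upset if and only if $\mathcal P_G=\{W\in\clopup(X)\mid G\subseteq W\}$ is a Scott-open filter. This is true, but it appears nowhere in the survey (\cref{prop: HM} concerns compact saturated subsets of $\loc X$, and \cref{prop: compact char} treats only the clopen case $G=\sigma(a)$), so it needs its own short argument rather than an appeal to ``the Hofmann--Mislove correspondence.'' Both directions are within reach of the survey's toolkit. Forward (needed to extract $V'\in\mathcal P$ with $V'\ll V$): if $G\subseteq\cl\bigcup\mathcal S$ for directed $\mathcal S\subseteq\clopup(X)$, then $G\cap\loc X\subseteq\bigcup\mathcal S$ by \cref{rem: loc topology}; since $G\cap\loc X$ is compact by \cref{prop: HM-1} and $G=\upset(G\cap\loc X)$, directedness yields $G\subseteq V$ for a single $V\in\mathcal S$. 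Converse (needed at the very end): if some $x\in\min G$ were not localic, then $U:=X\setminus\downset x$ is a non-closed open upset with $G\setminus\{x\}\subseteq U$ and $x\in\cl U$ (as $\cl U$ is an upset meeting $\downset x$), so $\cl U\in\mathcal P_G$; but $\cl U$ is the directed join of $\{W\in\clopup(X)\mid W\subseteq U\}$ by \cref{prop: exact joins-1}, so Scott-openness would give $G\subseteq W\subseteq U$ for some such $W$, contradicting $x\in G\setminus U$. With this lemma supplied, your proof is complete; the remainder of $(1)\Rightarrow(2)$---Scott-openness of $\upset f^{-1}[\mathcal P]$ via preservation of $\ll$, and the compactness argument identifying that filter with $\mathcal P_{f^{-1}(F)}$---is exactly where properness must enter and is carried out correctly.
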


Let $\LCSob$ be the category of locally compact sober spaces and proper continuous maps. As an immediate consequence of \cref{thm: adjunction in priestley,thm: locally compact loc,prop: proper morph,thm: proper cont} we obtain:

\begin{corollary}[{\cite[Thm.~5.18]{BM23}}] \label{cor: ConLPries LCSob}
    $\ConLPries$ is equivalent to $\LCSob$.
\end{corollary}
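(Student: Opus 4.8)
The plan is to obtain the equivalence by restricting the adjoint equivalence $\Loc \dashv \Pri$ of \cref{thm: adjunction in priestley}, which already restricts to an equivalence between $\SLPries$ and $\Sob$. Since continuous \L-spaces are \L-spatial by \cref{thm: con is spatial}, and $\LCSob$ is a subcategory of $\Sob$, both $\ConLPries$ and $\LCSob$ sit inside the two categories appearing in that equivalence. It therefore suffices to verify that $\Loc$ and $\Pri$ carry these subcategories into one another---on objects and on morphisms---and that the natural isomorphisms witnessing the equivalence restrict to them.

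First I would treat objects. If $X$ is a continuous \L-space, then $\loc X$ is sober (\cref{rem: sober}) and locally compact by \cref{thm: locally compact loc}, so $\loc X \in \LCSob$. Conversely, let $Y \in \LCSob$ and set $\Pri(Y) = X_{\Omega(Y)}$. As $\Omega(Y)$ is the frame of opens of a space, it is spatial, so $\Pri(Y)$ is \L-spatial by \cref{thm: spatial}. Moreover $\loc \Pri(Y) \cong \pt(\Omega(Y)) \cong Y$ since $Y$ is sober (\cref{rem: sober}), so $\loc \Pri(Y)$ is locally compact. The converse direction of \cref{thm: locally compact loc}, which applies precisely because $\Pri(Y)$ is \L-spatial, then yields that $\Pri(Y)$ is \L-continuous, i.e.\ $\Pri(Y) \in \ConLPries$.

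Next I would match the morphisms. For a proper \L-morphism $f \colon X \to Y$ between continuous \L-spaces, \cref{thm: proper cont} gives that $\Loc(f) \colon \loc X \to \loc Y$ is a proper continuous map, so $\Loc$ restricts to a functor $\ConLPries \to \LCSob$. In the other direction, let $g \colon Y \to Z$ be a proper continuous map between locally compact sober spaces. Then $\Pri(g)$ is an \L-morphism between continuous \L-spaces, and to see that it is \L-proper it suffices, again by \cref{thm: proper cont}, to check that its restriction to localic parts is proper continuous; under the identifications $\loc \Pri(Y) \cong Y$ and $\loc \Pri(Z) \cong Z$ furnished by the natural isomorphism $\Loc\Pri \cong \mathrm{id}_{\Sob}$ of the equivalence, this restriction is just $g$, which is proper by hypothesis. (Alternatively, one may invoke \cref{prop: proper morph} through the dual frame homomorphism.) Hence $\Pri$ restricts to a functor $\LCSob \to \ConLPries$.

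Finally, the components of the natural isomorphisms witnessing $\SLPries \simeq \Sob$ are isomorphisms in $\SLPries$ and $\Sob$; an \L-isomorphism is automatically \L-proper and a homeomorphism is automatically a proper continuous map, so these components already lie in $\ConLPries$ and $\LCSob$. Thus the restricted functors remain quasi-inverse, yielding the desired equivalence. I expect the only real work to be the object-level statement that $\Pri$ lands in $\ConLPries$: this is where one must combine the \L-spatiality of $\Pri(Y)$ with the converse half of \cref{thm: locally compact loc}, rather than reading it off directly; once this and the morphism correspondence of \cref{thm: proper cont} are in hand, the corollary is indeed immediate.
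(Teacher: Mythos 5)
Your proposal is correct and follows essentially the same route as the paper, which derives the corollary as an immediate consequence of \cref{thm: adjunction in priestley,thm: locally compact loc,prop: proper morph,thm: proper cont}; you simply spell out the object- and morphism-level restrictions (plus the needed \L-spatiality via \cref{thm: con is spatial}) that the paper leaves implicit.
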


Putting \cref{cor: ConFrm ConLPries,cor: ConLPries LCSob} together yields Hofmann--Lawson duality between continuous frames and locally compact sober spaces \cite{HL77}:

\begin{corollary} [Hofmann--Lawson]
    $\ConFrm$ is dually equivalent to $\LCSob$.
\end{corollary}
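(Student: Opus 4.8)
The plan is to obtain the result by composing the two equivalences already in hand, using the elementary fact that a dual (contravariant) equivalence followed by an ordinary (covariant) equivalence is again a dual equivalence. First I would invoke \cref{cor: ConFrm ConLPries}, which gives a dual equivalence between $\ConFrm$ and $\ConLPries$ realized by the restrictions of the Pultr--Sichler--Wigner functors: a continuous frame $L$ is sent to its \L-space $X_L$, and conversely a continuous \L-space $X$ is sent to $\clopup(X)$. Next I would invoke \cref{cor: ConLPries LCSob}, which gives a covariant equivalence between $\ConLPries$ and $\LCSob$ realized by the restrictions of $\Loc$ and $\Pri$: a continuous \L-space $X$ is sent to $\loc X$, and a locally compact sober space $Y$ to $X_{\Omega(Y)}$.

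Composing these, the functor $L \mapsto \loc X_L$ goes $\ConFrm \to \LCSob$ and is contravariant, being the composite of a contravariant functor with a covariant one, while the functor $Y \mapsto \clopup(X_{\Omega(Y)})$ goes $\LCSob \to \ConFrm$ and is likewise contravariant. Since each of the two building blocks is an equivalence of the appropriate variance, composing the witnessing natural isomorphisms shows that these two composite functors are quasi-inverse to one another. Hence $\ConFrm$ is dually equivalent to $\LCSob$.

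The argument is purely formal, so there is no genuine obstacle; the only points to verify are bookkeeping ones. I would confirm that the variances compose as claimed (contravariant $\circ$ covariant $=$ contravariant) and that the resulting functors agree with the classical ones, namely $\loc X_L = \pt(L)$ (by \cref{rem: sober}), so that the left-to-right functor is the point functor $\pt$, and dually $\clopup(X_{\Omega(Y)}) \cong \Omega(Y)$, so that the right-to-left functor is $\Omega$. This identification is not needed for the bare statement, but it makes explicit that the composite duality is precisely Hofmann--Lawson duality in its usual form, with $\Omega$ and $\pt$ as the contravariant functors.
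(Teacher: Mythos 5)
Your proposal is correct and is essentially the paper's own proof: the paper derives the corollary in one line by composing the dual equivalence of \cref{cor: ConFrm ConLPries} with the equivalence of \cref{cor: ConLPries LCSob}. Your additional bookkeeping on variances and the identification of the composite functors with $\pt$ and $\Omega$ is accurate but not needed beyond what the paper states.
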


\begin{remark}  \label{rem: stably}
    An important subcategory of continuous frames is that of \emph{stably continuous frames} (see, e.g., \cite[p.~488]{GH+03}); that is, continuous frames $L$ such that $a \ll b$ and $a \ll c$ imply $a \ll b \wedge c$. Their Priestley spaces are characterized in \cite[Thm.~6.4]{BM23} as those continuous \L-spaces $X$ in which 
    $\con(U \cap V) = \con U \cap \con V$ for all $U,V \in \clopup(X)$. The latter condition is equivalent to the intersection of two Scott upsets being a Scott upset. In \cite[Sec.~6]{BM23} it is shown that this characterization yields well-known dualities for stably continuous frames (see, e.g., \cite[p.~488]{GH+03}).
\end{remark}

We end this section by generalizing Isbell duality to locally compact Hausdorff spaces.

\begin{theorem} \label{thm: char conreg}
    Let $L$ be a spatial frame and $X$ its \L-space. The following are equivalent.
    \begin{enumerate}
        \item $L$ is continuous and regular.
        \item $X$ is \L-continuous and \L-regular.
        \item $\loc X$ is locally compact and Hausdorff.
    \end{enumerate}
\end{theorem}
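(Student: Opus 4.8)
The plan is to prove the three-way equivalence by splitting it into the two equivalences (1)$\Leftrightarrow$(2) and (2)$\Leftrightarrow$(3), handling continuity and regularity separately throughout. The equivalence (1)$\Leftrightarrow$(2) needs no spatiality hypothesis: by \cref{thm: char regular}, $L$ is regular precisely when $\cl(\reg U)=U$ for all $U\in\clopup(X)$, which is exactly the definition of $X$ being \L-regular; and by \cref{thm: char continuous}, $L$ is continuous precisely when $\cl(\con U)=U$ for all $U\in\clopup(X)$, which is the definition of $X$ being \L-continuous. Conjoining the two characterizations yields (1)$\Leftrightarrow$(2) at once.

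For (2)$\Leftrightarrow$(3), the first move is to record that since $L$ is spatial, $\loc X$ is dense in $X$ by \cref{thm: spatial}; that is, $X$ is \L-spatial. This unlocks the converse directions of the two transfer results. Applying \cref{thm: locally compact loc} then gives that $X$ is \L-continuous if and only if $\loc X$ is locally compact, and applying \cref{prop: reg loc} gives that $X$ is \L-regular if and only if $\loc X$ is regular. Hence (2) is equivalent to the assertion that $\loc X$ is locally compact and regular, and it remains only to exchange \emph{regular} for \emph{Hausdorff} in the presence of local compactness.

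This last exchange is the one genuinely point-set step, and I expect it to be the main (though standard) obstacle. Here I would use that $\loc X$ is sober, hence $T_0$ (see \cref{rem: sober}). A regular $T_0$ space is Hausdorff: given two distinct points, $T_0$ supplies an open set containing one but not the other, whose complement is a closed set separated from the first point by regularity, producing the required disjoint opens; note this direction does not even use local compactness. Conversely, a locally compact Hausdorff space is regular, since compact sets are closed in a Hausdorff space, so the compact neighborhood furnished by local compactness separates a point from any closed set avoiding it. Thus, for the locally compact sober space $\loc X$, regularity and Hausdorffness coincide, which upgrades ``locally compact and regular'' to ``locally compact and Hausdorff'' and completes (2)$\Leftrightarrow$(3), and with it the theorem.
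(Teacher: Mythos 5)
Your proof is correct and follows essentially the same route as the paper: (1)$\Leftrightarrow$(2) via \cref{thm: char continuous,thm: char regular}, and (2)$\Leftrightarrow$(3) via \cref{prop: reg loc,thm: locally compact loc} together with the exchange of regularity for Hausdorffness. Your explicit observation that the direction ``regular $\Rightarrow$ Hausdorff'' needs the $T_0$ property of $\loc X$ (supplied by sobriety, \cref{rem: sober}) is a welcome refinement of the paper's terser statement that a locally compact space is Hausdorff iff it is regular.
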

\begin{proof}
    For (1)$\Leftrightarrow$(2) apply \cref{thm: char continuous,thm: char regular}, and for     (2)$\Leftrightarrow$(3) observe that a locally compact space is Hausdorff if and only if it is regular and apply \cref{prop: reg loc,thm: locally compact loc}.
\end{proof}

\begin{definition}
    \leavevmode
    \begin{enumerate}
        \item Let $\ConRegFrm$ be the full subcategory of \ConFrm consisting of regular frames.
        \item Let \ConRegLPries be the full subcategory of \ConLPries consisting of regular \L-spaces.
        \item Let \LCHaus be the full subcategory of \LCSob consisting of Hausdorff spaces.
    \end{enumerate}    
\end{definition}
\cref{thm: char conreg} allows us to restrict the equivalences of \cref{cor: ConFrm ConLPries,cor: ConLPries LCSob}.

\begin{corollary}
    \leavevmode
    \begin{enumerate}[cref=corollary]
        \item \ConRegFrm and \ConRegLPries are dually equivalent. \label{cor: ConRegFrm ConRegLPries}
        \item \ConRegLPries and \LCHaus are equivalent.
        \label{cor: ConRegLPries LCHaus}
        \item \ConRegFrm and \LCHaus are dually equivalent.
        \label{cor: ConRegFrm LCHaus}
    \end{enumerate}
\end{corollary}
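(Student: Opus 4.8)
The plan is to obtain all three statements by restricting the two (dual) equivalences already at our disposal---the dual equivalence of \cref{cor: ConFrm ConLPries} and the equivalence of \cref{cor: ConLPries LCSob}---to the relevant full subcategories, using \cref{thm: char conreg} to match up objects across these restrictions. The underlying categorical principle is routine: an equivalence (or dual equivalence) restricts to an equivalence between full subcategories $\mathcal C_0$ and $\mathcal D_0$ as soon as it carries $\mathcal C_0$-objects to $\mathcal D_0$-objects and its quasi-inverse carries $\mathcal D_0$-objects back to $\mathcal C_0$-objects. Since $\ConRegFrm$, $\ConRegLPries$, and $\LCHaus$ are all defined as \emph{full} subcategories, no condition on morphisms needs to be checked, and only this object-level correspondence is at issue.

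For (1), I would restrict the dual equivalence of \cref{cor: ConFrm ConLPries}. A continuous frame is spatial (this is the spatiality of continuous frames obtained from \cref{thm: con is spatial} together with \cref{thm: spatial}), so every object of $\ConFrm$ satisfies the standing hypothesis of \cref{thm: char conreg}. Consequently, for a continuous frame $L$ with \L-space $X$, the equivalence of conditions (1) and (2) in \cref{thm: char conreg} (equivalently, \cref{thm: char regular}) shows that $L$ is regular if and only if $X$ is \L-regular. Thus the dual equivalence of \cref{cor: ConFrm ConLPries} sends continuous regular frames exactly to continuous regular \L-spaces and back, and restricts to the asserted dual equivalence $\ConRegFrm \simeq \ConRegLPries$.

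For (2), I would restrict the equivalence $\Loc \colon \ConLPries \to \LCSob$ of \cref{cor: ConLPries LCSob}. Every object $X$ of $\ConLPries$ is \L-continuous, hence \L-spatial by \cref{thm: con is spatial}; its frame is therefore spatial and \cref{thm: char conreg} again applies. Since $X$ is \L-continuous, $\loc X$ is already locally compact by \cref{thm: locally compact loc}, so the equivalence of conditions (2) and (3) in \cref{thm: char conreg} reduces to the statement that $X$ is \L-regular if and only if $\loc X$ is Hausdorff (this is \cref{prop: reg loc} together with the fact that a locally compact sober space is Hausdorff precisely when it is regular). Hence $\Loc$ carries continuous regular \L-spaces exactly to locally compact Hausdorff spaces and back, yielding the equivalence $\ConRegLPries \simeq \LCHaus$.

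Finally, (3) follows by composing the dual equivalence of (1) with the equivalence of (2). I do not expect a genuine obstacle here: the only points requiring care are that the spatiality hypothesis of \cref{thm: char conreg} is automatic (because continuous frames are spatial and continuous \L-spaces are \L-spatial), and that the three target categories are full subcategories, so that the abstract restriction principle applies verbatim with no separate verification on morphisms.
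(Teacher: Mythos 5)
Your proposal is correct and follows exactly the paper's route: the paper likewise obtains all three statements by restricting the (dual) equivalences of \cref{cor: ConFrm ConLPries} and \cref{cor: ConLPries LCSob} via the object-level correspondence of \cref{thm: char conreg}, with the spatiality hypothesis discharged by the spatiality of continuous frames. Your write-up merely makes explicit the routine restriction-of-equivalences argument that the paper leaves implicit.
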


\begin{remark} 
    \leavevmode
    \begin{enumerate}[cref=remark]
        \item As we pointed out at the beginning of this subsection, \cref{cor: ConRegFrm LCHaus} follows from \cite[Rem.~4]{Ban80} (see also \cite[Cor.~6.10]{BR25}).
        \item It is well known that every continuous map between compact Hausdorff spaces is proper, and that so is every frame homomorphism between compact regular frames. Likewise, every \L-morphism between compact regular \L-spaces is \L-proper (see, e.g., \cite[Thm.~7.18]{BM23}). Thus,
    \KHaus, \KRFrm, and \KRLPries are full subcategories of \LCHaus, \ConRegFrm, and \ConRegLPries, respectively.\label{rem: fullsub}
    \end{enumerate}
\end{remark}

We summarize the dualities established in this section in \cref{fig: 2}, using the same notation as in \cref{fig: Frm LPries Top}.

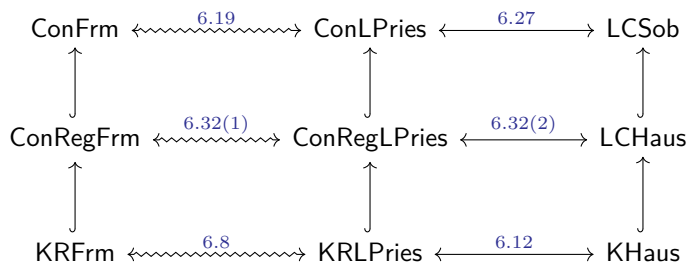
\begin{figure}[H]
\begin{center}
\begin{tikzcd}[column sep=huge, row sep=large]
\ConFrm 
\arrow[r, <->, dual, "\ref{cor: ConFrm ConLPries}"]
    & \ConLPries
    \ar[r, <->, "\ref{cor: ConLPries LCSob}"]
        & \LCSob
        \\
\ConRegFrm
\ar[u, hook]
\arrow[r, <->, dual, "\ref{cor: ConRegFrm ConRegLPries}"] 
    & \ConRegLPries
    \ar[u,hook]
    \ar[r, <->, "\ref{cor: ConRegLPries LCHaus}"]
        & \LCHaus
        \ar[u, hook] \\
\KRFrm
\ar[u,hook]
\arrow[r, <->, dual, "\ref{thm: KRFrm KRLPries}"] 
    & \KRLPries
    \ar[u, hook]
    \ar[r, <->, "\ref{thm: KRLPries KHaus}"]
        & \KHaus
        \ar[u, hook]
\end{tikzcd}
\end{center}
\caption{Equivalences between continuous frames, continuous \L-spaces, and locally compact sober spaces, together with their restrictions.}\label{fig: 2}
\end{figure}

\section{Algebraic, coherent, and Stone frames} \label{sec: algebraic}

A complete lattice is {\em algebraic} 
if the compact elements are {\em join dense} (that is, each element is a join of compact elements). 
It is a classical result of Nachbin \cite{Nac49} (see also \cite{BF48}) 
that algebraic lattices are precisely the ideal lattices of join-semilattices, and this correspondence restricts to algebraic frames and distributive join-semilattices (see, e.g., \cite[pp.~165, 167]{Gra11}). Restricting further to bounded distributive lattices and boolean algebras yields equivalences with coherent frames and Stone frames, respectively (see, e.g., \cite{Joh82,Ban89}).
In this section we describe the \L-spaces of algebraic, coherent, and Stone frames.
We conclude by deriving Priestley and Stone dualities from this approach, thereby coming back to where we started.

\subsection{Algebraic frames}

For a frame $L$, let $\K(L)$ denote the set
of compact elements of $L$. 

\begin{definition}[{see, e.g., \cite[p.~142]{PP12}}]\leavevmode
    \begin{enumerate}
        \item A frame $L$ is 
        \emph{algebraic} if for all $a,b \in L$,
    \[
        a \nleq b \implies \exists c \in \K(L) : c \leq a \text { and } c\nleq b.
    \]
    Equivalently, $L$ is algebraic provided the compact elements are join dense.
    \item Let \AlgFrm be the full subcategory of \ConFrm consisting of algebraic frames.
    \end{enumerate}
\end{definition}

\begin{remark}
    Recall (see, e.g., \cite[p.~64]{Joh82}) that a frame homomorphism $h : L \to M$ is \emph{coherent} if $h(k) \in \K(M)$ for every $k \in \K(L)$. 
    It is known that a frame homomorphism between algebraic frames is coherent if and only if it is proper (see, e.g., \cite[p.~271]{GH+03}).
    Hence, the morphisms of \AlgFrm are precisely the coherent frame homomorphisms.
\end{remark}

For an \L-space $X$, we denote by $\clopsup(X)$ the set of clopen Scott upsets of $X$. 

\begin{definition}[{\cite[Def.~4.2]{BM25}}]
    Let $X$ be an \L-space.
    \begin{enumerate}
        \item For $U \in \clopup(X)$ define the \emph{algebraic kernel} of $U$ as 
        \[
            \alg U = \bigcup \{V \in \clopsup(X) \mid V \subseteq U\}. 
        \]
        \item Call $X$  \emph{\L-algebraic} or an \emph{algebraic \L-space} if $\cl \alg U = U$ for each ${U \in \clopup(X)}$.
    \end{enumerate}
\end{definition}

To show that algebraic \L-spaces are the Priestley duals of algebraic frames, we require the following: 

\begin{lemma}[{see, e.g., \cite[Lem.~4.12]{BM25}}] \label{lem: compact opens}
    Let $X$ be a spatial \L-space. If $K \subseteq \loc X$ is compact open, then $\cl K \in \clopsup(X)$. 
\end{lemma}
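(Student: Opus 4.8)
The plan is to reduce $K$ to the form $V\cap\loc X$ for a single clopen upset $V$, to identify $\cl K$ with $V$, and then to read off the Scott-upset condition from the Hofmann--Mislove correspondence of \cref{prop: HM-2}.

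First I would rewrite $K$ in a convenient form. Since $K$ is open in $\loc X$, we have $K=U\cap\loc X$ for some $U\in\opup(X)$. By \cref{lem: pri-facts-unions}, $U=\bigcup_i V_i$ with each $V_i\in\clopup(X)$, so $\{V_i\cap\loc X\}$ is an open cover of $K$ in the subspace topology of $\loc X$. Compactness of $K$ yields a finite subcover, and taking $V$ to be the corresponding finite union of the $V_i$ (again a clopen upset) gives $K=V\cap\loc X$ with $V\in\clopup(X)$.

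Next I would show $\cl K=V$. Because $X$ is spatial, $\loc X$ is dense in $X$; since $V$ is open, $V\cap\loc X$ is dense in the subspace $V$, and as $V$ is also closed this forces $\cl K=\cl(V\cap\loc X)=V$. Thus $\cl K$ is a clopen upset, which already settles that it is both clopen and an upset.

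It remains to check the Scott condition $\min(\cl K)\subseteq\loc X$, and this is where compactness genuinely enters. Being open, $K$ is saturated, hence compact saturated, so by \cref{prop: HM-2} the set $\upset K$ is a Scott upset; in particular it is closed, and therefore $\cl K\subseteq\upset K$. Now take $x\in\min(\cl K)=\min V$. From $x\in\cl K\subseteq\upset K$ we obtain some $k\in K\subseteq V$ with $k\le x$, and minimality of $x$ in $V$ forces $k=x$, so $x=k\in\loc X$. Hence $\cl K=V\in\clopsup(X)$. The only nontrivial input is the closedness of $\upset K$ used in this last paragraph, which is exactly the content of \cref{prop: HM-2}; the remaining steps are routine manipulations with finite covers and density.
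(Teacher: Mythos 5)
Your proof is correct. The survey does not actually prove this lemma itself---it is quoted from \cite[Lem.~4.12]{BM25}---so there is no in-paper argument to compare against, but every step of yours checks out against the material already established here: the finite-subcover reduction $K = V \cap \loc X$ with $V \in \clopup(X)$ uses \cref{lem: pri-facts-unions} together with $V \subseteq U$ to get the reverse inclusion; the identification $\cl K = V$ uses only the density of $\loc X$ and the clopenness of $V$; and the Scott condition $\min(\cl K) \subseteq \loc X$ is correctly extracted from the closedness of $\upset K$ supplied by \cref{prop: HM-2}, which is proved earlier in the paper without reference to this lemma, so there is no circularity.
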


Following \cite[p.~2063]{Ern09}, we call a topological space \emph{compactly based}
if it has a basis of compact open sets.

\begin{theorem}[{\cite[Thm.~4.5]{BM25} and \cite[Thm.~12.11]{Mel25}}]\label{thm: char algebraic}
    Let $L$ be a frame and $X$ its \L-space. The following are equivalent.
    \begin{enumerate}
        \item $L$ is algebraic.
        \item $X$ is \L-algebraic.
    \end{enumerate}
    Each of these implies
    \begin{enumerate}[resume]
        \item $\loc X$ is compactly based. 
    \end{enumerate}
    If $L$ is spatial, the above three conditions are equivalent.
\end{theorem}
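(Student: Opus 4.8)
The plan is to exploit the fact that, under the isomorphism $\sigma \colon L \to \clopup(X)$, the compact elements of $L$ correspond precisely to the clopen Scott upsets of $X$. Indeed, by \cref{prop: compact char-3} an element $a \in L$ is compact if and only if $\min \sigma(a) \subseteq \loc X$, which is exactly the requirement that the clopen upset $\sigma(a)$ be a Scott upset. Hence $\sigma$ restricts to a bijection $\K(L) \to \clopsup(X)$, and this single observation reduces the rest to bookkeeping.

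For (1)$\Leftrightarrow$(2) I would rewrite algebraicity as the statement that $a = \bigvee \{c \in \K(L) \mid c \le a\}$ for all $a \in L$. Applying $\sigma$ and using that joins in a frame are exact (\cref{prop: exact joins-1}), the right-hand side becomes $\cl \bigcup \{\sigma(c) \mid c \in \K(L),\ c \le a\}$, which by the correspondence above is exactly $\cl \alg \sigma(a)$. Since $\sigma$ is an isomorphism onto $\clopup(X)$, the identity $a = \bigvee\{c \in \K(L) \mid c \le a\}$ holding for every $a$ is equivalent to $\cl \alg U = U$ for every $U \in \clopup(X)$, i.e.\ to $X$ being \L-algebraic.

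For (2)$\Rightarrow$(3) I would show that $\{V \cap \loc X \mid V \in \clopsup(X)\}$ is a basis of compact open sets for $\loc X$: each such set is compact by \cref{prop: HM-1} and open because $V$ is a clopen upset. The key step is the inclusion $U \cap \loc X \subseteq \alg U$ for every $U \in \clopup(X)$. Given $x \in U \cap \loc X$, the set $\downset x$ is clopen, so $\downset x \cap U$ is a nonempty clopen subset of $U$; density of $\alg U$ in $U$ (which is \L-algebraicity) yields $y \in \alg U \cap \downset x$, whence $y \in V \subseteq U$ for some $V \in \clopsup(X)$, and since $V$ is an upset and $y \le x$ we get $x \in V$, so $x \in \alg U$. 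Shrinking an arbitrary open of $\loc X$ to a clopen upset via \cref{lem: pri-facts-unions} then delivers the basis. This direction uses no spatiality.

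Finally, for the converse under the spatiality hypothesis, I would assume $\loc X$ is dense (\cref{thm: spatial}) and compactly based, and prove $U \cap \loc X \subseteq \alg U$ again, now from the topological side: a point $x \in U \cap \loc X$ lies in a compact open $K$ of $\loc X$ with $K \subseteq U \cap \loc X$, and \cref{lem: compact opens} turns $K$ into a clopen Scott upset $\cl K \subseteq U$ containing $x$, so $x \in \alg U$. Density of $\loc X$ then upgrades this to $\cl \alg U = U$, which is (2). The main obstacle is the two proofs of $U \cap \loc X \subseteq \alg U$, where one must manage the interplay of the upset structure of clopen Scott upsets, density, and (in the converse) the passage through $\cl K$ supplied by \cref{lem: compact opens}; once these are in hand, everything else is a translation through $\sigma$.
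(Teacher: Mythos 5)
Your proposal is correct and follows essentially the same route as the paper: the identification $\K(L)\cong\clopsup(X)$ via \cref{prop: compact char} plus exactness of joins for (1)$\Leftrightarrow$(2), the inclusion $U\cap\loc X\subseteq\alg U$ obtained from openness of $\downset x$ for (2)$\Rightarrow$(3), and \cref{lem: compact opens} together with density of $\loc X$ for the spatial converse. The only cosmetic difference is that you argue (3)$\Rightarrow$(2) pointwise where the paper covers $U\cap\loc X$ by compact opens, which amounts to the same thing.
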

\begin{proof}
    (1)$\Leftrightarrow$(2) By
    \cref{prop: compact char}, $\K(\clopup(X)) = \clopsup(X)$. Thus, by \cref{prop: exact joins-1}, $\cl (\alg \sigma(a)) = \sigma(a)$ if and only if $a = \bigvee \{k \in \K(L) \mid k \leq a\}$. Hence, $L$ is algebraic if and only if $X$ is \L-algebraic.

    (2)$\Rightarrow$(3) Suppose $X$ is \L-algebraic. Then
    $U \cap \loc X = \cl (\alg U) \cap \loc X$ for each $U \in \clopup(X)$. 
    If $x \in U \cap \loc X$, then $\downset x$ is open and hence $\downset x \cap \alg U \neq \varnothing$. 
    Therefore, there is $V \in \clopsup(X)$ such that $V \subseteq U$ and $V \cap \downset x\neq \varnothing$. Thus, $x \in V$. 
    But $V \cap \loc(X)$ is compact open by \cref{prop: HM-1}. Hence, $\loc X$ is compactly based.   

    Now suppose that $L$ is spatial, so $X$ is \L-spatial (see \cref{thm: spatial}). It is sufficient to show that (3) implies (2). 

    (3)$\Rightarrow$(2) Let $U \in \clopup(X)$. Then $U \cap \loc X$ is open. Since $\loc X$ is compactly based, 
    there exist compact opens $\{K_i\} \subseteq \loc X$ such that $U \cap \loc X = \bigcup K_i$, so $U = \cl \bigcup K_i$ since $X$ is \L-spatial. By \cref{lem: compact opens}, $\cl K_i \in \clopsup(X)$ for each $i$. Thus,
    \[
        U = \cl \medcup K_i \subseteq \cl\medcup \cl K_i \subseteq \cl \alg U,
    \]
    and hence $X$ is \L-algebraic.
\end{proof}

Let \AlgLPries be the full subcategory of \ConLPries consisting of algebraic \L-spaces, and let $\KBSob$ be the full subcategory of $\LCSob$ consisting of compactly based spaces. Using \cref{thm: char algebraic}, we can restrict \cref{cor: ConFrm ConLPries,cor: ConLPries LCSob} to obtain:
\begin{corollary}[{\cite[Thm.~4.11 and Cor.~4.14]{BM25}}]\leavevmode\label{cor: alg dualities}
    \begin{enumerate}
        \item $\AlgFrm$ is dually equivalent to $\AlgLPries$.
        \item $\AlgLPries$ is equivalent to $\KBSob$.
    \end{enumerate}
\end{corollary}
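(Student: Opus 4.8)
The plan is to obtain both equivalences as restrictions of the equivalences already established in the continuous setting, namely \cref{cor: ConFrm ConLPries,cor: ConLPries LCSob}. The guiding principle is the standard categorical fact that a (dual) equivalence restricts to a (dual) equivalence between full subcategories as soon as the underlying functor matches their objects and is essentially surjective onto the target subcategory: full faithfulness is then inherited automatically. Since $\AlgFrm$, $\AlgLPries$, and $\KBSob$ are by definition full subcategories of $\ConFrm$, $\ConLPries$, and $\LCSob$, respectively, the only thing I need to verify in each case is the object-correspondence, and the substantive input for this is \cref{thm: char algebraic}.

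For (1), I would start from the dual equivalence $\ConFrm \simeq \ConLPries^{\mathrm{op}}$ of \cref{cor: ConFrm ConLPries}, under which a frame $L$ corresponds to its \L-space $X$. By the equivalence of conditions (1) and (2) in \cref{thm: char algebraic}, $L$ is algebraic precisely when $X$ is \L-algebraic. Hence the functor carries $\AlgFrm$ into $\AlgLPries$; essential surjectivity onto $\AlgLPries$ follows because any algebraic \L-space is, up to isomorphism, the \L-space of some frame, which is then algebraic by the same equivalence. As both are full subcategories, the dual equivalence restricts, giving (1).

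For (2), I would restrict the equivalence $\ConLPries \simeq \LCSob$ of \cref{cor: ConLPries LCSob}, which sends an \L-space $X$ to $\loc X$. The point to observe is that every continuous \L-space is \L-spatial by \cref{thm: con is spatial}, so the spatiality hypothesis in \cref{thm: char algebraic} is automatically satisfied for objects of $\ConLPries$. Consequently, for such $X$ the equivalence of conditions (2) and (3) in \cref{thm: char algebraic} applies, yielding that $X$ is \L-algebraic if and only if $\loc X$ is compactly based. Thus $\loc$ carries $\AlgLPries$ into $\KBSob$; conversely, any compactly based space in $\LCSob$ is $\loc X$ for some continuous \L-space $X$, and since this $X$ is \L-spatial, the same equivalence forces $X$ to be \L-algebraic. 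Restricting to these full subcategories yields (2).

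The argument is routine once this framework is in place; the one place requiring care is (2), where the spatiality clause of \cref{thm: char algebraic} is available only because continuous \L-spaces are automatically \L-spatial (\cref{thm: con is spatial}). Without that observation the implication (3)$\Rightarrow$(2) would not be at our disposal, and $\loc$ would be known merely to map $\AlgLPries$ into $\KBSob$ rather than to restrict to an equivalence. I therefore expect the identification of this hypothesis guaranteeing (2)$\Leftrightarrow$(3) to be the main, if modest, obstacle.
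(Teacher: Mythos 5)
Your proposal is correct and follows exactly the paper's route: the paper obtains both statements by restricting \cref{cor: ConFrm ConLPries,cor: ConLPries LCSob} to the full subcategories of algebraic objects via the object-level correspondences in \cref{thm: char algebraic}. Your explicit observation that the spatiality hypothesis needed for the equivalence (2)$\Leftrightarrow$(3) of \cref{thm: char algebraic} is supplied by \cref{thm: con is spatial} is a point the paper leaves implicit, and you have identified it correctly.
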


An immediate consequence is the following result of Hofmann and Keimel (see also \cite[p.~423]{GH+03}).

\begin{corollary}[{\cite[Thm.~5.7]{HK72}}] \label{cor: algfrm kbsob}
    $\AlgFrm$ is dually equivalent to $\KBSob$.
\end{corollary}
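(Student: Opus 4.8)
The plan is to derive this duality simply by composing the two halves of \cref{cor: alg dualities}. Part~(1) supplies a dual equivalence between $\AlgFrm$ and $\AlgLPries$ (the restriction of Pultr--Sichler--Wigner duality given by the functors $\clopup$ and $X_{(-)}$), while part~(2) supplies a covariant equivalence between $\AlgLPries$ and $\KBSob$ (the restriction of the functors $\Loc$ and $\Pri$ from \cref{thm: adjunction in priestley}). First I would record these two facts, and then invoke a purely categorical composition principle.

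The only general fact needed is that the composite of a dual equivalence with an equivalence is again a dual equivalence: if $\mathsf A$ is dually equivalent to $\mathsf B$ and $\mathsf B$ is equivalent to $\mathsf C$, then $\mathsf B^{\mathrm{op}}$ is equivalent to $\mathsf C^{\mathrm{op}}$, so $\mathsf A \simeq \mathsf B^{\mathrm{op}} \simeq \mathsf C^{\mathrm{op}}$ exhibits $\mathsf A$ as dually equivalent to $\mathsf C$. Taking $\mathsf A = \AlgFrm$, $\mathsf B = \AlgLPries$, and $\mathsf C = \KBSob$ gives the claimed dual equivalence between $\AlgFrm$ and $\KBSob$.

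There is no real obstacle here, since \cref{cor: alg dualities} has already carried out all the substantive work; what remains is only the routine observation above. Unwinding the composites as a sanity check, the resulting contravariant functors send an algebraic frame $L$ to the compactly based space $\loc X_L \cong \pt(L)$ of its points, and a compactly based sober space $Y$ back to its frame $\clopup(X_{\Omega(Y)}) \cong \Omega(Y)$ of opens, which is precisely the Hofmann--Keimel correspondence.
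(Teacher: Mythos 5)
Your proposal is correct and matches the paper exactly: the paper states this corollary as an immediate consequence of \cref{cor: alg dualities}, i.e., by composing the dual equivalence $\AlgFrm \simeq \AlgLPries^{\mathrm{op}}$ with the equivalence $\AlgLPries \simeq \KBSob$. Your unwinding of the composite functors as $L \mapsto \pt(L)$ and $Y \mapsto \Omega(Y)$ is also consistent with the paper's setup.
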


\begin{remark} \label{rem: arithmetic}
    Recall \cite[p.~117]{GH+03} that an algebraic frame is \emph{arithmetic} if the meet of any two compact elements is compact. Arithmetic frames form a full 
    subcategory of stably continuous frames (see \cref{rem: stably}). The Priestley spaces of arithmetic frames are  the algebraic \L-spaces $X$ for which $\alg$ commutes with binary intersections 
    \cite[Thm.~13.5]{Mel25} (see also \cite[Sec.~5]{BM25}). This characterization yields the known duality for arithmetic frames \cite[Thm.~5.7]{HK72} (see also \cite[p.~423]{GH+03}).
\end{remark}

\subsection{Coherent frames} \label{subsec: coherent}

We now turn to coherent frames and their \L-spaces.  This essentially requires characterizing stability (see \cref{rem: stably}) in the setting of algebraic frames (see \cref{rem: arithmetic}).

\begin{definition}[{see, e.g., \cite[p.~65]{Joh82}}]
    \leavevmode
\begin{enumerate}
    \item
    A frame $L$ is \emph{coherent} if it is a compact arithmetic frame. 
     Equivalently, $L$ is coherent provided $K(L)$ forms a bounded sublattice of $L$ which is join dense. 
     \item Let \CohFrm be the full subcategory of \AlgFrm consisting of coherent frames.
\end{enumerate}
\end{definition}

\begin{remark} \label{rem: K(L)}
    For each frame $L$, we have that $K(L)$ is closed under finite joins.
    Thus, $\K(L)$ always forms a join-semilattice, which is always distributive (see, e.g., \cite[Lem.~184]{Gra11}). In addition, 
    \begin{itemize}
        \item $L$ is algebraic if $\K(L)$ is join dense;
        \item $L$ is arithmetic if moreover
        $\K(L)$ is a lattice;
        \item $L$ is coherent if furthermore
        $\K(L)$ is a bounded lattice.
    \end{itemize}
\end{remark}

\begin{definition}[{\cite[Def.~5.6]{BM25}}]
\leavevmode
    \begin{enumerate}
        \item An \L-space $X$ is \emph{\L-coherent} or a \emph{coherent \L-space} if it is \L-algebraic, \L-compact, and
        \[
            \alg U \cap \alg V = \alg(U \cap V) \tag{B} \label{eq:B}
        \]
        for all $U, V \in \clopup(X)$.
        \item Let $\CohLPries$ be the full subcategory of $\AlgLPries$ consisting of coherent \L-spaces.
    \end{enumerate}
\end{definition}

\begin{lemma}[{\cite[Rem.~2.4]{BBM25} and \cite[Proof of Lem.~5.2]{BM25}}]
    Let $X$ be an \L-space. 
    \begin{enumerate}[cref=lemma]
        \item If $U \in \clopup(X)$, then $U \in \clopsup(X)$ if and only if  $\alg U = U$. \label{lem: alg fixpoint}
        \item $\clopsup(X)$ is closed under binary intersections if and only if \eqref{eq:B} is satisfied for all ${U,V \in \clopup(X)}$. \label{lem: alg stability}
    \end{enumerate}
\end{lemma}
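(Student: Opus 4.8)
The plan is to build everything on two elementary observations about the algebraic kernel: that $\alg$ is monotone and that $\alg U \subseteq U$ for every $U \in \clopup(X)$. Both are immediate from the definition $\alg U = \bigcup\{V \in \clopsup(X) \mid V \subseteq U\}$, since enlarging $U$ only enlarges the index set of the union, and every member of the union is by definition contained in $U$. With these in hand, part~\cref{lem: alg fixpoint} follows quickly. For the forward direction, if $U \in \clopsup(X)$ then $U$ is itself a clopen Scott upset contained in $U$, so $U \subseteq \alg U$, and the reverse inclusion is the observation above; hence $\alg U = U$.

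For the converse in \cref{lem: alg fixpoint}, I would assume $\alg U = U$. Since $U$ is already a clopen upset, it remains only to check that $\min U \subseteq \loc X$. Given $x \in \min U$, the equality $U = \alg U$ produces some $V \in \clopsup(X)$ with $x \in V \subseteq U$. The key step is to argue that $x \in \min V$: if $y \in V$ with $y \leq x$, then $y \in U$ because $V \subseteq U$, and minimality of $x$ in $U$ forces $y = x$. As $V$ is a Scott upset, $\min V \subseteq \loc X$, whence $x \in \loc X$. This shows $U$ is a Scott upset, so $U \in \clopsup(X)$. I expect this minimal-point argument to be the only genuinely nontrivial moment in the whole lemma; everything else is formal bookkeeping around the fixpoint characterization.

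Part~\cref{lem: alg stability} then reduces to formal manipulation once \cref{lem: alg fixpoint} is available. For the direction assuming \eqref{eq:B}, I would take $U,V \in \clopsup(X)$; by \cref{lem: alg fixpoint} we have $\alg U = U$ and $\alg V = V$, so \eqref{eq:B} gives $U \cap V = \alg U \cap \alg V = \alg(U \cap V)$. Since $U \cap V$ is again a clopen upset, \cref{lem: alg fixpoint} applied in reverse yields $U \cap V \in \clopsup(X)$. Conversely, assuming $\clopsup(X)$ is closed under binary intersections, the inclusion $\alg(U \cap V) \subseteq \alg U \cap \alg V$ holds for all $U,V$ by monotonicity of $\alg$, with no hypothesis needed. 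For the reverse inclusion, a point $x \in \alg U \cap \alg V$ lies in some $A \in \clopsup(X)$ with $A \subseteq U$ and some $B \in \clopsup(X)$ with $B \subseteq V$; then $A \cap B \in \clopsup(X)$ by hypothesis, and $x \in A \cap B \subseteq U \cap V$, so $x \in \alg(U \cap V)$. Combining the two inclusions gives \eqref{eq:B}, completing the argument.
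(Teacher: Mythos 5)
Your proof is correct. Note that the paper itself states this lemma without proof, citing \cite[Rem.~2.4]{BBM25} and \cite[Proof of Lem.~5.2]{BM25}, so there is no in-paper argument to compare against; your argument is the natural one, and the key step --- observing that a minimal point of $U$ remains minimal in any $V \in \clopsup(X)$ with $x \in V \subseteq U$, so that $\min U \subseteq \loc X$ follows from $\min V \subseteq \loc X$ --- is exactly the right bridge between the fixpoint condition $\alg U = U$ and the Scott-upset condition.
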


We recall (see, e.g., \cite[p.~4]{DST19}) that a \emph{spectral space} is a compactly based sober space in which finite intersections of compact opens are compact. 

\begin{theorem}[{\cite[Thm.~5.8]{BM25}}] \label{thm: char coherent}
    Let $L$ be a spatial frame and $X$ its \L-space. The following are equivalent.
    \begin{enumerate}
        \item $L$ is coherent.
        \item $X$ is \L-coherent.
        \item $\loc X$ is a spectral space.
    \end{enumerate}
\end{theorem}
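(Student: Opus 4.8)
The plan is to prove the two equivalences $(1)\Leftrightarrow(2)$ and $(2)\Leftrightarrow(3)$ in turn, in each case decomposing coherence into its three ingredients---compactness, algebraicity, and closure of the compact elements under binary meet---and matching each ingredient against its \L-space and its localic counterpart. The spatiality hypothesis enters only through \cref{thm: spatial}, which guarantees that $X$ is \L-spatial; this is exactly what supplies the converse directions of \cref{prop: loc X compact,thm: char algebraic}, which in general hold in one direction only.

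For $(1)\Leftrightarrow(2)$ I would use that a coherent frame is a compact arithmetic frame, and that arithmeticity means algebraicity together with $\K(L)$ being closed under binary meet. Compactness of $L$ is equivalent to \L-compactness of $X$ by \cref{thm: compact and min}, and algebraicity of $L$ to \L-algebraicity of $X$ by \cref{thm: char algebraic}. For the last clause, recall from the proof of \cref{thm: char algebraic} (via \cref{prop: compact char}) that $\K(\clopup(X)) = \clopsup(X)$; since the meet in $\clopup(X)$ is intersection, ``the meet of two compact elements is compact'' becomes ``$\clopsup(X)$ is closed under binary intersection,'' which by \cref{lem: alg stability} is precisely condition \eqref{eq:B}. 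Combining the three translations gives that $L$ is coherent if and only if $X$ is \L-compact, \L-algebraic, and satisfies \eqref{eq:B}, i.e.\ \L-coherent.

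For $(2)\Leftrightarrow(3)$ I would match the defining conditions of a spectral space against those of an \L-coherent space, using that $\loc X$ is automatically sober (\cref{rem: sober}). Compact basedness of $\loc X$ is equivalent to \L-algebraicity by \cref{thm: char algebraic}, and compactness of $\loc X$ to \L-compactness by \cref{prop: loc X compact} (here \L-spatiality supplies the converse). The remaining and most delicate point is to identify \eqref{eq:B} with the requirement that binary intersections of compact opens of $\loc X$ be compact. To do this I would first show that, under \L-algebraicity and \L-spatiality, $V \mapsto V \cap \loc X$ is a bijection from $\clopsup(X)$ onto the compact opens of $\loc X$, with inverse $K \mapsto \cl K$: density of $\loc X$ gives $\cl(V\cap\loc X)=V$, \cref{prop: HM-1} gives that each trace is compact open, and \cref{lem: compact opens} together with compact basedness shows every compact open is such a trace. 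Since this bijection sends $V_1\cap V_2$ to $(V_1\cap\loc X)\cap(V_2\cap\loc X)$, and since by \cref{lem: alg fixpoint} membership in $\clopsup(X)$ is the same as being an $\alg$-fixpoint, the intersection of two compact opens of $\loc X$ is compact exactly when $\clopsup(X)$ is closed under binary intersection, i.e.\ (by \cref{lem: alg stability}) exactly when \eqref{eq:B} holds. Assembling the three matches yields that $X$ is \L-coherent if and only if $\loc X$ is spectral.

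I expect the main obstacle to lie in this last identification, specifically in pinning down that the compact opens of $\loc X$ are exactly the traces of clopen Scott upsets and that this correspondence is compatible with binary intersection, so that \eqref{eq:B} transfers faithfully. A secondary point needing care is the compactness clause: one should check that the paper's definition of spectral space does entail compactness of the whole space---so that it corresponds to \L-compactness rather than being absorbed into compact basedness---and then invoke the \L-spatial converse of \cref{prop: loc X compact}.
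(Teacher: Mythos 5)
Your proposal is correct and follows essentially the same route as the paper: both reduce coherence to compactness, algebraicity, and closure of $\clopsup(X)$ under binary intersection via $\K(L)\cong\clopsup(X)$ (\cref{prop: compact char}) and \cref{lem: alg stability}, and both handle the localic side through the correspondence between clopen Scott upsets and compact opens of $\loc X$ given by \cref{prop: HM-1,lem: compact opens} together with density of $\loc X$. The only cosmetic difference is that you package this correspondence as an explicit bijection $V\mapsto V\cap\loc X$, whereas the paper argues the two inclusions of the intersection condition directly.
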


\begin{proof}
    Recall that $\loc X$ is always sober (see \cref{rem: sober}). Therefore, by putting \cref{prop: compact char,prop: loc X compact,thm: char algebraic} together, we may assume that $L$ is compact and algebraic, that $X$ is \L-compact and \L-algebraic, and that $\loc X$ is compact, compactly based, and sober.

    (1)$\Leftrightarrow$(2) 
    Since $K(L) \cong \K(\clopup(X)) = \clopsup(X)$ (see \cref{prop: compact char}), $L$ is coherent if and only if $\clopsup(X)$ is closed under binary intersections. 
    The result then follows from \cref{lem: alg stability}.

    (2)$\Rightarrow$(3) Let $U,V \in \clopup(X)$ be such that $U \cap \loc X$ and $V \cap \loc X$ are compact open. Since $X$ is \L-spatial (see \cref{thm: spatial}), $\cl (U\cap \loc X) = U$ and $\cl (V \cap \loc X)= V$. Thus, by \cref{lem: compact opens}, $U,V \in \clopsup(X)$. Hence, $U \cap V \in \clopsup(X)$ by \cref{lem: alg stability}, so $U \cap V \cap \loc X$ is compact by \cref{prop: HM-1}.

    (3)$\Rightarrow$(2) Let $U,V \in \clopsup(X)$. Then $U \cap \loc X$ and $V \cap \loc X$ are compact open by \cref{prop: HM-1}. Hence, $U \cap V \cap \loc X$ is compact open since $\loc X$ is spectral. Because $X$ is \L-spatial, $U \cap V = \cl(U \cap V \cap \loc X) \in \clopsup(X)$ by \cref{lem: compact opens}.
\end{proof}

Let $\Spec$ be the full subcategory of $\KBSob$ consisting of spectral spaces. By \cref{thm: char coherent}, \cref{cor: alg dualities} restricts to the following result:
\begin{corollary}[{\cite[Cor.~5.9]{BM25}}] \label{cor: coh dualities}
\leavevmode
    \begin{enumerate}[cref=corollary]
        \item $\CohFrm$ is dually equivalent to $\CohLPries$. \label{cor: coh dualities-1}
        \item $\CohLPries$ is equivalent to $\Spec$.\label{cor: coh dualities-2}
    \end{enumerate}
\end{corollary}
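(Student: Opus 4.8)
The plan is to derive both equivalences purely as restrictions of the equivalences in \cref{cor: alg dualities}, with \cref{thm: char coherent} supplying the matching of objects. Because $\CohFrm$, $\CohLPries$, and $\Spec$ are by definition \emph{full} subcategories of $\AlgFrm$, $\AlgLPries$, and $\KBSob$, I will not need to do any work on morphisms: once I know that the object classes correspond under the functors of \cref{cor: alg dualities}, the restricted functors are automatically (dual) equivalences onto the full subcategories in which they land.

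The one subtlety is that \cref{thm: char coherent} requires the frame to be spatial, so my first step will be to record that this hypothesis is met throughout. Since $\AlgFrm$ is a full subcategory of $\ConFrm$, every algebraic frame is continuous and hence spatial (recall the discussion following \cref{thm: con is spatial}). Dually, since $\AlgLPries$ is a full subcategory of $\ConLPries$, every algebraic \L-space is a continuous \L-space and hence \L-spatial by \cref{thm: con is spatial}. With this in hand, \cref{thm: char coherent} applies to every object under consideration.

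For (1), I will invoke \cref{cor: alg dualities}(1) to obtain the dual equivalence between $\AlgFrm$ and $\AlgLPries$. For an algebraic frame $L$ with \L-space $X$, the spatiality of $L$ lets me apply \cref{thm: char coherent}, giving that $L$ is coherent if and only if $X$ is \L-coherent; hence the dual equivalence restricts to one between $\CohFrm$ and $\CohLPries$. For (2), I will invoke \cref{cor: alg dualities}(2) to obtain the equivalence between $\AlgLPries$ and $\KBSob$. For an algebraic \L-space $X$, its \L-spatiality again lets me apply \cref{thm: char coherent}, now giving that $X$ is \L-coherent if and only if $\loc X$ is spectral; since $\Spec$ is exactly the full subcategory of $\KBSob$ on spectral spaces, the equivalence restricts to one between $\CohLPries$ and $\Spec$.

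The only step that is not a formality is confirming the spatiality (resp.\ \L-spatiality) of the objects involved, as that is precisely what licenses the use of \cref{thm: char coherent} beyond its literal statement; everything else rests on the standard principle that an equivalence of categories restricts to any pair of full subcategories whose object classes correspond under it.
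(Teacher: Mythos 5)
Your proposal is correct and follows exactly the paper's route: the paper proves this corollary in one line by restricting \cref{cor: alg dualities} along the object-level characterization in \cref{thm: char coherent}, relying on fullness of the subcategories. The only addition you make is to spell out why the spatiality hypothesis of \cref{thm: char coherent} is automatically satisfied (algebraic frames are continuous, hence spatial), which the paper leaves implicit but which is a worthwhile check.
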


As a consequence, we arrive at the following well-known result
\cite[Thm.~5.7]{HK72} (see also \cite{Ban80,Ban81} and \cite[pp.~65--66]{Joh82}). 

\begin{corollary} \label{cor: Spec}
    $\CohFrm$ is dually equivalent to $\Spec$. 
\end{corollary}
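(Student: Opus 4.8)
The plan is to obtain this duality by composing the two results recorded in \cref{cor: coh dualities}. First I would invoke \cref{cor: coh dualities-1}, which provides a dual equivalence between $\CohFrm$ and $\CohLPries$; concretely, this is the restriction of the Priestley functors $\clopup(-)$ and $X_{(-)}$ to coherent frames and coherent \L-spaces. Then I would invoke \cref{cor: coh dualities-2}, which supplies an equivalence between $\CohLPries$ and $\Spec$, realized by the restrictions of $\Loc$ and $\Pri$ to these subcategories.

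The remaining observation is that a dual equivalence followed by a (covariant) equivalence is again a dual equivalence. Explicitly, the composite $L \mapsto \loc X_L$ from $\CohFrm$ to $\Spec$, together with the composite $Y \mapsto \clopup(\Pri\, Y)$ in the reverse direction, are mutually quasi-inverse: each is built from functors already known to be (dual) equivalences, so the relevant natural isomorphisms are obtained by pasting the natural isomorphisms witnessing \cref{cor: coh dualities-1,cor: coh dualities-2}. Hence $\CohFrm$ is dually equivalent to $\Spec$.

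There is essentially no obstacle here, since both constituent (dual) equivalences have already been established; the only care needed is to track variance, so that ``dual equivalence composed with equivalence'' is correctly recorded as a dual equivalence rather than an equivalence. It is worth noting that this result closes the loop with the Cornish isomorphism mentioned in \cref{rem: cornish}, recovering the classical duality between coherent frames and spectral spaces.
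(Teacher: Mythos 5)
Your proposal is correct and matches the paper's own derivation: the corollary is obtained exactly by composing the dual equivalence of \cref{cor: coh dualities-1} with the equivalence of \cref{cor: coh dualities-2}. The extra care you take in tracking variance is sound but adds nothing beyond the paper's one-line argument.
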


Composing the equivalence of \cref{cor: Spec} with the Cornish isomorphism 
between \Spec and \Pries (recall the introduction),
we obtain that
\CohFrm is dually equivalent to \Pries. We will return to this in \cref{sec: deriving Priestley}.

\subsection{Stone frames}

We now turn our attention to the \L-spaces of Stone frames. This requires describing zero-dimensionality and complemented elements in terms of Priestley spaces. 
We recall (see the paragraph before \cref{thm: center}) that a frame is zero-dimensional if each element is a join of complemented elements. 

\begin{definition}[{see, e.g., \cite[p.~258]{Ban89}}]
\leavevmode
    \begin{enumerate}
        \item A \emph{Stone frame} is a zero-dimensional compact frame.
        \item Let $\StoneFrm$ be the full subcategory of $\Frm$ consisting of Stone frames.
    \end{enumerate}
\end{definition}

Clearly $\StoneFrm$ is a full subcategory of $\KRFrm$. This implies that $\StoneFrm$ is a full subcategory of $\ConFrm$ (see \cref{rem: fullsub}), and hence a full subcategory of $\CohFrm$.

We next turn our attention to the Priestley spaces of zero-dimensional and Stone frames. Those were described in 
\cite[Sec.~6]{BGJ16} and 
\cite[Sec.~5]{BM25}. 

\begin{definition}[{\cite[Def.~5.10]{BM25}}]
    Let $X$ be an \L-space.
    \begin{enumerate}
        \item For $U \in \clopup(X)$, define the \emph{zero-dimensional kernel} of $U$ as
        \[
            \zer U = \bigcup \{V \in \clopup(X) \mid V \subseteq U \text{ and } V \text{ is a downset}\}.
        \]
        \item We call $X$ \emph{\L-zero-dimensional} or a \emph{zero-dimensional \L-space} if $\cl (\zer U) = U$ for every ${U \in \clopup(X)}$.
        \item We call $X$ \emph{\L-Stone} or a \emph{Stone \L-space} if it is \L-compact and \L-zero-dimensional.
        \item Let \StoneLPries be the full subcategory of \LPries consisting of Stone \L-spaces.
    \end{enumerate}
\end{definition}

\begin{lemma}[{\cite[Lem.~6.1]{BGJ16}}] \label{lem: complemented}
    Let $X$ be an \L-space and $U \in \clopup(X)$. Then $U$ is complemented if and only if $U$ is a downset.
\end{lemma}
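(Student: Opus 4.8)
The plan is to work inside the frame $\clopup(X)$, using the isomorphism $L \cong \clopup(X)$ together with the explicit description of pseudocomplements from \cref{lem: pseudocomplement-1}. The first observation I would record is that in $\clopup(X)$ binary meets are intersections, binary joins are unions (the union of two clopen upsets is again a clopen upset), the bottom element is $\varnothing$, and the top element is $X$. Since $\clopup(X)$ is a frame, hence a Heyting algebra, an element $U$ has a complement if and only if the pseudocomplement $U^*$ is one: if $V$ is any complement, then distributivity gives $V = V \wedge (U \vee U^*) = (V \wedge U) \vee (V \wedge U^*) = V \wedge U^* \le U^*$, and symmetrically $U^* \le V$, so $V = U^*$. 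Because $U \wedge U^* = \varnothing$ always holds, being complemented reduces to the single join condition $U \cup U^* = X$.

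Next I would substitute the formula $U^* = X \setminus \downset U$ from \cref{lem: pseudocomplement-1}, so that $U \cup U^* = U \cup (X \setminus \downset U)$. Taking complements in $X$ yields
\[
    X \setminus (U \cup U^*) = (X \setminus U) \cap \downset U,
\]
whence $U \cup U^* = X$ if and only if $(X \setminus U) \cap \downset U = \varnothing$, i.e.\ $\downset U \subseteq U$. Since $U \subseteq \downset U$ holds trivially, this is equivalent to $\downset U = U$, that is, to $U$ being a downset.

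Combining the two steps settles both implications simultaneously: $U$ is complemented iff $U \cup U^* = X$ iff $U$ is a downset. I do not expect a genuine obstacle here; the only point requiring a little care is the justification that a complement, if it exists, must coincide with the pseudocomplement, which is what collapses the problem to the single equation $U \cup U^* = X$. Everything after that is a direct set-theoretic computation built on \cref{lem: pseudocomplement-1}.
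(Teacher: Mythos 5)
Your proof is essentially correct, and since the paper states this lemma only with a citation to \cite[Lem.~6.1]{BGJ16} and gives no proof of its own, there is nothing in the text to compare against line by line. Two comments. First, a small slip in your reduction: the chain $V = V \wedge (U \vee U^*) = \dots$ presupposes $V \subseteq U \cup U^*$, which is not yet known at that point. The clean version is: from $U \cap V = \varnothing$ and the maximality of the pseudocomplement you get $V \subseteq U^*$ directly, whence $X = U \cup V \subseteq U \cup U^*$; and for the reverse inequality $U^* = U^* \cap (U \cup V) = U^* \cap V \subseteq V$, which legitimately uses $U \cup V = X$. This is a one-line fix, not a structural problem. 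Second, the detour through \cref{lem: pseudocomplement-1} is heavier than necessary. Since finite meets and joins in $\clopup(X)$ are set-theoretic intersections and unions, any complement $V$ of $U$ satisfies $U \cap V = \varnothing$ and $U \cup V = X$, so $V = X \setminus U$ as sets; thus $U$ is complemented iff $X \setminus U \in \clopup(X)$, i.e.\ iff $X \setminus U$ is an upset, i.e.\ iff $U$ is a downset (clopenness of $X \setminus U$ is automatic). Your route buys a reusable fact --- a clopen upset is complemented iff it equals its double pseudocomplement's complement, so to speak, via the formula $U^* = X \setminus \downset U$ --- but for this particular statement the direct set-theoretic argument avoids pseudocomplements entirely.
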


The next result was established in \cite[Thm.~6.3]{BGJ16} and \cite[Cor.~5.17]{BM25}.

\begin{theorem}
\label{thm: char zero-dim}
    Let $L$ be a frame and $X$ its \L-space. The following are equivalent.
    \begin{enumerate}
        \item $L$ is zero-dimensional.
        \item $X$ is \L-zero-dimensional.
    \end{enumerate}
    Each of these implies
    \begin{enumerate}[resume]
        \item $\loc X$ is zero-dimensional.
    \end{enumerate}
    If $L$ is spatial, the above three conditions are equivalent.
\end{theorem}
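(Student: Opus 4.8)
The plan is to mirror the proof of \cref{thm: char algebraic}, with compact elements (clopen Scott upsets) replaced throughout by complemented elements (clopen upset-downsets). For the equivalence (1)$\Leftrightarrow$(2), I would exploit that $\sigma \colon L \to \clopup(X)$ is an isomorphism and that, by \cref{lem: complemented}, the complemented elements of $\clopup(X)$ are precisely the clopen upsets that are also downsets. Hence, for $a \in L$, the expression $a = \bigvee\{c \in L \mid c \leq a \text{ and } c \text{ complemented}\}$ transports, via \cref{prop: exact joins-1} (using that every join in the frame $L$ is exact), to $\sigma(a) = \cl(\zer \sigma(a))$. Since $\sigma$ is onto $\clopup(X)$, this shows that $L$ is zero-dimensional exactly when $\cl(\zer U) = U$ for all $U \in \clopup(X)$, i.e.\ when $X$ is \L-zero-dimensional.

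For (2)$\Rightarrow$(3), the crucial observation is that a clopen upset-downset $V$ restricts to a \emph{clopen} subset $V \cap \loc X$ of $\loc X$: it is open because $V \in \clopup(X)$, and its relative complement $V^c \cap \loc X$ is open because $V^c$ is again a clopen upset. Given $x \in U \cap \loc X$ with $U \in \clopup(X)$, the set $\downset x$ is open (as $x$ is localic), so it meets $\zer U$ because $x \in U = \cl(\zer U)$. Choosing $y \in \downset x \cap \zer U$, we land in some clopen upset-downset $V \subseteq U$; since $V$ is an upset and $y \leq x$, we get $x \in V \cap \loc X \subseteq U \cap \loc X$. Thus clopen sets form a basis of $\loc X$, which is zero-dimensional.

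For (3)$\Rightarrow$(2) under the spatiality hypothesis (so $X$ is \L-spatial and $\loc X$ dense by \cref{thm: spatial}), the heart of the argument is a lifting step: every clopen subset $C$ of $\loc X$ arises as $W \cap \loc X$ for a clopen upset-downset $W$ of $X$. Writing $C = W \cap \loc X$ and $C^c = W' \cap \loc X$ with $W,W' \in \clopup(X)$, disjointness and covering in $\loc X$ give $W \cap W' \cap \loc X = \varnothing$ and $\loc X \subseteq W \cup W'$; density of $\loc X$ then forces the clopen set $W \cap W'$ to be empty and $W \cup W'$ to be all of $X$, so $W' = W^c$ and $W$ is a downset. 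Applying this to a basic cover of $U \cap \loc X$ by clopen subsets of $\loc X$ produces clopen upset-downsets $W_i$ with $W_i = \cl(W_i \cap \loc X) \subseteq \cl U = U$, hence $W_i \subseteq \zer U$. This yields $U \cap \loc X = \zer U \cap \loc X$, and taking closures (using density once more) gives $U = \cl(\zer U)$.

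The main obstacle I anticipate is exactly this lifting step in (3)$\Rightarrow$(2): upgrading an abstract clopen subset of the subspace $\loc X$ to a \emph{complemented} clopen upset of $X$. The whole argument turns on density of $\loc X$ forcing the two clopen upsets witnessing $C$ and $C^c$ to be genuine complements in $X$, which is precisely what promotes ``upset'' to ``upset and downset'' and connects the topological zero-dimensionality of $\loc X$ back to the order-theoretic kernel $\zer$.
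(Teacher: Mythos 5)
Your proposal is correct and follows essentially the same route as the paper: (1)$\Leftrightarrow$(2) via $\sigma$, \cref{prop: exact joins-1}, and \cref{lem: complemented}; (2)$\Rightarrow$(3) by noting that complemented clopen upsets trace clopen subsets on $\loc X$; and (3)$\Rightarrow$(2) by lifting a clopen subset of $\loc X$ to a clopen upset that density forces to be a downset (the paper phrases this as $W = \cl V = D$ for a clopen upset $W$ and clopen downset $D$, which is the same device as your $W' = W^c$). The only cosmetic difference is that you organize the lifting as a cover of $U \cap \loc X$ rather than pointwise, which changes nothing of substance.
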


\begin{proof}
    (1)$\Leftrightarrow$(2) This follows from \cref{prop: exact joins-1,lem: complemented}.

    (2)$\Rightarrow$(3) Suppose $U \in \clopup(X)$ is a downset. Then $U \cap \loc X$ is clopen in $\loc X$. Hence, if $X$ is \L-zero-dimensional, then $\loc X$ has a basis of clopen sets (see \cref{rem: loc topology}).

    Next suppose that $L$ is spatial, so $X$ is \L-spatial (see \cref{thm: spatial}). It is sufficient to show that (3) implies (2). 
    
    (3)$\Rightarrow$(2) Suppose $U \in \clopup(X)$. Since $X$ is \L-spatial, $\loc X$ is dense, so it is enough to show that ${U \cap \loc X \subseteq \zer U}$. Let $x \in U \cap \loc X$. Because $\loc X$ is zero-dimensional, there is a clopen subset $V$ of $\loc X$ such that $x \in V$ and $V \subseteq U \cap \loc X$. 
    Since $V$ and $\loc X \setminus V$ are open, $V = W \cap \loc X = D \cap \loc X$ for a clopen upset $W$ and a clopen downset $D$ (see \cref{rem: loc topology}). Because $X$ is \L-spatial, $W = \cl V = D$, so $W$ is a downset. Moreover, $x \in W = \cl V \subseteq U$. Thus, $x \in \zer U$, and hence 
    $X$ is \L-zero-dimensional.  
\end{proof}

The above theorem together with 
\cref{cor: compact,prop: loc X compact} gives: 
\begin{corollary}[{\cite[Cor.~5.18]{BM25}}] \label{cor: char Stone}
    Let $L$ be a spatial frame and $X$ its \L-space. 
    The following are equivalent.
    \begin{enumerate}
        \item $L$ is a Stone frame.
        \item $X$ is a Stone \L-space.
        \item $\loc X$ is a Stone space.
    \end{enumerate}
\end{corollary}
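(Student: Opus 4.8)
The plan is to derive the three-way equivalence by decoupling the two constituent properties---compactness and zero-dimensionality---and citing the relevant results for each separately. First I would note that since $L$ is spatial, \cref{thm: spatial} guarantees that $\loc X$ is dense in $X$, i.e.\ $X$ is \L-spatial; this is exactly the hypothesis needed to run the converse directions of \cref{thm: char zero-dim,prop: loc X compact}. I would then unwind the three notions of ``Stone'': a Stone frame is compact and zero-dimensional, a Stone \L-space is \L-compact and \L-zero-dimensional, and a Stone space is compact, Hausdorff, and zero-dimensional (see \cref{lem: pri-facts-stone}). The strategy is to prove (1)$\Leftrightarrow$(2) and (2)$\Leftrightarrow$(3).

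For (1)$\Leftrightarrow$(2), I would simply conjoin two already-established equivalences. By the equivalence (1)$\Leftrightarrow$(2) of \cref{thm: char zero-dim}, $L$ is zero-dimensional if and only if $X$ is \L-zero-dimensional, and by \cref{cor: compact} (specifically \cref{thm: compact and min}), $L$ is compact if and only if $X$ is \L-compact. Intersecting these two biconditionals gives that $L$ is a Stone frame exactly when $X$ is a Stone \L-space.

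For (2)$\Leftrightarrow$(3) I would again treat compactness and zero-dimensionality in parallel, now passing to the localic part. In the forward direction, \L-compactness of $X$ yields compactness of $\loc X$ by \cref{prop: loc X compact}, and \L-zero-dimensionality yields zero-dimensionality of $\loc X$ by the implication (2)$\Rightarrow$(3) of \cref{thm: char zero-dim}. For the converse I invoke \L-spatiality: by the converse halves of \cref{prop: loc X compact} and of \cref{thm: char zero-dim} (the latter valid because $L$ is spatial), compactness and zero-dimensionality of $\loc X$ pull back to \L-compactness and \L-zero-dimensionality of $X$, so $X$ is a Stone \L-space.

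The one point requiring a little care---and the main (albeit minor) obstacle---is recovering Hausdorffness of $\loc X$ in the forward direction, since ``Stone \L-space'' carries no explicit separation hypothesis. Here I would use that $\loc X$ is always sober (\cref{rem: sober}), hence $T_0$, together with the elementary fact that a $T_0$ zero-dimensional space is automatically Hausdorff: given distinct points, $T_0$ separates them by an open set, which by zero-dimensionality can be shrunk to a clopen set, and that clopen set together with its complement separates the two points. Thus the compact, zero-dimensional, sober space $\loc X$ is in fact a Stone space, completing (2)$\Rightarrow$(3); note that in the converse direction the Hausdorff clause of (3) is not even needed, as only compactness and zero-dimensionality of $\loc X$ are used.
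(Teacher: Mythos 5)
Your proof is correct and takes essentially the same route as the paper, which obtains the corollary in one line by combining \cref{thm: char zero-dim} with \cref{cor: compact,prop: loc X compact}, exactly the compactness/zero-dimensionality decomposition you describe. Your only addition is the explicit check that the sober, zero-dimensional space $\loc X$ is Hausdorff (hence genuinely a Stone space), a point the paper leaves implicit; your $T_0$-plus-clopen-basis argument for this is sound.
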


Letting $\Stone$ denote the full subcategory of $\Top$ consisting of Stone spaces and applying \cref{cor: char Stone}, the duality of \cref{thm: spatial duality} restricts to yield:

\begin{corollary}[{\cite[Thm.~5.14 and Cor.~5.19]{BM25}}] \label{cor: stone dualities}
\leavevmode
    \begin{enumerate}[cref=corollary]
        \item $\StoneFrm$ is dually equivalent to $\StoneLPries$. \label{cor: stone dualities-1}
        \item $\StoneLPries$ is equivalent to $\Stone$.\label{cor: stone dualities-2}
    \end{enumerate}
\end{corollary}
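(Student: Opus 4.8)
The plan is to derive both statements by restricting equivalences and dualities already in hand to the relevant full subcategories, so that the only real work is to match up objects. For part (1), I would restrict Pultr--Sichler--Wigner duality (\cref{thm: WPS duality}). Since $\StoneFrm$ and $\StoneLPries$ are by definition full subcategories of $\Frm$ and $\LPries$, it suffices to verify that the duality pairs their objects, i.e.\ that a frame $L$ is a Stone frame precisely when its \L-space $X$ is a Stone \L-space. This splits into the equivalence $L$ compact $\iff$ $X$ \L-compact (\cref{cor: compact}) together with the equivalence $L$ zero-dimensional $\iff$ $X$ \L-zero-dimensional (\cref{thm: char zero-dim}, the equivalence (1)$\Leftrightarrow$(2)). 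Crucially, neither of these requires spatiality, so the object correspondence is exact, and Pultr--Sichler--Wigner duality restricts to a dual equivalence between $\StoneFrm$ and $\StoneLPries$.

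For part (2), I would restrict the equivalence between $\SLPries$ and $\Sob$ furnished by the functors $\Loc$ and $\Pri$ (\cref{thm: adjunction in priestley}). Two checks are needed: that $\StoneLPries$ is a full subcategory of $\SLPries$, and that $\Loc$ and $\Pri$ restrict to a bijection on objects between Stone \L-spaces and Stone spaces. For the first, part (1) identifies Stone \L-spaces with duals of Stone frames; since $\StoneFrm \subseteq \KRFrm$, every Stone frame is compact regular and hence spatial by Isbell's Spatiality Theorem (\cref{thm: subfit compact is spatial}, regular frames being subfit), so every Stone \L-space is \L-spatial. For the second, \cref{cor: char Stone} gives, for \L-spatial $X$, that $X$ is a Stone \L-space iff $\loc X$ is a Stone space; thus $\Loc$ carries $\StoneLPries$ into $\Stone$. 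Conversely, a Stone space $Y$ is compact Hausdorff, hence sober, with $\Omega(Y)$ spatial and $\loc(\Pri(Y)) \cong Y$ (\cref{rem: sober}), so a further application of \cref{cor: char Stone} places $\Pri(Y)$ in $\StoneLPries$. Hence the equivalence restricts to one between $\StoneLPries$ and $\Stone$.

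Since the substantive content is already encapsulated in the cited results, the argument is largely bookkeeping about restricting (co)equivalences to full subcategories. The step I expect to demand the most care is the spatiality input in part (2): \cref{cor: char Stone} is available only for spatial frames, so before invoking the $\loc X$ description I must first certify that every Stone frame, equivalently every Stone \L-space, is spatial. Routing this through the inclusion $\StoneFrm \subseteq \KRFrm$ and Isbell's Spatiality Theorem resolves the issue, after which the remaining verifications are routine.
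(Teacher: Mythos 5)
Your proof is correct and follows essentially the same route as the paper, which obtains both statements by restricting the existing (dual) equivalences via \cref{cor: char Stone}, with spatiality of Stone frames supplied by the inclusion $\StoneFrm \subseteq \KRFrm$ and Isbell's Spatiality Theorem. The only (harmless) variation is that you derive part (1) by restricting the full Pultr--Sichler--Wigner duality using \cref{cor: compact} and \cref{thm: char zero-dim}, which avoids invoking spatiality there, whereas the paper restricts the spatial duality of \cref{thm: spatial duality}.
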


As an immediate consequence, we obtain the following well-known result 
(see, e.g., 
\cite{Sto36,Ban89}). 

\begin{corollary} 
    $\StoneFrm$ is dually equivalent to $\Stone$. 
\end{corollary}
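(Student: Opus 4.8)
The plan is simply to compose the two equivalences established in \cref{cor: stone dualities}. By \cref{cor: stone dualities-1}, the category $\StoneFrm$ is dually equivalent to $\StoneLPries$, and by \cref{cor: stone dualities-2}, the category $\StoneLPries$ is equivalent to $\Stone$. Since composing a dual equivalence with an equivalence again yields a dual equivalence, we conclude that $\StoneFrm$ is dually equivalent to $\Stone$.

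To record the duality explicitly, I would trace through the composite functors. Following the restriction of Priestley duality from \cref{cor: stone dualities-1} and then the restriction of $\Loc$ and $\Pri$ from \cref{cor: stone dualities-2}, the resulting functor $\StoneFrm \to \Stone$ sends a Stone frame $L$ to $\loc X_L$, which by \cref{rem: sober} is homeomorphic to its space of points $\pt(L)$; conversely, the functor $\Stone \to \StoneFrm$ sends a Stone space $Y$ to $\clopup(X_{\Omega(Y)}) \cong \Omega(Y)$, its frame of opens. This recovers the classical duality for Stone frames.

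There is no genuine obstacle here, as all the substantive work was already carried out in establishing \cref{cor: char Stone} and hence \cref{cor: stone dualities}. The only point worth verifying is the routine categorical bookkeeping that the variance composes correctly: a contravariant (dual) equivalence followed by a covariant equivalence remains a dual equivalence, so the arrows between $\StoneFrm$ and $\Stone$ point in opposite directions, as claimed.
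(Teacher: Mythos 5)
Your proposal is correct and matches the paper exactly: the corollary is stated there as an immediate consequence of \cref{cor: stone dualities}, obtained by composing the dual equivalence of \cref{cor: stone dualities-1} with the equivalence of \cref{cor: stone dualities-2}. The additional tracing of the composite functors through $\loc X_L \cong \pt(L)$ and $\clopup(X_{\Omega(Y)}) \cong \Omega(Y)$ is accurate but not needed for the paper's argument.
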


\begin{remark}
We call a topological space \emph{locally Stone} if it is zero-dimensional, locally compact, and Hausdorff (see, e.g., \cite[Def.~4]{Rum10}). 
A \emph{locally Stone frame} is an algebraic zero-dimensional frame (see, e.g., \cite[Def.~7.6]{BCM24}). 
Define a \emph{locally Stone \L-space} 
as an algebraic zero-dimensional \L-space. Then, using \cref{thm: char zero-dim}, the equivalences of
\cref{cor: alg dualities} restrict to the locally Stone case, yielding a generalization of \cref{cor: stone dualities}. An immediate corollary is the folklore duality between locally Stone frames and locally Stone spaces (see, e.g., \cite[Thm.~3.11]{BK25}). 
\end{remark}

\subsection{Deriving Priestley duality} \label{sec: deriving Priestley}

Let $L$ be an algebraic frame. 
If $L$ is coherent, then $\K(L)$ is a bounded distributive lattice (recall \cref{rem: K(L)}). This correspondence is at the heart of 
the well-known equivalence between $\CohFrm$ and $\DLat$, 
which further restricts to the well-known equivalence between $\StoneFrm$ and the category $\BA$ of boolean algebras and boolean homomorphisms: 

\begin{theorem}
\leavevmode
\begin{enumerate}[cref=theorem]
    \item {\em \cite[p.~65]{Joh82}} \CohFrm and \DLat are equivalent.\label{thm: cohfrm dlat}
    \item {\em \cite[p.~258]{Ban89}} \StoneFrm and \BA are equivalent. \label{thm: Ban Stone}
\end{enumerate}
\end{theorem}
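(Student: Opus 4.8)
The plan is to exhibit the equivalence of part~(1) explicitly through the compact-element functor and its quasi-inverse, the ideal-lattice functor, and then to cut this equivalence down to the Stone/boolean setting for part~(2). Define $\K \colon \CohFrm \to \DLat$ by $L \mapsto \K(L)$; by \cref{rem: K(L)} this is a bounded distributive lattice, and a coherent homomorphism $h$ restricts to a bounded lattice morphism $h|_{\K(L)}$ precisely because coherent maps send compact elements to compact elements. In the other direction, define $\operatorname{Idl} \colon \DLat \to \CohFrm$ by sending $D$ to its ideal lattice $\operatorname{Idl}(D)$ and a lattice morphism $g$ to $I \mapsto \downset g[I]$. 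First I would check that $\operatorname{Idl}(D)$ is coherent: it is an algebraic frame whose compact elements are exactly the principal ideals $\downset a$ ($a \in D$), and these form a bounded sublattice isomorphic to $D$ that is join dense, which is precisely coherence (\cref{rem: K(L)}).

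The core of part~(1) is to produce natural isomorphisms $\K(\operatorname{Idl}(D)) \cong D$ and $\operatorname{Idl}(\K(L)) \cong L$. The first is immediate from the identification of the compact elements of $\operatorname{Idl}(D)$ with the principal ideals. For the second I would use the unit $\eta_L \colon L \to \operatorname{Idl}(\K(L))$, $a \mapsto \{k \in \K(L) \mid k \le a\}$. This is a well-defined ideal of $\K(L)$ since $\K(L)$ is closed under finite joins, and it is a frame isomorphism: injectivity and surjectivity both rest on algebraicity, namely $a = \bigvee\{k \in \K(L) \mid k \le a\}$ together with the fact that a compact element below a join lies below a finite subjoin, so that every ideal of $\K(L)$ is recovered as the set of compact elements below its join. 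Checking that $\eta_L$ preserves finite meets and arbitrary joins is then routine, and naturality of both isomorphisms is straightforward. This yields $\CohFrm \simeq \DLat$.

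For part~(2) I would restrict this equivalence, using that $\StoneFrm$ is a full subcategory of $\CohFrm$ and $\BA$ a full subcategory of $\DLat$ (a bounded lattice morphism between boolean algebras automatically preserves complements). Thus it suffices to show that, for a coherent frame $L$, $L$ is a Stone frame if and only if $\K(L)$ is a boolean algebra. The key lemma is that \emph{in a coherent frame every complemented element is compact}: if $a \vee a' = 1$ and $a \wedge a' = 0$, write $a' = \bigvee\{k \in \K(L) \mid k \le a'\}$; compactness of $1$ gives a finite subjoin $k \le a'$ with $a \vee k = 1$, while $k \le a'$ forces $a \wedge k = 0$, so $k$ is a complement of $a$ and hence equals $a'$ by uniqueness of complements in a distributive lattice. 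Granting this, if $\K(L)$ is boolean then every compact element is complemented, and since compact elements are join dense, $L$ is zero-dimensional, hence Stone. Conversely, if $L$ is Stone then any $k \in \K(L)$ is a finite join of complemented elements below it (zero-dimensionality plus compactness), hence complemented in $L$, and its complement is compact by the lemma, so $k$ is complemented in $\K(L)$; thus $\K(L)$ is boolean.

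The main obstacle is the reconstruction isomorphism $\operatorname{Idl}(\K(L)) \cong L$ for coherent $L$, i.e.\ verifying that a coherent frame is faithfully rebuilt from its lattice of compact elements; this is the distributive-lattice instance of the classical Nachbin correspondence between algebraic lattices and ideal lattices of join-semilattices recalled at the start of \cref{sec: algebraic}, and the compactness argument underlying it is exactly what also powers the complemented-equals-compact lemma needed for part~(2). Once the theorem is in place, combining part~(1) with the dual equivalence $\CohFrm \simeq \Pries^{\mathrm{op}}$ obtained from \cref{cor: Spec} and the Cornish isomorphism recovers Priestley duality, and part~(2) likewise recovers Stone duality, closing the circle promised in the introduction.
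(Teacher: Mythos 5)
The paper states this theorem without proof, citing Johnstone and Banaschewski, so there is no in-paper argument to compare against; your proposal supplies the standard proof from those sources, and it is correct. The decomposition is the expected one: the compact-element functor $\K$ and the ideal-lattice functor form a quasi-inverse pair, with $\K(\operatorname{Idl}(D)) \cong D$ via principal ideals and $\operatorname{Idl}(\K(L)) \cong L$ via $a \mapsto \{k \in \K(L) \mid k \le a\}$, the latter being exactly the Nachbin reconstruction the paper alludes to at the start of \cref{sec: algebraic}. Your reduction of part~(2) to ``$L$ is Stone iff $\K(L)$ is boolean'' is also the right move, and the complemented-implies-compact lemma closes the only nontrivial direction. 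Two minor remarks. First, that lemma holds in any compact frame without invoking algebraicity: if $b$ has complement $b'$ and $b \le \bigvee S$, then $1 = b' \vee \bigvee S$, compactness of $1$ gives a finite $T \subseteq S$ with $1 = b' \vee \bigvee T$, and meeting with $b$ yields $b \le \bigvee T$; this is slightly cleaner than routing through a compact complement of $b'$. Second, when restricting the equivalence you should note explicitly that a complement computed in the sublattice $\K(L)$ is a complement in $L$ and vice versa (both containing the same $0$ and $1$), which is what makes ``$\K(L)$ boolean'' and ``every compact element of $L$ is complemented in $L$ by a compact element'' interchangeable; you use this implicitly in both directions. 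Neither point is a gap.
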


By \cref{cor: coh dualities-1}, $\DLat$ is dually equivalent to \ConLPries, and hence to \Spec by \cref{cor: coh dualities-2}.
Similarly, by \cref{cor: stone dualities-1}, $\BA$ is dually equivalent to $\StoneLPries$, and hence to \Stone by \cref{cor: stone dualities-2}.
Consequently, we recover Stone’s classic dualities for boolean algebras \cite{Sto36} and distributive lattices \cite{Sto38}:

\needspace{2em}
\begin{corollary}[Stone dualities]
    \leavevmode
    \begin{enumerate}
        \item \BA is dually equivalent to \Stone.
        \item \DLat is dually equivalent to \Spec.
    \end{enumerate}
\end{corollary}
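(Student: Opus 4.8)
The plan is to derive both dualities formally, by composing the equivalences and dual equivalences already assembled in this subsection; the only thing to watch is the variance of each functor, so that each chain contains exactly one contravariant passage and the composite is therefore a dual equivalence.

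For part~(1), I would start from the equivalence between $\BA$ and $\StoneFrm$ provided by \cref{thm: Ban Stone}. Composing it with the dual equivalence between $\StoneFrm$ and $\StoneLPries$ of \cref{cor: stone dualities-1} shows that $\BA$ is dually equivalent to $\StoneLPries$. Composing further with the equivalence between $\StoneLPries$ and $\Stone$ of \cref{cor: stone dualities-2} preserves this contravariance, yielding that $\BA$ is dually equivalent to $\Stone$.

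For part~(2), the same reasoning runs along the coherent/distributive column: \cref{thm: cohfrm dlat} gives an equivalence between $\DLat$ and $\CohFrm$, \cref{cor: coh dualities-1} a dual equivalence between $\CohFrm$ and $\CohLPries$, and \cref{cor: coh dualities-2} an equivalence between $\CohLPries$ and $\Spec$. Concatenating the three yields a single contravariant passage, so $\DLat$ is dually equivalent to $\Spec$.

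There is no real obstacle here; the only point deserving mention is the routine categorical fact that the composite of an equivalence with a dual equivalence is again a dual equivalence (and that two dual equivalences compose to an equivalence). Since each of the two chains contains exactly one dual equivalence, both composites are dual equivalences, and the corollary follows with no further computation.
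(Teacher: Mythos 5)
Your argument is correct and is exactly the paper's own derivation: the text preceding the corollary composes \cref{thm: Ban Stone} with \cref{cor: stone dualities-1,cor: stone dualities-2} for boolean algebras, and \cref{thm: cohfrm dlat} with \cref{cor: coh dualities-1,cor: coh dualities-2} for distributive lattices, precisely as summarized in \cref{fig:PD}. Your attention to the variance bookkeeping (exactly one contravariant link in each chain) is the only content of the proof, and you handle it correctly.
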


Finally, by Cornish’s isomorphism between \Spec and \Pries (see the introduction),
we obtain that $\DLat$ is dually equivalent to \Pries. 
In other words, the results of this section recover Priestley duality (see \cref{fig:PD}).

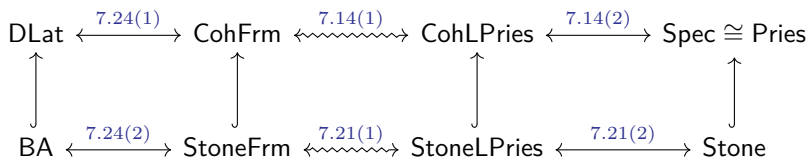
\begin{figure}[h]
    \centering
    \begin{tikzcd}[column sep=large, row sep=large]
        \DLat \ar[r, <->, "\ref{thm: cohfrm dlat}"] & \CohFrm \ar[r, <->, dual, "\ref{cor: coh dualities-1}"]& \CohLPries \ar[r, <->, "\ref{cor: coh dualities-2}"] & \Spec \cong \Pries \\
        \BA \ar[u, hook] \ar[r, <->, "\ref{thm: Ban Stone}"] & \StoneFrm \ar[u, hook] \ar[r, <->, dual, "\ref{cor: stone dualities-1}"] & \StoneLPries \ar[u, hook] \ar[r, <->,  "\ref{cor: stone dualities-2}"] & \Stone \ar[u, hook]
    \end{tikzcd}
     \caption{Recovering Priestley and Stone dualities from the dualities for coherent and Stone frames.}
    \label{fig:PD}
\end{figure}

We find it apt to end with the duality underlying this approach.

\begin{corollary}[Priestley duality]
    \DLat is dually equivalent to \Pries.
\end{corollary}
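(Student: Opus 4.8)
The plan is simply to compose the chain of categorical equivalences assembled over the course of this section and displayed in \cref{fig:PD}. Four links are available. The first, \cref{thm: cohfrm dlat}, provides an equivalence $\DLat \simeq \CohFrm$, realized by the functor $L \mapsto \K(L)$ sending a coherent frame to its bounded distributive lattice of compact elements. The second, \cref{cor: coh dualities-1}, is the dual equivalence between $\CohFrm$ and $\CohLPries$ obtained by restricting Pultr--Sichler--Wigner duality. The third, \cref{cor: coh dualities-2}, is the equivalence $\CohLPries \simeq \Spec$ obtained by restricting the equivalence induced by $\Loc$ and $\Pri$. The fourth is the Cornish isomorphism $\Spec \cong \Pries$ recalled in the introduction.

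The core of the argument is then pure bookkeeping of directions. Composing an equivalence with a dual equivalence yields a dual equivalence, and composing further with an equivalence and an isomorphism preserves this property. Since exactly one of the four links (namely \cref{cor: coh dualities-1}) reverses direction, the composite $\DLat \simeq \CohFrm \simeq^{\mathrm{op}} \CohLPries \simeq \Spec \cong \Pries$ is a dual equivalence between $\DLat$ and $\Pries$, which is precisely the claim.

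I do not expect any genuine obstacle, as all the substantive work has already been carried out in the constituent results. The only two points requiring a moment's care are (i) confirming the parity of direction-reversals, so that the composite lands as a dual equivalence rather than an equivalence, and (ii) noting that the imported Cornish isomorphism $\Spec \cong \Pries$ composes cleanly with the internally developed functors. If one wishes to be fully explicit, one may instead trace the composite functors directly: starting from a bounded distributive lattice $D$, one passes to the coherent frame of its ideals, then to the corresponding coherent \L-space, and finally across the Cornish isomorphism, recovering the Priestley space $X_D$ of prime filters of $D$. This confirms that the derived duality coincides with the original Priestley functors of \cref{rem: Priestley functors}. It is also worth recording that, like each of its links, this duality ultimately rests on PIT (see \cref{rem:PIT}).
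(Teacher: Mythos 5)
Your proposal is correct and follows essentially the same route as the paper: the paper likewise derives the result by composing the equivalence $\DLat \simeq \CohFrm$ of \cref{thm: cohfrm dlat} with the dual equivalence of \cref{cor: coh dualities-1}, the equivalence of \cref{cor: coh dualities-2}, and the Cornish isomorphism $\Spec \cong \Pries$, exactly as summarized in \cref{fig:PD}. Your additional remarks on tracking the single direction-reversal and on tracing the composite back to the standard prime-filter functors are consistent with the paper's intent and add nothing that conflicts with it.
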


\bibliographystyle{alpha-init}
\bibliography{refs}

\end{document}